\newfont{\Bb}{msbm10}
\newfont{\Bc}{msbm8}
\DeclareMathOperator{\CCC}{\mbox{\Bc C}}
\DeclareMathOperator{\OO}{\cal O}
\DeclareMathOperator{\Der}{Der}
\newcommand{\QQ}{\mathbb{Q}}
\newcommand{\KK}{\mathbb{K}}
\newcommand{\RR}{\mathbb{R}}
\newcommand{\CC}{\mathbb{C}}
\newcommand{\GG}{\mathbb{G}}
\newcommand{\NN}{\mathbb{N}}
\newcommand{\PP}{\mathbb{P}}
\newcommand{\ZZ}{\mathbb{Z}}
\newcommand\s{\mathscr}
\def\Tk{T^{(k)}}
\def\xk{x^{(k)}}
\def\xo{x^{(0)}}
\def\11{\mathbf{1}}
\def\cc{\circ}
\def\ssm{\smallsetminus}
\def\ben{\begin{enumerate}}
\def\bin{\begin{itemize}}
\def\ein{\end{itemize}}
\def\een{\end{enumerate}}
\def\beq{\begin{equation}}
\def\eeq{\end{equation}}
\def\bit{\begin{itemize}}
\def\eit{\end{itemize}}
\def\ec{\end{center}}
\def\bc{\begin{center}}
\def\ld{{\ldots}}
\def\cd{{\cdots}}
\def\d{\mathbf{d}}
\def\Gl{\text{Gl}}
\def\gl{\mathfrak{gl}}
\def\gg{\mathfrak{g}}
\def\mm{\mathfrak{m}}
\def\Sl{\text{Sl}}
\def\GD{G^0_D}
\def\ve{{\varepsilon}}
\def\pp{{\varphi}}
\def\a{{\alpha}}
\def\G{{\Gamma}}
\def\o{{\omega}}
\def\O{{\Omega}}
\def\eop{\hspace*{\fill}$\Box$ \vskip \baselineskip} 
\def\bu{\bullet}
\def\tr{{\pitchfork}}
\def\w{{\wedge}}
\def\p{{\partial}}
\DeclareMathOperator{\supp}{supp}
\DeclareMathOperator{\Rep}{Rep}
\def\vol{\text{vol}}
\DeclareMathOperator{\Sym}{Sym}
\DeclareMathOperator{\Hom}{Hom}
\DeclareMathOperator{\End}{End}
\def\Tr{T^1_{\s R_h/\CC}}
\newcommand{\diag}{\textup{diag}}
\def\iff{{\Leftrightarrow}}
\def\to{{\rightarrow\,}}
\newcommand{\Id}{\text{Id}}
\newtheorem{theorem}{Theorem}[section]
\newtheorem{lemma}[theorem]{Lemma}
\newtheorem{lemmadefinition}[theorem]{Lemma and Definition}
\newtheorem{proposition}[theorem]{Proposition}
\numberwithin{equation}{section}
\newtheorem{corollary}[theorem]{Corollary}
\theoremstyle{definition}
\newtheorem{definition}[theorem]{Definition}
\newtheorem{conjecture}[theorem]{Conjecture}
\theoremstyle{remark}
\newtheorem{algorithm}{Algorithm}
\newtheorem{example}[theorem]{Example}
\newtheorem{question}[theorem]{Question}
\newtheorem{remark}[theorem]{Remark}
\begin{document}

\title{Linear free divisors and Frobenius manifolds}
\author{Ignacio de Gregorio, David Mond and Christian Sevenheck}
\date{December 17, 2008}

\maketitle

\begin{abstract}
We study linear functions on fibrations whose central fibre is a linear free divisor.
We analyse the Gau\ss-Manin system associated to these functions,
and prove the existence of a primitive and homogenous form. As a consequence, we show
that the base space of the semi-universal unfolding of such a function
carries a Frobenius manifold structure.
\end{abstract}

\tableofcontents

\renewcommand{\thefootnote}{}
\footnote{\noindent 2000 \emph{Mathematics Subject Classification.}
Primary: 16G20, 32S40. Secondary: 14B07, 58K60\\
Keywords: Linear free divisors, prehomogenous vector spaces, quiver
representations, Gau\ss-Manin-system, Brieskorn lattice, Birkhoff
problem, spectral numbers, Frobenius manifolds.}

\section{Introduction}

In this paper we study Frobenius manifolds arising as deformation
spaces of linear
functions on certain non-isolated singularities, the so-called
linear free divisors. It is
a nowadays classical result that the semi-universal unfolding space of an isolated
hypersurface singularity can be equipped with a Frobenius structure. One of the main
motivations to study Frobenius manifolds comes from the fact that they also arise in a
very different area: the total cohomology space of a projective manifold carries such a
structure, defined by the quantum multiplication. Mirror symmetry postulates an
equivalence between these two types of Frobenius structures. In order to carry this
program out, one is forced to study not only local singularities (which are in fact never
the mirror of a quantum cohomology ring) but polynomial functions on affine manifolds.  It
has been shown in \cite{DS} (and later, with a somewhat different strategy in \cite{Dou})
that given a convenient and non-degenerate Laurent polynomial
$\widetilde{f}:(\CC^*)^n\rightarrow \CC$, the base space $M$ of a semi-universal unfolding
$\widetilde{F}:(\CC^*)^n \times M \rightarrow \CC$ can be equipped with a (canonical)
Frobenius structure.  An  important example is the function
$\widetilde{f}=x_1+\ldots+x_{n-1}+\frac{t}{x_1\cdot\ldots\cdot x_{n-1}}$ for some fixed $t\in \CC^*$: the Frobenius
structure obtained on its unfolding space is known (see \cite{Giv6}, \cite{Giv7} and \cite{Ba}) to be
isomorphic to the full quantum cohomology of the projective space $\PP^{n-1}$.
More generally, one can consider the Laurent polynomial
$\widetilde{f}=x_1+\ldots+x_{n-1}+\frac{t}{x_1^{w_1}\cdot\ldots\cdot x_{n-1}^{w_{n-1}}}$ for some
weights $(w_1,\ldots,w_{n-1})\in\NN^{n-1}$, here the Frobenius structure corresponds to
the (orbifold) quantum cohomology of the weighted projective space $\PP(1,w_1,\ldots,w_{n-1})$
(see \cite{Mann} and \cite{CCLH}). A detailed
analysis on how to construct the Frobenius structure for these functions is given
in \cite{DS2}; some of the techniques in this paper are similar to those used here.
Notice that the mirror of the ordinary projective space can be interpreted in a slightly different way, namely, as the restriction of
the linear polynomial $f=x_1+\ldots+x_n:\CC^n\rightarrow \CC$ to the non-singular
fibre $h(x_1,\ldots, x_n)-t=0$ of the torus fibration defined by the homogeneous
polynomial $h=x_1\cdot\ldots\cdot x_n$.

In the present paper, we construct Frobenius structures on the unfolding spaces of a class
of functions generalizing this basic example, namely, we consider homogenous functions $h$
whose zero fibre is a \emph{linear free divisor}.  Linear free divisors were
recently introduced by R.-O.~Buchweitz and Mond in \cite{bm} (see also
\cite{gmns}), but are closely related to the more classical {\em prehomogeneous spaces} of
T.~Kimura and M.~Sato (\cite{sk}). They are defined as free divisors $D=h^{-1}(0)$ in some
vector space $V$ whose sheaf of derivations can be generated by vector fields having only
linear coefficients. The classical example is of course the normal crossing divisor.
Following the analogy with the mirror of $\PP^{n-1}$, we are interested in characterising when
there exist linear functions $f$ having only
isolated singularities on the Milnor fibre $D_t=h^{-1}(t)$, $t\neq 0$.  As
it turns out, not all linear free divisors support such functions, but the large class of
{\em reductive} ones do, and for these the set of linear functions having only isolated
singularities can be characterised as the complement of the {\em dual divisor}.

Let us give a short overview on the paper.
In section~\ref{sec:ReductiveLFD} we state and prove some general results on linear free divisors.
In particular, we introduce the notion of \emph{special linear free divisors}, and show that
reductive ones are always special. This is proved by studying the relative logarithmic
de Rham complex (subsection~\ref{subsec:RelLogdeRham}) which is also important in the later discussion
of the Gau\ss-Manin-system. The cohomology of this complex is computed in
the reductive case, thanks to a classical theorem of Hochschild and Serre.

Section~\ref{sec:funcOnLFD} discusses linear functions $f$ on linear
free divisors $D$, as well
as on their Milnor fibres $D_t$. We show  (in an even more general situation
where $D$ is not a linear free divisor) that $f_{|D_t}$ is a Morse function
if the restriction $f_{|D}$ is right-left
stable. This implies in particular that the Frobenius structures associated to the
functions $f_{|D_t}$ are all semi-simple. Subsection \ref{subsec:REquivalences} discusses
deformation problems associated to the two functions $(f,h)$. In particular, we show that
linear forms in the complement of the dual divisor have the necessary finiteness properties.
In order that we can construct Frobenius structures, the
fibration defined by $f_{|D_t}$ is required
to have good behaviour at infinity,
comprised in the notion of {\em tameness}. In
subsection \ref{subsec:tame} it is shown that this property indeed holds
for these functions.

In section \ref{sec:GMBrieskorn} we study the (algebraic)
Gau\ss-Manin system and the (algebraic) Brieskorn lattice of
$f_{|D_t}$. We actually define both as families over the parameter
space of $h$, and using logarithmic forms along $D$ (more precisely,
the relative logarithmic de Rham complex mentioned above) we get
very specific extensions of these families over $D$.
The fact that $D$ is a linear free divisor allows us to
construct explicitly a basis of this family of Brieskorn lattice, hence showing
its freeness. Next we give a solution to the so-called Birkhoff problem.
Although this solution is not a good basis in the sense of M.~Saito \cite{SM}, that is, it might not
compute the spectrum at infinity of $f_{|D_t}$, we give an algorithmic procedure to turn
it into one. This allows us in particular to compute the monodromy of $f_{|D_t}$.
We finish this section by showing that this solution to the Birkhoff
problem is also compatible with a natural pairing defined on the
Brieskorn lattice, at least under an additional hypothesis (which is
satisfied in many examples) on the spectral numbers.

In section \ref{sec:Frobenius} we finally apply all these results to
construct Frobenius structures on the unfolding spaces of the
functions $f_{|D_t}$ (subsection \ref{subsec:Frobenius-tneq0}) and on
$f_{|D}$ (subsection \ref{subsec:Frobenius-teq0}). Whereas the former
exists in all cases, the latter depends on a conjecture concerning
a natural pairing on the Gauss-Manin-system. Similarly, assuming this
conjecture, we give some partial results concerning \emph{logarithmic
Frobenius structures} as defined in \cite{Reich1} in subsection \ref{subsec:LogFrobenius}.

We end the paper with some examples (section \ref{sec:examples}). On the one
hand, they illustrate the different phenomena that can occur, as for instance, the fact that there might not be a
\emph{canonical} choice (as in \cite{DS}) of a primitive form. On the
other hand, they support the conjecture concerning the pairing used in
the discussion of the Frobenius structure associated to $f_{|D}$.

\bigskip
\noindent{\bf Acknowledgements.}  We are grateful to Antoine Douai, Michel Granger,
Claus Hertling, Dmitry Rumynin,
Claude Sabbah and Mathias Schulze for helpful discussions on the subject of this article.
We thank the anonymous referee for a careful reading of a first version and a number of very helpful
remarks.

\section{Reductive and special linear free divisors}
\label{sec:ReductiveLFD}

\subsection{Definition and examples}
\label{subsec:RedLFDDefEx}

A hypersurface $D$ in a complex manifold $X$ is a {\it free divisor} if the $\OO_X$-module
$\Der(-\log D)$ is locally free. If $X=\CC^n$ then $D$ is furthermore a {\it linear free
  divisor} if $\Der(-\log D)$ has an $\OO_{\CC^n}$-basis consisting of weight-zero vector
fields -- vector fields whose coefficients, with respect to a standard linear coordinate
system, are linear functions (see \cite[Section 1]{gmns}). By Serre's conjecture, if
$D\subset\CC^n$ is a free divisor then $\Der(-\log D)$ is globally free.  If
$D\subset\CC^n$ is a linear free divisor then the group $G_D:=\{A\in\Gl_n(\CC):AD=D\}$ of
its linear automorphisms is algebraic of dimension $n$.  We denote by $G_D^0$ the
connected component of $G_D$ containing the identity, and by $\Sl_D$ the intersection of
$\GD$ with $\Sl_n(\CC)$. The infinitesimal action of the Lie algebra $\gg_D$ of $G_D^0$
generates $\Der(-\log D)$ over $\OO_{\CC^n}$, and it follows that the complement of $D$ is
a single $G_D^0$-orbit (\cite[Section 2]{gmns}).
Thus, $\CC^n$, with this action of $G^0_D$, is  a
\emph{prehomogeneous vector space} (\cite{sk})  i.e., a representation
$\rho$ of a group $G$ on a vector space $V$ in which the group has an
open orbit. The complement of the open orbit in a prehomogeneous vector space
is known as the {\it discriminant}. The (reduced)
discriminant in a prehomogeneous vector space is a linear free divisor if and
only if the dimensions
of $G$ and $V$ and the degree of the discriminant are all equal.

By Saito's criterion
(\cite{saito}), the determinant of the matrix of coefficients of a set of generators of
$\Der(-\log D)$ is a reduced equation for $D$, which is therefore homogeneous of degree
$n$. Throughout the paper we will denote the reduced homogeneous equation of the linear
free divisor $D$ by $h$.

If the group $G$ acts on the vector space $V$, then a rational function $f\in\CC(V)$ is a
{\it semi-invariant} (or {\it relative invariant}) if there is a character
$\chi_f:G\to\CC^*$ such that for all $g\in G$, $f\cc g=\chi_f(g)f$. In this case $\chi_f$
is the {\it character associated to $f$}. Sato and Kimura prove (\cite[\S 4 Lemma 4]{sk})
that semi-invariants with multiplicatively independent associated characters are
algebraically independent.  If $D$ is a linear free divisor with equation $h$, then $h$ is
a semi-invariant (\cite[\S 4]{sk}) (for the action of $\GD$).  For it is clear that $g$
must leave $D$ invariant, and thus $h\cc g$ is some complex multiple of $h$. This multiple
is easily seen to define a character, which we call $\chi_h$.
\begin{definition}\label{mond:specdef}
  We call the linear free divisor $D$ {\em special} if $\chi_h$ is equal to the
determinant of the representation, and {\it reductive} if the group $\GD$ is reductive.
\end{definition}
We show in \ref{cor2} below that every reductive linear free divisor is special. We do
not know if the converse holds.
The term ``special'' is used here because the condition means
that the elements of $G_D$ which fix $h$ lie in $\Sl_n(\CC)$.

Not all linear free divisors are special.  Consider the example of
the group $B_k$ of upper triangular complex matrices acting on the space $V=\Sym_k(\CC)$
of symmetric $k\times k$ matrices by transpose conjugation,
\begin{equation}
  \label{symmatrices}
  B\cdot S={^tB}SB
\end{equation}
The discriminant here is a linear free divisor (\cite[Example 5.1]{gmns}).  Its equation
is the product of the determinants of the top left-hand $l\times l$ submatrices of the
generic $k\times k$ symmetric matrix, for $l=1,\ld, k$. It follows that if
$B=\mbox{diag}(\lambda_1,\ld,\lambda_k)\in B_k$ then
\begin{displaymath}
  h\cc\rho(B)=\lambda_1^{2k}\lambda_2^{2k-2}\cd \lambda_k^2\,h,
\end{displaymath}
and $D$ is not special. The simplest example is the case $k=2$, here the divisor
has the equation
\begin{equation}\label{eqNonReductiveDim3}
h=x(xz-y^2)
\end{equation}

Irreducible prehomogeneous vector spaces are classified in \cite{sk}. However, irreducible
representations account for very few of the linear free divisors known.
For more examples we turn to the representation spaces of quivers:
\begin{proposition}[\cite{bm}]
  \label{dy}
  \begin{enumerate}
  \item Let $Q$ be a quiver without oriented loops and let $\d$ be (a dimension vector
    which is) a real Schur root of $Q$. Then the triple $(\Gl_{Q,\d},\rho, \Rep(Q,\d))$ is
    a prehomogeneous vector space and the complement of the open orbit is a divisor $D$
    (the ``discriminant'' of the representation $\rho$ of the quiver group $Gl_{Q,\d}$ on
    the representation space $\Rep(Q,\d)$)).
  \item If in each irreducible component of $D$ there is an open orbit, then $D$ is a
    linear free divisor.
  \item If $Q$ is a Dynkin quiver then the condition of (ii) holds for all real Schur
    roots $\d$.
  \end{enumerate}
\end{proposition}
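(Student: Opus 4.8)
The plan is to treat the three parts in order, using the Euler (Ringel) form as the organising tool. Write $G=\Gl_{Q,\d}=\prod_{i}\Gl_{d_i}(\CC)$ acting on $V=\Rep(Q,\d)=\bigoplus_{a\colon i\to j}\Hom(\CC^{d_i},\CC^{d_j})$ by $(g\cdot M)_a=g_{j}M_a g_{i}^{-1}$, and recall that the scalar torus $\CC^*\hookrightarrow G$, $\l\mapsto(\l\,\Id,\ld,\l\,\Id)$, acts trivially. For the Euler form $\la\a,\beta\ra=\sum_i\a_i\beta_i-\sum_{a\colon i\to j}\a_i\beta_j$ one has $\dim G-\dim V=\la\d,\d\ra=q(\d)$ and, for $M,N$ of dimension $\a,\beta$, $\la\a,\beta\ra=\dim\Hom(M,N)-\dim\Ext^1(M,N)$. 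For (i) I would argue: since $\d$ is a Schur root a generic $M$ is a brick, so its stabiliser is $\Aut(M)=\End(M)^\times=\CC^*$, of dimension $1$; hence $G\cdot M$ has dimension $\dim G-1$, and as $\d$ is a real root, $q(\d)=1$, so $\dim G-1=\dim V$ and the orbit is open and dense, i.e.\ $(G,\rho,V)$ is prehomogeneous. To see that $D=V\ssm(G\cdot M)$ is a divisor I would invoke Matsushima's criterion: the reductive stabiliser $\CC^*$ inside the reductive $G$ makes $G\cdot M\cong G/\CC^*$ affine, and an affine open subset of the smooth (hence normal) affine $V$ with nonempty complement has complement of pure codimension one.

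For (ii) the tool is Saito's criterion. The infinitesimal action gives a map $\gg=\mathrm{Lie}(G)\to\Der(V)$ into weight-zero vector fields (those with linear coefficients), with kernel the scalar line, so its image has generic rank $\dim G-1=n:=\dim V$; choose $\xi_1,\ld,\xi_n\in\gg$ whose fundamental fields $\delta_1,\ld,\delta_n$ are generically independent. Each $\delta_j$ is tangent to the $G$-invariant set $D$, hence $\delta_j\in\Der(-\log D)$, and the matrix $A=(A_{jl})$ of coefficients has linear entries with $\det A$ homogeneous of degree $n$. Because $A(M)$ drops rank exactly when $M$ leaves the open orbit, $\det A$ vanishes precisely on $D$; writing $D=\bigcup_k D_k$ with reduced equations $h_k$ gives $\det A=c\prod_k h_k^{m_k}$ with $m_k\ge 1$. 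By Saito's criterion the $\delta_j$ form an $\OO_V$-basis of $\Der(-\log D)$, making $D$ a linear free divisor, if and only if $\det A$ is reduced, i.e.\ every $m_k=1$. Here the hypothesis enters: at a generic smooth point $p\in D_k$ the dense orbit $\Omega_k\subseteq D_k$ has dimension $n-1$, so the $\delta_j(p)$ span $T_pD_k$ and $A(p)$ has corank exactly one.

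The main obstacle is the passage from corank one to vanishing order one, since corank one alone does not force a simple zero (e.g.\ $\diag(x_1^2,1,\ld,1)$). To close the gap I would work in local coordinates with $D_k=\{x_1=0\}$: tangency forces the first column of $A$ to be divisible by $x_1$, so $\det A=x_1\det A'$, and it remains to show $\det A'(p)\neq 0$ for generic $p\in D_k$. The columns of $A'$ indexed by $x_2,\ld,x_n$ already have rank $n-1$, spanning $T_pD_k$ since $\Omega_k$ is open in $D_k$; the first column is, up to a unit, the vector of infinitesimal characters $\bigl(d\chi_k(\xi_j)\bigr)_j$ of the semi-invariant $h_k$. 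The required statement is that this transverse direction is independent of the tangential ones, equivalently that the stabiliser of $p$ is not contained in $\ker d\chi_k$; since a quiver without oriented cycles has no non-constant invariants, $\chi_k$ is non-trivial, and one checks using the dense-orbit hypothesis that its differential does not vanish on the stabiliser. This yields $m_k=1$ for all $k$, hence the linear free divisor structure.

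Finally, for (iii) I would invoke Gabriel's theorem: if $Q$ is Dynkin then it has finite representation type, every positive root is a real Schur root (the Tits form is positive definite and the unique indecomposable of that dimension is a brick without self-extensions), and for each $\d$ there are only finitely many isomorphism classes, hence finitely many $G$-orbits, in $V$. A $G$-invariant irreducible closed subvariety is then a finite union of orbits and so contains a (necessarily open) dense orbit; applying this to each component $D_k$ shows the hypothesis of (ii) holds, whence $D$ is a linear free divisor.
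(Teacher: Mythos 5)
The paper itself contains no proof of this proposition: it is quoted from the reference [BM06] (Buchweitz--Mond), so your attempt has to be judged on its own merits rather than against an argument in the text. Parts (i) and (iii) of your proposal are correct and complete: the count $\dim G-\dim V=\langle\d,\d\rangle=1$ together with the brick property of a generic representation gives the open orbit; Matsushima's criterion (reductive isotropy $\CC^*$ inside the reductive group $\Gl_{Q,\d}$) makes the open orbit affine, so its nonempty complement (it contains $0$) has pure codimension one in the smooth, hence normal, affine variety $V$; and for Dynkin quivers Gabriel's theorem gives finitely many orbits, so each irreducible component of $D$ contains a dense, hence open, orbit. In part (ii) your reduction is also sound as far as it goes: Saito's criterion, tangency of the fundamental fields, the fact that $\det A$ vanishes exactly on $D$, and the identification, at a generic point $p$ of a component $D_k$ with dense orbit, of ``$\det A$ has multiplicity one along $D_k$'' with ``$d\chi_k$ does not vanish on the isotropy algebra $\gg_p$'' are all correct.

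The genuine gap is the sentence ``one checks using the dense-orbit hypothesis that its differential does not vanish on the stabiliser'': this is the entire content of (ii), and it does not follow from the ingredients you invoke (non-triviality of $\chi_k$ plus density of the orbit in $D_k$). Indeed the implication fails for general prehomogeneous actions of this numerical type: let $G\cong\CC^*\ltimes\CC$ act on $V=\CC^2$ by $(x,y)\mapsto(tx+sy,\,t^{-1}y)$. The open orbit is $\{y\neq 0\}$; its complement $D=\{y=0\}$ is an irreducible divisor containing the dense orbit $\{y=0,\ x\neq 0\}$; there are no non-constant invariants and the character of $h=y$ is non-trivial; yet the isotropy algebra of $(1,0)$ is nilpotent, so $d\chi$ kills it, and correspondingly $\det A=\det\begin{pmatrix}x&-y\\ y&0\end{pmatrix}=y^2$ is non-reduced: $D$ is not a linear free divisor, although every step of your argument before the skipped one applies verbatim. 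So closing the gap requires genuinely quiver-theoretic input, none of which appears in your proposal. At a minimum one needs: Kac's theorem that a real root admits a unique indecomposable representation, so that every representation outside the open orbit is decomposable --- this, and nothing you have written, excludes your own problematic scenario $\End(M)\cong\CC[\epsilon]/(\epsilon^2)$, in which all traces vanish and $d\chi_k(\gg_p)=0$; then, using density of the orbit again, that a generic $M\in D_k$ is $M_1\oplus M_2$ with $\End(M)\cong\CC\times\CC$ and $\Ext^1(M,M)$ one-dimensional, concentrated in $\Ext^1(M_1,M_2)$ or $\Ext^1(M_2,M_1)$; and finally the identification of the conormal line of $D_k$ at $M$ with $\Ext^1(M,M)^\vee$ as an $\Aut(M)$-module, on which the non-scalar element $(\mathrm{id}_{M_1},0)\in\End(M)$ acts with weight $\mp 1$, whence $d\chi_k(\mathrm{id}_{M_1},0)=\pm 1\neq 0$. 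Without this (or an equivalent computation, for instance via Schofield semi-invariants and their weights), part (ii) remains unproven.
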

We note that the normal crossing divisor appears as the discriminant in the representation
space $\Rep(Q,\11)$ for every quiver $Q$ whose underlying graph is a tree. Here {\bf 1} is
the dimension vector which takes the value $1$ at every node.

All of the linear free divisors constructed in Proposition 2.2 are reductive.
For if $D$ is the discriminant in $\Rep(Q,\d)$ then
$\GD$ is the quotient of $\Gl_{Q,\d}=\prod_i\Gl_{d_i}(\CC)$
by a 1-dimensional central subgroup.
\begin{example}\label{D_4/E_6}
\begin{enumerate}
\item
Consider
the quiver of type $D_4$ with real Schur root
$$\xymatrix@R=0.2in@C=0.17in{&1\ar[d]\\&2\\
1\ar[ur]&&1\ar[ul]}$$
By choosing a basis for each vector space
we can identify the representation space $\Rep(Q,\d)$ with the space of $2\times 3$ matrices,
with each of the three morphisms corresponding to a column. The open orbit
in $\Rep(Q,\d)$
consists of matrices whose columns are pairwise linearly independent. The discriminant
thus
has equation
\begin{equation}\label{eqStar3}
h=(a_{11}a_{22}-a_{12}a_{21})
(a_{11}a_{23}-a_{13}a_{21})
(a_{12}a_{23}-a_{22}a_{13}).
\end{equation}
This example generalises: instead of three arrows converging to the central node,
we take $m$, and
set the dimension of the space at the central node to $m-1$. The representation
space can now be identified with the space of $(m-1)\times m$ matrices,
and the discriminant
is once again
defined by the vanishing of the product of maximal minors. Again it is a linear
free divisor (\cite[Example 5.3]{gmns}), even though for $m>3$ the quiver is no longer
a Dynkin quiver. We refer to it as the {\it star quiver}, and denote it by
$\star_m$.

\item
The linear free divisor arising by the construction
of Proposition \ref{dy} from the quiver of type $E_6$ with real Schur root
$$\xymatrix@R=0.2in{&&2\\1\ar[r]&2\ar[r]&3\ar[u]&2\ar[l]&1\ar[l]}
$$
has five irreducible components. In
the $22$-dimensional representation space $\Rep(Q,\d)$,  we take coordinates $a,b,\ld, v$.
Then
\begin{equation}\label{eqE6}
h=F_1\cdot F_2\cdot F_3\cdot F_4\cdot F_5
\end{equation}
where four of the components have the equations
$$
\begin{array}{|ccl|}
\hline
F_1&=&dfpq-cgpq-dfor+cgor+efps-chps+egrs-dhrs-efot+chot-egqt+dhqt\\
F_2&=&jlpq-impq-jlor+imor+klps-inps+kmrs-jnrs-klot+inot-kmqt+jnqt\\
F_3&=&-aejl-bhjl+adkl+bgkl+aeim+bhim-ackm-bfkm-adin-bgin+acjn+bfjn\\
F_4&=&egiu-dhiu-efju+chju+dfku-cgku+eglv-dhlv-efmv+chmv+dfnv-cgnv
\\
\hline
\end{array}
$$
and the fifth has the equation $F_5=0$, which is of degree 6, with 48 monomials. This example is
discussed in detail in \cite[Example 7.3]{bm}
\end{enumerate}
\end{example}

\subsection{The relative logarithmic de Rham complex}
\label{subsec:RelLogdeRham}

Let $D$ be a linear free divisor with equation $h$.  We set $\Der(-\log
h)=\{\chi\in\Der(-\log D):\chi\cdot h=0\}$. Under the infinitesimal action
of $\GD$, the Lie algebra of $\ker(\chi_h)$, which we denote by $\gg_h$, is identified with the
weight zero part of $\Der(-\log\,h)$, which we denote by  $\Der(-\log\,h)_0$.
$\Der(-\log h)$ is a summand of $\Der(-\log D)$, as is shown by the equality
\begin{displaymath}\xi=\frac{\xi\cdot h}{E\cdot h}
  E +\left(\xi-\frac{\xi\cdot h}{E\cdot h}E\right)\end{displaymath}
in which $E$ is the Euler vector field and the second summand on the right
is easily seen to annihilate $h$.

The quotient complex
\begin{displaymath}
  \Omega^\bu(\log h):=\frac{\O^\bu(\log D)}{dh/h\wedge\O^{\bu-1}(\log
    D)} =
    \frac{\O^\bu(\log D)}{h^*\left(\Omega^1_\CC(\log\{0\})\right)\wedge\O^{\bu-1}(\log
    D)}
\end{displaymath}
is the \emph{relative logarithmic de Rham complex} associated with the function $h:\CC^n\to
\CC$. Each module $\O^k(\log h)$ is isomorphic to the submodule
\begin{displaymath}
  \O^k(\log h)':=\{\o\in \O^k(\log D):\iota_{E}\o=0\} \subset \O^k(\log D).
\end{displaymath}
This is because the natural map $i:\o\mapsto \o+(dh/h)\w\O^{\bu-1}(\log D)$ gives
an injection $\O^k(\log h)'\to \O^k(\log h)$, since for $\o\in\O^k(\log h)'$, if
$\o=(dh/h)\w \o_1$ for some $\o_1$ then
\begin{displaymath}0=\iota_{E}(\o)=\iota_{E}\left(\frac{dh}{h}\w\o_1\right)=n\o_1-
  \frac{dh}{h}\w\iota_{E}(\o_1)\end{displaymath} and thus $\o_1=(dh/nh)\w\iota_{E}(\o_1)$
and $\o=(dh/h)\w(dh/nh)\w\iota_{E}(\o_1)=0.$ Because
\begin{displaymath}\frac{1}{n}\iota_{E}\left(\frac{dh}{h}\w \o\right)\in \O^k(\log
  h)'\end{displaymath}
and
\begin{equation}
  \label{fi}
  \o-\frac{1}{n}\iota_{E}\left(\frac{dh}{h}\w\o\right)
  \ \in\ \frac{dh}{h}\w\O^{k-1}(\log D)
\end{equation}
$i$ is surjective.
However, the collection of $\O^k(\log h)'$ is not a subcomplex of
$\O^\bu(\log D)$:  The form $\iota_{E}(d\o)$ may not be zero even when
$\iota_{E}(\o)=0$. We define $d':\O^k(\log h)'\to
\O^{k+1}(\log h)'$ by composing the usual exterior
derivative $\O^k(\log h)'\to\O^{k+1}(\log D)$ with with the
projection operator $P:\O^\bu(\log D)\to \O^\bu(\log h)'$
defined by
\begin{equation}\label{defd'}
  P(\o)=\frac{1}{n}\iota_{E}\left(\frac{dh}{h}\w\o\right)
  \ =\ \o-\frac{1}{n}\frac{dh}{h}\w\iota_{E}(\o).\end{equation}
\begin{lemma}
  \begin{enumerate}
  \item $d\cc i=i\cc d'$.
  \item The differential satisfies $(d')^2=0$.
  \item The mapping $i:\bigl(\O^\bu(\log h)',d'\bigr) \to \bigl(\O^\bu(\log h),d\bigr)$ is an
    isomorphism of complexes.
  \end{enumerate}
\end{lemma}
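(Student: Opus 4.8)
The plan is to prove the three statements in sequence, noting that parts (ii) and (iii) follow almost formally once (i) is established. The key object is the projection operator $P:\O^\bu(\log D)\to \O^\bu(\log h)'$ defined in \eqref{defd'}, together with the fact (already established above) that $i$ is the isomorphism inverse to $P$ restricted to $\O^\bu(\log h)'$. First I would record the two identities that the surrounding text has essentially handed us: for $\o\in\O^k(\log h)'$ we have $i(\o)=\o+(dh/h)\w\O^{\bu-1}(\log D)$, and the formula \eqref{fi} says precisely that any $\o\in\O^\bu(\log D)$ differs from $P(\o)$ by an element of $(dh/h)\w\O^{\bu-1}(\log D)$.

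For part (i), by definition $d'=P\cc d$ on $\O^\bu(\log h)'$, so I must show $d(i(\o))=i(P(d\o))$ as classes in $\O^\bu(\log h)$. Since $i$ is the inclusion-into-quotient map, this amounts to showing $d\o$ and $P(d\o)$ represent the same class modulo $(dh/h)\w\O^{\bu-1}(\log D)$, which is exactly the content of \eqref{fi} applied to $\o$ replaced by $d\o$. Equivalently, one checks directly that $d\o - P(d\o)=\tfrac{1}{n}(dh/h)\w\iota_E(d\o)\in (dh/h)\w\O^{\bu-1}(\log D)$, so the two map to the same element of the quotient complex; this gives $d\cc i=i\cc d'$. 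I expect this to be the genuine step, though a short one, since it is where the defining formula for $P$ is actually used.

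Part (ii) I would deduce from (i) together with the injectivity of $i$: from $d\cc i=i\cc d'$ we get $i\cc (d')^2 = d\cc i\cc d' = d\cc d\cc i = 0$, using $d^2=0$ for the ordinary exterior derivative on $\O^\bu(\log D)$; since $i$ is injective (indeed an isomorphism onto $\O^\bu(\log h)$, as shown in the text preceding the lemma), this forces $(d')^2=0$. For part (iii), the text has already verified that each $i:\O^k(\log h)'\to\O^k(\log h)$ is bijective, so $i$ is an isomorphism of graded modules; part (i) states precisely that $i$ intertwines $d'$ and $d$, i.e.\ that $i$ is a chain map. An isomorphism of graded modules that commutes with the differentials is an isomorphism of complexes, which is the assertion of (iii).

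The main obstacle, such as it is, is purely bookkeeping in part (i): keeping straight that $i$ sends a representative to its class, so that ``$d\cc i = i\cc d'$'' should be read as an identity of maps $\O^\bu(\log h)'\to\O^{\bu+1}(\log h)$ into the quotient complex, where two forms agreeing modulo $(dh/h)\w\O^{\bu-1}(\log D)$ are identified. Once that convention is fixed, everything reduces to the single displayed congruence \eqref{fi} plus formal nonsense, and there is no hard analytic or geometric input required.
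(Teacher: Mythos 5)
Your proposal is correct and follows essentially the same route as the paper's own (very brief) proof: part (i) comes down to the identity $d\o-P(d\o)=\tfrac{1}{n}\tfrac{dh}{h}\w\iota_E(d\o)$ from \eqref{defd'}/\eqref{fi}, part (ii) follows from $d^2=0$ and the injectivity of $i$, and part (iii) is formal given (i), (ii) and the bijectivity of $i$ established before the lemma. Your write-up merely makes explicit the bookkeeping that the paper leaves implicit.
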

\begin{proof} The first statement is an obvious consequence of the second equality in
  (\ref{defd'}). The second follows because $d^2=0$ and $i$ is an injection. The third is
  a consequence of (i) and (ii).
\end{proof}
\begin{lemma}\label{wt0}
  The weight zero part of $\bigl(\O^\bu(\log h)',d'\bigr)$ is a subcomplex of
  $\bigl(\O^\bu(\log D),d\bigr)$.
\end{lemma}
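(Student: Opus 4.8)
The plan is to lean on Cartan's homotopy formula $L_E = d\,\iota_E + \iota_E\,d$ for the Lie derivative along the Euler field $E$, together with the observation that the weight grading on $\O^\bu(\log D)$ is nothing but the eigenspace decomposition of $L_E$: a form $\o$ has weight $w$ precisely when $L_E\o = w\,\o$. (One checks in passing that $dh/h$ has weight zero, so that the whole logarithmic setup is compatible with this grading.) Since the weight-zero part $W^k := \O^k(\log h)'_0$ already sits inside $\O^k(\log D)$ by definition, to prove that $(W^\bu, d)$ is a subcomplex it suffices to show that the \emph{ordinary} differential $d$ maps $W^k$ into $W^{k+1}$, i.e. that for $\o \in \O^k(\log h)'$ of weight zero the form $d\o$ is again of weight zero and satisfies $\iota_E(d\o)=0$.

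First I would record that $d$ commutes with $L_E$: from Cartan's formula and $d^2=0$ one gets $L_E\,d = d\,\iota_E\,d = d\,L_E$. Hence $d$ preserves the $L_E$-eigenspaces, so $L_E\o = 0$ forces $L_E(d\o)=0$, and $d\o$ is again of weight zero. Second, and this is the heart of the matter, I would apply Cartan's formula to $\o$ itself: $L_E\o = d(\iota_E\o) + \iota_E(d\o)$. Because $\o\in\O^k(\log h)'$ we have $\iota_E\o=0$, and because $\o$ has weight zero we have $L_E\o = 0$; the two together give $\iota_E(d\o)=0$, that is $d\o \in \O^{k+1}(\log h)'$. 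Combined with the first step this shows $d\o\in W^{k+1}$, so $d$ indeed preserves $W^\bu$.

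Finally I would note that on $W^\bu$ the differential $d'$ coincides with $d$: since $P(\eta)=\eta-\frac{1}{n}\frac{dh}{h}\w\iota_E(\eta)$ and we have just shown $\iota_E(d\o)=0$, the projection acts as the identity on $d\o$, whence $d'\o = P(d\o) = d\o$. Thus $(W^\bu,d')=(W^\bu,d)$ is a genuine subcomplex of $\bigl(\O^\bu(\log D),d\bigr)$, as claimed. There is no serious obstacle to overcome here; the entire content is the vanishing of the correction term $\iota_E(d\o)$, which Cartan's formula forces precisely in weight zero. In any nonzero weight $w$ one would instead obtain $\iota_E(d\o)=w\,\o\neq 0$, which is exactly why the full complex $\O^\bu(\log h)'$ fails to be closed under $d$ and the projection $P$ is genuinely needed there.
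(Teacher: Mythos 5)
Your proof is correct and follows essentially the same route as the paper: both arguments rest on Cartan's formula $\iota_E(d\o)=L_E(\o)-d\iota_E(\o)$ together with the fact that $L_E$ acts as multiplication by the weight on homogeneous forms, so that $\iota_E(\o)=0$ and weight zero force $\iota_E(d\o)=0$ and hence $d'\o=P(d\o)=d\o$. The only difference is presentational — you spell out separately that $d$ preserves weight (via $[L_E,d]=0$) and that $d\o$ again lies in $\O^{\bu}(\log h)'$, steps the paper's one-line computation leaves implicit.
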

\begin{proof}
  Let $\o\in\O^k(\log h)'$. We have
  \begin{displaymath}P_{k+1}(d\o)=\frac{1}{n}\iota_{E}\left(\frac{dh}{h}\w d\o\right)=
    d\o-\frac{1}{n}\frac{dh}{h}\w \iota_{E}(d\o) =d\o-\frac{1}{n}\frac{dh}{h}\w
    \bigl(L_{E}(\o)-d\iota_{E}(\o)\bigr)\end{displaymath} where $L_{E}$ is the Lie
  derivative with respect to $E$. By assumption, $\iota_{E}(\o)=0$, and since
  $L_{E}(\sigma)=\mbox{weight}(\sigma)\sigma$ for any homogeneous form, it follows that if
  $\mbox{weight}(\o)=0$ then $d'\o=d\o$.\end{proof} Let
\begin{equation}\label{defal}\a=\iota_{E}\left(\frac{dx_1\w\cd\w
      dx_n}{h}\right)\end{equation}
Evidently $\a\in\O^{n-1}(\log h)'$, and moreover
\begin{displaymath}\alpha= n\frac{dx_1\w\cd\w dx_n}{dh}.\end{displaymath}
For $\xi\in\Der(-\log h)$, we define the form $\lambda_\xi=\iota_{\xi}\a$.
Notice that $\alpha$ generates the rank one $\CC[V]$-module
$\Omega^{n-1}(\log\, h)$: We have $\alpha\wedge dh/nh=dx_1\wedge\ldots dx_n/h$, which is a generator of $\Omega^n(\log\,D)$ (remember
that $dh/nh$ is the element of $\Omega^1(\log \,D)$ dual to $E\in\Der(-\log\,D)$).

\begin{lemma}\label{dl0}
  The linear free divisor $D\subset\CC^n$ is special if and only if $d\lambda_\xi=0$ for
  all $\xi\in\Der(-\log h)_0$.
\end{lemma}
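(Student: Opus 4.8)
The plan is to compute $d\lambda_\xi$ explicitly and read off the speciality condition from the answer. Write $\nu=dx_1\w\cd\w dx_n$, so that $\a=\iota_{E}(\nu/h)$. First I would observe that $\a$ is closed: by Cartan's formula
\begin{displaymath}
d\a=d\iota_{E}(\nu/h)=L_{E}(\nu/h)-\iota_{E}\,d(\nu/h),
\end{displaymath}
and both terms vanish — the second because $\nu/h$ is already of top degree, the first because $\nu/h$ has weight $n-n=0$ and $L_{E}$ multiplies a homogeneous form by its weight. Consequently, for any $\xi\in\Der(-\log h)$, Cartan's formula gives
\begin{displaymath}
d\lambda_\xi=d\iota_{\xi}\a=L_{\xi}\a-\iota_{\xi}\,d\a=L_{\xi}\a.
\end{displaymath}

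Next I would evaluate $L_{\xi}\a$ for $\xi\in\Der(-\log h)_0$. Using the commutation identity $[L_{\xi},\iota_{E}]=\iota_{[\xi,E]}$ together with $[\xi,E]=0$ (a weight-zero field commutes with the Euler field), one gets $L_{\xi}\a=\iota_{E}\,L_{\xi}(\nu/h)$. Since $\xi\in\Der(-\log h)$ means $\xi\cdot h=0$, the Leibniz rule yields $L_{\xi}(\nu/h)=(L_{\xi}\nu)/h$. Finally, writing $A=d\rho(\xi)\in\gl_n(\CC)$ for the matrix of the weight-zero (hence linear) field $\xi$, the divergence of $\xi$ equals $\mbox{tr}(A)$, so $L_{\xi}\nu=\mbox{tr}(A)\,\nu$. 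Assembling these,
\begin{displaymath}
d\lambda_\xi=L_{\xi}\a=\mbox{tr}(A)\,\iota_{E}(\nu/h)=\mbox{tr}\bigl(d\rho(\xi)\bigr)\,\a.
\end{displaymath}
Because $\a$ generates the rank-one module $\O^{n-1}(\log h)$ and is in particular nonzero, this shows $d\lambda_\xi=0$ if and only if $\mbox{tr}(d\rho(\xi))=0$.

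It then remains to identify the vanishing of the trace functional $\xi\mapsto\mbox{tr}(d\rho(\xi))$ on $\Der(-\log h)_0\cong\gg_h$ with the speciality of $D$. Here $\mbox{tr}\cc d\rho$ is precisely the differential of the determinant character $\det\cc\rho$, while $\gg_h$ is the Lie algebra of $\ker(\chi_h)$, that is $\ker(d\chi_h)$. Thus $d\lambda_\xi=0$ for all $\xi\in\Der(-\log h)_0$ says exactly that $\gg_h\subset\sl_n(\CC)$, i.e. that the identity component of the stabiliser of $h$ lies in $\Sl_n(\CC)$ — which by the remark following Definition~\ref{mond:specdef} is the speciality of $D$. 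To match this with the character-theoretic formulation $\chi_h=\det\cc\rho$ directly, I would use the decomposition $\gg_D=\gg_h\oplus\CC E$: the differentials $d\chi_h$ and $d(\det\cc\rho)$ both send $E$ to $n$ (Euler's identity gives $E\cdot h=nh$, and $\mbox{tr}(\Id)=n$), so they agree on all of $\gg_D$ precisely when they agree on $\gg_h$; since $G^0_D$ is connected, equality of these differentials is equivalent to $\chi_h=\det\cc\rho$.

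The computation itself is routine; the step needing the most care is the last one — matching the infinitesimal trace condition with the definition of speciality. The point to get right is the normalisation fixing the proportionality constant between $d\chi_h$ and $d(\det\cc\rho)$ (the common value $n$ on the Euler field), and the passage from the Lie-algebra statement to the equality of characters via connectedness of $G^0_D$, so that the two a priori different formulations of speciality are seen to coincide.
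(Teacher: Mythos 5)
Your proof is correct, and while it arrives at the same two key facts as the paper's proof --- the identity $d\lambda_\xi=\operatorname{trace}(A)\,\a$, and the equivalence of trace-vanishing on $\gg_h$ with $\chi_h=\det\cc\rho$ --- it reaches each of them by a genuinely different route. The paper never computes $d\lambda_\xi$ head-on: it uses that $\lambda_\xi$ has weight zero (so $d'\lambda_\xi=d\lambda_\xi$ by Lemma \ref{wt0}) and that $\a$ generates $\O^{n-1}(\log h)$ to write $d\lambda_\xi=c\a$ for some scalar $c$, and then extracts $c$ by wedging with $dh$, via $dh\w d\lambda_\xi=-d(dh\w\lambda_\xi)=d\iota_\xi(\vol)=L_\xi(\vol)=\operatorname{trace}(A)\,\vol$. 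You instead prove $d\a=0$ (Cartan's formula plus the weight count) and commute $L_\xi$ past $\iota_E$ using $[\xi,E]=0$; this stays entirely inside ordinary Cartan calculus, needs neither the relative complex machinery nor Lemma \ref{wt0}, and produces $d\lambda_\xi=\operatorname{trace}(A)\,\a$ as an identity of forms on the nose --- arguably the cleaner derivation of the same formula. At the Lie-algebra-to-character step, the paper argues that $\ker d\chi_h\subseteq\ker d\det$ forces equality of these two codimension-one subspaces of $\gg_D$, hence proportionality of the differentials, and then fixes the scalar by a degree count (both $\chi_h$ and $\det$ are degree-$n$ polynomials on $G^0_D$); you fix it instead by evaluating both differentials on the Euler element, where both give $n$, so that agreement on $\gg_h$ already yields $d\chi_h=d(\det\cc\rho)$ on all of $\gg_D=\gg_h\oplus\CC E$, and connectedness of $G^0_D$ converts equality of differentials into equality of characters --- thereby bypassing the paper's somewhat loose intermediate claim that $\chi_h$ is ``a power of $\det$''. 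The only caution is that your appeal to the remark after Definition \ref{mond:specdef} ($\gg_h\subset\sl_n(\CC)$ ``is'' speciality) is a priori only an identity-component statement; but this costs nothing, since you then back it up with the Euler/connectedness argument giving the character equality itself, and the paper's own proof likewise establishes speciality only on $G^0_D$.
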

\begin{proof} Let $\xi\in \Der(-\log h)_0$ and let $\lambda_\xi=\iota_{\xi}(\a)=
  \iota_{\xi}\iota_E\vol.$ Since $\alpha$ generates $\Omega^{n-1}(\log\, h)$ and $\lambda_\xi$ has weight zero,
  $d'\lambda_\xi=c\a$ for some scalar $c$.  By the previous lemma, the same is true for $d\lambda_\xi$. Since
  $dh\w\a=\vol$, it follows that $dh\w d\lambda_\xi=c\vol$.  Now $dh\w
  d\lambda_\xi=-d(dh\w \lambda_\xi)=d\iota_{\xi}(\vol)=L_{\xi}(\vol).$ An easy calculation
  shows that $L_{\xi}(\vol)=\operatorname{trace}(A)\vol$, where $A$ is the $n\times n$
  matrix such that $A\cdot x=\xi(x)$. Hence
  \begin{displaymath}d\lambda_\xi=0\quad\iff\quad \operatorname{trace}(A)=0.\end{displaymath} Thus
  $d\lambda_\xi=0$ for all $\xi\in\Der(-\log D)$ if and only if
  $\operatorname{trace}(A)=0$ for all matrices $A\in \ker d\chi_h$, i.e. if and only if
  $\ker d\chi_h  \subseteq \ker d\det$. Since both kernels have codimension $1$, the inclusion holds if
  and only if equality holds, and this is equivalent to $\chi_h$ being a power of
  $\det$. On the other hand, regarding $G^0_D$ as a subgroup of $\Gl_n(\CC)$, both $\det$ and $\chi_h$ are polynomials
of degree $n$, so they must be equal.
  \end{proof}

If $D$ is a linear free divisor with reductive group $\GD$ and reduced homogeneous equation
$h$ then
by Mather's lemma (\cite[lemma 3.1]{Math4})
the fibre $D_t:=h^{-1}(t)$, $t\neq 0$,
is a single orbit of the group $\ker(\chi_h)$.
It follows that $D_t$ is a finite quotient of $\ker(\chi_h)$
since $\dim(D_t)=\dim(\ker(\chi_h))$ and the action is algebraic.
Hence $D_t$ has cohomology isomorphic to $H^*(\ker(\chi_h),\CC)$. Now $\ker(\chi_h)$ is reductive -
its Lie algebra $\gg_h$ has the same semi-simple part as $\gg_D$, and a centre
one dimension smaller than that of $\gg_D$. Thus, $\ker(\chi_h)$ has a
compact $n-1$-dimensional Lie group $K_h$ as deformation retract.
Poincar\'e duality for $K_h$ implies a duality on the cohomology of
$\ker(\chi_h)$, and this duality carries over to
$H^*(D_t;\CC)$.  How is this reflected in the cohomology of the complex
$\O^\bu(\log h)$ of relative logarithmic forms (in order to simplify notations,
we write $\O^\bu$ for the spaces of global sections of algebraic differential forms)? Notice
that evidently $H^0(\O^\bu(\log h))=\CC[h]$, since the kernel of
$d_h$ consists precisely of functions constant along the fibres
of $h$. It is considerably less obvious that $H^{n-1}(\O^\bu(\log h))$
should be isomorphic to $\CC[h]$, for this cohomology group is naturally
a quotient, rather than a subspace, of $\CC[V]$.  We prove it (in Theorem \ref{nbi}
below) by showing that
thanks to the reductiveness of $\GD$, it follows from a classical theorem of
Hochschild and Serre (\cite[Theorem 10]{hs}) on the cohomology of Lie algebras.
From Theorem \ref{nbi} we then deduce that every reductive linear free
divisor is special.

We write $\O^\bu(\log h)_m$ for the graded part of $\O^\bu(\log h)$ of weight $m$.
\begin{theorem}\label{nbi}
Let $D\subset\CC^n$ be a reductive linear free divisor with homogeneous equation $h$.
There is a natural graded isomorphism
  \begin{displaymath}
    H^*(\O^\bu(\log h)_0)\otimes_{\CCC}\CC[h]\to H^*(\O^\bu(\log
    h)).
  \end{displaymath}
In particular, $H^*(\O^\bu(\log h))$ is a free $\CC[h]$-module.
\end{theorem}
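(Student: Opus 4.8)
The plan is to recognise the relative logarithmic de Rham complex as the Chevalley--Eilenberg complex computing the Lie algebra cohomology of $\gg_h$ with coefficients in the $\gg_h$-module $\CC[V]$, and then to evaluate that cohomology weight by weight, the finite-dimensional weight pieces being handled by the theorem of Hochschild and Serre. Once this is in place, freeness over $\CC[h]$ will fall out of the fact that multiplication by $h$ is the only source of invariants.

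First I would set up the algebraic identification. Since $\Der(-\log D)$ is a free $\OO_{\CC^n}$-module of rank $n$ generated by the weight-zero fields spanning $\gg_D$, a basis $\xi_1,\dots,\xi_n$ of $\gg_D$ together with the dual coframe $\omega_1,\dots,\omega_n\in\O^1(\log D)$ identifies $\O^\bu(\log D)$ with $\bigwedge^\bu\gg_D^\vee\otimes_\CC\CC[V]$. Because each $\omega_k(\xi_j)$ is constant one gets $d\omega_k(\xi_i,\xi_j)=-\omega_k([\xi_i,\xi_j])$, while $df(\xi_k)=\xi_k\cdot f$; thus the de Rham differential is exactly the Chevalley--Eilenberg differential of $\gg_D$ acting on $\CC[V]$, its exterior part reproducing the structure constants and its coefficient part the module action. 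As $E$ is central in $\gg_D$, as $\gg_D=\gg_h\oplus\CC E$, and as $dh/h$ is (up to the factor $n$) the coframe element dual to $E$, the quotient defining $\O^\bu(\log h)$ is precisely $\bigwedge^\bu\gg_D^\vee\big/\,E^\vee\w\bigwedge^{\bu-1}\gg_D^\vee\cong\bigwedge^\bu\gg_h^\vee$, and the differential descends to the Chevalley--Eilenberg differential of the subalgebra $\gg_h$ (the $E^\vee$-components, including the Euler term $E\cdot f=\operatorname{weight}(f)\,f$, being killed). Hence $H^*(\O^\bu(\log h))\cong H^*(\gg_h,\CC[V])$, compatibly with the weight grading, and in weight zero $H^*(\O^\bu(\log h)_0)\cong H^*(\gg_h,\CC)$.

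Next I would exploit the grading. The fields of $\gg_h$ have weight zero, so $\CC[V]=\bigoplus_m\CC[V]_m$ is a decomposition into finite-dimensional $\gg_h$-submodules and $H^*(\gg_h,\CC[V])=\bigoplus_m H^*(\gg_h,\CC[V]_m)$. Since $\GD$ is reductive, so is $\ker(\chi_h)$ and hence $\gg_h$, and its centre acts semisimply on each algebraic module $\CC[V]_m$. The Hochschild--Serre theorem then gives, for every $m$, a natural isomorphism $H^*(\gg_h,\CC[V]_m)\cong H^*(\gg_h,\CC)\otimes_\CC(\CC[V]_m)^{\gg_h}$. The invariants are read off from the geometry: $\ker(\chi_h)$ acts transitively on each fibre $D_t$ ($t\neq0$), so $\CC[V]^{\gg_h}$ consists of the functions constant along the fibres of $h$, namely $\CC[h]$; consequently $(\CC[V]_m)^{\gg_h}=\CC\,h^{m/n}$ when $n\mid m$ and vanishes otherwise.

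Assembling the pieces yields $H^*(\O^\bu(\log h))\cong\bigoplus_{k\ge0}H^*(\gg_h,\CC)\otimes\CC\,h^k=H^*(\gg_h,\CC)\otimes_\CC\CC[h]\cong H^*(\O^\bu(\log h)_0)\otimes_\CC\CC[h]$, the isomorphism being induced by multiplication by powers of $h$; it is manifestly graded (as $h$ has weight $n$) and exhibits the freeness. The main obstacle I expect lies in the reductive but non-semisimple structure of $\gg_h$: one must use its centre both to annihilate every isotypic component of $\CC[V]_m$ with nontrivial central character --- via the standard fact that a central element acts trivially on Lie algebra cohomology yet by a nonzero scalar on such a component --- and to produce the correct exterior-algebra factor in $H^*(\gg_h,\CC)$. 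The secondary point requiring care is the clean verification that the de Rham differential descends to the Chevalley--Eilenberg differential of $\gg_h$ after the quotient by $dh/h\w(-)$, which is where the centrality of $E$ and the splitting $\gg_D=\gg_h\oplus\CC E$ are essential.
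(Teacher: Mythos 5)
Your proposal is correct and follows the same overall strategy as the paper's proof: identify each weight piece $\O^\bu(\log h)_m$ with the Chevalley--Eilenberg complex of $\gg_h$ with coefficients in $\CC[V]_m=\Sym^m(V^\vee)$, apply the Hochschild--Serre theorem (legitimate because $\gg_h$ is reductive and these modules, being algebraic representations of the reductive group $\ker(\chi_h)$, are completely reducible), thereby reducing to $H^*(\gg_h;\CC)\otimes(\CC[V]_m)^{\gg_h}$, show the invariants are spanned by powers of $h$, and sum over $m$. Two differences are worth recording. First, you verify the identification of the differentials explicitly, via the coframe dual to a basis of $\gg_D$ together with the observation that centrality of $E$ and the splitting $\gg_D=\gg_h\oplus\CC E$ make the ideal generated by $E^\vee$ a differential ideal; the paper asserts this identification ``by inspection of the formulae'', citing \cite{gmns} for the weight-zero case, so your version supplies detail the paper omits. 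The genuine divergence is in computing $(\CC[V]_m)^{\gg_h}$: you argue geometrically, from transitivity of $\ker(\chi_h)$ on the Milnor fibres $D_t$, that invariant polynomials are constant on the fibres of $h$ and hence lie in $\CC[h]$, whereas the paper argues algebraically via the Sato--Kimura uniqueness of a semi-invariant with a given character (the quotient of two such is an absolute invariant, hence constant by density of the open orbit). The algebraic route is safer on one subtle point: a $\gg_h$-invariant polynomial is a priori constant only on orbits of the \emph{identity component} of $\ker(\chi_h)$, while $D_t$ is a single orbit of the full, possibly disconnected, group $\ker(\chi_h)$; to pass from local constancy on $D_t$ to constancy one needs $D_t$ connected, which does hold here because $h$ is reduced (meridians around the components of $D$ map under $h$ to generators of $\pi_1(\CC^*)$, so the fibres of $h$ on $V\ssm D$ are connected). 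With that point supplied --- or by substituting the semi-invariant uniqueness argument --- your proof is complete, and it has the merit of making the appearance of $\CC[h]$ geometrically transparent.
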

\begin{proof} The complex $\O^\bu(\log h)_m$ is naturally identified with the complex
  $\bigwedge^\bu(\gg_h;\Sym^m(V^\vee))$ whose cohomology is the Lie algebra cohomology of
  $\gg_h$ with coefficients in the representation $\Sym^m(V^\vee)$, denoted by
  $H^*(\gg_h;\Sym^m(V^\vee))$. This is because we have the following equality  of vector spaces,
  \begin{displaymath}
    \O^k(\log h)_m=\O^k(\log h)_0\otimes_\CC \Sym^m(V^\vee)
    =\left(\bigwedge^k\gg_h^\vee\right)\otimes_{\CCC} \Sym^m\left(V^\vee\right)
    =\bigwedge^k\left(\gg_h^\vee \otimes_{\CCC}\Sym^m\left(V^\vee\right)\right),
  \end{displaymath}
  and inspection of the formulae for the differentials in the two complexes shows that
  they are the same under this identification. Notice that this identification for the case
    $m=0$ was already made in \cite{gmns}, where it gave a proof of the global logarithmic
    comparison theorem for reductive linear free divisors.  The representation of
  $\gg_h$ in $\Sym^k(V^\vee)$ is semi-simple (completely reducible), since $\gg_h$ is a
  reductive Lie algebra and every finite dimensional complex representation of a reductive
  Lie algebra is semisimple.  By a classical theorem of Hochschild and Serre
  (\cite[Theorem 10]{hs}), if $M$ is a semi-simple representation of a finite-dimensional
  complex reductive Lie algebra $\gg$, then
  \begin{displaymath}
    H^*(\gg;M)=H^*(\gg;M^0),
  \end{displaymath}
  where $M^0$ is the
  submodule of $M$ on which $\gg$ acts trivially.  Evidently we have
  $H^*(\gg;M^0)=H^*(\gg;\CC)\otimes_{\CCC} M^0$.  Now
  \begin{displaymath}
    \Sym^m(V^\vee)^0=\left\{
      \begin{array}{ll}\CC\cdot h^\ell&\mbox{if}\ m=\ell n\\
        0&\mbox{otherwise}
      \end{array}
    \right.
  \end{displaymath}
  by the uniqueness, up to scalar multiple, of the semi-invariant with a given character
  on a prehomogeneous vector space (see the proof of lemma \ref{f*h} below for a more detailed
  explanation). It follows
  that
  \begin{align*}
    H^k(\O^\bu(\log h))&=\bigoplus_mH^k(\O^\bu(\log h)_m)
    =\bigoplus_mH^k(\gg_h;\Sym^m(V^\vee))\\
    &=\bigoplus_\ell H^*(\gg_h;\CC)\otimes_{\CCC}\CC
    \cdot h^\ell=H^*(\O^\bu(\log h)_0)\otimes_{\CCC}\CC[h].
  \end{align*}
\end{proof}
\begin{corollary} There is a $\CC[h]$-perfect pairing
  \begin{align*}
    H^k(\O^\bu(\log h))\times H^{n-k-1}(\O^\bu(\log h))&\longrightarrow
    H^{n-1}(\O^\bu(\log h)) \simeq\CC[h]\\
    ([\o_1],[\o_2])&\longmapsto[\o_1\wedge\o_2].
  \end{align*}
\end{corollary}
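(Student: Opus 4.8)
The plan is to exhibit the pairing as the cup product on the cohomology of the relative complex, and to reduce its perfectness, via Theorem~\ref{nbi}, to Poincar\'e duality on the compact form $K_h$.

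First I would check that the pairing is well defined. The exterior product makes $(\O^\bu(\log D),d)$ into a differential graded algebra, and the subspace $(dh/h)\w\O^{\bu-1}(\log D)$ is a differential graded ideal: it is closed under wedging with arbitrary forms, and $d\bigl((dh/h)\w\beta\bigr)=-(dh/h)\w d\beta$ since $d(dh/h)=0$. Hence the quotient $\O^\bu(\log h)$ inherits a differential graded algebra structure, its differential satisfies the Leibniz rule, and therefore $[\o_1]\w[\o_2]:=[\o_1\w\o_2]$ depends only on the classes $[\o_1],[\o_2]$. The product respects the weight grading and is $\CC[h]$-bilinear, because multiplication by $h$ is wedging with a weight-$n$ function and commutes with $\w$; in top degree it lands in $H^{n-1}(\O^\bu(\log h))\simeq\CC[h]$, giving the asserted $\CC[h]$-bilinear pairing.

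Next I would reduce perfectness to the weight-zero part. By Theorem~\ref{nbi}, $H^k(\O^\bu(\log h))$ and $H^{n-k-1}(\O^\bu(\log h))$ are free $\CC[h]$-modules of the form $H^\bu(\O^\bu(\log h)_0)\otimes_\CC\CC[h]$, with the factor $h^\ell$ placed in weight $\ell n$, and likewise $H^{n-1}(\O^\bu(\log h))=H^{n-1}(\O^\bu(\log h)_0)\otimes_\CC\CC[h]$ with $H^{n-1}(\O^\bu(\log h)_0)\simeq\CC$. Since the pairing is the $\CC[h]$-bilinear extension of its restriction to the weight-zero generators, i.e.\ of a $\CC$-bilinear pairing $H^k(\O^\bu(\log h)_0)\times H^{n-k-1}(\O^\bu(\log h)_0)\to H^{n-1}(\O^\bu(\log h)_0)\simeq\CC$, a routine linear-algebra argument between free modules shows that $\CC[h]$-perfectness of the former is equivalent to $\CC$-perfectness of the latter. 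Finally I would identify this weight-zero pairing with Poincar\'e duality: under the identification of Theorem~\ref{nbi}, $\O^\bu(\log h)_0=\bigwedge^\bu\gg_h^\vee$ with its exterior product, so on cohomology the weight-zero pairing is the cup product on the Lie algebra cohomology $H^*(\gg_h;\CC)$ valued in $H^{n-1}(\gg_h;\CC)\simeq\CC$. As $\gg_h$ is reductive of dimension $n-1$, $H^*(\gg_h;\CC)$ is the cohomology of the compact form $K_h$, a compact orientable $(n-1)$-manifold, and its cup-product pairing into $H^{n-1}(K_h;\CC)\simeq\CC$ is perfect by Poincar\'e duality---exactly the duality already noted before Theorem~\ref{nbi}. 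Combined with the reduction above this yields $\CC[h]$-perfectness.

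The main obstacle I anticipate is the compatibility in the last step. One must verify carefully that the wedge product of relative logarithmic forms corresponds, under the isomorphisms of Theorem~\ref{nbi} (which combine the Hochschild--Serre reduction $H^*(\gg_h;M)=H^*(\gg_h;\CC)\otimes_\CC M^0$ with the multiplication $\Sym^{an}(V^\vee)^0\otimes_\CC\Sym^{bn}(V^\vee)^0\to\Sym^{(a+b)n}(V^\vee)^0$, $h^a\otimes h^b\mapsto h^{a+b}$), to the cup product on $\bigwedge^\bu\gg_h^\vee$, and in particular that the chosen generator of $H^{n-1}(\O^\bu(\log h)_0)\simeq\CC$ is the fundamental class realizing Poincar\'e duality. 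Once this purely formal but slightly delicate matching of multiplicative structures is in place, perfectness of the pairing is immediate from Poincar\'e duality.
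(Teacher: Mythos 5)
Your proof is correct, but it transfers Poincar\'e duality into the algebraic setting by a different mechanism than the paper. The paper specializes: it restricts the complex to a Milnor fibre, using $\O^\bu(\log h)\otimes_{\CC[h]}\CC[h]/(h-t)=\O^\bu_{D_t}$, invokes the affine de Rham theorem to identify the resulting pairing with the topological cup product on $H^*(D_t)$ --- perfect by Poincar\'e duality for the compact retract $K_h$ --- and then lifts perfectness to $\CC[h]$ ``in view of Theorem~\ref{nbi}''. You instead restrict to the weight-zero subcomplex, identify it multiplicatively with the Chevalley--Eilenberg complex $\bigwedge^\bu\gg_h^\vee$, use the ring isomorphism $H^*(\gg_h;\CC)\cong H^*(K_h;\CC)$ and Poincar\'e duality for $K_h$ there, and finally extend $\CC[h]$-bilinearly. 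Both arguments bottom out in duality for $K_h$ and both need Theorem~\ref{nbi}; what differs is the reduction. Your route stays entirely algebraic (no de Rham comparison theorem) and makes explicit the step the paper leaves terse: perfectness of the specialized pairings at $t\neq 0$ alone would \emph{not} imply $\CC[h]$-perfectness (consider $(a,b)\mapsto hab$ on $\CC[h]\times\CC[h]$), so the grading of Theorem~\ref{nbi} must be used, exactly as your weight-zero reduction does. The price you pay is the compatibility check you flag: that the identification of $\O^\bu(\log h)_0$ with $\bigwedge^\bu\gg_h^\vee$ (evaluation on the fundamental vector fields) and the Hochschild--Serre isomorphism respect wedge products; the paper's route sidesteps this because the de Rham comparison is already a ring map. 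One small simplification: your worry about whether the generator of $H^{n-1}(\O^\bu(\log h)_0)\simeq\CC$ is ``the fundamental class'' is immaterial, since perfectness of a pairing valued in a one-dimensional space does not depend on the choice of generator.
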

\begin{proof}
  The pairing is evidently well defined.  Poincar\'e Duality on the compact
  deformation retract $K_h$ of $\ker(\chi_h)$ gives rise to a perfect pairing
  \begin{displaymath}
    H^k(D_t)\times H^{n-k-1}(D_t)\to H^{n-1}(D_t),
  \end{displaymath}
  Now
  \begin{displaymath}
    H^k(D_t)=H^k\left(\O^\bu(\log h) \otimes_{\CC[h]}\CC[h]/(h-t)\right)
  \end{displaymath}
  by the affine de Rham theorem,
  since $\O^k(\log h)/(h-t)=\O^k_{D_t}$.  In view of theorem \ref{nbi}, the perfect pairing on
  $H^*(D_t)$ lifts to a $\CC[h]$-perfect pairing on $H^*\bigl(\G(V,\O^\bu(\log
  h))\bigr)$.
\end{proof}
\begin{corollary}\label{cor2}
  A linear free divisor with reductive group is special.
\end{corollary}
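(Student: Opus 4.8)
The goal is Corollary \ref{cor2}: every linear free divisor with reductive group $\GD$ is special. The plan is to extract the required statement from Theorem \ref{nbi} together with the characterisation of specialness provided by Lemma \ref{dl0}. Recall that Lemma \ref{dl0} tells us that $D$ is special precisely when $d\lambda_\xi=0$ for all $\xi\in\Der(-\log h)_0$, where $\lambda_\xi=\iota_\xi\a$ is a weight-zero relative $(n-1)$-form. So the entire question reduces to showing that these forms are $d'$-closed in the relative logarithmic complex.

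First I would observe that, since $\lambda_\xi$ has weight zero and lies in $\O^{n-1}(\log h)'$, its class lives in the top cohomology $H^{n-1}(\O^\bu(\log h)_0)$. The decisive input is the computation of this top graded cohomology group in Theorem \ref{nbi}. Indeed, from the isomorphism $H^*(\O^\bu(\log h)_0)\cong H^*(\gg_h;\CC)$ and the fact that $\ker(\chi_h)$ retracts onto a compact $(n-1)$-dimensional Lie group $K_h$, the group $H^{n-1}(\O^\bu(\log h)_0)$ is one-dimensional. More to the point, the weight-zero relative de Rham cohomology in degree $n-1$ equals $H^{n-1}(D_t)$ for the Milnor fibre, and by Poincar\'e duality on $K_h$ this is again one-dimensional, spanned by the class of the volume-type form. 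I would then argue that the natural candidate generating this line is already represented by an exact-at-the-right-place form, so that any weight-zero $\lambda_\xi$ must have $d'\lambda_\xi$ landing in a cohomologically trivial position.

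The cleanest route, which I expect the authors take, is dimensional: by Lemma \ref{dl0} the map $\xi\mapsto d\lambda_\xi$ sends $\Der(-\log h)_0\cong\gg_h$ into $\CC\cdot\a$ (since $d\lambda_\xi=c\,\a$), and vanishing of this map for \emph{all} $\xi$ is equivalent to $\mathrm{trace}(A)=0$ on $\ker d\chi_h$, hence to $\ker d\chi_h\subseteq\ker d\det$. The reductiveness enters through Theorem \ref{nbi}: the isomorphism $H^{n-1}(\O^\bu(\log h)_0)\cong\CC$ forces the coboundary $d'$ restricted to weight-zero $(n-2)$-forms to have image of the expected codimension, so that the weight-zero $(n-1)$-forms modulo exact ones is exactly one-dimensional, matching the dimension count that makes $\ker d\chi_h=\ker d\det$ rather than a proper inclusion. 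Combined with the observation at the end of Lemma \ref{dl0} that $\det$ and $\chi_h$ are both degree-$n$ polynomials on $G^0_D\subset\Gl_n(\CC)$, equality of the kernels of their differentials upgrades to equality $\chi_h=\det$, which is precisely the definition of special.

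The main obstacle, as I see it, is making the passage from the \emph{cohomological} statement of Theorem \ref{nbi} to the \emph{pointwise vanishing} $d\lambda_\xi=0$ for every individual $\xi$, rather than merely on cohomology classes. One must be careful that Lemma \ref{dl0} already reduced specialness to an honest equality $d\lambda_\xi=0$ (not just triviality of a class), so one needs the freeness/one-dimensionality in Theorem \ref{nbi} to pin down the relevant trace functional completely. I expect this is handled by noting that $\Der(-\log h)$ is generated by weight-zero fields and that $\a$ itself is $d'$-closed (being, up to scalar, the generator $n\,dx_1\w\cd\w dx_n/dh$ of the top relative forms, whose exterior derivative wedges trivially), so that the scalar $c$ attached to each $\xi$ is forced to be zero once Theorem \ref{nbi} guarantees there is no room in top cohomology for a nonzero multiple of $\a$ to be a nontrivial coboundary — equivalently, that the trace character coincides with $d\det$.
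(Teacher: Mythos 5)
Your proposal is correct and takes essentially the same route as the paper: reduce specialness to $d\lambda_\xi=0$ via Lemma \ref{dl0}, recall from its proof that $d\lambda_\xi=c\,\a$ for a scalar $c$, and then use Theorem \ref{nbi} (the identification of $H^*(\O^\bu(\log h)_0)$ with the cohomology of $\gg_h$, equivalently of the compact retract of $\ker\chi_h$) together with Poincar\'e duality to conclude that the class of $\a$ in $H^{n-1}(\O^\bu(\log h)_0)$ is non-zero, so that no non-zero multiple of $\a$ can be a coboundary and hence $c=0$. The only slip is notational: $\lambda_\xi=\iota_\xi\a$ is an $(n-2)$-form, not an element of $\O^{n-1}(\log h)'$, but this does not affect the argument, which rests entirely on the identity $d\lambda_\xi=c\,\a$ and the non-vanishing of $[\a]$.
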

\begin{proof}
  By what was said before, $H^{n-1}(\ker(\chi_h),\CC)$ is isomorphic to $H^{n-1}(\Omega^\bu(\log\,h)_0)$, so
  Poincar\'e duality for $\ker\chi_h$ implies
  that the class of $\a$ in $H^{n-1}(\O^\bu(\log h)_0)$ is non-zero. Recall from the proof of lemma
  \ref{dl0} that if
  $\lambda=\iota_\xi\alpha=\iota_\xi\iota_E(\vol/h)$ with $\xi\in\Der(-\log h)_0$, then
  $d\lambda=c\alpha$ in $\O^\bu(\log h)_0$ for some $c\in\CC$. As the class of $\alpha$ is non-zero,
  this forces $d\lambda$ to be zero. The conclusion follows from \ref{dl0}.
\end{proof}
\section{Functions on Linear Free Divisors and their Milnor Fibrations}
\label{sec:funcOnLFD}
\subsection{Right-left stable functions on divisors}
\label{subsec:LeftRight}

Let $h$ and $f$ be homogenous polynomials in $n$ variables, where
the degree of $h$ is $n$.
As before, we write $D=h^{-1}(0)$ and $D_t=h^{-1}(t)$ for $t\neq 0$.
However, we do not assume in this subsection that $D$ is a free divisor.
We call $f_{|D_t}$ a {\it Morse function}
if all its critical points are isolated and non-degenerate and all its critical values are
distinct.
\begin{lemma}\label{mond:lst1}
$f_{|D_t}$ is a Morse function
if and only if
$\CC[D_t]/J_{f}$ is generated over $\CC$ by the powers of $f$.
\end{lemma}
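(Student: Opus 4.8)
The plan is to work throughout with the finite‑dimensional picture of the Jacobian algebra. Write $A=\CC[D_t]/J_f$ and let $\overline f\in A$ denote the class of $f$; the support of $A$ is the critical locus of $f_{|D_t}$, and ``generated over $\CC$ by the powers of $f$'' means precisely that $A=\CC[\overline f]$, the $\CC$-subalgebra generated by $\overline f$. I would prove both implications by analysing the Artinian structure of $A$. For the direction $\Rightarrow$: if $f_{|D_t}$ is Morse it has finitely many critical points $p_1,\dots,p_r$, and at each of them non-degeneracy together with the holomorphic Morse lemma ($f_{|D_t}=c_i+\sum y_j^2$, so $J_f=\mathfrak m_{p_i}$) gives local ring $\CC$. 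Hence $A\cong\CC^r$, with $\overline f$ corresponding to $(f(p_1),\dots,f(p_r))$, whose entries are pairwise distinct because the critical values are. A Vandermonde argument then shows that $1,\overline f,\dots,\overline f^{\,r-1}$ already span $\CC^r$, so $A=\CC[\overline f]$.

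For the direction $\Leftarrow$, assume $A=\CC[\overline f]$. First I would establish that the critical points are isolated. Since $A$ is a quotient of $\CC[T]$ via $T\mapsto\overline f$, $\operatorname{Spec}A$ is a closed subscheme of $\mathbb A^1$, hence either finite or all of $\mathbb A^1$. To rule out the second possibility I use that on the reduced critical locus $C\subseteq D_t$ the differential of $f_{|D_t}$ vanishes, so $f$, and therefore $\overline f$, is constant on each irreducible component of $C$; were $\operatorname{Spec}A=\mathbb A^1$, the element $\overline f$ would be a coordinate on the irreducible curve $C$, a contradiction. Thus $A$ is Artinian, $A=\prod_{i=1}^r A_i$ with each $A_i$ local, maximal ideal $\mathfrak n_i$ and residue value $c_i=f(p_i)$.

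It remains to deduce distinct critical values and non-degeneracy. The surjection $A\twoheadrightarrow A_{\mathrm{red}}=\CC^r$, combined with $A=\CC[\overline f]$, forces $\{(q(c_1),\dots,q(c_r)):q\in\CC[T]\}=\CC^r$, so the $c_i$ are pairwise distinct. Distinctness lets me write the idempotents $e_i$ as polynomials in $\overline f$ (Lagrange interpolation), whence $A_i=\CC[n_i]$ with $n_i:=\overline f-c_i\in\mathfrak n_i$; in particular $\mathfrak n_i=(n_i)$. The key point is then that, since $p_i$ is a critical point of $f_{|D_t}$, the class of $f-c_i$ in the cotangent space $\mathfrak m_{p_i}/\mathfrak m_{p_i}^2$, namely $d(f_{|D_t})_{p_i}$, vanishes, so $f-c_i\in\mathfrak m_{p_i}^2$ and hence $n_i\in\mathfrak n_i^2$. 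Therefore $\mathfrak n_i=(n_i)\subseteq\mathfrak n_i^2$, and Nakayama's lemma gives $\mathfrak n_i=0$, i.e. $A_i=\CC$: every critical point is non-degenerate. Isolated critical points, non-degeneracy, and separated critical values together say that $f_{|D_t}$ is Morse.

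I expect the main obstacle to be the non-degeneracy step. The observation that $\overline f-c_i$ lands in $\mathfrak n_i^2$ is exactly what upgrades the purely algebraic ``monogenic'' hypothesis $A_i=\CC[n_i]$ to reducedness through Nakayama; without it, generation by powers of $f$ would be perfectly compatible with curvilinear (e.g. $A_k$) critical points, and the lemma would be false. The finiteness step is the other place requiring care, as it rests on the geometric input that $f$ is constant along each component of the critical locus; by comparison the forward implication is a routine interpolation argument.
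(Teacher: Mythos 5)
Your proof is correct, and its forward half is essentially the paper's own argument: decompose $\CC[D_t]/J_f$ as the product of its local rings, which are all $\CC$ by non-degeneracy, and use the Vandermonde determinant on the distinct critical values. Your converse, however, takes a genuinely different route. The paper argues by singularity theory: if the powers of $f$ span each local ring $\OO_{D_t,p_j}/J_f$, then an $\s R_e$-versal deformation of the germ of $f_{|D_t}$ at $p_j$ can be chosen of the form $F(x,u)=g_u\cc f(x)$; such deformations cannot split the critical point, so it must be non-degenerate, and the critical values are then pairwise distinct because otherwise the matrix $[f^{k-1}(p_j)]$ would have two equal columns and could not have rank $N$. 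You replace this with elementary commutative algebra (writing $A=\CC[D_t]/J_f$): constancy of $f$ on the irreducible components of the critical locus excludes a positive-dimensional critical set; surjectivity of $\CC[\overline f]\to A_{\mathrm{red}}$ forces the critical values to be distinct; and the key observation that the critical-point condition on the smooth variety $D_t$ puts $f-c_i$ into $\mathfrak m_{p_i}^2$, combined with $\mathfrak n_i=(n_i)$ and Nakayama, collapses each local ring to $\CC$, i.e. gives Milnor number one, hence non-degeneracy. Your route buys two things: it avoids Mather's versality theory, which the paper invokes without proof, and it explicitly establishes isolatedness of the critical points --- a step the paper glosses over, since its converse opens with ``if $1,f,\ldots,f^N$ span'', a finite-dimensionality assumption that is not literally contained in the lemma's hypothesis, whereas your curve argument derives it. What the paper's argument buys is brevity and coherence with the deformation-theoretic viewpoint it uses immediately afterwards (Proposition \ref{mond:lst2} on right-left stability). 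Note finally that both proofs tacitly use smoothness of $D_t$ (which holds for $t\neq 0$ by Euler's relation, $h$ being homogeneous): you need it for the implication $d(f_{|D_t})_{p_i}=0\Rightarrow f-c_i\in\mathfrak m_{p_i}^2$, the paper for the Morse lemma and for versality.
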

\begin{proof} Suppose $f_{|D_t}$ is a Morse function, with critical points
$p_1,\ld,\,p_N$.
Since any quotient of $\CC[D_t]$ with finite support is a product of its
localisations, we have
\begin{displaymath}
  \CC[D_t]/J_f\simeq \oplus_{j=1}^N\OO_{D_t,p_j}/J_f=\oplus_{j=1}^N\CC_{p_j}.
\end{displaymath}
The image in $\oplus_{k=j}^N\CC_{p_j}$ of $f^k$ is the vector
$\bigl(f(p_1)^k,\ld\,, f(p_N)^k\bigr)$. These vectors, for $0\leq k\leq N-1$, make up
the Vandermonde determinant, which is non-zero because the $f(p_j)$ are pairwise
distinct. Hence they span $\oplus_{j=1}^N\CC_{p_j}.$

Conversely, if $1,f,\ld,f^N$ span $\CC[D_t]/J_f$ then the powers of
$f$ span each local ring $\OO_{D_t,p_j}/J_f$. This implies that there is
an $\s R_e$-versal deformation of the singularity of $f_{|D_t}$ at $p_j$
of the form $F(x,u)=g_u\cc f(x)$. In particular, the critical point of $f_{|D_t}$
at $p_j$ does not split, and so must be non-degenerate. Now choose a minimal $R$
such that $1,f,\ld,f^{R-1}$ span $\CC[D_t]/J_f$. Since all the critical points
are non-degenerate, projection of $\CC[D_t]/J_f$ to the product of its
local rings shows that the matrix $M:=[f^{k-1}(p_j)]_{1\leq k\leq R,1\leq j\leq N}$
has rank $N$. But if
$f(p_i)=f(p_j)$ for some $i\neq j$ then $M$ has
two equal columns. So the critical values of $f$ must be pairwise distinct.
\end{proof}
If $(X,x)$ is a germ of complex variety, an analytic map-germ
$f:(X,x)\to (\CC^p,0)$ is {\it right-left stable} if every germ of deformation
$F:(X\times\CC,(x,0))\to (\CC^p\times\CC,(0,0))$ can be trivialised by suitable
parametrised families of bi-analytic diffeomorphisms of source and target.
A necessary and sufficient condition for right-left stability is infinitesimal
right-left stability: $df(\theta_{X,0})+f^{-1}(\theta_{\CC^p,0})=\theta(f)$,
where $\theta_{X,0}$ is the space of germs of vector fields on $X$ and
$\theta(f)=f^*\theta_{\CC^p,0}$ is the space of infinitesimal deformations of $f$ (freely
generated over $\OO_{X,0}$ by $\p/\p y_1,\ld\,,\p/\p y_p$,
where $y_1,\ld\,, y_p$ are
coordinates on $\CC^p$). When $p=1$, $\theta(f)\simeq \OO_{X,0}$ and
$f^{-1}(\theta_{\CC,0})\simeq \CC\{f\}.$ Note also that if $X\subset\CC^n$ then
$\theta_{X,0}$ is the image of $\Der(-\log X)_0$ under the restriction of $\theta_{\CC^n,0}$
to $(X,0)$.
\begin{proposition}\label{mond:lst2}
If $f_{|D}:D\to\CC$ has a right-left stable singularity at $0$
then $f_{|D_t}$ is a Morse function, or non-singular.
\end{proposition}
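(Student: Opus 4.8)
The plan is to verify the algebraic criterion of Lemma \ref{mond:lst1}: I will show that right-left stability of $f_{|D}$ at $0$ forces $\CC[D_t]/J_f$ to be spanned over $\CC$ by the powers of $f$, for every $t\neq 0$. Write $J_f\subset\CC[V]$ for the homogeneous ideal generated by $\{\xi\cdot f:\xi\in\Der(-\log h)\}$ (its image in $\CC[D_t]=\CC[V]/(h-t)$ is the ideal appearing in Lemma \ref{mond:lst1}), and set $S:=\CC[V]/J_f$, a graded $\CC$-algebra carrying the two homogeneous elements $f$ and $h$, of degrees $\deg f$ and $n$. Then $\CC[D_t]/J_f=S/(h-t)S$, and the whole question becomes one of understanding this graded algebra together with the elements $f$ and $h$.

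First I would unwind infinitesimal right-left stability. For $p=1$ the criterion recalled above reads $df(\theta_{D,0})+\CC\{f\}=\OO_{D,0}$, where $\theta_{D,0}$ is the image of $\Der(-\log D)_0$. Using the decomposition $\Der(-\log D)=\OO_{\CC^n}\cdot E\oplus\Der(-\log h)$ (which only requires $h$ to be reduced, hence is available here even though $D$ need not be free) together with $E\cdot f=(\deg f)f$, the subspace $df(\theta_{D,0})$ is the ideal of $\OO_{D,0}$ generated by $f$ and by the germs $\xi\cdot f$, $\xi\in\Der(-\log h)$; in particular $f$ itself lies in this ideal. Hence all positive powers of $f$ are absorbed, and the stability equation collapses to $\OO_{D,0}/\bigl(J_f+(f)\bigr)\OO_{D,0}=\CC$. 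In other words, the localisation of $S/(f,h)S$ at the origin equals $\CC$ (the degenerate case in which this localisation vanishes is exactly the one where $f_{|D}$ is already non-singular at $0$, and there $f_{|D_t}$ is non-singular as well).

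The heart of the argument is then to propagate this single-point statement to the whole general fibre, and this is where homogeneity does the work. Since $J_f$, $(f)$ and $(h)$ are homogeneous, the support of $S/(f,h)S$ is a cone; a cone in which the origin is an isolated reduced point must be the origin alone, so the previous step upgrades to the global equality $S/(f,h)S=\CC$. Because $f$ and $h$ have positive degree, this says precisely that the irrelevant maximal ideal of $S$ equals $(f,h)S$, so that by the graded Nakayama lemma $S$ is generated as a $\CC$-algebra by $f$ and $h$, i.e. $S=\CC[f,h]$. Consequently $\CC[D_t]/J_f=S/(h-t)S$ is obtained from $\CC[f,h]$ by the substitution $h=t$, so every element becomes a polynomial in $f$ and the algebra is spanned over $\CC$ by the powers of $f$. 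Lemma \ref{mond:lst1} then yields that $f_{|D_t}$ is Morse when this algebra is a non-zero finite-dimensional vector space, and non-singular when it vanishes (which happens exactly when $h$ is nilpotent in $S$, i.e. when the relative critical locus is concentrated at the origin).

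I expect the main obstacle to be exactly the transfer step in the previous paragraph: the hypothesis is intrinsically a statement about the germ of $f_{|D}$ at the single point $0$ of the special fibre $D=D_0$, whereas the conclusion is global on the smooth fibres $D_t$. Making this rigorous requires keeping careful track of the grading, so that the local computation at $0$ genuinely controls $S$ globally through the cone/graded-Nakayama argument, and also checking that the identification of $df(\theta_{D,0})$ with the ideal generated by $f$ and the $\xi\cdot f$ remains valid at the level of germs when $D$ is merely a (possibly non-free) reduced divisor; in particular one must know that the global logarithmic vector fields generate $\Der(-\log h)$ at $0$, which is where the absence of the freeness hypothesis has to be handled with some care.
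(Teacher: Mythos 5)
Your globalization strategy is genuinely different from the paper's, and in an appealing way: where the paper stays in the analytic local ring (proving its equation \eqref{teq}, $df(\Der(-\log h))+\CC\{f\}=\OO_{\CC^n,0}$, by an induction on powers of $\mm$, then deducing that $L_f=V(J_f)$ is a line or a point and spreading out via coherence of $h_*\bigl(\OO_{\CC^n}/J_f\bigr)$ and homogeneity), you pass at once to the graded ring $S=\CC[V]/J_f$ and use the cone structure plus graded Nakayama. However, as written there is a genuine gap at the last step. What you actually prove is that $S$ is generated as a $\CC$-algebra by the \emph{two} elements $f$ and $h$, hence that $\CC[D_t]/J_f=S/(h-t)S$ is spanned by the powers of $f$. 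But the Morse conclusion needs in addition that this algebra be finite-dimensional (equivalently, that the critical points of $f_{|D_t}$ be isolated) -- exactly the qualifier you yourself insert when citing Lemma \ref{mond:lst1} (``a non-zero finite-dimensional vector space'') and never verify. Generation of $S$ by two elements only bounds $\dim V(J_f)$ by $2$; it is a priori compatible with $S\cong\CC[f,h]$ being a polynomial ring, in which case $S/(h-t)S\cong\CC[f]$ is infinite-dimensional, is still spanned by powers of $f$, and corresponds to a whole curve of critical points on $D_t$, i.e.\ to a function which is certainly not Morse. So your dichotomy (non-zero finite-dimensional versus zero) has a third branch that your argument does not exclude; nor can Lemma \ref{mond:lst1} be invoked there as a black box, since the paper's proof of its converse direction starts from a \emph{finite} spanning set $1,f,\ldots,f^N$. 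This is precisely the information the paper keeps and your graded Nakayama step discards: \eqref{teq} says that the local ring $\OO_{\CC^n,0}/J_f$ itself is spanned by powers of $f$, which forces $\dim V(J_f)\le 1$ and hence the required finiteness.

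The good news is that the gap can be closed inside your own framework by the same observation the paper uses. After disposing of $\deg h=1$ separately (there $D$ is smooth and Mather's theorem applies, as in the paper), one has $h\in\mm^2$, so your graded identity $\mm=J_f+(f)+(h)$ becomes $\mm=J_f+(f)+\mm^2$; in $S$ this reads $S_+=fS+S_+^2$ (writing $S_+$ for the irrelevant ideal), and the same degree-by-degree Nakayama argument now shows that $f$ \emph{alone} generates $S$ as a $\CC$-algebra. Then $S$ itself is spanned by powers of $f$, the image of $h$ in $S$ is $c f^k$ with $k\deg f=\deg h$ (or zero, by homogeneity), and $S/(h-t)S$ is a quotient of $\CC[u]/(cu^k-t)$, hence finite-dimensional, when $c\neq 0$, and is zero when $c=0$ -- your ``$h$ nilpotent in $S$'' case, which is the paper's case $L_f\subseteq D$ (note $L_f$ may then be a line inside $D$, not only the origin, so ``concentrated at the origin'' is not the right description). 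In the first branch Lemma \ref{mond:lst1} now applies legitimately and gives Morse; in the second, $V(J_f)\cap D_t=\emptyset$ gives non-singularity. (Alternatively one can patch the original argument by observing that $f$ is locally constant on its own critical locus, so the infinite-dimensional branch $\CC[D_t]/J_f\cong\CC[f]$ can never occur; but some such argument must be supplied.) Your two deferred worries are unproblematic: the analytic stalk of $\Der(-\log h)$ is indeed generated by the polynomial module, because kernels commute with the flat base change $\CC[V]_{\mm}\to\OO_{\CC^n,0}$, and both degenerate cases do land in the non-singular alternative by the same take-degree-zero-parts homogeneity argument you use for the cone step.
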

\begin{proof} $f_{|D}$ has a stable singularity at $0$ if
and only if the image in $\OO_{D,0}$ of
$df(\Der(-\log D))+\CC\{f\}$ is all of $\OO_{D,0}$.
Write $\mathfrak{m}:=\mathfrak{m}_{\CC^n,0}$.
Since $df(\chi_E)=f$, stability implies
\begin{equation}\label{seq}
df(\Der(-\log h))+(f)+(h)\supseteq \mathfrak{m}.
\end{equation}
This is an equality unless $D\cong D'\times\CC$ and $\partial_{t_0}
f \neq 0$, where $t_0$ is a coordinate on the factor $\CC$. In
this case $f$ is non-singular on all the fibres $h^{-1}(t)$ for $t\neq
0$. So we may assume that \eqref{seq} is an equality.

If $\deg(h)=1$, then $D$ is non-singular and the result follows
immediately from Mather's theorem that infinitesimal stability implies
stability. Hence we may also assume that $\mbox{deg}(h) > 1$.
It follows that
$$\mm/\bigl(df(\Der(-\log h))+\mm^2\bigr)=\langle f\rangle_{\CC}.$$
It follows that for all $k\in \NN$,
$$\bigl(\mm^k+df(\Der(-\log h))\bigr)/\bigl(df(\Der(-\log h))+\mm^{k+1}\bigr)=
\langle f^k\rangle_{\CC}$$
and thus that
\beq\label{teq}df(\Der(-\log h))+\CC\{f\}=\OO_{\CC^n,0}.\eeq
Now \eqref{seq} implies that $V(df(\Der(-\log h)))$ is either a line or
a point. Call it $L_f$.
If $L_f\not\subseteq D$, then
the sheaf $h_*\bigl(\OO_{\CC^n}/df(\Der(-\log h))\bigr)$ is
finite over $\OO_{\CC}$,
and (\ref{teq}) shows that its stalk at $0$ is generated by $1,f,\ld,f^R$ for
some finite $R$. Hence these same sections generate
$h_*\bigl(\OO_{\CC^n}/df(\Der(-\log h))\bigr)_t$ for
$t$ near $0$, and therefore for all $t$, by homogeneity. As
$h_*\bigl(\OO_{\CC^n}/df(\Der(-\log h))\bigr)_t=
\CC[D_t]/J_f$, by
\ref{mond:lst1} $f_{|D_t}$ is a Morse function.

On the other hand, if $L_f\subset D$, then $f:D_t\to\CC$ is non-singular.
\end{proof}
We do not know of any example where the latter alternative holds.
\begin{proposition} If $f:(D,0)\to(\CC,0)$ is right-left stable then $f$ is linear and
$\Der(-\log D)_0$ must contain at least $n$ linearly independent weight zero vector fields.
In particular, the only free divisors supporting right-left stable functions are linear
free divisors.
\end{proposition}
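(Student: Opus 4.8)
The plan is to exploit that infinitesimal right-left stability, for a target of dimension one, reads $df(\theta_{D,0})+\CC\{f\}=\OO_{D,0}$, so that the graded local algebra $Q:=\OO_{D,0}/df(\Der(-\log D))$ is spanned over $\CC$ by the powers $1,f,f^2,\dots$ of $f$. Since $h$, $f$ and $D$ are homogeneous, $Q$ is graded and $f^k$ sits in degree $k\deg f$; hence $Q_m=0$ whenever $\deg f\nmid m$. I would read this off in degree one, where $Q_1=V^\vee/[df(\Der(-\log D))]_1$ because $\deg h=n\ge2$ (the case $n=1$ being trivial).

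First I would prove $f$ is linear. The degree-one part of $df(\Der(-\log D))$ is cut out by the weight-$(1-\deg f)$ part of $\Der(-\log D)$: writing $\eta f=\sum_j(\text{coeff}_j\,\eta)\,\partial_{x_j}f$ and noting $\partial_{x_j}f$ is homogeneous of degree $\deg f-1$, a degree-one contribution requires the relevant coefficients of $\eta$ to have degree $2-\deg f$. No vector field has weight $\le-2$ (its coefficients would have negative degree), and the weight $-1$ part consists of the constant logarithmic fields, a space of dimension $<n$ because $h$ is nonconstant. Thus if $\deg f\ge2$ then $[df(\Der(-\log D))]_1$ has dimension $<n=\dim V^\vee$, forcing $Q_1\neq0$ although $\deg f\nmid1$; this contradiction gives $\deg f=1$.

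Next, with $f$ linear, the same degree-one computation shows $[df(\Der(-\log D))]_1=df(\Der(-\log D)_0)$ (the weight-zero part), and $Q_1=0$ becomes the surjectivity of $\Der(-\log D)_0\to V^\vee$, $\chi\mapsto\chi\cdot f$. Surjectivity onto an $n$-dimensional space immediately yields $\dim_\CC\Der(-\log D)_0\ge n$, which is the second assertion.

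For the final clause I would add that $D$ is free and argue that it cannot be a cylinder $D'\times\CC$; this exclusion is the step I expect to be the main obstacle, being the only place where the \emph{singular} (rather than merely stable) nature of $f$ is used. If $D$ were such a cylinder then $\partial_{t_0}\in\Der(-\log D)$. Since $f$ is already known to be linear, $\partial_{t_0}f$ is a constant: if it is nonzero, $f$ is a submersion and has no singularity, contradicting the hypothesis; if it is zero, then $f$ is independent of $t_0$ and $df(\Der(-\log D))=\OO_{D,0}\cdot df'(\Der(-\log D'))$ is an extended ideal, so $Q$ contains every power of $t_0$ and cannot be spanned by powers of $f$, contradicting stability. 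Hence $D$ is not a cylinder and $\Der(-\log D)$ has no vector field of negative weight. Being free, homogeneous and of degree $n$, it has a homogeneous basis with exponents $w_1,\dots,w_n$ satisfying $\sum_i(w_i+1)=n$ (comparing degrees in Saito's criterion $\det=u\,h$, $u$ a unit), so $\sum_i w_i=0$; non-negative exponents summing to zero all vanish, whence $\Der(-\log D)$ has a basis of weight-zero vector fields and $D$ is a linear free divisor.
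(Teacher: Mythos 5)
Your proof is sound in strategy and is in fact more complete than the paper's own argument, which is a single sentence: from the stability relation \eqref{seq}, $df(\Der(-\log h))+(f)+(h)\supseteq \mathfrak{m}$, the linearity of $f$ and the existence of $n-1$ independent weight-zero fields in $\Der(-\log h)$ are declared obvious, and these together with the Euler field give $n$; no argument at all is offered for the final ``in particular'' clause. Your degree-one bookkeeping is precisely what makes the paper's ``obvious'' step rigorous, and your treatment of the last clause fills a genuine gap: having $n$ linearly independent weight-zero logarithmic fields does \emph{not} by itself make a free divisor linear free. Indeed, for the cylinder $D=\{xy(x+y)=0\}\subset\CC^3$ every weight-zero logarithmic field has the form $c\,(x\partial_x+y\partial_y)+\ell(x,y,z)\partial_z$, so the weight-zero part is $4$-dimensional, yet any three such fields have identically vanishing determinant, and $D$ is not linear free; since $f=z$ is infinitesimally right-left stable on this $D$ (as $df(\partial_z)=1$), this example also shows that the hypothesis must be read, as you do, to include that $f$ is genuinely singular at $0$ (cf.\ the phrase ``right-left stable singularity'' in the preceding proposition) --- otherwise the statement itself fails. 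Your cylinder exclusion followed by the homogeneous-basis/weight-sum argument via Saito's criterion is the correct way to finish, and it is exactly the step the paper leaves unproved.

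Two repairs are needed. First, in your second step the assertion ``$Q_1=0$'' does not follow from divisibility once $\deg f=1$: spanning of $Q$ by powers of $f$ only gives $Q_1\subseteq\CC\cdot[f]$. You must add that $f=df(E)\in df(\Der(-\log D))$, because the Euler field $E$ is logarithmic and of weight zero for homogeneous $h$ --- this is where the paper invokes $E$ --- and then $Q_1=0$ and your surjectivity argument goes through. (Your claim $[df(\Der(-\log D))]_1=df(\Der(-\log D)_0)$ is correct, but deserves the remark that a product $g\cdot\eta(f)$, with $g$ linear and $\eta$ a constant logarithmic field, equals $(g\eta)(f)$ with $g\eta$ again a weight-zero logarithmic field, so such contributions are absorbed.) Second, in the cylinder sub-case $\partial_{t_0}f=0$, your claim that the powers of $t_0$ survive in $Q$ tacitly assumes $df'(\Der(-\log D'))\subseteq\mathfrak{m}$, which can fail if $D'$ has a further cylinder direction on which $f$ is nonconstant; this is fixed by splitting off the maximal cylinder factor $D=D''\times\CC^k$ at the outset, after which either $f$ is nonconstant on the $\CC^k$ factor (and $f$ is a stratified submersion, excluded), or $f$ is pulled back from $D''$, no logarithmic field of $D''$ is constant, the ideal lies in $\mathfrak{m}$, and the powers of the cylinder coordinates do survive.
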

\begin{proof}
From equation \eqref{seq} it is obvious that $f$ must be linear, and that
$Der(-\log h)$ must contain
at least $n-1$ independent weight zero vector fields;
these, together
with the Euler field, make $n$ in $\Der(-\log D)$.
\end{proof}
We note that the hypothesis of the proposition is fulfilled by a generic linear
function on the hypersurface defined by $\sum_jx_j^2=0$, which is not a free divisor if $n\geq 3$.


\subsection{$\s R_D$- and $\s R_h$-equivalence of functions on divisors}
\label{subsec:REquivalences}

Let $D\subset\CC^n$ be a weighted homogeneous free divisor and let $h$ be its weighted
homogeneous equation. We consider functions $f:\CC^n\to\CC$ and their restrictions to the
fibres of $h$. The natural equivalence relation to impose on functions on $D$ is $\s
R_D$-equivalence: right-equivalence with respect to the group of bianalytic
diffeomorphisms of $\CC^n$ which preserve $D$. However, as we are interested also in the
behaviour of $f$ on the fibres of $h$ over $t\neq 0$, we consider also {\it fibred
right-equivalence} with respect to the function $h:(\CC^n,0)\to\CC$. That is,
right-equivalence under the action of the group $\s R_h$ consisting of germs of bianalytic
diffeomorphisms $\pp:(\CC^n,0)\to(\CC^n,0)$ such that $h\cc\pp=h$. A standard calculation
shows that the tangent spaces to the $\s R_D$ and $\s R_h$-orbits of $f$ are equal to
$df(\Der(-\log D))$ and $df(\Der(-\log h))$ respectively. We define
\begin{align*}
  T^1_{\s R_D}f&:=\frac{\OO_{\CC^n,0}}{df(\Der(-\log D))}\\
  T^1_{\s R_h}f&:=\frac{\OO_{\CC^n,0}}{df(\Der(-\log  h))+(h)}\\
  T^1_{\s R_h/\CC}f&:=\frac{\OO_{\CC^n,0}}{df(\Der(-\log h))}
\end{align*}
and say that $f$ is $\s R_D$-finite or $\s R_h$-finite if
$\dim_{\CCC}T^1_{\s R_D}f<\infty$ or $\dim_{\CCC}T^1_{\s R_h}f<\infty$ respectively.  Note
that it is only in the definition of $T^1_{\s R_h}f$ that we explicitly restrict to the
hypersurface $D$.

We remark that a closely related notion called $_D\s K$-equivalence is studied by Damon in
\cite{legIII}.
\begin{proposition}\label{numcons} If the germ $f\in\OO_{\CC^n,0}$ is
  $\s R_h$-finite then there exist $\ve>0$ and $\eta>0$ such that for $t\in\CC$ with
  $|t|<\eta$,
  \begin{displaymath}
    \sum_{x\in D_t\cap B_{\ve}}\mu(f_{|D_t};x)=\dim_{\CC}T^1_{\s
      R_h}f.
  \end{displaymath}
  If $f$ is weighted homogeneous (with respect to the same weights as $h$)
  then $\ve$ and $\eta$ may be taken to be
  infinite.
\end{proposition}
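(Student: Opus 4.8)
The plan is to prove the statement as an instance of the principle of conservation of number, applied to the module
\[
M:=\OO_{\CC^n,0}/df(\Der(-\log h)),
\]
regarded as a module over $\CC\{t\}=\OO_{\CC,0}$ via $h^*$. First I would record the local geometry. Since $h$ is weighted homogeneous, the Euler relation $E\cdot h=(\deg h)h$ forces $dh$ to vanish only on $h^{-1}(0)$, so $D_t$ is smooth for $t\neq0$ and $f_{|D_t}$ is a genuine function on a manifold. Away from $D$ the module $\Der(-\log D)$ coincides with the full module of vector fields, so at any $x\in D_t$ ($t\neq0$) its evaluations span $T_x\CC^n$; combined with the Euler splitting $\Der(-\log D)=\OO E\oplus\Der(-\log h)$ and the fact that $E(x)\notin T_xD_t$ while $\Der(-\log h)(x)\subseteq T_xD_t$, this shows that $\Der(-\log h)$ evaluates onto $T_xD_t$. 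Hence $\supp M\cap D_t$ is exactly the critical set of $f_{|D_t}$, and for such $x$
\[
M_x/(h-t)M_x=\OO_{\CC^n,x}/\bigl(df(\Der(-\log h))+(h-t)\bigr)=\OO_{D_t,x}/J(f_{|D_t}),
\]
whose dimension is $\mu(f_{|D_t};x)$. Likewise $M/(h)M=T^1_{\s R_h}f$. Thus the proposition reduces to showing that $\dim_\CC M/(h-t)M$ is independent of $t$.

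The heart of the argument, and the step I expect to be the main obstacle, is to prove that $M$ is Cohen--Macaulay of dimension one, equivalently that it is $\CC\{t\}$-flat. Because $D$ is a free divisor, $\Der(-\log D)$ is free of rank $n$, and the Euler-field splitting recalled above exhibits $\Der(-\log h)$ as a free summand of rank $n-1$; fixing a basis $\xi_1,\dots,\xi_{n-1}$, the ideal $I:=df(\Der(-\log h))$ is generated by the $n-1$ elements $\xi_i\cdot f$. The hypothesis that $f$ is $\s R_h$-finite means exactly that $\OO_{\CC^n,0}/(I+(h))$ is finite-dimensional, i.e.\ $V(I)\cap D=\{0\}$ as germs, whence $\dim\OO_{\CC^n,0}/I=1$. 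Since $\OO_{\CC^n,0}$ is regular, hence Cohen--Macaulay, of dimension $n$, and $I$ is generated by $n-1=\operatorname{codim}V(I)$ elements, the sequence $\xi_1\cdot f,\dots,\xi_{n-1}\cdot f$ is regular, $I$ is a complete intersection, and $M=\OO_{\CC^n,0}/I$ is Cohen--Macaulay of dimension one. It is precisely here that the free-divisor hypothesis is indispensable, through the rank of $\Der(-\log h)$.

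With $M$ Cohen--Macaulay of dimension one, the rest is formal. As $M/(h)M$ has finite length, $h$ is a system of parameters on $M$, hence a non-zerodivisor on the Cohen--Macaulay module $M$; by the Weierstrass finiteness theorem $M$ is a finite $\CC\{t\}$-module, and being finite and torsion-free over the discrete valuation ring $\CC\{t\}$ it is free. Choosing representatives $B_\ve$ and $\Delta_\eta$ so that $\supp M\cap B_\ve$ is finite over $\Delta_\eta$ and meets $D$ only at the origin, freeness of $h_*M$ says its fibre dimension over $t\in\Delta_\eta$, namely $\sum_{x\in h^{-1}(t)\cap B_\ve}\dim_\CC M_x/(h-t)M_x$, is constant. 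At $t=0$ this value is $\dim_\CC M/(h)M=\dim_\CC T^1_{\s R_h}f$, while for $0<|t|<\eta$ it equals $\sum_{x\in D_t\cap B_\ve}\mu(f_{|D_t};x)$ by the identifications above. This proves the first assertion.

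Finally, for the weighted-homogeneous case I would run the same computation globally and graded. If $f$ is weighted homogeneous for the weights of $h$, one may take the $\xi_i$ weighted homogeneous, so that $I$ is a graded ideal of $\CC[x_1,\dots,x_n]$ and $M$ a graded $\CC[h]$-module. Then $V(I)$ and $D$ are cones meeting only at the origin, so $M$ is a finite graded-free $\CC[t]$-module and no critical point can escape to infinity. Its fibre dimension is therefore the same for every $t$, and over all of $\CC^n$ the count is constant for all $t\neq0$; hence one may take $\ve=\eta=\infty$.
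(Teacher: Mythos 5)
Your proof is correct and takes essentially the same route as the paper's: both establish that $T^1_{\s R_h/\CC}f=\OO_{\CC^n,0}/df(\Der(-\log h))$ is a one-dimensional complete intersection, hence Cohen--Macaulay, using the regular sequence $df(\xi_1),\ldots,df(\xi_{n-1})$ furnished by a free basis of $\Der(-\log h)$, then deduce finiteness and local freeness over $\OO_{\CC,0}$ via $h$, and conclude by conservation of number. You merely spell out the steps the paper leaves implicit (the identification of the fibres with $\sum_x\mu(f_{|D_t};x)$ and with $T^1_{\s R_h}f$, and the graded global argument for the weighted homogeneous case).
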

\begin{proof}
  Let $\xi_1,\ld\xi_{n-1}$ be an $\OO_{\CC^n,0}$-basis for $\Der(-\log h)$. The $\s
  R_h$-finiteness of $f$ implies that the functions $df(\xi_1),\ldots, df(\xi_{n-1})$ form a regular
  sequence in $\OO_{\CC^n,0}$, so that $\Tr f$ is a complete intersection ring, and in
  particular Cohen-Macaulay, of dimension 1.  The condition of $\s R_h$-finiteness is
  equivalent to $T^1_{\s R_h/\CC}f$ being finite over $\OO_{\CC,0}$.  It follows that it
  is locally free over $\OO_{\CC,0}$.
\end{proof}

Now suppose that $D\subset \CC^n=V$ is a linear free divisor.
We denote the dual space $\Hom_{\CC}(V,\CC)$
by $V^\vee$. The group $\GD$ acts on $V^\vee$ by the contragredient action $\rho^\vee$ in
which
\begin{displaymath}
  g\cdot f=f\cc\rho(g)^{-1}.
\end{displaymath}
If we write the elements of
$V^\vee\simeq\CC^n$ as column vectors, then the representation $\rho^\vee$ takes the form
$\rho^\vee(g)={^t}\rho(g)^{-1},$ and the infinitesimal action takes the form
$d\rho^\vee(A)=-{^t}A.$ Let $A_1,\ld,A_n$ be a basis for $\gg_D$. Then the vector fields
\begin{equation}
  \label{vf}
\xi_i(x)=(\p/\p x_1, \ld, \p/\p x_n)A_i x,\quad \mbox{for}\ i=1,\ld,n
\end{equation}
form an $\OO_{\CC^n}$ basis for $\Der(-\log D)$, and the determinant of the $n\times n$
matrix of their coefficients is a non-zero scalar multiple of $h$, by Saito's
criterion. The vector fields
\begin{equation}
  \label{v*f}
\xi_i(y)=(\p/\p y_1, \ld, \p/\p y_n)({^tA}_i) y,\quad \mbox{for}\ i=1,\ld,n
\end{equation}
generate the infinitesimal action of $\gg_D$ on $V^\vee$. We denote by $h^\vee$ the
determinant of the $n\times n$ matrix of their coefficients. Its zero-locus is the
complement of the open orbit of $\GD$ on $V^\vee$ (including when the open orbit is empty).  In general $\rho^\vee$ and $\rho$ are
not equivalent representations. Indeed, it is not always the case that
$(\GD,\rho^\vee,V^\vee)$
is a prehomogeneous vector space. We describe an example where this occurs in \ref{path1}
below.

Suppose $f\in V^\vee$. Let $L_f=\supp T^1_{\s R_h/\CC}f$.  Since $\Der(-\log h)$ is
generated by weight zero vector fields, $L_f$ is a linear subspace of $V$.
\begin{proposition}
  \label{fincon}
  Let $f\in V^\vee$. Then
  \begin{enumerate}
  \item The space $L_f$ is a line transverse to $f^{-1}(0) \ \iff \ f$ is $\s R_D$-finite $\iff$ the
    $\GD$-orbit of $f$ in the representation $\rho^\vee$ is open.
  \item Suppose that $f=0$ us an equation for the tangent plane $T_pD_t$, then
    \begin{equation}
      \label{mu1}
      H(p)\neq 0\quad\implies\quad\mu(f_{|D_t};p)=1
    \end{equation}
    where $H$ is the Hessian determinant of $h$.
  \item If $f$ is $\s R_h$-finite then
    \begin{enumerate}
    \item $f$ is $\s R_D$- finite;
    \item the classes of $1, f,\ld, f^{n-1}$ form a $\CC$-basis for $T^1_{\s R_h}f$;
    \item on each Milnor fibre $D_t:=h^{-1}(t), t\neq 0$, $f$ has $n$ non-degenerate
      critical points, which form an orbit under the diagonal action of the group of
      $n$-th roots of unity on $\CC^n$.
    \end{enumerate}
  \end{enumerate}
\end{proposition}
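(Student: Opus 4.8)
The plan is to treat the three parts in turn, reducing each to linear algebra by means of the identity $df(\xi_i)=\langle {}^tA_if,\cdot\rangle$, which records that the linear form $df(\xi_i)$ on $V$ is represented by the covector ${}^tA_if\in V^\vee$. Throughout I take the basis $A_1,\ldots,A_n$ of $\gg_D$ with $A_n$ the identity, so that $\xi_n=E$ and $\xi_1,\ldots,\xi_{n-1}$ generate $\Der(-\log h)$.

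For (i), since $df(E)=f$, the ideal $df(\Der(-\log D))$ is generated by the $n$ linear forms $df(\xi_1),\ldots,df(\xi_{n-1}),f$, whose covectors are ${}^tA_1f,\ldots,{}^tA_{n-1}f,f$, while $L_f=V(df(\xi_1),\ldots,df(\xi_{n-1}))$ is the annihilator of $W:=\operatorname{span}\{{}^tA_1f,\ldots,{}^tA_{n-1}f\}$. I would then read off three facts at once: $f$ is $\s R_D$-finite iff those $n$ covectors span $V^\vee$; $L_f\cap f^{-1}(0)=(W+\CC f)^\perp$, so it equals $\{0\}$ iff the same $n$ covectors span, in which case $\dim L_f=1$ and $L_f$ is transverse to $f^{-1}(0)$; and the tangent space to the $\GD$-orbit of $f$ is $\operatorname{span}\{-{}^tA_if\}_{i=1}^{n}=W+\CC f$, so the orbit is open iff those covectors span. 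This yields the two equivalences simultaneously.

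For (ii), the hypothesis says $p$ is a critical point of $f_{|D_t}$, i.e. $f=\lambda\,dh_p$ with $\lambda\neq0$. Writing $D_t$ near $p$ as a graph over $T:=\ker dh_p$ shows that the Hessian of $f_{|D_t}$ at $p$ is $-\lambda\,\operatorname{Hess}(h)_p$ restricted to $T$, so I must show this restriction is non-degenerate when $H(p)\neq0$. Here I would use homogeneity: Euler's relation gives $\langle\nabla h(p),p\rangle=nt\neq0$ and, upon differentiating, $\operatorname{Hess}(h)_p\,p=(n-1)\nabla h(p)$. If the restriction were degenerate there would be $0\neq u\in T$ with $\operatorname{Hess}(h)_p\,u\in\CC\,\nabla h(p)$; invertibility of $\operatorname{Hess}(h)_p$ together with $\operatorname{Hess}(h)_p^{-1}\nabla h(p)=\tfrac{1}{n-1}p$ then forces $u\in\CC p$, contradicting $\langle\nabla h(p),u\rangle=0\neq\langle\nabla h(p),p\rangle$. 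This step, which ties the full Hessian determinant to the restricted one through the homogeneity relations, is the one genuinely delicate point of the proposition.

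For (iii), assume $f$ is $\s R_h$-finite, so $L_f\cap D=\{0\}$; since $L_f$ is a linear cone and $D$ a hypersurface this forces $\dim L_f=1$. Choosing a direction vector $q$ of $L_f$ we have $h(q)\neq0$, and if $f(q)$ vanished then $f=df_q$ would kill $\xi_1(q),\ldots,\xi_{n-1}(q)$ (as $q\in L_f$) and $E(q)=q$, hence a basis of $\CC^n$, because the coefficient determinant of $\xi_1,\ldots,\xi_n$ is a nonzero multiple of $h$ at $q\notin D$; this would give $f=0$. Thus $f(q)\neq0$, $L_f$ is transverse to $f^{-1}(0)$, and (a) follows from part (i). Since $df(\Der(-\log h))$ is generated by the linear forms $df(\xi_i)$ cutting out the line $L_f$, the ring $\Tr f=\OO/df(\Der(-\log h))$ is identified with $\CC\{s\}$ on $L_f$; under it $h|_{L_f}=h(q)s^n$ and $f|_{L_f}=f(q)s$, so $\Th f\cong\CC\{s\}/(s^n)$ and $1,f,\ldots,f^{n-1}$ map to $1,f(q)s,\ldots,(f(q)s)^{n-1}$, a basis, proving (b) and giving $\dim_\CC\Th f=n$. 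For (c), Proposition \ref{numcons} then gives $\sum_{x\in D_t}\mu(f_{|D_t};x)=n$; the diagonal $n$-th roots of unity preserve each $D_t$ (as $\deg h=n$) and permute the critical points of $f_{|D_t}$ (since $f\circ(\zeta\,\cdot)=\zeta f$), freely because a fixed point of a nontrivial scaling is $0\notin D_t$. Hence critical points occur in orbits of size exactly $n$, and writing $m$ for their number, $n=\sum\mu\geq\#\{\text{critical points}\}=mn$ forces $m=1$, so $f_{|D_t}$ has precisely $n$ critical points, each with $\mu=1$, forming a single orbit.
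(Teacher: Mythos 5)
Your proof is correct, and while parts (i) and (iii)(b) follow essentially the paper's own route, the remaining parts genuinely diverge. For (i), the paper likewise reduces everything to the equality $df(\Der(-\log D))=df(\Der(-\log h))+(f)$ and the identification of the tangent space of the $\GD$-orbit of $f$ with $df(\Der(-\log D))\subset V^\vee$, and (iii)(b) is in both cases the restriction of $h$ and $f$ to the line $L_f$. In (ii), the paper parametrises $D_t$ by $\pp$, derives the identity $\left[\p^2(f\cc\pp)/\p u_i\p u_j\right]={}^t\!\left[\p\pp_s/\p u_i\right]\left[\p^2h/\p x_s\p x_t\right]\left[\p\pp_t/\p u_j\right]$ up to nonzero scalar, and then simply asserts that $H(p)\neq 0$ gives non-degeneracy; since a non-degenerate quadratic form can restrict degenerately to a hyperplane, your extra step --- the differentiated Euler relation $\operatorname{Hess}(h)_p\,p=(n-1)\nabla h(p)$ together with $\langle\nabla h(p),p\rangle=nh(p)=nt\neq 0$, showing the radical of the restriction to $T_pD_t$ is trivial --- is precisely the justification the paper leaves implicit, and you correctly flagged it as the delicate point. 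In (iii)(a), the paper argues by contradiction that if $\s R_D$-finiteness failed then $L_f\subset\{f=0\}$ and the inclusion $\ker d_pf\subset\ker d_ph$ at points of $L_f$ would make $h$ constant along $L_f$; your version --- evaluating at $q\in L_f\setminus\{0\}$ and using Saito's criterion to see that $\xi_1(q),\ldots,\xi_{n-1}(q),q$ is a basis of $\CC^n$ because $h(q)\neq 0$, so that $f(q)=0$ would force $f=0$ --- is cleaner and avoids having to justify the kernel inclusion at non-generic points of $L_f$. In (iii)(c), the paper identifies the critical locus of $f_{|D_t}$ with $L_f\cap D_t$, equates Milnor numbers with intersection multiplicities of the line with $D_t$, and applies the fundamental theorem of algebra to $(h-t)_{|L_f}$, thereby exhibiting the $n$ points explicitly as a single orbit on the line; you instead combine Proposition \ref{numcons} (the global count $\sum_x\mu(f_{|D_t};x)=\dim_\CC T^1_{\s R_h}f=n$, valid on all of $D_t$ by homogeneity) with the freeness of the diagonal action of the $n$-th roots of unity to force exactly one orbit of $n$ non-degenerate points. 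That counting argument is valid and not circular, since Proposition \ref{numcons} precedes the present proposition and does not depend on it, though it is less explicit than the paper's about where the critical points actually sit.
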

\begin{proof}
  \noindent{(i)} The first equivalence holds simply because
  \begin{displaymath}
    df(\Der(-\log D))=df(\Der(-\log h))+(df(E))=df(\Der(-\log
    h))+(f).
  \end{displaymath} For the second equivalence, observe that the tangent space
  to the $\GD$-orbit of $f$ is naturally identified with $df(\Der(-\log D))\subset
  \mathfrak{m}_{V,0}/\mathfrak{m}^2_{V,0}=V^\vee$. For given $A\in \gg_D$, we have
  \begin{equation}
    \label{icga}
    \left(\frac{d}{dt}\exp(tA)\cdot
      f\right)_{|t=0}(x) =df\left(\frac{d}{dt}\exp(-tA)\cdot x\right)_{|t=0}\ =\
    -df(\xi_A)
  \end{equation}
  where $\xi_A$ is the vector field on $V$ arising from $A$ under the the infinitesimal
  action of $\rho$. Because $\Der(-\log D)$ is generated by vector fields of weight zero,
  $df(\Der(-\log D))$ is generated by linear forms, and so $f$ is $\s R_D$-finite if and
  only if $df(\Der(-\log D))\supset \mathfrak{m}_{V,0}$.

  \smallskip
  \noindent{(ii)} is well-known. To prove it, parametrise $D_t$ around $p$ by
  $\pp:(\CC^{n-1},0)\to (D_t,p)$. Then because $f$ is linear we have
  \begin{equation}
    \label{eq1}
    \frac{\p^2(f\cc\pp)}{\p u_i\p u_j}=\sum_s\frac{\p f}{\p x_s} \frac{\p^2\pp_s}{\p u_i\p
      u_j}.
  \end{equation}
  Because $h\cc\pp$ is constant, we find that
  \begin{equation}
    \label{eq2}
    0=\sum_{s,t}\frac{\p^2h}{\p x_s\p x_t}\frac{\p \pp_s}{\p
      u_i} \frac{\p \pp_t}{\p u_j}+\sum_s\frac{\p h}{\p x_s} \frac{\p^2\pp_s}{\p u_i\p
      u_j}.
  \end{equation}
  Because $T_pD_t=\{f=0\}$, $d_ph$ is a scalar multiple of $d_pf=f$.  From this, equations
  (\ref{eq1}) and (\ref{eq2}) give an equality (up to non-zero scalar multiple) of
  $n\times n$ matrices,
  \begin{equation}
    \label{eq3}
    \left[\frac{\p^2 (f\cc\pp)}{\p u_i\p u_j}\right] ={^t\left[\frac{\p \pp_s}{\p
          u_i}\right]} \left[\frac{\p^2h}{\p x_s\p x_t}\cc\pp\right] \left[\frac{\p
        \pp_t}{\p u_j}\right]
  \end{equation}
  It follows that if $H\neq 0$ then the restriction of $f$ to $D_t$ has a non-degenerate
  critical point at $p$.

  \smallskip
  \noindent{(iii)(a)} If $f$ is $\s R_h$-finite then $L_f$ must be a line intersecting $D$
  only at $0$. If $\s R_D$ finiteness of $f$ fails, then $L_f\subset\{f=0\}$, and $f$ is
  constant along $L_f$. But at all points $p\in L_f$, $\ker d_pf\subset\ker d_ph$, so $h$
  also is constant along $L_f$.

  \smallskip
  \noindent{(iii)(b)} As $L_f$ is a line and $\OO_{V}/df(\Der(-\log h))=\OO_{L_f}$,
  $h_{|L_f}$ is necessarily the $n$'th power of a generator of $m_{L_f,0}$.  It follows
  that $T^1_{\s R_h}f$ is generated by the first $n$ non-negative powers of any linear
  form whose zero locus is transverse to the line $L_f$.

  \smallskip
  \noindent{(iii)(c)} Since $f$ is $\s R_D$ finite, $L_f$ is a line transverse to
  $\{f=0\}$. The critical points of $f_{|D_t}$ are those points $p\in D_t$ where
  $T_pD_t=\{f=0\}$; thus $L_f\,\tr D_t$ at each critical point.  In $\OO_{D_t}$, the
  ideals $df(\Der(-\log h))$ and $J_{f_{|D_t}}$ coincide. Thus the intersection number of
  $L_f$ with $D_t$ at $p$, which we already know is equal to 1, is also equal to the
  Milnor number of $f_{|D_t}$ at $p$. The fact that there are $n$ critical points,
  counting multiplicity, is just the fundamental theorem of algebra, applied to the
  single-variable polynomial $(h-t)_{|L_f}$. The fact that these $n$ points form an orbit
  under the diagonal action of the group $\GG_n$ of $n$-th roots of unity is a consequence
  simply of the fact that $h$ is $\GG_n$-invariant and $L_f$ is preserved by the action.
\end{proof}

If $D$ is a linear free divisor, there may be no $\s R_h$-finite linear forms, or even no
$\s R_D$ finite linear forms, as the following examples shows.
\begin{example}
  \label{path1}
  Let $D$ be the free divisor in the space $V$ of $2\times 5$ complex matrices defined by
  the vanishing of the product of the $2\times 2$ minors $m_{12},m_{13},m_{23}, m_{34}$
  and $m_{35}$. Then $D$ is a linear free divisor (\cite[Example 5.7(2)]{gmns}), but
  $\rho^\vee$ has no open orbit in $V^\vee$: it is easily checked that $h^\vee=0.$ It
  follows by \ref{fincon} (i) that no linear function $f\in V^\vee$ is $\s R_D$-finite, and
  so by \ref{fincon} (iii) that none is $R_h$-finite.
\end{example}
In Example \ref{path1}, the group $\GD$ is not reductive.  Results of Sato and Kimura in
\cite[\S 4]{sk} show that if $\GD$ is reductive then $(\GD,\rho^\vee,V^\vee)$ is
prehomogeneous, so that almost all $f\in V^\vee$ are $\s R_D$-finite, and moreover imply
that all $f$ in the open orbit in $V^\vee$ are $\s R_h$-finite.  We briefly review their
results. As we will see, the complement of the open orbit in $V^\vee$ is a divisor whose equation,
in suitable coordinates $x$ on $V$, and dual coordinates $y$ on $V^\vee$, is of the form
is of the form $h^\vee=\overline{h(\bar y)}$. From now on we will
denote the function $y\mapsto \overline{h(\bar y)}$ by $h^*(y)$.
The coordinates in question are chosen as follows: as $\GD$
is reductive, it has a Zariski dense compact subgroup $K$. In suitable coordinates on
$V=\CC^n$ the representation $\rho$ places $K$ inside $U(n)$. Call such a coordinate
system {\it unitary}.  From this it follows that if $f$ is any rational semi-invariant on
$V$ with associated character $\chi$ then the function $f^*:V^\vee\to\CC$ defined by
$f^*(y)=\overline{f(\bar y)}$ is also a semi-invariant {\it for the representation of
$K$} with associated
character $\bar{\chi}$, which
is equal to $\chi^{-1}$ since $\chi(K)\subset S^1$ by compactness.
Note that $f^*$ cannot be the zero polynomial. As $K$ is
Zariski-dense in $\GD$, the rational equality
\begin{displaymath}
  f^*(\rho^\vee(g)y)=\frac{1}{\chi(g)}f^*(y)
\end{displaymath}
holds for
all $g\in \GD$.
\begin{proposition} \label{propUnitaryCoord}
  Let $D\subset\CC^n$ be a linear free divisor with equation $h$.  If
  $\GD$ is reductive then
  \begin{enumerate}
  \item The tuple $(\GD,\rho^\vee,V^\vee)$ is a prehomogeneous vector space.
  \item $D^\vee$, the complement of the open orbit in $V^\vee$, has equation $h^*$, with
    respect to dual unitary coordinates on $V^\vee$.
  \item $D^\vee$ is a linear free divisor.
  \end{enumerate}
\end{proposition}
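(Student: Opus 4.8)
The plan is to establish the three assertions in order and to deduce (iii) formally from (i) and (ii). Part (i) amounts precisely to the non-vanishing of the Jacobian determinant $h^\vee$ introduced in subsection \ref{subsec:REquivalences}: if $h^\vee\not\equiv 0$ then the vector fields ${}^tA_i y$ are generically independent, $\rho^\vee$ has a dense orbit $\Omega^\vee$, and its complement is $D^\vee=\{h^\vee=0\}$. This is the one genuinely non-formal point, and it is exactly where reductivity enters---Example \ref{path1} exhibits a non-reductive $\GD$ for which $h^\vee\equiv 0$. I would deduce it from the theorem of Sato and Kimura \cite[\S 4]{sk} that the contragredient of a reductive prehomogeneous representation is again prehomogeneous; conceptually the reason is that the logarithmic gradient $\operatorname{grad}\log h\colon V\ssm D\to V^\vee$ is $\GD$-equivariant (a chain-rule computation gives $(\operatorname{grad}\log h)(\rho(g)x)=\rho^\vee(g)(\operatorname{grad}\log h)(x)$), so it maps the single open orbit $V\ssm D$ onto one orbit of $\rho^\vee$, which reductivity forces to be open. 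Granting (i), $h^\vee$ is then honestly homogeneous of degree $n$, being an $n\times n$ determinant of linear forms.

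For (ii) I would build on the facts assembled immediately before the proposition: in dual unitary coordinates $h^*(y)=\overline{h(\bar y)}$ is a nonzero polynomial satisfying $h^*(\rho^\vee(g)y)=\chi_h(g)^{-1}h^*(y)$ for all $g\in\GD$. The first step is to observe that $h^*$ is reduced of degree $n$: factoring the reduced equation $h=\prod_i h_i$ into distinct irreducibles (Saito's criterion, subsection \ref{subsec:RedLFDDefEx}), each $h_i^*(y)=\overline{h_i(\bar y)}$ is again irreducible and the $h_i^*$ are pairwise distinct, so $h^*=\prod_i h_i^*$. Being a semi-invariant, $h^*$ has $\GD$-invariant and proper zero set, which therefore misses the dense orbit $\Omega^\vee$ and lies in $D^\vee=\{h^\vee=0\}$. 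Since $h^*$ is reduced, each of its irreducible factors divides $h^\vee$, whence $h^*\mid h^\vee$; comparing degrees ($\deg h^*=n=\deg h^\vee$) yields $h^\vee=c\,h^*$ with $c\in\CC^*$, so $D^\vee$ has reduced equation $h^*$. An alternative that ties back to the earlier theory is to match characters directly: a short computation using that the reductive algebra $\gg_D$ is unimodular gives $\chi_{h^\vee}=(\det\rho)^{-1}$, while $\chi_{h^*}=\chi_h^{-1}$, and these coincide because $D$ is special by Corollary \ref{cor2}; the Sato--Kimura uniqueness of semi-invariants then gives $h^\vee=c\,h^*$ as well.

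Part (iii) is then immediate from the characterisation recalled in subsection \ref{subsec:RedLFDDefEx}: the reduced discriminant of a prehomogeneous vector space is a linear free divisor exactly when the dimension of the group, the dimension of the space and the degree of the discriminant all agree. For $(\GD,\rho^\vee,V^\vee)$ these are $\dim\GD=n$, $\dim V^\vee=n$ and $\deg D^\vee=\deg h^*=n$, so $D^\vee$ is a linear free divisor. I expect the main obstacle to be step (i): once the open orbit in $V^\vee$ is in hand, the remainder is a divisibility-and-degree bookkeeping argument together with an appeal to the classification criterion, whereas the existence of that open orbit is the substantive input and is precisely what encodes the reductivity hypothesis.
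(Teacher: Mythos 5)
Your proof is correct, but it takes a genuinely different route from the paper's. The paper proves (i) and (ii) simultaneously by a single coordinate computation: reductivity gives a Zariski-dense compact subgroup $K\subset\GD$, and in unitary coordinates one may take the basis $A_1,\ldots,A_n$ of $\gg_D$ inside $\mathfrak{u}_n$, so that ${}^tA_i=-\bar{A}_i$; the coefficient determinant $h^\vee$ of the dual fields (\ref{v*f}) is then equal (up to a nonzero scalar) to $h^*$, which is visibly nonzero since it is just $h$ with conjugated coefficients, and this one identity yields the open orbit, the equation of $D^\vee$, and the reduced degree-$n$ determinant that Saito's criterion needs for (iii). You instead import (i) wholesale from Sato--Kimura and recover (ii) formally --- reducedness of $h^*$, $\GD$-invariance of its zero set, Nullstellensatz divisibility into $h^\vee$ and a degree count (your character-matching alternative via unimodularity, Corollary \ref{cor2} and uniqueness of semi-invariants is equally sound) --- and then (iii) via the dimension/degree characterisation recalled in subsection \ref{subsec:RedLFDDefEx}, which is Saito's criterion one level up. Both routes are valid; be aware, though, that your ``conceptual'' gradient-map argument for (i) is only a heuristic as written: equivariance shows that $\operatorname{grad}\log h$ maps $V\ssm D$ onto a single orbit of $\rho^\vee$, but the openness of that orbit is not automatic --- it requires the nonvanishing of the Hessian of $h$ off $D$ (Lemma \ref{f*h}), which in this paper is itself deduced from Sato--Kimura's theory --- so in your write-up the citation, not the sketch, carries the weight of (i). What your approach buys is a clean separation of the substantive input (prehomogeneity of the dual, exactly what fails in non-reductive examples such as \ref{path1}) from its purely formal consequences; what the paper's computation buys is self-containedness and the explicit unitary-coordinate identity $h^\vee=c\,h^*$, which is precisely the form in which the result is reused later (Lemma \ref{hessian}, Lemma \ref{f*h}, Remark \ref{aserendip}).
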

\begin{proof}
  As $\CC$-basis of the Lie algebra $\gg_D$ of $\GD$ we can take a real basis of the Lie
  algebra of $K$. With respect to unitary coordinates, $\rho$ represents $K$ in $U(n)$, so
  $d\rho(\gg_D)\subset\gl_n(\CC)$ has $\CC$-basis $A_1,\ld, A_n$ such that $A_i\in
  \mathfrak u_n$, i.e.  $^tA_i=-\bar A_i$, for $i=1,\ld,n$.  It follows that the
  determinant of the matrix of coefficients of the matrix (\ref{v*f}) above is equal to
  $h^*$, and in particular is not zero. This proves (i) and (ii).

  That $D^\vee$ is free follows from Saito's criterion (\cite{saito}): the $n$ vector
  fields (\ref{v*f}) are logarithmic with respect to $D^\vee$, and $h^*$, the determinant
  of their matrix of coefficients, is not identically zero, and indeed is a reduced
  equation for $D^\vee$ because $h$ is reduced.
\end{proof}
We now prove the main result of this section. In order to make the argument clear, we
postpone some steps in the proof to lemmas \ref{hessian} and \ref{f*h} and to proposition
\ref{serendip},
which we prove immediately afterwards.
\begin{theorem}
  \label{finfin}
  If $\GD$ is reductive then $f\in V^\vee$ is $\s R_h$-finite if and only if it is $\s
  R_D$-finite. In particular, $f$ is $\s R_h$-finite if and only if $f\in  V^\vee\backslash D^\vee$.
\end{theorem}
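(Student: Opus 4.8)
The plan is to reduce both asserted equivalences to a single new implication. The implication ``$\s R_h$-finite $\Rightarrow \s R_D$-finite'' is already Proposition \ref{fincon}(iii)(a), and by Proposition \ref{fincon}(i) together with Proposition \ref{propUnitaryCoord}(ii) the property of being $\s R_D$-finite is exactly equivalent to the $\GD$-orbit of $f$ under $\rho^\vee$ being open, i.e.\ to $f\in V^\vee\setminus D^\vee$. Thus the ``in particular'' clause is automatic once the main equivalence is known, and the only content left to establish is
\[
  f\in V^\vee\setminus D^\vee \ \Longrightarrow\ f \text{ is } \s R_h\text{-finite}.
\]

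Recall that $f$ is $\s R_h$-finite precisely when $\dim_\CC T^1_{\s R_h}f<\infty$, and since $T^1_{\s R_h}f=\OO_{\CC^n,0}/\bigl(df(\Der(-\log h))+(h)\bigr)$, this holds if and only if the support of this module, namely $L_f\cap D$, reduces to $\{0\}$. Now $L_f=\supp T^1_{\s R_h/\CC}f$ is the zero set of the ideal $df(\Der(-\log h))$, and by Proposition \ref{fincon}(i) the hypothesis of $\s R_D$-finiteness makes $L_f$ a line through the origin. Since $D$ is a homogeneous cone, $L_f\cap D=\{0\}$ as soon as $L_f$ contains a single point off $D$. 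So it suffices to exhibit one nonzero $p\in L_f$ with $h(p)\neq 0$.

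To produce such a point I would use the gradient map $\operatorname{grad} h\colon V\to V^\vee$, $x\mapsto(\partial h/\partial x_1,\dots,\partial h/\partial x_n)(x)$, which is homogeneous of degree $n-1$. Differentiating the semi-invariance relation $h\circ\rho(g)=\chi_h(g)\,h$ gives $\operatorname{grad} h\circ\rho(g)=\chi_h(g)\,\rho^\vee(g)\circ\operatorname{grad} h$; since $\GD$ contains the scalars (which preserve the homogeneous $D$) the factor $\chi_h(g)$ is absorbed, so $\operatorname{grad} h$ sends the open orbit $V\setminus D$ into a single $\rho^\vee$-orbit of $V^\vee$. Granting that this map is dominant, that orbit is $n$-dimensional, hence is the open orbit $V^\vee\setminus D^\vee$. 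Then any $f\in V^\vee\setminus D^\vee$ has a preimage $p\in V\setminus D$ with $\operatorname{grad} h(p)$ proportional to $f$. Off $D$ the fields generating $\Der(-\log h)$ span the tangent space $\ker d_ph$ to the smooth fibre $D_{h(p)}$, so $df_p$ kills $\ker d_ph=\ker f$, i.e.\ $p$ is a critical point of $f_{|D_{h(p)}}$. Thus $p\in L_f$, $h(p)\neq 0$, and $L_f=\CC\cdot p$ meets $D$ only at $0$, which gives the $\s R_h$-finiteness of $f$.

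The main obstacle is exactly the dominance of $\operatorname{grad} h$ on $V\setminus D$, equivalently the non-vanishing there of the Hessian determinant $H=\det[\partial^2 h/\partial x_i\partial x_j]$ (the \emph{regularity} of the prehomogeneous vector space). I would settle this in the unitary coordinates of Proposition \ref{propUnitaryCoord}, where $h^*(y)=\overline{h(\bar y)}$ is a reduced equation for $D^\vee$. The determinant $H$ is itself a relative invariant of $(\GD,\rho)$, so by uniqueness of semi-invariants up to scalar it is either identically zero or a nonzero constant multiple of a power of $h$ (in fact $H=c\,h^{n-2}$); ruling out the degenerate alternative $H\equiv0$ is precisely the regularity, and it holds here because $h$ is a genuinely nondegenerate reduced equation of degree $n=\dim V$ with $n$-dimensional open orbit. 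With $H\neq0$ off $D$, $\operatorname{grad} h$ is a local biholomorphism on the open orbit, hence dominant, and the argument above closes. This non-vanishing of $H$ is moreover exactly what makes the critical point $p$ non-degenerate via Proposition \ref{fincon}(ii), in agreement with Proposition \ref{fincon}(iii)(c).
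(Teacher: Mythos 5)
Your reduction of the theorem to the single implication $f\in V^\vee\ssm D^\vee\Rightarrow f$ is $\s R_h$-finite is correct, and your endgame is sound: once you produce one point $p\in L_f$ with $h(p)\neq 0$, then (using \ref{fincon}(i) to know $L_f$ is a line) homogeneity gives $L_f\cap D=\{0\}$, and since $\supp T^1_{\s R_h}f=L_f\cap D$, finiteness follows. This is in fact the same route the paper takes: its proof also realises $f$ as $\nabla h(p)$ for some $p\notin D$ (via Proposition \ref{serendip}) and concludes through $L_f$; you even streamline slightly, replacing the paper's use of non-degeneracy of the critical point (\ref{fincon}(ii)) by a direct appeal to \ref{fincon}(i).

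The genuine gap sits exactly at the step you yourself flag as ``the main obstacle'': the non-vanishing of the Hessian determinant $H$ on $V\ssm D$ (equivalently, dominance of $\nabla h$, equivalently regularity of the prehomogeneous vector space). Your justification --- that it ``holds here because $h$ is a genuinely nondegenerate reduced equation of degree $n=\dim V$ with $n$-dimensional open orbit'' --- makes no use of reductivity and is therefore not an argument: every linear free divisor satisfies those properties, yet the paper's Example \ref{path1} (the product of five $2\times 2$ minors on the space of $2\times 5$ matrices) is a linear free divisor with $h^\vee\equiv 0$, and Lemma \ref{hessian} ($h^\vee(\p h/\p x_1,\ldots,\p h/\p x_n)=(n-1)Hh$) then forces $H\equiv 0$ for it. So the degenerate alternative in your dichotomy really does occur for linear free divisors, and excluding it is precisely where reductivity must enter. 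In the paper this is Lemma \ref{f*h}, quoted from Sato--Kimura: in unitary coordinates (which exist because a reductive group has a Zariski-dense compact subgroup) one has $h^*\bigl(\tfrac{\p}{\p x_1},\ldots,\tfrac{\p}{\p x_n}\bigr)h^m=b(m)h^{m-1}$ with $b$ a polynomial of degree exactly $n$, whence $h^*(\p h/\p x_1,\ldots,\p h/\p x_n)=b_0h^{n-1}$ with $b_0\neq 0$, and so $(n-1)H=b_0h^{n-2}\not\equiv 0$. Without this (or an equivalent) input, your proof does not close. A secondary, repairable, slip: uniqueness of semi-invariants is uniqueness \emph{per character}, so to write $H\in\CC^*\cdot h^{n-2}$ you must also match characters, $\chi_H=\chi_h^n\det^{-2}=\chi_h^{n-2}$, which uses speciality, i.e.\ Corollary \ref{cor2}; when $D$ is reducible there are plenty of semi-invariants (the components and their products) that are not powers of $h$. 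For your purposes, though, all you actually need is that a not-identically-zero semi-invariant can vanish only on $D$.
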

\begin{proof}
  Let $p\in D_t$ (for $t\neq 0$) and
  suppose that $T_pD_t$ has equation $f$, i.e. that $\nabla h(p)$ is a
non-zero multiple of $f$. We claim that $f$ is
  $\s R_h$-finite.
 For by Lemma (\ref{f*h}) below, $H(p)\neq 0$, where $H$ is the
  Hessian determinant of $h$.  It follows by \ref{fincon} (ii)
  that the restriction of $f$ to $D_t$ has a non-degenerate critical point at $p$. The
  critical locus of $f_{|D_t}$ is precisely $L_f\cap D_t$; so $L_f$ must be a line (recall
  that it is a linear subspace of $V$), and must meet $D_t$ transversely at $p$. By the
  homogeneity of $D$, it follows that $L_f\cap D=\{0\}$, so $f$ is $\s R_h$-finite. Thus
  \begin{displaymath}
    f\ \s R_D\mbox{-finite}\quad\stackrel{\ref{fincon}}{\implies}
\quad f\in V^\vee\ssm D^\vee\quad
    \stackrel{\ref{serendip}}
{\implies}\quad f=\nabla h(p)\ \mbox{for some}\ p\notin D\quad
    {\implies}\quad f\ \s
    R_h\mbox{-finite}.
  \end{displaymath}
  We have already proved the opposite implication,
  in \ref{fincon}.
\end{proof}
\begin{lemma}
  \label{hessian}
  Let $D\subset\CC^n$ be a linear free divisor with homogeneous equation $h$, let $h^\vee$
  be the determinant of the matrix of coefficients of (\ref{v*f}), and let, as before, $H$ be the
  Hessian determinant of $h$.  Then
  \begin{displaymath}
    h^\vee\left(\frac{\p h}{\p x_1},\,\ld\,,\frac{\p h}{\p x_n}\right)=(n-1)Hh.
  \end{displaymath}
\end{lemma}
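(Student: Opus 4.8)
The plan is to reduce the identity to a single determinant computation built from the two coefficient matrices appearing in (\ref{vf}) and (\ref{v*f}). Write $M(x)$ for the $n\times n$ matrix whose $i$-th column is $A_ix$, so that $\det M(x)$ is a nonzero scalar multiple of $h$, and $M^\vee(y)$ for the matrix whose $i$-th column is ${}^tA_iy$, so that $h^\vee(y)=\det M^\vee(y)$ and hence $h^\vee(\nabla h)=\det M^\vee(\nabla h)$ once we substitute $y=\nabla h$. The key structural input is that each generator $\xi_i$ of $\Der(-\log D)$ scales $h$: since $\xi_i$ is logarithmic and $h$ is a semi-invariant, $\xi_i(h)=\tau_ih$ for scalars $\tau_i=d\chi_h(A_i)$. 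I will also use that the Euler field $E$ lies in the infinitesimal action of $\gg_D$, so the basis may be chosen with $A_n$ equal to the identity; then the $n$-th column of $M(x)$ is $x$, the $n$-th column of $M^\vee(y)$ is $y$, and $\tau_n=n$ because $E(h)=nh$.

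First I would differentiate the relation $\xi_i(h)=\tau_ih$. Applying $\p/\p x_l$ to $\sum_j(A_ix)_j\,\p h/\p x_j=\tau_ih$ and using $\p(A_ix)_j/\p x_l=(A_i)_{jl}$ gives
\begin{displaymath}
\sum_j(A_i)_{jl}\frac{\p h}{\p x_j}+\sum_j(A_ix)_j\frac{\p^2h}{\p x_j\p x_l}=\tau_i\frac{\p h}{\p x_l}.
\end{displaymath}
The first sum is exactly the $(l,i)$ entry of $M^\vee(\nabla h)$, while the second is the $(l,i)$ entry of $\mathrm{Hess}(h)\,M(x)$, using the symmetry of the Hessian. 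Writing $\nabla h$ as a column and $\tau=(\tau_1,\ldots,\tau_n)$ as a row, this reads as the matrix identity
\begin{displaymath}
M^\vee(\nabla h)=(\nabla h)\,\tau-\mathrm{Hess}(h)\,M(x),
\end{displaymath}
so taking determinants of both sides produces $h^\vee(\nabla h)$ on the left and a rank-one perturbation of a Hessian-times-$M$ product on the right.

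The final step is to evaluate $\det\bigl((\nabla h)\tau-\mathrm{Hess}(h)M\bigr)$. On the open set $\{H\neq0\}$ I would factor out $-\mathrm{Hess}(h)$ and apply the matrix-determinant lemma to the resulting rank-one update of $M(x)$, obtaining
\begin{displaymath}
h^\vee(\nabla h)=(-1)^nH\,\det M\,\bigl(1-\tau\,M^{-1}\mathrm{Hess}(h)^{-1}\nabla h\bigr).
\end{displaymath}
The two Euler identities now collapse the bracket: homogeneity of the first derivatives gives $\mathrm{Hess}(h)\,x=(n-1)\nabla h$, whence $\mathrm{Hess}(h)^{-1}\nabla h=\tfrac{1}{n-1}x$, while the choice $A_n=I$ gives $M^{-1}x=e_n$ and $\tau e_n=\tau_n=n$; thus the bracket equals $1-\tfrac{n}{n-1}=-\tfrac{1}{n-1}$, and the ratio $\det M/h$ is a constant by Saito's criterion. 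This yields $h^\vee(\nabla h)=c\,Hh$ for an explicit nonzero constant $c$, which is exactly the proportionality that Theorem \ref{finfin} uses (there one only needs $H(p)\neq0\iff h^\vee(\nabla h(p))\neq 0$ away from $D$). The identity then extends from $\{H\neq0\}$ to all of $\CC^n$ by polynomiality. The main obstacle is purely the bookkeeping of the scalar: pinning the constant down to the stated value $(n-1)$ requires fixing the normalisations of $\det M$ relative to $h$ and the sign conventions for $H$, and I would verify the final constant on the normal-crossing model $h=x_1\cdots x_n$, where $h^\vee(\nabla h)=h^{n-1}$ can be computed directly as a consistency check.
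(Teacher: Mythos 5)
Your core computation is the same as the paper's: differentiate the scaling relations $\xi_i(h)=\tau_i h$ to obtain a matrix identity coupling the coefficient matrix $M$ of the $\xi_i$, the Hessian of $h$, and the dual coefficient matrix evaluated at $\nabla h$, then take determinants. The difference lies in how the rank-one term $(\nabla h)\tau$ is handled, and this is where you have a genuine gap. The paper chooses the basis adapted to the splitting $\gg_D=\gg_h\oplus\CC\cdot I$, so $\tau_i=0$ for every non-Euler generator; the rank-one term then survives only in the Euler slot, where Euler's identity applied to the partial derivatives, $\mathrm{Hess}(h)\,x=(n-1)\nabla h$, rewrites that column too as a Hessian image, and multilinearity of the determinant yields the identity \emph{identically in} $x$, with nothing inverted. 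You instead keep all the $\tau_i$, factor out $\mathrm{Hess}(h)$, and apply the matrix determinant lemma, which needs both $H\neq0$ and $\det M\neq0$, and you then extend from this open set by density. But the lemma is stated for arbitrary linear free divisors, and for some of them $H\equiv0$, so your open set is empty and the argument proves nothing. Concretely, for Example \ref{path1} (the $2\times5$ matrix example) one has $h^\vee\equiv0$, so the identity to be proven asserts precisely that $H\equiv0$ --- a statement with content that a density argument over $\{H\neq0\}$ cannot reach. The repair is exactly the adapted-basis device: with $A_i\in\gg_h$ for $i<n$ and $A_n=I$, the columns of $M^\vee(\nabla h)$ are $-\mathrm{Hess}(h)\,A_ix$ for $i<n$ and $\tfrac{1}{n-1}\mathrm{Hess}(h)\,x$ for $i=n$, and the determinant factors as $\tfrac{(-1)^{n-1}}{n-1}H\det M$ with no inversion.

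On the constant, your computation is more trustworthy than the statement you were asked to prove. Carried to the end, your formula gives $h^\vee(\nabla h)=\tfrac{(-1)^{n-1}}{n-1}\,\tfrac{\det M}{h}\,Hh$; taking $h:=\det M$ and $h^\vee:=\det M^\vee$ with respect to the same basis (the natural reading, and the one under which both sides of the identity transform identically under change of basis), this is $\tfrac{(-1)^{n-1}}{n-1}Hh$, not $(n-1)Hh$. The normal crossing check you propose confirms this rather than the stated constant: for $h=x_1\cdots x_n$ one has $h^\vee(\nabla h)=h^{n-1}$ while $H=(-1)^{n-1}(n-1)h^{n-2}$, so the factor is $(-1)^{n-1}/(n-1)$. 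No bookkeeping of normalisations can produce $(n-1)$; in fact the paper's own proof, read carefully, derives $hH=(n-1)h^\vee(\nabla h)$ (up to a sign $(-1)^{n-1}$ dropped from the $n-1$ negated rows), which is inconsistent with the constant displayed in the lemma. None of this matters downstream: Lemma \ref{f*h} and Theorem \ref{finfin} only use that $h^\vee(\nabla h)$ is a \emph{nonzero} constant multiple of $Hh$, which both your argument (where it applies) and the paper's establish. So: fix the $H\equiv0$ loophole as above, and keep your constant rather than forcing it to match the lemma's.
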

\begin{proof}
  Choose the basis $A_1=I_n,\ld A_n$ for $\gl_D$ so that the associated vector fields
$\xi_2,\ld, \xi_n$ are in $\Der(-\log h)$.
The matrix $I_n$ gives rise to the Euler vector field $E$. Write
  $A_i=[a^k_{ij}]$, with the upper index $k$ referring to columns and the lower index $j$
  referring to rows.  Let $\a_{ji}=\sum_ka^k_{ij}x_k$ denote the coefficient of $\p/\p
  x_j$ in $\xi_i$ for $i=2,\ld,n-1$.  Then
  \begin{displaymath}
    0=dh(\xi_i)=\sum_j\a_{ji}\frac{\p h}{\p x_j},
  \end{displaymath}
  so
  differentiating with respect to $x_k$,
  \begin{equation}
    \label{d2x}0=\sum_j\frac{\p\a_{ji}}{\p x_k}\frac{\p h}{\p x_j}+
    \sum_j\a_{ji}\frac{\p^2h}{\p x_k\p x_j}
    =\sum_ja^k_{ij}\frac{\p h}{\p x_j}+\sum_j\a_{ji}\frac{\p^2 h}{\p x_k\p x_j}.
  \end{equation}
  For the Euler field $\xi_1$ we have
  \begin{displaymath}
    nh= dh(E)=\sum_j\a_{j1}\frac{\p h}{\p x_j}
  \end{displaymath}
  so
  \begin{equation}
    \label{dxi}
    n\frac{\p h}{\p x_k}=\sum_ja^k_{1j}\frac{\p h}{\p x_j}+\sum_j\a_{j1}
    \frac{\p^2h}{\p x_k\p x_j}=\frac{\p h}{\p x_k}+\sum_j\a_{j1}
    \frac{\p^2h}{\p x_k\p x_j}.
  \end{equation}
  Putting the $n$ equations (\ref{d2x}) and (\ref{dxi}) together in
  matrix form we get
  \begin{displaymath}
    \left[
      \begin{array}
        {c}{^tE}\\{^t\xi_1}\\ \cdot\\{^t\xi_{n-1}}
      \end{array}
    \right]\left[\frac{\p^2h}{\p x_k\p x_j}\right]
  =-\left[
    \begin{array}{c}
      (n-1)\nabla h\cdot E\\
      \nabla h\cdot A_1\\
      \cdot\\
      \nabla h\cdot A_{n-1}
    \end{array}
  \right].
\end{displaymath}
Now take determinants of both sides. The determinant on the right
hand side is
\begin{displaymath}
  (n-1)h^\vee\left(\frac{\p h}{\p x_1},\,\ld\,, \frac{\p h}{\p
      x_n}\right).
\end{displaymath}
The determinants of the two matrices on the left are,
respectively, $h$ and $H$.
\end{proof}
\begin{lemma}\label{f*h}{\em(\cite{sk})} If $D$ is a reductive linear free divisor,
then for all $p\in \CC^n$
$$h(p)\neq 0\quad\implies \quad H(p)\neq 0.$$
\end{lemma}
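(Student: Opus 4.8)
The plan is to deduce the nonvanishing of the Hessian determinant from Lemma \ref{hessian}, which reads $h^\vee(\p h/\p x_1,\ldots,\p h/\p x_n)=(n-1)Hh$. Evaluating this identity at a point $p$ with $h(p)\neq 0$, it suffices to prove that the gradient $\nabla h(p)=(\p h/\p x_1,\ldots,\p h/\p x_n)(p)$ lies outside the dual divisor $D^\vee$, i.e. that $h^\vee(\nabla h(p))\neq 0$; since the factor $h(p)$ is nonzero, the nonvanishing of $H(p)$ follows at once. So everything reduces to controlling the gradient map $V\to V^\vee$ on the open orbit $V\setminus D=\{h\neq 0\}$.

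First I would record the equivariance of this map. Differentiating the semi-invariance relation $h\circ\rho(g)=\chi_h(g)\,h$ once in $x$ gives $\nabla h(\rho(g)x)=\chi_h(g)\,\rho^\vee(g)\,\nabla h(x)$, since $\rho^\vee(g)={}^t\rho(g)^{-1}$. Consequently the logarithmic gradient $\psi:=\nabla h/h$ is genuinely $\GD$-equivariant: $\psi(\rho(g)x)=\rho^\vee(g)\,\psi(x)$. Now $V\setminus D$ is a single $\GD$-orbit, and by Proposition \ref{propUnitaryCoord} the representation $\rho^\vee$ is prehomogeneous with open orbit $V^\vee\setminus D^\vee$, so $D^\vee$ is $\rho^\vee$-invariant. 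Hence it is enough to produce a \emph{single} point $x_0$ with $h(x_0)\neq 0$ and $\psi(x_0)\notin D^\vee$: equivariance then carries this conclusion across the whole orbit $\psi(V\setminus D)$, giving $h^\vee(\nabla h(p))\neq 0$ for every $p$ with $h(p)\neq 0$.

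The heart of the matter, and the step where reductivity is indispensable, is the construction of this point $x_0$. Here I would use unitary coordinates, in which Proposition \ref{propUnitaryCoord} identifies $h^\vee$ with $h^*$, where $h^*(y)=\overline{h(\bar y)}$; in these coordinates $h^\vee(\nabla h(x_0))\neq 0$ is equivalent to $\overline{\nabla h(x_0)}\notin D$. To find $x_0$ I would maximise the real function $|h(x)|^2$ over the unit sphere $\|x\|=1$; since $\{h\neq 0\}$ is dense the maximum value is positive, so $h(x_0)\neq 0$ at a maximiser $x_0$. A Lagrange-multiplier computation, using the Wirtinger derivatives $\p_{x_i}|h|^2=\bar h\,\p_{x_i}h$ and $\p_{x_i}\|x\|^2=\bar x_i$, forces $\nabla h(x_0)=\mu\,\bar h(x_0)^{-1}\,\overline{x_0}$ for a real multiplier $\mu$; contracting with $x_0$ and invoking Euler's relation $\sum_i x_{0,i}\,\p_{x_i}h(x_0)=n\,h(x_0)$ gives $\mu=n|h(x_0)|^2\neq 0$. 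Therefore $\overline{\nabla h(x_0)}$ is a nonzero scalar multiple of $x_0$, which lies in the open orbit $V\setminus D$ (a cone, as $h$ is homogeneous); so $\overline{\nabla h(x_0)}\notin D$, as required.

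Combining the three steps yields $h^\vee(\nabla h(p))\neq 0$ for all $p\notin D$, and Lemma \ref{hessian} then gives $H(p)\neq 0$. I expect the genuine obstacle to be exactly this existence step: the equivariance and the passage from one point to the whole orbit are formal, but producing one point of the open orbit whose gradient lands in the dual open orbit is where one must exploit the compact real form of $\GD$, through the Hermitian maximisation and the identity $h^\vee=h^*$, rather than any purely algebraic manipulation.
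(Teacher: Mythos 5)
Your proof is correct, but it follows a genuinely different route from the paper's. The paper's proof is essentially a citation of Sato--Kimura's $b$-function theory: by uniqueness of semi-invariants with a given character there is a polynomial $b(m)$ of degree $n$ such that, in unitary coordinates, $h^*(\partial_{x_1},\ldots,\partial_{x_n})\cdot h^m=b(m)h^{m-1}$ (equation (\ref{bfcn})), whence $h^*(\nabla h)=b_0h^{n-1}$ with $b_0\neq 0$ the leading coefficient (equation (\ref{f**})); combined with Lemma \ref{hessian} this yields the polynomial identity $(n-1)H=b_0h^{n-2}$, which contains the lemma. You keep the same two ingredients that are internal to the paper and proved before this lemma --- Lemma \ref{hessian} and the identification $h^\vee=h^*$ in unitary coordinates from Proposition \ref{propUnitaryCoord} --- but you replace the $b$-function input by a soft, self-contained argument: the equivariance $\nabla h(\rho(g)x)=\chi_h(g)\rho^\vee(g)\nabla h(x)$ together with the $\rho^\vee$-invariance of $D^\vee$ reduces everything to exhibiting one point $x_0\notin D$ with $\nabla h(x_0)\notin D^\vee$, and your Lagrange-multiplier maximisation of $|h|^2$ on the unit sphere produces such a point, since at a maximiser $\overline{\nabla h(x_0)}=n\overline{h(x_0)}\,x_0$ is a nonzero multiple of $x_0$ and hence lies off the cone $D$. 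I checked the details (the multiplier is pinned down as $\mu=n|h(x_0)|^2$ by Euler's relation, and the passage from one point to the whole open orbit is clean) and see no gap. The trade-off is this: your argument avoids invoking the existence and degree of the $b$-function and makes explicit exactly where reductivity (the compact form, via unitary coordinates) does the work; the paper's argument proves more, namely the global identities (\ref{f**}) and (\ref{g*h}), and these are not disposable scaffolding --- they are reused in Proposition \ref{serendip}, Remark \ref{aserendip} and thus in the tameness results --- so within the paper the $b$-function route earns its keep. Note finally that your method can recover (\ref{f**}) as well: your equivariance computation shows that $h^*\circ\nabla h$ is a semi-invariant with character $\chi_h^{n-1}$, hence equals $c\,h^{n-1}$ by the same uniqueness of semi-invariants that the paper quotes, and your maximisation shows $c\neq 0$.
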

\begin{proof}
  In \cite[Page 72]{sk}, Sato and Kimura show that if $g$ is a homogeneous rational
  semi-invariant of degree $r$ with associated character $\chi_g$ then there is a
  polynomial $b(m)$ of degree $r$ (the b-function of $g$) such that, with respect to
  unitary coordinates on $\CC^n$,
  \begin{equation}
    \label{bfcn}
    g^*\left(\frac{\p}{\p x_1},\ld,\frac{\p}{\p x_n}\right)\cdot g^m=b(m)g^{m-1}
  \end{equation}
  This is proved by showing that the left hand side is a semi-invariant with associated
  character $\chi_g^{m-1}$, and noting that the semi-invariant corresponding to a given
  character is unique up to scalar multiple, since the quotient of two semi-invariants
  with the same character is an absolute invariant, and therefore must be constant (since
  $\GD$ has a dense orbit).  From this it follows (\cite[page 72]{sk}) that
  \begin{equation}
    \label{f**}
    g^*\left(\frac{\p g}{\p x_1},\,\ld,\,\frac{\p g}{\p x_n}\right)=
    b_0{g^{r-1}},
  \end{equation}
  where $b_0$ is the (non-zero) leading coefficient of the polynomial $b(m)$, and
  hence that
  \begin{equation}
    \label{g*h}
    (n-1)H=b_0h^{n-2},
  \end{equation}
  by lemma \ref{hessian}.
\end{proof}
  \begin{proposition}
    \label{serendip}
    If $D$ is a linear free divisor with reductive group $\GD$ and homogeneous equation
    $h$ with respect to unitary coordinates, then
    \begin{enumerate}
    \item the gradient map $\nabla h$ maps the fibres $D_t, t\neq 0$ of $h$
      diffeomorphically to the fibres of $h^*$.
    \item the gradient map $\nabla h^*$ maps Milnor fibres of $h^*$ diffeomorphically to
      Milnor fibres of $h$.
    \end{enumerate}
  \end{proposition}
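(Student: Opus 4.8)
The plan is to prove (i) by exhibiting $\nabla h|_{D_t}$ as a bijective local biholomorphism onto a single fibre of $h^*$, and then to deduce (ii) by applying (i) to the dual divisor. Throughout I work in the unitary coordinates of Proposition~\ref{propUnitaryCoord}, so that $h^\vee=h^*$ and the identities surrounding Lemma~\ref{f*h} are available. First I would record the target: equation (\ref{f**}) applied to $g=h$ (which has degree $n$) gives $h^*(\nabla h(x))=b_0\,h(x)^{n-1}$ with $b_0\neq 0$, so $\nabla h$ carries $D_t$ into the single fibre $(h^*)^{-1}(b_0 t^{n-1})$ for every $t\neq 0$. Since the Jacobian matrix of $\nabla h$ is the Hessian of $h$, whose determinant $H$ is nonzero off $D$ by Lemma~\ref{f*h}, the map $\nabla h$ is a local biholomorphism on $V\ssm D$, and in particular on each $D_t$.

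The key step is an identity furnishing an essentially inverse map. Differentiating $h^*(\nabla h(x))=b_0 h(x)^{n-1}$ in $x$, the left-hand side becomes $(\mathrm{Hess}\,h)\cdot(\nabla h^*\cc\nabla h)$ and the right-hand side $b_0(n-1)h^{n-2}\nabla h$; combining this with Euler's relation $(\mathrm{Hess}\,h)\cdot x=(n-1)\nabla h$ for the degree-$(n-1)$ map $\nabla h$, and cancelling the invertible factor $\mathrm{Hess}\,h$ off $D$, I obtain
\[
  \nabla h^*\cc\nabla h \;=\; b_0\,h^{\,n-2}\cdot\Id \qquad\text{on } V\ssm D.
\]
Restricted to $D_t$, where $h\equiv t\neq 0$, the right-hand side is the nonzero scalar $b_0 t^{n-2}$ times the identity, so $\nabla h|_{D_t}$ is injective, with explicit left inverse $y\mapsto (b_0 t^{n-2})^{-1}\nabla h^*(y)$.

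For surjectivity I would use the orbit structure rather than chase the scalars. From $h\cc\rho(g)=\chi_h(g)h$ one gets $\nabla h(\rho(g)x)=\chi_h(g)\,\rho^\vee(g)\,\nabla h(x)$, so $\nabla h$ is $\ker(\chi_h)$-equivariant; as $D_t$ is a single $\ker(\chi_h)$-orbit (Mather's lemma, exactly as used earlier for the fibres of $h$), its image is again a single orbit. Now $\rho^\vee(\GD)$ is reductive because $\GD$ is, and it is the identity component of $G_{D^\vee}$, so $D^\vee$ is once more a reductive linear free divisor; the same argument shows that $(h^*)^{-1}(b_0t^{n-1})$ is a single orbit of $\ker(\chi_{h^*})=\ker(\chi_h)$. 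A single orbit contained in a single orbit of the same group fills it, so $\nabla h(D_t)=(h^*)^{-1}(b_0t^{n-1})$, and a bijective local biholomorphism is a diffeomorphism. This proves (i), and (ii) is precisely (i) for the reductive linear free divisor $D^\vee$ with equation $h^{*}$ (using $h^{**}=h$), so it follows by symmetry.

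The part I expect to be most delicate is keeping the homogeneity bookkeeping honest in the middle step. The identity $\nabla h^*\cc\nabla h=b_0 h^{n-2}\Id$ must not be misread as global injectivity: because $\nabla h$ is homogeneous of degree $n-1$ one has $\nabla h(\zeta x)=\nabla h(x)$ for every $(n-1)$-st root of unity $\zeta$, so $\nabla h$ is genuinely $(n-1)$-to-one on $V\ssm D$, and injectivity is recovered only after fixing the value of $h$. Correspondingly, the fibres $D_{\zeta t}$ all map to the \emph{same} fibre $(h^*)^{-1}(b_0t^{n-1})$, and it is the orbit argument --- which avoids comparing the leading $b$-function coefficients of $h$ and $h^*$ --- that cleanly pins surjectivity onto exactly that fibre.
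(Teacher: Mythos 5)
Your proof is correct and follows essentially the same route as the paper's: formula (\ref{f**}) to send fibres of $h$ into fibres of $h^*$, Lemma \ref{f*h} for the local diffeomorphism property, $\ker(\chi_h)$-equivariance together with the single-orbit structure of the fibres of $h$ and of $h^*$ (and $\chi_{h^*}=\chi_h^{-1}$) for surjectivity, and the symmetry $(h^*)^*=h$ in dual unitary coordinates for part (ii). The only difference is that where the paper dismisses injectivity as ``easy to check,'' you supply the explicit left inverse coming from the identity $\nabla h^*\circ\nabla h=b_0\,h^{n-2}\cdot\mathrm{Id}$ (obtained by differentiating (\ref{f**}) and using Euler's relation), which cleanly settles that step.
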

  \begin{proof}
    The formula (\ref{f**}) shows that $\nabla h$ maps fibres of $h$ into fibres of
    $h^*$. Each fibre of $h$ is a single orbit of the kernel of $\chi_h:\GD\to \CC^*$, and
    each fibre of $h^*$ is a single orbit of the kernel of $\chi_{h^*}$.  These two
    subgroups coincide because $\chi_{h^*}=(\chi_h)^{-1}$.  The map is equivariant:
    $\nabla h(\rho(g)x)=\rho^*(g)^{-1}\nabla h(x)$.  It follows that $\nabla h$ maps $D_t$
    surjectively onto a fibre of $h^*$. By lemma \ref{f*h}, this mapping is a local
    diffeomorphism. It is easy to check that it is 1-1.  Since $(h^*)^*=h$ and dual
    unitary coordinates are themselves unitary, the same argument, interchanging the roles
    of $h$ and $h^*$, gives (ii).
  \end{proof}
\begin{question}
If we drop the condition that $D$ be a {\it linear} free divisor, what condition
    could replace reductivity to guarantee that for (linear) functions $f\in\OO_{\CC^n}$,
    $\s R_D$-finiteness implies $\s R_h$ finiteness?
\end{question}
  \begin{remark}\label{aserendip}
    The following will be used in the proof of lemma \ref{Mond:tame2}. Let $AT_xD_t:=x+T_xD_t$ denote the affine tangent space at $x$. Proposition \ref{serendip}
    implies that the affine part $D_t^\vee=\{AT_xD_t: x\in D_t\}$ of the projective dual
    of $D_t$ is a Milnor fibre of $h^*$. This is because $AT_xD_t$ is the set
    \begin{displaymath}
      (y_1,\ld,y_n)\in \CC^n:d_xh(y_1,\ld,y_n)=
      d_xh(x_1,\ld,x_n);
    \end{displaymath}
    by homogeneity of $h$ the right hand side is
    just $nt$, and thus in dual projective coordinates $AT_xD_t$ is the point $(-nt:\p
    h/\p x_1(x):\cd:\p h/\p x_n(x))$.  In affine coordinates on $U_0$, this is the point
    \begin{displaymath}
      \left(\frac{-1}{nt}\frac{ \p h}{\p x_1}(x),\cd,\frac{-1}{nt}
        \frac{\p h}{\p x_n}(x)\right).
    \end{displaymath}
    By (\ref{f**}), the function $h^*$ takes the value
    $b_0t^{n-1}/(nt)^n=b_0/n^nt$ at this point, independent of $x\in D_t$,
    and so $D_t^\vee \subset (h^*)^{-1}(b_0/n^nt)$. The opposite inclusion
    holds by openness of the map $\nabla h$, which, in turn,
    follows from lemma
    \ref{hessian}.
  \end{remark}
\subsection{Tameness}
\label{subsec:tame}

In this subsection, we study a property of the polynomial functions $f_{|D_t}$ known as
tameness. It describes the topological behaviour of $f$ at infinity, and is needed in order
to use the general results from \cite{Sa2} and \cite{DS} on the Gau{\ss}-Manin system and
the construction of Frobenius structures.  In fact we discuss two versions,
cohomological tameness and $M$-tameness.  Whereas the first will be seen to hold for
all $\mathcal{R}_h$-finite linear functions on a linear free divisor $D$, we show
$M$-tameness only if $D$ is reductive. Cohomological tameness is all that is
needed in our later construction of Frobenius manifolds, but we feel that the more
evidently geometrical condition of $M$-tameness is of independent interest.
\begin{definition}[\cite{Sa2}]
  Let $X$ be an affine algebraic variety and $f:X\rightarrow \CC$ a regular function.
  Then $f$ is called cohomologically tame if there is a partial compactification
  $X\stackrel{j}{\hookrightarrow} Y$ with $Y$ quasiprojective, and a proper regular
  function $F:Y\rightarrow \CC$ extending $f$, such that for any $c\in\CC$, the complex
  $\varphi_{F-c}(\mathbb{R}j_*\QQ_X)$ is supported in a finite number of points, which are
  contained in $X$. Here $\varphi$ is the functor of vanishing cycles of Deligne, see,
  e.g., \cite{Di}.
\end{definition}
It follows in particular that a cohomologically tame function $f$ has at most isolated
critical points.
\begin{proposition}\label{propCohomTame}
  Let $D\subset V$ be linear free and $f\in \CC[V]_1$ be an $\mathcal{R}_h$-finite linear
  section.  Then the restriction of $f$ to $D_t:=h^{-t}(t)$, $t\neq 0$ is cohomologically tame.
\end{proposition}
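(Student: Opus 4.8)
The plan is to prove cohomological tameness by the classical device of compactifying $D_t$ fibrewise in projective space, extending $f$ to a proper function by taking the closure of its graph, and then showing that the resulting map carries no vanishing cycles along the boundary. The geometric input that makes this work — replacing any delicate estimate ``at infinity'' — is that all the fibres $\overline{\{f=c\}\cap D_t}$ meet the hyperplane at infinity in one and the same set, independent of the value $c$ and of $t$. Concretely, embed $V=\CC^n\into\PP^n$ as $\{x_0\neq0\}$, set $H_\infty=\{x_0=0\}$, and take the fibrewise closure $\overline{D_t}=\{h(x_1,\ld,x_n)=t\,x_0^n\}$, so that $\overline{D_t}\cap H_\infty=\PP D$ (the projectivisation of $D$) is independent of $t$. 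The linear form extends to the rational function $\tilde f=f/x_0$; I take $Y$ to be the intersection with $\PP^n\times\CC$ of the closure in $\PP^n\times\PP^1$ of the graph of $\tilde f$, and $F:Y\to\CC$ the second projection. Then $F$ is proper, $j:D_t\into Y$ is the open inclusion of the locus over $\{x_0\neq0\}$, and $F|_{D_t}=f|_{D_t}$, so $(Y,F,j)$ is data of the kind required by the definition.

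It remains to check the support condition for $\varphi_{F-c}(\mathbb{R}j_*\QQ_{D_t})$. Inside $D_t$ there is nothing to do: by the Euler relation $n h=\sum_i x_i\,\p h/\p x_i$, the fibre $D_t$ is smooth for $t\neq0$, so $\mathbb{R}j_*\QQ_{D_t}$ is the constant sheaf there and the only critical points of $F$ lying in $D_t$ are the $n$ non-degenerate ones furnished by Proposition \ref{fincon}(iii). The entire content is therefore to show that $\varphi_{F-c}(\mathbb{R}j_*\QQ_{D_t})$ has no support on the boundary $\partial Y=Y\ssm D_t$, which lies over $\PP D\subset H_\infty$. In the chart $\{x_n\neq0\}$, with $w_0=x_0/x_n$ and $w_i=x_i/x_n$, one has $\overline{D_t}=\{\bar h(w_1,\ld,w_{n-1})=t\,w_0^n\}$ and $\tilde f=\bar f(w)/w_0$, where $\bar h,\bar f$ do not involve $t$ or $c$ and the $t$-term vanishes to order $n\ge2$ along $\{w_0=0\}$. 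Thus near $H_\infty$ the geometry of the fibres is governed entirely by the $t$- and $c$-independent loci $\{\bar h=0\}$ and $\{\bar f=0\}$, and after the graph closure moving in the $c$-direction is exactly moving along the exceptional directions over $Z:=\PP D\cap\{f=0\}$, transverse to $H_\infty$.

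To turn this into a vanishing statement I would choose a Whitney stratification of $\overline{D_t}$ compatible with $H_\infty$ whose strata along $H_\infty$ refine the logarithmic (orbit) stratification of $D$. The key point is that $\ker f$ is transverse to each such stratum away from the origin: $f$ is $\s R_h$-finite, hence $\s R_D$-finite by Proposition \ref{fincon}(iii)(a), and by the proof of Proposition \ref{fincon}(i) this is equivalent to $df(\Der(-\log D))\supseteq\mm$, i.e.\ the linear forms $df(\xi_1),\ld,df(\xi_n)$ span $V^\vee$ and so have common zero locus $\{0\}$. Since $df(\xi_i)(x)=f(\xi_i(x))$ is the value of $f$ on the orbit tangent vector at $x$, this says precisely that at every $x\in D\ssm\{0\}$ some $df(\xi_i)(x)\neq0$, i.e.\ $\ker f$ meets the $\GD$-orbit through $x$ transversally. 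As the fibre at infinity is always cut by $f=0$, this transversality is uniform in $c$, so the Thom--Mather first isotopy lemma makes the proper map $F$ a stratified locally trivial fibration near $\partial Y$ over a disc in the $c$-line; hence $\mathbb{R}j_*\QQ_{D_t}$ is locally constant in the $F$-direction there and $\varphi_{F-c}(\mathbb{R}j_*\QQ_{D_t})|_{\partial Y}=0$ for every $c$. Together with the previous paragraph this yields cohomological tameness.

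The hard part is this last step at the points of $D$ lying at infinity where $\overline{D_t}$, and the graph closure over the higher-codimension strata of $Z$, are genuinely singular. What makes it go through — and what I would need to spell out carefully — is the combination of three features special to this setting: the affine fibre $D_t$ carries no finite singularities to interfere (smoothness via Euler); the locus cut at infinity is the fixed, $c$-independent set $Z$; and the transversality of $\ker f$ to the orbit strata holds along all of $D\ssm\{0\}$, not merely generically. Packaging these into the verification of the Thom $a_F$-condition along $\partial Y$ (equivalently, the stratified submersivity of $F$ there), and justifying the holonomicity needed for a Whitney stratification refining the orbit partition, is where the real work lies.
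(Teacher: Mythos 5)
Your proof follows the same skeleton as the paper's: compactify via the closure of the graph of $f$ in $\overline{D}_t\times\CC$, take $F$ to be the second projection, observe that the interior contributes only the finitely many non-degenerate critical points from Proposition \ref{fincon}, and kill the boundary contribution to $\varphi_{F-c}(\mathbb{R}j_*\QQ_{D_t})$ by a transversality-at-infinity argument. Your identification of the key geometric input is also the paper's: $\s R_h$-finiteness gives $\s R_D$-finiteness, hence the linear forms $df(\xi_i)$ have common zero locus $\{0\}$, so $\ker f+\Der(-\log D)(x)=\CC^n$ for all $x\in D\ssm\{0\}$, and this transversality to the $\GD$-orbits projectivises to the boundary $B_h$.

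However, the step you defer as ``the real work'' contains a genuine error of direction, and as stated it would fail. You ask for a Whitney stratification whose strata along $H_\infty$ \emph{refine} the orbit (logarithmic) stratification, and then assert that $\ker f$ is transverse to each such stratum. Transversality to an orbit does not pass to submanifolds contained in it: if $S\subseteq O$ then $T_xS\subseteq T_xO$, and $\ker f+T_xO=\CC^n$ gives no control over $\ker f+T_xS$. What you need is the opposite inclusion: strata whose tangent spaces \emph{contain} $\Der(-\log D)(x)$, i.e.\ strata that are unions of orbits. This is exactly what the paper gets for free from the canonical Whitney stratification of $\overline{D}_t$ (\cite{tei}, \cite[Corollary 1.3.3]{tei2}): its strata are unions of isosingular loci, and the isosingular locus of $\overline{D}_t$ through a point of $B_h$ contains the projectivised isosingular locus of $D$, so your orbit-transversality transfers to these coarser strata; this also sidesteps entirely the question of whether the orbit partition itself is holonomic or locally finite. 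Once this is fixed the rest of your worries evaporate: after splitting the open stratum into $D_t$ and $(B_h)_{\mathrm{reg}}$, the hypersurface $\{f=cx_0\}$ is transverse to the product stratification of $\overline{D}_t\times\CC$, so the induced boundary strata of $\overline{\Gamma}(f)$ are products $(X_\a\cap B_f)\times\CC$ on which $F$ is the projection, hence submersive; the support estimate for vanishing cycles of a constructible complex (\cite[proposition 4.2.8]{Di}, as in the paper) --- or, if you prefer, the first isotopy lemma applied to the proper map $F$ away from the finitely many interior critical points --- then localises $\varphi_{F-c}(\mathbb{R}j_*\QQ_{D_t})$ at those critical points. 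In particular, no Thom $a_F$-condition needs to be verified.
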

\begin{proof}
  A similar statement is actually given without proof in \cite{NS} as an example of a
  so-called weakly tame function.  We consider the standard graph compactification of $f$:
  Let $\overline{\Gamma}(f)$ be the closure of the graph $\Gamma(f)\subset D_t\times \CC$ of
  $f$ in $\overline{D}_t\times \CC$ (where $\overline{D}_t$ is the projective closure of
  $D_t$ in $\PP^n$), we identify $f$ with the projection $\Gamma(f)\to\CC$, and extend $f$
  to the projection $F:\overline{\Gamma}(f)\to\CC$.  Refine the canonical Whitney
  stratification of $\overline{D}_t$ by dividing the open stratum, which consists of
  $D_t\cup (B_h)_{\mbox{\scriptsize reg}}$, into the two strata $D_t$ and
  $(B_h)_{\mbox{\scriptsize reg}}$. Here $B_h=\{(0,x_1,\ldots,x_n)\in\PP^n\,|\,h(x_1,\ldots,x_n)=0\}$.
  Evidently this new stratification $\s S$ is still
  Whitney regular.  From $\s S$ we obtain a Whitney stratification $\s S'$ of
  $\overline{\Gamma}(f)$, since $\overline{\Gamma}(f)$ is just the transversal
  intersection of a hyperplane with $\overline{D}_t\times\CC$.  The isosingular locus of
  $\overline{D}_t$ through any point $(0:x_1:\ld:x_n)\in B_h$ contains the projectivised
  isosingular locus of $D$ through $(x_1,\ld,x_n)$, and so by the $\s R_h$-finiteness of
  $f$, $\{f=0\}$ is transverse to the strata of $\s S$.  This translates into the fact
  that the restriction of $F$ (i.e., the second projection) to the strata of the
  stratification $\s S'$ (except the stratum over $D_t$) is regular. It then follows from
  \cite[proposition 4.2.8]{Di} that the cohomology sheaves of
  $\varphi_{F-c}(\mathbb{R}j_*\QQ_{D_t})$ are supported in $D_t$ in a finite number of
  points, namely the critical points of $f_{|D_t}$. Therefore $f$ is cohomologically tame.
\end{proof}

\begin{definition}
  [\cite{NS}]
  \label{Mond:tame}
  Let $X\subset \CC^n$ be an affine algebraic variety and $f:X\to\CC$ a regular
  function. Set
  \begin{displaymath}
    M_{f}:=\{x\in X:f^{-1}(f(x))\ \not\hskip -.02in\tr \ S_{\Vert x \Vert}\},
  \end{displaymath}
  where $S_{\Vert x\Vert}$ is the sphere in $\CC^n$ centered at $0$ with radius
  $\Vert{}x\Vert$. We say that $f:X\rightarrow\CC$ is {\em $M$-tame} if there is no
  sequence $(\xk)$ in $M_f$ such that\\

\begin{enumerate}
  \addtocounter{enumi}{0}
  \item
The sequence $\Vert{}\xk\Vert{}$ tends to  infinity  as $k\to\infty$,
\item
The sequence $f(\xk)$ tends to a limit $\ell\in\CC$ as $k\to\infty$.
\end{enumerate}
\end{definition}
Suppose $\xk$ is a sequence in $M_f$ satisfying (i) and (ii).  After
passing to a subsequence, we
may suppose also that as $k\to\infty$,
\begin{enumerate}
  \addtocounter{enumi}{2}
\item $(\xk)\to x^{(0)}\in H_\infty$, where $H_\infty$ is the hyperplane at infinity in
  $\PP^n$,
\item $T^{(k)}\to T^{(0)}\in G_{d-1}(\PP^n)$ where $T^{(k)}$ denotes the {\it
    affine} tangent space $AT_{x^{(k)}}f^{-1}(f(x^{(k)})$, $d=\dim\,X$
and $G_{d-1}(\PP^n)$ is the Grassmannian of $(d-1)$-planes
in $\PP^n$.
\end{enumerate}
Let $f$ and $h$ be homogeneous polynomials on $\CC^n$ and $X=D_t=h^{-1}(t)$
for some $t\neq 0$. As before, let
$$  B_f=\{(0,x_1,\ld,x_n)\in\PP^n:f(x_1,\ld,x_n)=0\}\quad
  B_h=\{(0,x_1,\ld,x_n)\in\PP^n:h(x_1,\ld,x_n)=0\}.
$$
Note that $B_f$ and $B_h$ are contained in the
projective closure of every affine fibre of $f$ and $h$ respectively.  We continue to
denote the restriction of $f$ to $D_t$ by $f$.  Let $\xk$ be a sequence satisfying
\ref{Mond:tame}(i)--(iv).
\begin{lemma}
  $\xo\in B_f\cap B_h$.
\end{lemma}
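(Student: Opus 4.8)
The plan is to play the homogeneity of $f$ and $h$ off against the escape of the sequence to infinity. Since $\xk\in M_f\subseteq D_t$ we have $h(\xk)=t$ for every $k$, and this single identity, together with $\n{\xk}\too\infty$, already forces the limiting direction onto $B_h$; a companion argument using (ii) will place it on $B_f$. Concretely, I would normalise by setting $r_k=\n{\xk}$ and $u_k=\xk/r_k$, so each $u_k$ is a unit vector and, by (i), $r_k\too\infty$. After passing to a subsequence we may assume $u_k\too u_0$ for some unit vector $u_0$; then in projective coordinates $(1:\xk)=(1/r_k:u_k)\too(0:u_0)$, which identifies the limit point of (iii) as $\xo=(0:u_0)\in H_\infty$. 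Since $h$ and $f$ are homogeneous, membership $\xo\in B_h$ and $\xo\in B_f$ is equivalent to the scalar conditions $h(u_0)=0$ and $f(u_0)=0$.

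I would then read these off directly. For $h$, of degree $\deg h\ge 1$, homogeneity gives $t=h(\xk)=r_k^{\deg h}h(u_k)$, so $h(u_k)=t/r_k^{\deg h}\too 0$ and, by continuity, $h(u_0)=0$; hence $\xo\in B_h$. For $f$, of degree $\deg f\ge 1$, homogeneity gives $f(\xk)=r_k^{\deg f}f(u_k)$, and since (ii) keeps $f(\xk)$ bounded while $r_k^{\deg f}\too\infty$, we obtain $f(u_k)=f(\xk)/r_k^{\deg f}\too 0$ and thus $f(u_0)=0$; hence $\xo\in B_f$. Combining the two yields $\xo\in B_f\cap B_h$, as required.

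The argument is essentially one limit computation, so I do not expect a genuine obstacle; the only points needing care are the bookkeeping between the affine representatives $\xk$ and their projective limit (which is exactly why normalising by $r_k$ is convenient) and the use of $\deg f\ge 1$, without which boundedness of $f(\xk)$ would not force $f(u_k)\too 0$. It is worth noting that only hypotheses (i), (ii) and (iii) enter here: (i) and (ii) drive the two vanishing statements and (iii) names the limit point, whereas the tangent-space convergence (iv) is not used and is presumably reserved for the later lemmas.
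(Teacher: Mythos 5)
Your proof is correct and follows essentially the same route as the paper: both arguments play the homogeneity of $h$ and $f$ against $\Vert \xk\Vert\to\infty$, using $h(\xk)=t$ to force $\xo\in B_h$ and the boundedness $f(\xk)\to\ell$ to force $\xo\in B_f$. The only difference is bookkeeping — the paper normalises by the largest coordinate $\xk_1$ in the affine chart $U_1$ (and handles $B_h$ in one line via $\xo\in\overline{D}_t\cap H_\infty=B_h$), whereas you normalise by the Euclidean norm and work on the unit sphere; these are the same limit computation.
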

\begin{proof}
  Evidently $\xo\in \overline{D}_t\cap H_{\infty}=B_h$.
Let $U_1= \{(x_0,x_1,\ld,x_n)\in\PP^n:x_1\neq 0\}$.
After permuting the coordinates $x_1,\ld\,,x_n$ and
passing to a subsequence we may assume
  that $|\xk_1|\geq |\xk_j|$ for $j\geq 1$.  It follows that
  $x^{(0)}\in U_1$. In local coordinates $y_0=x_0/x_1,y_2=x_2/x_1,\ld,y_n=x_n/x_1$ on
  $U_1$, $B_f$ is defined by the two equations $y_0=0, f(1,y_2,\ld,y_n)=0$.  Since
  $f(\xk_1,\ld,\xk_n)\to\ell$ and $\xk_1\to\infty$, we have
  $f(1,\xk_2/\xk_1,\ld,\xk_n/\xk_1)\to 0$.  It follows that $f(\xo_1,\ld,\xo_n)=0,$ and
  $\xo\in B_f$.
\end{proof}
\begin{lemma}
  \label{Mond:tame3} If $f$ is a linear function then
  $T^{(0)}=B_f$.
\end{lemma}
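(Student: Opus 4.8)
The plan is to show that the limiting tangent plane $T^{(0)}$, which lives in the hyperplane at infinity $H_\infty$, coincides with $B_f$. Since $f$ is linear, the fibres $f^{-1}(c)$ are affine hyperplanes all parallel to the hyperplane $\{f=0\}$, so their affine tangent spaces are constant and equal to the linear hyperplane $\ker(df)$. The subtlety is that here we are taking the affine tangent space to the fibre of $f$ \emph{restricted to} $D_t$, i.e. $T^{(k)}=AT_{\xk}\bigl(f^{-1}(f(\xk))\cap D_t\bigr)$, which is the intersection of two affine hyperplanes: the level set of $f$ through $\xk$ and the affine tangent space $AT_{\xk}D_t$.

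First I would write $T^{(k)}$ explicitly as the intersection $AT_{\xk}\{f=f(\xk)\}\cap AT_{\xk}D_t$. The first factor is the fixed hyperplane $\{y:df(y)=f(\xk)\}$, whose projective closure always contains $B_f$ (since $B_f$ is the trace at infinity of every affine fibre of $f$, as noted in the excerpt). The second factor, $AT_{\xk}D_t$, has projective closure containing $B_h$, and by the previous lemma we already know $\xo\in B_f\cap B_h$. Taking the limit $k\to\infty$, the projective closure of $T^{(k)}$ converges to $T^{(0)}$ in the Grassmannian $G_{d-1}(\PP^n)$, and its part at infinity must lie in the intersection of the limiting tangent cones, hence in $B_f$.

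The key geometric point is a dimension count. Both $T^{(0)}$ and $B_f$ are $(d-1)$-dimensional projective linear subspaces contained in $H_\infty$, where $d=\dim D_t = n-1$. I would argue that $T^{(0)}\subseteq H_\infty$: indeed each $T^{(k)}$ is a genuine affine $(d-1)$-plane in $\CC^n$, but since $\Vert\xk\Vert\to\infty$ while $f(\xk)\to\ell$ stays bounded, the defining affine equation $df(y)=f(\xk)$ of the $f$-level hyperplane, after projectivising and passing to the limit, degenerates so that $T^{(0)}$ lies entirely at infinity. Concretely, in the local chart $U_1$ used in the previous lemma, the condition $y_0=0$ cutting out $H_\infty$ is satisfied in the limit. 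This forces $T^{(0)}\subseteq H_\infty$, and since $T^{(0)}$ already lies in the closure of the $f$-fibre it lies in $\overline{\{f=\ell\}}\cap H_\infty = B_f$. A matching dimension count ($\dim T^{(0)}=\dim B_f=d-1=n-2$ as projective spaces) then upgrades the inclusion $T^{(0)}\subseteq B_f$ to equality.

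The main obstacle I anticipate is making the limiting argument rigorous: one must verify that $T^{(0)}$ indeed falls entirely into $H_\infty$ rather than retaining an affine part, and that the limit of the intersection $T^{(k)}=AT_{\xk}\{f=f(\xk)\}\cap AT_{\xk}D_t$ behaves well (intersections do not in general pass to limits continuously in the Grassmannian). The linearity of $f$ is what saves the day: the first factor is a \emph{fixed} family of parallel hyperplanes whose projective closures share the common part $B_f$ at infinity independently of $k$, so no collapsing of this factor can occur. I would therefore reduce the whole question to controlling the limit of $AT_{\xk}D_t$, use that $f$ being linear makes $T^{(k)}\supseteq B_f$ at the level of closures for every $k$, and conclude by the dimension count that the inclusion is an equality in the limit.
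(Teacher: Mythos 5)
Your proof has a genuine gap at its central step, namely the claim that $T^{(0)}\subseteq H_\infty$. You justify this by saying that since $\Vert x^{(k)}\Vert\to\infty$ while $f(x^{(k)})$ stays bounded, the defining equations of $T^{(k)}$ ``degenerate'' in the limit so that $T^{(0)}$ falls into the hyperplane at infinity. This inference is invalid, and in fact the claimed principle is false: the level hyperplanes $\{f=f(x^{(k)})\}$ do not degenerate at all (their closures converge to $\overline{f^{-1}(\ell)}$, not to $H_\infty$), and nothing about norms tending to infinity with bounded $f$-values pushes limits of fibre tangent spaces to infinity. The telltale sign is that your argument never uses the hypothesis that the sequence $(x^{(k)})$ lies in $M_f$, i.e.\ that the fibres of $f_{|D_t}$ fail to be transverse to the spheres $S_{\Vert x^{(k)}\Vert}$; without that hypothesis the lemma is simply wrong. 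Concretely, take $h=xyz$, $t\neq 0$, $f=x+y+z$ and $x^{(k)}=\bigl(k,\,1-k,\,t/(k(1-k))\bigr)\in D_t$: then $\Vert x^{(k)}\Vert\to\infty$ and $f(x^{(k)})\to 1$, but a direct computation shows that the tangent lines to the fibres of $f_{|D_t}$ converge to the line in $\PP^3$ joining $(1:1:0:0)$ and $(0:-1:1:0)$, which contains the affine point $(1:1:0:0)\notin H_\infty$ (this sequence, of course, does not lie in $M_f$). The paper's proof obtains $T^{(0)}\subset H_\infty$ precisely from the $M_f$-condition: non-transversality to the sphere forces $T^{(k)}$ into the maximal complex affine subspace of $AT_{x^{(k)}}S_{\Vert x^{(k)}\Vert}$, namely $x^{(k)}+(x^{(k)})^\bot$, and in dual projective coordinates $x^{(k)}+(x^{(k)})^\bot=(-\Vert x^{(k)}\Vert^2:x^{(k)}_1:\cdots:x^{(k)}_n)\to(1:0:\cdots:0)=H_\infty$. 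This Hermitian-orthogonality computation is the heart of the lemma and is entirely absent from your proposal.

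There is also a secondary error: you propose to use that ``$f$ being linear makes $T^{(k)}\supseteq B_f$ at the level of closures for every $k$''. This is dimensionally impossible. Here $T^{(k)}$ is the affine tangent space to a fibre of $f$ \emph{restricted to} $D_t$, an $(n-2)$-plane, so its projective closure meets $H_\infty$ in a $\PP^{n-3}$, which cannot contain the $(n-2)$-dimensional space $B_f$; you are conflating the fibre of $f_{|D_t}$ with the full level hyperplane $f^{-1}(f(x^{(k)}))\subset\CC^n$, whose closure does contain $B_f$. The correct use of this observation, as in the paper, is the inclusion $T^{(k)}\subset f^{-1}(f(x^{(k)}))$, which in the limit gives $T^{(0)}\subseteq\overline{f^{-1}(\ell)}$; only after $T^{(0)}\subset H_\infty$ has been established from the $M_f$-condition does one conclude $T^{(0)}\subseteq\overline{f^{-1}(\ell)}\cap H_\infty=B_f$, and then your (correct) dimension count upgrades this to equality.
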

\begin{proof}
  For all $k$ we have $\Tk\subset
  AT_{\xk}S_{\Vert{}\xk\Vert{}}$. Let $x^\bot$ denote the Hermitian orthogonal complement
  of the vector $x$ in $\CC^n$.  Then $\Tk$ is contained in $(\xk+{\xk}^\bot)\cap
  AT_{\xk}D_t$, since this is the maximal complex subspace of $AT_{\xk}D_t\cap
  AT_{\xk}S_{\Vert{}\xk\Vert{}}$.  With respect to dual homogeneous coordinates on
  $(\PP^n)^\vee$, $\xk+{\xk}^\bot= (-\Vert{}\xk\Vert{}^2:\xk_1:\ \cd\ :\xk_n)$. So
  \begin{displaymath}
    \lim_{k\rightarrow\infty}\xk+{\xk}^\bot=\lim_{k\rightarrow\infty}
    (1:\xk_1/\Vert{}\xk\Vert{}^2:\ \cd\ :\xk_n/\Vert{}\xk\Vert{}^2)=(1:0:\ \cd\
    :0)=H_{\infty}.
  \end{displaymath}
  It follows that $T^{(0)}\subset H_\infty$.
  To see that $T^{(0)}\subset B_f$, note that $T^{(k)}\subset f^{-1}(f(\xk))$. Since
  $f(\xk)\to\ell$, $f^{-1}(f(\xk))\to f^{-1}(\ell)$ and so in the limit $T^{(0)}\subseteq
  \overline{f^{-1}(\ell)}$. Since $T^{(0)}\subset H_\infty$, we conclude that
  $T^{(0)}\subset \overline {f^{-1}(\ell)}\cap H_\infty=B_f$. As $\dim\,B_f=\dim\,
  T^{(0)}$, the two spaces must be equal.
\end{proof}
By passing to a subsequence, we may suppose that
$AT_{\xk}D_t$ tends to a limit $L$ as $k\to\infty$.
\begin{lemma}\label{Mond:tame2}
  If $D=h^{-1}(0)$ is a reductive linear free divisor then
 $L\neq H_\infty$.
\end{lemma}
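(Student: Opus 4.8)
The plan is to exploit the dual description of the affine tangent spaces of $D_t$ supplied by Remark \ref{aserendip}. Viewing each hyperplane $AT_{x^{(k)}}D_t$ as a point of the dual projective space $(\PP^n)^\vee$, I will show that all of these points lie in a single affine chart on which $H_\infty$ becomes the \emph{origin}, whereas Remark \ref{aserendip} confines them to a Milnor fibre of $h^*$ that is bounded away from the origin. This forces $L\neq H_\infty$ directly.

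Concretely, I would first record the dual coordinates of $AT_{x^{(k)}}D_t$. By homogeneity of $h$, the affine tangent space at $x\in D_t$ is $\{y:d_xh(y)=nt\}$, so in dual homogeneous coordinates it is the point $\bigl(-nt:\p h/\p x_1(x):\cd:\p h/\p x_n(x)\bigr)$, exactly as computed in Remark \ref{aserendip}. Since $t\neq 0$, the zeroth coordinate $-nt$ never vanishes, so every $AT_{x^{(k)}}D_t$ lies in the chart $U_0\subset(\PP^n)^\vee$ where the zeroth coordinate is nonzero; in that chart its affine coordinates are $p^{(k)}:=\frac{-1}{nt}\nabla h(x^{(k)})$. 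The hyperplane at infinity $H_\infty=\{x_0=0\}$ has dual point $(1:0:\cd:0)$, which in the same chart $U_0$ is the origin. Hence $L=H_\infty$ would force $p^{(k)}\to 0$ in $\CC^n$.

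Next I would invoke Remark \ref{aserendip}: because $D$ is reductive, each $p^{(k)}$ lies on the affine dual $D_t^\vee$, which coincides with the Milnor fibre $(h^*)^{-1}\bigl(b_0/n^nt\bigr)$, where $b_0\neq 0$ is the leading coefficient of the $b$-function appearing in (\ref{f**}). Thus $h^*(p^{(k)})=b_0/n^nt$ for every $k$. Since $h^*$ is homogeneous of positive degree, $h^*(0)=0\neq b_0/n^nt$, so by continuity of $h^*$ the sequence $p^{(k)}$ cannot converge to $0$. This contradicts $L=H_\infty$, proving the lemma.

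The only delicate point is the bookkeeping with projective duality: I must check that the non-vanishing of the zeroth dual coordinate places all the tangent hyperplanes, as well as the limiting hyperplane $H_\infty$, in the \emph{same} affine chart $U_0$, so that projective convergence $AT_{x^{(k)}}D_t\to H_\infty$ is genuinely equivalent to the affine convergence $p^{(k)}\to 0$. Once this identification is made, the argument reduces to the elementary fact that a nonzero-value fibre of the homogeneous polynomial $h^*$ stays away from the origin. Reductivity enters only through Remark \ref{aserendip} (hence through Lemma \ref{f*h} and Proposition \ref{serendip}, and the non-vanishing of $b_0$), which is precisely what guarantees the constant, nonzero value of $h^*$ along $D_t^\vee$.
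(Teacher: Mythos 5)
Your proof is correct and is essentially the paper's own argument: both rest on Remark \ref{aserendip} to identify the affine dual $D_t^\vee$ with the fibre $(h^*)^{-1}(b_0/n^nt)$, and then observe that $H_\infty=(1:0:\cd:0)$ cannot lie in the closure of that fibre because $h^*$ vanishes there. The paper phrases this by homogenizing the equation $h^*=c$ and checking that the projective closure misses $(1:0:\cd:0)$, while you work sequentially in the affine chart $U_0$; these are the same containment argument in different clothing.
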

\begin{proof}
  It is only necessary to show that $H_{\infty}$ does not lie in the projective closure of
  the dual $D_t^\vee$ of $D_t$. By Remark \ref{aserendip}, $D_t^{\vee}=(h^*)^{-1}(c)$ for
  some $c\neq 0$. Its projective closure is thus
   $\{(y_0:y_1:\ \cd\ :y_n)\in (\PP^n)^\vee:h^*(y_1,\ld,y_n)=cy_0^n\}$,
  which does not contain $H_\infty=(1:0:\ \cd\ :0)$.
\end{proof}
Let $\{X_{\a}\}_{\a\in A}$ be a
Whitney stratification of $\overline{D}_t$, with regular stratum $D_t$, and suppose
$\xo\in X_\a$.
By Whitney regularity, $L\supset AT_{\xo}X_{\a}$.  Clearly $T^{(0)}\subset L$.  As $L\neq
H_\infty$ then since $T^{(0)}\subset H_{\infty}$, for dimensional reasons we must have
$T^{(0)}= L\cap H_{\infty}$.  It follows that $T^{(0)}\supset AT_{\xo}X_\a$, and thus, by
Lemma \ref{Mond:tame3},
\begin{displaymath}
  B_f\supset AT_{\xo}X_\a.
\end{displaymath}
We have proved
\begin{proposition}
  \label{Mond:tame4} If $D=\{h=0\}$ is a reductive linear free divisor, $D_t=h^{-1}(t)$
  for $t\neq 0$,
  and $f:D_t\to \CC$ is not $M$-tame, then $B_f$ is not transverse to the Whitney
stratification
$\{X_{\a}\}_{\a\in A}$ of $\overline{D}_t$.\eop
\end{proposition}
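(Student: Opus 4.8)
The statement is an immediate consequence of the three preceding lemmas together with the Whitney regularity of the stratification, so the plan is simply to assemble them. I would argue by unwinding the definition of $M$-tameness: suppose $f\colon D_t\to\CC$ is \emph{not} $M$-tame, so that there is a sequence $(\xk)$ in $M_f$ with $\Vert\xk\Vert\to\infty$ and $f(\xk)\to\ell$ for some $\ell\in\CC$. Using the compactness of $\PP^n$, of the Grassmannian $G_{d-1}(\PP^n)$, and of the space of hyperplanes in $\PP^n$, I would pass to a subsequence along which the limits in \ref{Mond:tame}(iii)--(iv) exist, and along which in addition $AT_{\xk}D_t\to L$ for some limiting plane $L$. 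This is the only place where a genuine choice is made, and it is harmless, since transversality is tested pointwise at $\xo$.

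Having fixed such a subsequence, I would locate the limit point $\xo$. By the first lemma above, $\xo\in B_f\cap B_h$; in particular $\xo$ lies in $\overline{D}_t\cap H_\infty=B_h$, hence in some stratum $X_\a$ of the given Whitney stratification, and this stratum is contained in $H_\infty$. The three ingredients I would then invoke are: Lemma \ref{Mond:tame3} (linearity of $f$), which gives $T^{(0)}=B_f$; Lemma \ref{Mond:tame2} (here the reductivity hypothesis enters, through the identification of $D_t^\vee$ with a Milnor fibre of $h^*$ in Remark \ref{aserendip}), which gives $L\neq H_\infty$; and Whitney regularity of the pair $(D_t,X_\a)$, which forces $L\supset AT_{\xo}X_\a$.

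The heart of the argument is then a dimension count. Since $T^{(0)}\subset L$ and $T^{(0)}\subset H_\infty$ while $L\neq H_\infty$, the hyperplane $L$ is not contained in $H_\infty$, so $L\cap H_\infty$ is a hyperplane of $L$, of the same dimension as $T^{(0)}$; as $T^{(0)}\subseteq L\cap H_\infty$, this forces the equality
\begin{displaymath}
  T^{(0)}=L\cap H_\infty.
\end{displaymath}
Because $X_\a\subset H_\infty$ we have $AT_{\xo}X_\a\subset H_\infty$, and combining this with $AT_{\xo}X_\a\subset L$ yields $AT_{\xo}X_\a\subset L\cap H_\infty=T^{(0)}=B_f$. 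Thus the tangent space to the stratum $X_\a$ at $\xo$ is contained in $B_f$, which is precisely the failure of transversality of $B_f$ to $X_\a$ at $\xo$, proving that $B_f$ is not transverse to $\{X_\a\}_{\a\in A}$.

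I expect the main obstacle to be purely organisational: arranging that the several limits coexist on a single subsequence, and verifying the dimensional coincidence $\dim T^{(0)}=\dim(L\cap H_\infty)$ so that the inclusion $T^{(0)}\subseteq L\cap H_\infty$ upgrades to an equality. All of the substantive geometry has already been absorbed into Lemmas \ref{Mond:tame2} and \ref{Mond:tame3}; in particular the use of reductivity is entirely confined to Lemma \ref{Mond:tame2}.
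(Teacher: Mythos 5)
Your proposal is correct and follows essentially the same route as the paper: both assemble the preceding lemmas ($\xo\in B_f\cap B_h$, $T^{(0)}=B_f$, and $L\neq H_\infty$ via reductivity), invoke Whitney regularity to get $AT_{\xo}X_\a\subset L$, and conclude by the same dimension count $T^{(0)}=L\cap H_\infty$, which forces $AT_{\xo}X_\a\subset B_f$. The only difference is cosmetic: you make explicit the passage to a common subsequence and the fact that $X_\a\subset H_\infty$, both of which the paper leaves implicit.
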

Now we can prove the result concerning M-tameness of (reductive) linear free divisors.
\begin{theorem}
  \label{theoMTame}
  If $D$ is a reductive linear free divisor with
  homogeneous equation $h$, and if the linear function $f$ is $\s R_h$-finite, then the
  restriction of $f$ to $D_t$, $t\neq 0$ is $M$-tame.
\end{theorem}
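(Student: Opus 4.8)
The plan is to prove the theorem by contradiction, reducing $M$-tameness to a transversality property that follows directly from $\s R_h$-finiteness via Proposition~\ref{Mond:tame4}. So I would assume that $f_{|D_t}$ is not $M$-tame and fix the canonical (isosingular) Whitney stratification $\{X_\alpha\}_{\alpha\in A}$ of $\overline{D}_t$, with open stratum $D_t$. Proposition~\ref{Mond:tame4} then hands me a point $\xo\in B_f\cap B_h$ lying in a stratum $X_\alpha$ to which $B_f$ is not transverse, i.e. $B_f\supset AT_{\xo}X_\alpha$. The whole proof thus reduces to showing that, under the $\s R_h$-finiteness hypothesis, $B_f$ is actually transverse along $B_h$ to every stratum of this stratification, which gives the desired contradiction.

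Next I would identify the strata meeting $B_h$. Since $h$ is homogeneous of degree $n$, one has $\overline{D}_t\cap H_\infty=B_h=\PP(D)$, the projectivisation of the cone $D$. Because $D$ is a linear free divisor, $\Der(-\log D)$ is generated by the weight-zero fields $\xi_1,\ld,\xi_n$ arising from the infinitesimal $\GD$-action, so the logarithmic strata of $D$ are precisely the $\GD$-orbits, with $T_x(\GD\cdot x)=\Der(-\log D)(x)=\{\xi(x):\xi\in\Der(-\log D)\}$. Exactly as in the proof of Proposition~\ref{propCohomTame}, the stratum $X_\alpha$ through a point $(0:x)\in B_h$ contains the projectivised orbit $\PP(\GD\cdot x)$ as a smooth submanifold, so $T_{(0:x)}\PP(\GD\cdot x)\subset T_{(0:x)}X_\alpha$. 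Hence it is enough to prove that $B_f$ is transverse to each projectivised orbit $\PP(\GD\cdot x)$, since transversality to the smaller submanifold forces transversality to the larger stratum $X_\alpha$.

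Finally I would reduce this projective transversality to an affine, algebraic statement and read it off from the earlier results. For $x\neq0$ with $f(x)=0$, both $\GD\cdot x$ and $\ker f$ are cones through $x$, so transversality of $B_f=\PP(\ker f)$ to $\PP(\GD\cdot x)$ inside $H_\infty$ is equivalent to $\ker f\,\tr\,\GD\cdot x$ in $\CC^n$, i.e. to $d_xf|_{\Der(-\log D)(x)}\neq0$. Since $f$ is $\s R_h$-finite and $\GD$ is reductive, Theorem~\ref{finfin} makes $f$ also $\s R_D$-finite, and then Proposition~\ref{fincon}(i) gives $df(\Der(-\log D))\supset\mathfrak{m}_{V,0}$. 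As $df(\Der(-\log D))$ is the ideal generated by the linear forms $df(\xi_i)$, this inclusion forces those forms to span $V^\vee$, so their only common zero is $0$; thus for every $x\neq0$ some $d_xf(\xi_i(x))\neq0$, which is exactly the transversality sought, contradicting $B_f\supset AT_{\xo}X_\alpha$. The main obstacle I expect is the geometric identification in the middle step --- verifying that the strata of $\overline{D}_t$ along $H_\infty$ contain the projectivised $\GD$-orbits of the cone and that Whitney regularity lets me argue stratum by stratum --- but this is essentially the same input already established in Proposition~\ref{propCohomTame}, so I would lean on that.
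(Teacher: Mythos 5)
Your proof is correct and follows essentially the same route as the paper's: both reduce $M$-tameness via Proposition \ref{Mond:tame4} to transversality of $B_f$ to the strata at infinity, both use that the tangent spaces of the canonical Whitney strata contain the logarithmic (orbit) directions $\Der(-\log D)(x)$, and both deduce the required transversality from the finiteness hypothesis. The only cosmetic difference is that you route the finiteness through $\s R_D$-finiteness (Theorem \ref{finfin} and Proposition \ref{fincon}(i)) and work with $\GD$-orbits, whereas the paper derives the transversality statement directly from $\s R_h$-finiteness using $\Der(-\log h)$ --- these are equivalent at the relevant points, since where $f(x)=0$ the Euler field satisfies $E(x)=x\in\ker f$, so $\Der(-\log D)(x)\subset\ker d_xf$ if and only if $\Der(-\log h)(x)\subset\ker d_xf$.
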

\begin{proof}
  $\s R_h$-finiteness of $f$ implies that for all $x\in D\cap \{f=0\}\ssm\{0\}$,
  \begin{equation}
    \label{Mond:tame5}
    T_x\{f=0\}+\Der(-\log h)(x)=T_x\CC^n.
  \end{equation}
  The strata of the canonical Whitney stratification $\s S$
  (\cite{tei} and \cite[Corollary 1.3.3]{tei2}) of $D$
  are unions of isosingular loci. So for any $x\in X_{\a}\in\s S$, $T_xX_\a\supset
  \Der(-\log D)(x)$.
  It follows from
  (\ref{Mond:tame5}) that $\{f=0\}\,\tr\,\s S$.  Because $D$ is
  homogeneous, the strata of $\s S$ are homogeneous too, and so we may form the projective
  quotient stratification $\PP\s S$ of $B_h$.  Transversality of $\{f=0\}$ to $D$ outside
  $0$ implies that $B_f$ is transverse to $\PP\s S$.  The conclusion follows by
  Proposition \ref{Mond:tame4}.
\end{proof}
\begin{remark}
  Reductivity is needed in Lemma \ref{Mond:tame2} to conclude that $L\neq
  H_{\infty}$. Indeed, consider the example given by Broughton in \cite[Example 3.2]{brou} of a
  non-tame function on $\CC^2$, defined as $g(x_1,x_2)= x_1(x_1x_2-1)$. Homogenising this equation, we obtain
  $h(x_1,x_2,x_3)=x_1(x_1x_2-x_3^2)$, which is exactly the defining equation
  of the non-reductive linear free divisor \eqref{eqNonReductiveDim3}.
The sequence $\xk=(1/k,k^2,\sqrt{2k})$
  lies in $D_{-1}$ and tends to $x^{(0)}=(0:0:1:0)$ in $\PP^3$, and
  $AT_{\xk}D_{-1}$ has dual projective coordinates $(3:0:1/n^2:-2n^{-1/2})$ and thus tends
  to $H_\infty$ as $n\to\infty$.

  Notice that M-tameness might also hold for $\mathcal{R}_h$-finite linear functions for non-reductive
  linear free divisors, but, as just explained, the above proof does not apply.
\end{remark}

\section{Gau{\ss}-Manin systems and Brieskorn lattices}
\label{sec:GMBrieskorn}

In this section we introduce the family of Gau{\ss}-Manin systems and Brieskorn lattices
attached to an $\mathcal{R}_h$-finite linear section of the fibration defined by the
equation $h$. All along this section, we suppose that $h$ defines a linear free divisor.

Under this hypothese, we show the freeness of the Brieskorn lattice, and prove that a particular
basis can be found yielding a solution of the so-called Birkhoff problem.
The proof of the freeness relies on two facts, first, we need that the deformation
algebra $T^1_{\s R_h/\CC}f$ is generated by the powers of $f$ (this would follow
only from the hypotheses of lemma \ref{mond:lst1}) and second on a division
theorem, whose essential ingredient is lemma \ref{trick} below, which in turn
uses the relative logarithmic de Rham complex associated to a linear free divisor which was studied in subsection \ref{subsec:RelLogdeRham}.
The particular form of the connection that we obtain on the Brieskorn lattice allows us
to prove that a solution to the Birkhoff problem always exists. This solution defines an
extension to infinity (i.e., a family of trivial algebraic bundles on $\PP^1$) of the Brieskorn
lattice. However, these solutions miss a crucial property needed in the next section: The
extension is not compatible with the canonical $V$-filtration at infinity, in other words,
it is not a $V^+$-solution in the sense of \cite[Appendix B]{DS}. We provide a very
explicit algorithm to compute these $V^+$-solutions.  In particular, this gives the
spectral numbers at infinity of the functions $f_{|D_t}$.

Using the tameness of the functions $f_{|D_t}$ it is shown in \cite{Sa2} that the Gau\ss-Manin systems
are equipped with a non-degenerate pairing with a specific pole order property on the Brieskorn lattices. A solution
to the Birkhoff problem compatible with this pairing is called $S$-solution in
\cite[Appendix B]{DS}. One needs such a solution in order to construct
Frobenius structures, see the next section. We prove that our solution
is a $(V^+,S)$-solution  under an additional hypothesis,
which is nevertheless satisfied for many examples.

Let us start by defining the two basic objects we are interested in this section. We
recall that we work in the algebraic category.
\begin{definition}
  \label{GMBrieskorn}
  Let $D$ be a linear free divisor with defining equation $h\in\CC[V]_n$ and
  $f\in\CC[V]_1$ linear and $\mathcal{R}_h$-finite. Let
  \begin{displaymath}
    \mathbf{G} := \frac{\Omega^{n-1}(\log\,h)[\tau,\tau^{-1}]}{(d-\tau df\wedge)(\Omega^{n-2}(\log\,h)[\tau,\tau^{-1}])}
  \end{displaymath}
  be the family of {\em algebraic} Gau{\ss}-Manin systems of $(f,h)$ and
  \begin{displaymath}
    G := \textup{Image of }\Omega^{n-1}(\log h)[\tau^{-1}]\mbox{ in }\mathbf{G} \quad =
    \frac{\Omega^{n-1}(\log\,h)[\tau^{-1}]}{(\tau^{-1}d-df\wedge)(\Omega^{n-2}(\log\,h)[\tau^{-1}])}
  \end{displaymath}
  be the family of {\em algebraic} Brieskorn lattices of $(f,h)$.
\end{definition}
\begin{lemma}
  \label{BrieskornFree}
  $\mathbf{G}$ is a free $\CC[t,\tau, \tau^{-1}]$-module of rank $n$, and $G$ is free over
  $\CC[t,\tau^{-1}]$ and is a lattice inside $\mathbf{G}$, i.e.,
  $\mathbf{G}=G\otimes_{\CC[t,\tau^{-1}]}\CC[t,\tau, \tau^{-1}]$. A $\CC[t,\tau,\tau^{-1}]$-basis of $\mathbf{G}$
  (resp. a $\CC[t,\tau^{-1}]$-basis of $G$) is given by $(f^i \alpha)_{i\in\{0,\ldots,n-1\}}$, where
  $\alpha:=n\cdot\vol/dh=\iota_E(\vol/h)$.
\end{lemma}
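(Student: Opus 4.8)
The plan is to prove freeness of $\mathbf{G}$ and $G$ simultaneously by exhibiting the explicit basis $(f^i\alpha)_{i=0}^{n-1}$ and controlling how the differential $d$ and wedging with $df$ interact with this basis. First I would recall that by Proposition \ref{fincon}(iii)(b), the classes of $1,f,\ldots,f^{n-1}$ form a $\CC$-basis for $T^1_{\s R_h}f = \OO_{\CC^n,0}/(df(\Der(-\log h))+(h))$, and that $\alpha$ generates the rank-one $\CC[V]$-module $\Omega^{n-1}(\log h)$ (established just after equation (\ref{defal})). The key identity I expect to need is that for $\xi\in\Der(-\log h)$, wedging $df\wedge$ corresponds to the contraction $df(\xi)$ via the generator $\alpha$: since $\Omega^{n-1}(\log h)'=\{\omega:\iota_E\omega=0\}$ is free of rank $n-1$ with basis $\{\lambda_{\xi_i}=\iota_{\xi_i}\alpha\}$, one computes $df\wedge\lambda_\xi = df(\xi)\,\alpha$ up to the relation $\iota_E$, so that the image of $(\tau^{-1}d-df\wedge)$ on $\Omega^{n-2}(\log h)$ is governed precisely by the Jacobian ideal $df(\Der(-\log h))$.

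\textbf{The main computation.}

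The heart of the argument is a \emph{division theorem}: every element of $\Omega^{n-1}(\log h)[\tau^{-1}]$ should be reducible, modulo the image of $(\tau^{-1}d - df\wedge)$, to a $\CC[t,\tau^{-1}]$-combination of the $f^i\alpha$. To carry this out I would write an arbitrary form as $g\,\alpha$ with $g\in\CC[V][\tau^{-1}]$, and use the fact that $\CC[V]$ is generated, modulo $df(\Der(-\log h))+(h)$, by $1,f,\ldots,f^{n-1}$, together with the relation $h\equiv t$ coming from the relative situation, to rewrite $g$ as a combination of powers $f^i$ with coefficients in $\CC[t]$. The relations $\tau^{-1}d(g_1\,\lambda_\xi) - df\wedge(g_1\lambda_\xi)$ for $g_1\in\CC[V][\tau^{-1}]$ and $\xi$ ranging over a basis of $\Der(-\log h)$ are exactly what let me trade a form for a $\tau^{-1}$-shifted one while reducing its $f$-degree, so that induction on $\tau^{-1}$-order terminates. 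This shows the $f^i\alpha$ \emph{generate}; linear independence over $\CC[t,\tau^{-1}]$ follows because any relation, read in lowest $\tau^{-1}$-order, would force a nontrivial relation among $1,f,\ldots,f^{n-1}$ in $T^1_{\s R_h}f$, contradicting Proposition \ref{fincon}(iii)(b).

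\textbf{The expected obstacle.}

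The delicate point — and where I expect the real work to lie — is the division/reduction step controlling the $\tau^{-1}$-order: one must show that the reduction process does not merely shuffle terms but genuinely decreases an appropriate complexity measure, so that it halts after finitely many steps and produces a finite $\CC[t,\tau^{-1}]$-combination. This is precisely the ``division theorem'' flagged in the section's introduction as relying on Lemma \ref{trick}, which in turn uses the relative logarithmic de Rham complex of subsection \ref{subsec:RelLogdeRham}. The subtlety is that $d$ does not preserve the subspace $\Omega^\bullet(\log h)'=\{\iota_E\omega=0\}$ (the projector $P$ of (\ref{defd'}) is needed), so I must be careful that applying $d$ and then reducing stays inside the correct module; the weight-zero subcomplex result (Lemma \ref{wt0}) and the structure of $d'$ are what make this controllable. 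Once freeness of $G$ over $\CC[t,\tau^{-1}]$ with the stated basis is proved, the statements that $\mathbf{G}$ is free of rank $n$ over $\CC[t,\tau,\tau^{-1}]$ and that $G$ is a lattice with $\mathbf{G}=G\otimes_{\CC[t,\tau^{-1}]}\CC[t,\tau,\tau^{-1}]$ follow formally by inverting $\tau$, since the same basis $(f^i\alpha)$ serves both and localisation at $\tau$ is flat.
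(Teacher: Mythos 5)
Your proposal matches the paper's own proof in all essentials: generation is proved by a degree-decreasing reduction using Lemma \ref{trick} (the terminating complexity measure being the polynomial degree of the coefficient, which drops by one at each application), independence by examining the extreme $\tau$-power of a putative relation against the freeness of $\Omega^{n-1}(\log h)/df\wedge\Omega^{n-2}(\log h)$, and the statements about $\mathbf{G}$ and the lattice property by localisation in $\tau$. The one slight imprecision is that the independence step requires the $\CC[t]$-freeness of the relative module $T^1_{\s R_h/\CC}f$ with basis $1,f,\ldots,f^{n-1}$ (Propositions \ref{numcons} and \ref{fincon} combined), not merely the $\CC$-basis of $T^1_{\s R_h}f$ from Proposition \ref{fincon}(iii)(b), but this is exactly the graded statement the paper itself invokes.
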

\begin{proof}
  As it is clear that $\mathbf{G}=G\otimes\CC[t,\tau, \tau^{-1}]$, we only have to show
  that the family $(f^i \alpha)_{i\in\{0,\ldots,n-1\}}$ freely generates $G$. This is done along the line of \cite[proposition 8]{ignacio3}.
  Remember from the discussion in subsection \ref{subsec:RelLogdeRham} that $\Omega^{n-1}(\log
  h)$ is $\CC[V]$-free of rank one, generated by the form
  $\alpha$. If we denote, as before, by $\xi_1,\ldots,\xi_n$ a linear
  basis of $\Der(-\log\,h)$, then we have that
  \begin{displaymath}
    G/\tau^{-1}G \cong \frac{\Omega^{n-1}(\log h)}{df\wedge\Omega^{n-2}(\log h)} \cong
    \left(h_*\mathcal{T}^1_{\s R_h/\CC}f\right)\alpha = \left(\frac{\CC[V]}{\xi_1(f),\ldots,\xi_{n-1}(f)}\right)\alpha
  \end{displaymath}
  which is a graded free $\CC[t]$-module of rank $n=\deg(h)$ by proposition \ref{numcons}
  and proposition \ref{fincon}.  Let $1,f,f^2,\ldots, f^{n-1}$ be the homogeneous
  $\CC[t]$-basis of $h_*\mathcal{T}^1_{\s R_h/\CC}f$ constructed in proposition
  \ref{fincon}, and $\omega=g\alpha$ be a representative for a section $[\omega]$ of $G$,
  where $g\in\CC[V]_l$ is a homogeneous polynomial of degree $l$. Then $g$ can be written
  as $g(x)=\widetilde{g}(h)\cdot f^i+\eta(f)$ where $\widetilde{g}\in\CC[t]_{\lfloor
    l/n\rfloor}$, $i=l\textup{ mod } n$ and $\eta\in\Der(-\log h)$. Using the basis
  $\xi_1,\ldots,\xi_{n-1}$, we find homogeneous functions
  $k_j\in\CC[V]_{l-1}$, $j=1,\ldots,n-1$ such that
  \begin{displaymath}
    \omega=\widetilde{g}(h) f^i \alpha + \sum_{j=1}^{n-1} k_j\xi_j(f)\alpha
  \end{displaymath}
  It follows from the next lemma that in $\mathbf{G}$ we have
  \begin{displaymath}
    [\omega]=\widetilde{g}(h) f^i\alpha+ \tau^{-1}\sum_{j=1}^{n-1}
    \left(\xi_j(k_j)+\operatorname{trace}(\xi_j)\cdot k_j\right)\alpha
  \end{displaymath}
  As $\deg(\xi_j(k_j)+\operatorname{trace}(\xi_j)\cdot k_j)=\deg(g)-1$, we see by
  iterating the argument (i.e., applying it to all the classes
  $\left[(\xi_j(k_j)+\operatorname{trace}(\xi_j)\cdot k_j)\alpha\right]\in G$) that
  $(f^i\alpha)_{i=0,\ldots,n-1}$ gives a system of generators for $G$ over
  $\CC[t,\tau^{-1}]$.

	To show that they freely generate, let us consider a
  relation
  \begin{displaymath}
    \sum_{j=0}^{n-1} a_j(t,\tau^{-1}) f^j\alpha = (d-\tau df\wedge)
    \sum_{i=l}^L \tau^i \omega_i,~ \omega_i\in\Omega^{n-2}(\log h)
  \end{displaymath}
  where $l\leq L\leq 0$. Rewriting the left-hand side as a polynomial in $\tau^{-1}$ with
  coefficients in $\Omega^{n-1}(\log h)$, the above equation becomes
  \begin{equation}
    \label{Ig:eq:11}
    \sum_{i=l}^{L+1} \tau^{i} \eta_i = (d-\tau df\wedge) \sum_{i=l}^L \tau^i \omega_i
  \end{equation}
  where we have written $\eta_i = \sum_{j=0}^{n-1} b_{ij}(t)(f^j\alpha)$.
	It follows that $\eta_{L+1}=df\wedge \omega_L\in df\wedge\Omega^{n-2}(\log\,h)$.
	Since $(f^j\alpha)_{j=0,\ldots,n-1}$ form a $\CC[t]$-basis of the quotient $\Omega^{n-1}(\log
  h)/df\wedge\Omega^{n-2}(\log h)$, it follows that $b_{{L+1},j}=0$ for $j=0,\dots,n-1$.
	Hence $\eta_{L+1}=0$, and we see by descending induction on $L$ that $\eta_i=0$ for
	any $i\in\{l,\ldots,L+1\}$. This shows $a_j=0$ for all $j=0,\ldots,n-1$, so that
	the relation is trivial, showing the $\CC[t,\tau^{-1}]$-freeness of $G$.
\end{proof}
\begin{lemma}
  \label{trick}
  For any $\xi$ in $\Der(-\log h)_0$ and $g\in\CC[V]$, the following relation holds in
  $\mathbf{G}$
  \begin{displaymath}
    \tau g \xi(f)\alpha = \left(\xi(g)+ g \cdot\operatorname{trace} (\xi)\right)\alpha
  \end{displaymath}
\end{lemma}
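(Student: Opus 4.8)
The plan is to exhibit the difference of the two sides as the image under $(d-\tau\,df\wedge)$ of a single explicit relative $(n-2)$-form, so that it vanishes in $\mathbf{G}$ by the very definition of the quotient. The natural candidate is $\omega:=g\,\iota_\xi\alpha\in\Omega^{n-2}(\log h)$, where $\alpha$ is the rank-one generator of $\Omega^{n-1}(\log h)$ from subsection \ref{subsec:RelLogdeRham}. This choice is essentially forced by the shape of the claim: applying $(d-\tau\,df\wedge)$ to a $\tau$-independent form produces one term linear in $\tau$ (coming from $df\wedge\omega$) and one term of $\tau$-degree zero (coming from $d\omega$), and these should match the left- and right-hand sides respectively.

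First I would compute the $\tau$-linear term. Since $\Omega^n(\log h)=0$, the relative $n$-form $df\wedge\alpha$ vanishes; contracting it with $\xi$ and using the interior-product Leibniz rule $\iota_\xi(df\wedge\alpha)=\xi(f)\,\alpha-df\wedge\iota_\xi\alpha$ gives $df\wedge\iota_\xi\alpha=\xi(f)\,\alpha$, hence $-\tau\,df\wedge\omega=-\tau\,g\,\xi(f)\,\alpha$. Next I would compute $d\omega=dg\wedge\iota_\xi\alpha+g\,d(\iota_\xi\alpha)$. The first summand is treated exactly as above: contracting the vanishing $n$-form $dg\wedge\alpha$ yields $dg\wedge\iota_\xi\alpha=\xi(g)\,\alpha$. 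For the second summand I need the identity $d(\iota_\xi\alpha)=\operatorname{trace}(\xi)\,\alpha$ in $\Omega^{n-1}(\log h)$, which is precisely what was extracted in the proof of Lemma \ref{dl0}: because $\vol/h$ has weight zero and is a top-degree (hence closed) form, Cartan's formula gives $d\alpha=d\iota_E(\vol/h)=L_E(\vol/h)=0$, so $d\iota_\xi\alpha=L_\xi\alpha$; since $\xi\in\Der(-\log h)_0$ the form $\lambda_\xi=\iota_\xi\alpha$ has weight zero, so $d\lambda_\xi$ is a scalar multiple $c\,\alpha$ of the weight-zero generator, and pairing with $dh$ together with $L_\xi(\vol)=\operatorname{trace}(\xi)\,\vol$ identifies $c=\operatorname{trace}(\xi)$. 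Thus $d\omega=\bigl(\xi(g)+g\operatorname{trace}(\xi)\bigr)\alpha$.

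Assembling, $(d-\tau\,df\wedge)\omega=\bigl(\xi(g)+g\operatorname{trace}(\xi)\bigr)\alpha-\tau\,g\,\xi(f)\,\alpha$ lies in the submodule defining $\mathbf{G}$, so it is zero there, which is exactly the asserted relation $\tau\,g\,\xi(f)\,\alpha=\bigl(\xi(g)+g\operatorname{trace}(\xi)\bigr)\alpha$. I expect the only delicate point to be the bookkeeping behind $d(\iota_\xi\alpha)=\operatorname{trace}(\xi)\,\alpha$: one must make genuine use of the weight-zero hypothesis on $\xi$, which guarantees that $\lambda_\xi$ has weight zero, so that its relative and ordinary exterior derivatives agree by Lemma \ref{wt0} and $d\lambda_\xi$ is forced to be a scalar times $\alpha$. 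By contrast, no homogeneity of $g$ is required. Everything else is a routine application of the interior-product Leibniz rule together with the vanishing of relative $n$-forms.
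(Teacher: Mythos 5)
Your proof is correct and is essentially the paper's own argument: the paper establishes the same chain of identities in $\mathbf{G}$, beginning with $\tau g\,\xi(f)\,\alpha = \tau g\, df\wedge \iota_\xi\alpha = d(g\,\iota_\xi\alpha)$, which is precisely your formulation that $(d-\tau\, df\wedge)(g\,\iota_\xi\alpha)$ vanishes in $\mathbf{G}$. The key ingredients — using $\Omega^n(\log h)=0$ to obtain $df\wedge\iota_\xi\alpha=\xi(f)\,\alpha$ and $dg\wedge\iota_\xi\alpha=\xi(g)\,\alpha$, and the weight-zero identity $d\iota_\xi\alpha=\operatorname{trace}(\xi)\,\alpha$ extracted from the proof of Lemma \ref{dl0} — are the same in both arguments.
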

\begin{proof}
  We have that
  \begin{align*}
    \tau g \xi(f)\alpha &= \tau g i_\xi(df)\alpha =\tau g (i_\xi(df\wedge\alpha)+df\wedge
    i_\xi\alpha)= \tau g df\wedge i_\xi\alpha \\
    &= d(gi_\xi \alpha) = dg \wedge i_\xi \alpha + gd i_\xi \alpha =i_\xi(dg \wedge \alpha) + dg \wedge i_\xi \alpha  + gd i_\xi \alpha \\
    &=i_\xi(dg)\alpha
    + gd i_\xi \alpha=\xi(g)\alpha + gd i_\xi \alpha \\
    &=\left(\xi(g)+ g \cdot \operatorname{trace}(\xi)\right)\alpha.
  \end{align*}
  In this computation, we have twice used the fact that for any function $r\in\CC[V]$, the
  class $i_\xi(dr\wedge\alpha)$ is zero in $\Omega^{n-1}(\log h)$. This holds because for
  $\xi\in \Der(-\log h)$ and for $r\in\CC[V]$ the operations $i_\xi$ and $dr\wedge$ are
  well defined on $\Omega^\bullet(\log h)$ and moreover, $\Omega^n(\log h)=0$, so that
  already $dr\wedge \alpha=0\in\Omega^\bullet(\log h)$.
\end{proof}
We denote by $T:=\textup{Spec}\,\CC[t]$ the base of the family defined by $h$.  Then
$\mathbf{G}$ corresponds to a rational vector bundle of rank $n$ over $\PP^1\times T$,
with poles along $\{0,\infty\}\times T$. Here we consider the two standard charts of
$\PP^1$ where $\tau$ is a coordinate centered at infinity. The module $G$ defines an extension over
$\{0\}\times T$, i.e., an algebraic bundle over $\CC\times T$ of the same rank as
$\mathbf{G}$.

We define a (relative) connection operator on $\mathbf{G}$ by
\begin{displaymath}
  \nabla_{\partial_\tau}\left(\sum_{i=i_0}^{i_1} \omega_i\tau^i\right):=\sum_{i=i_0-1}^{i_1}
  \left((i+1)\omega_{i+1}-f\cdot\omega_i\right)\tau^i
\end{displaymath}
where $\omega_{i_1+1}:=0, \omega_{i_0-1}:=0$. Then it is easy to check that this gives a
well defined operator on the quotient $\mathbf{G}$, and that it satisfies the Leibniz
rule, so that we obtain a relative connection
\begin{displaymath}
  \nabla:\mathbf{G}\longrightarrow\mathbf{G}\otimes\Omega^1_{\CC \times T /T}(*\{0\}\times T).
\end{displaymath}
As multiplication with $f$ leaves invariant the module $\Omega^{n-1}(\log h)$, we see that
the operator $-\nabla_{\partial_\tau}$ sends $G$ to itself, in other words, $G$ is stable
under
$-\nabla_{\partial_\tau}=\tau^{-2}\nabla_{\partial_{\tau^{-1}}}=\theta^2\nabla_{\partial_\theta}$,
where we write $\theta:=\tau^{-1}$. This shows that the relative connection $\nabla$ has a
pole of order at most two on $G$ along $\{0\}\times T$ (i.e., along $\tau=\infty$).

Consider, for any $t\in T$, the restrictions $\mathbf{G}_t:=\mathbf{G}/\mathfrak{m}_t
\mathbf{G}$ and $G_t:=G/\mathfrak{m}_t G$.  Then $\mathbf{G}_t$ is a free
$\CC[\tau,\tau^{-1}]$-module and $G_t$ is a $\CC[\tau^{-1}]$-lattice in it.  It follows
from the definition that if $t\neq 0$, this is exactly the (localized partial Fourier-Laplace
transformation of the) Gau{\ss}-Manin system (resp. the Brieskorn lattice) of the function
$f:D_t\rightarrow \CC$, as studied in \cite{Sa2}. We will make use of the results of
loc.cit. applied to $f_{|D_t}$ in the sequel. Let us remark that the freeness of the
individual Brieskorn lattices $G_t$ (and consequently also of the Gau{\ss}-Manin systems
$\mathbf{G}_t$) follows from the fact that $f_{|D_t}$ is cohomological tame (\cite[theorem
10.1]{Sa2}). In our situation we have the stronger statement of lemma \ref{BrieskornFree},
which gives the $\CC[\tau^{-1},t]$-freeness of the whole module $G$.

Our next aim is to consider the so-called Birkhoff problem, that is, to find a basis
$\underline{\omega}^{(1)}$ of $G$ such that the connection take the particularly simple
form
\begin{displaymath}
  \partial_\tau(\underline{\omega}^{(1)}) =
  \underline{\omega}^{(1)}\cdot\left(\Omega_0+\tau^{-1} A_\infty\right),
\end{displaymath}
(from now on, we write $\partial_\tau$ instead of $\nabla_{\partial_\tau}$ for short)
where we require additionally that $A_\infty$ is diagonal. We start with the basis
$\underline{\omega}^{(0)}$, defined by
\begin{equation}\label{eqOmega0}
  \omega^{(0)}_i:=(-f)^{i-1}\cdot \alpha
  \quad\forall i\in\{1,\ldots,n\}.
\end{equation}
Then we have $\partial_\tau(\omega^{(0)}_i)=\omega^{(0)}_{i+1}$ for all
$i\in\{1,\ldots,n-1\}$ and
\begin{displaymath}
  \partial_\tau(\omega^{(0)}_n)=(-f)^n\alpha.
\end{displaymath}
As $\deg((-f)^n)=n$, $(-f)^n$ is a non-zero multiple of $h$ in the Jacobian algebra
$\CC[V]/(df(Der(-\log h)))$, so that we have an expression $(-f)^n=c_0\cdot h +
\sum_{j=1}^{n-1}d^{(1)}_j\xi_j(f)$, where $c_0\in\CC^*$, $d^{(1)}_j\in\CC[V]_{n-1}$. This
gives by using lemma \ref{trick} again that
\begin{displaymath}
  \partial_\tau(\omega^{(0)}_n)=(-f)^n\alpha =
  \left(c_0 t+\sum_{j=1}^{n-1}d^{(1)}_j \xi_j(f)\right)\alpha=
  \left(c_0 t+\tau^{-1}\sum_{j=1}^{n-1} \left(\xi_j(d^{(1)}_j)+\operatorname{trace}(\xi_j)\right)\right)\alpha.
\end{displaymath}
As $\deg\left(\xi_j(d^{(1)}_j)+\operatorname{trace}(\xi_j)\right)=n-1$, there exist
$c_1\in\CC$ and $d^{(2)}_r \in\CC[V]_{n-2}$ such that
\begin{align*}
    \left(\sum_{j=1}^{n-1}
      \left(\xi_j(d^{(1)}_j)+\operatorname{trace}(\xi_j)\right)\right)\alpha
    &=  \left(c_1 (-f)^{n-1}+\sum_{r=1}^{n-1}d^{(2)}_r\xi_r(f)\right)\alpha \\
    &= \left(c_1 (-f)^{n-1}+ \tau^{-1}\sum_{r=1}^{n-1}\left(\xi_r(d^{(2)}_r)
        +\operatorname{trace}(\xi_r)d^{(2)}_r\right)\right)\alpha,
\end{align*}
and $\deg\left(\xi_r(d^{(2)}_r)+\operatorname{trace}(\xi_r)d^{(2)}_r\right)=n-2$. We see
by iteration that the connection operator $\partial_\tau$ takes the following form with
respect to $\underline{\omega}^{(0)}$:
\begin{equation}
  \label{Ig:eq:1}
  \partial_\tau(\underline{\omega}^{(0)}) =
  \underline{\omega}^{(0)}
  \cdot
  \begin{pmatrix}
    0&0&\ldots &0&c_0t+c_n\tau^{-n}\\
    1   &0&\ldots&0&c_{n-1}\tau^{-n+1}\\
    \hdotsfor{5}\\
    0 &0 & \ldots &0& c_2\tau^{-2}\\
    0&0& \ldots& 1&c_1\tau^{-1}
  \end{pmatrix}=:\underline{\omega}^{(0)}\cdot \Omega =:\underline{\omega}^{(0)}\cdot
  \left(\sum_{k=0}^{n}\Omega_{k} \tau^{-k}\right).
\end{equation}
Notice that if $D$ is special then $c_n=0$, i.e., $\Omega_n=0$.

The matrix
$\Omega_0$ has a very particular form, due to the fact that the jacobian algebra
$h_*\mathcal{T}^1_{\mathcal{R}_h/\CC} f$ is generated by the powers of $f$.
Notice also that the restriction $(\Omega_0)_{|t=0}$ is nilpotent, with a single
Jordan block with eigenvalue zero. This reflects the fact that
$(G_0,\nabla)$ is \emph{regular singular} at $\tau=\infty$, which is not the
case for any $t\neq 0$. Remember that although $D$ is singular itself,
so that it is not quite true that there is only one critical value of $f$ on $D$,
we have that $f$ is regular on $D\backslash\{0\}$ in the stratified sense (see
the proof of proposition \ref{propCohomTame}).

The particular form of the matrix $\Omega_0$
is the key ingredient to solve the Birkhoff problem, which can actually be done by a
triangular change of basis.
\begin{lemma}
  \label{lemBirkhoff}
  There exists a base change
  \begin{equation}
    \label{eqBirkhoff}
    \omega^{(1)}_j:=\omega_j^{(0)}+\sum_{i=1}^{j-1}b_i^j\tau^{-i}\omega_{j-i}^{(0)},
  \end{equation}
  such that the matrix of the connection with respect to $\underline{\omega}^{(1)}$ is
  given by
  \begin{displaymath}
    \Omega_0 + \tau^{-1}A_{\infty},
  \end{displaymath}
  where $A^{(1)}_\infty$ is diagonal. Moreover, if $D$ is special, then $b_i^{j}$ can be chosen such that
  $b_i^{i+1}=0$ for $i=1,\dots,n-1$.
\end{lemma}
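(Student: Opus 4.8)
The plan is to realize the prescribed triangular substitution \eqref{eqBirkhoff} directly, exploiting the grading under which $\omega^{(0)}_i$ has weight $i-1$, $\tau^{-1}$ has weight $1$ and $t$ has weight $n$. Under this grading $\partial_\tau$ is homogeneous of degree $+1$ and the matrix $\Omega$ of \eqref{Ig:eq:1} is homogeneous, so it is natural to seek a base change in which every coefficient $b^j_i$ is a \emph{constant}; then the matrix $P$ of \eqref{eqBirkhoff} is polynomial in $\tau^{-1}$ of degree at most $n-1$ and unipotent, hence invertible over $\CC[t,\tau^{-1}]$, so that $\underline{\omega}^{(1)}$ is again a basis of $G$. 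Since a change of basis $\underline{\omega}^{(1)}=\underline{\omega}^{(0)}P$ transforms the connection matrix into $P^{-1}\Omega P+P^{-1}\partial_\tau P$, what must be shown is that constants $b^j_i$ and $a_1,\ldots,a_n$ can be chosen so that this equals $\Omega_0+\tau^{-1}A_\infty$ with $A_\infty=\diag(a_1,\ldots,a_n)$.

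Rather than solve this matrix equation head on, I would build the new basis recursively. Set $\omega^{(1)}_1:=\omega^{(0)}_1=\alpha$ and define, for $j=1,\ldots,n-1$,
\begin{displaymath}
  \omega^{(1)}_{j+1}:=\partial_\tau\omega^{(1)}_j-a_j\tau^{-1}\omega^{(1)}_j .
\end{displaymath}
Using $\partial_\tau\omega^{(0)}_i=\omega^{(0)}_{i+1}$ together with $\partial_\tau(\tau^{-i}\omega^{(0)}_k)=-i\tau^{-i-1}\omega^{(0)}_k+\tau^{-i}\omega^{(0)}_{k+1}$, an easy induction shows that each $\omega^{(1)}_j$ is homogeneous of weight $j-1$ and has exactly the shape \eqref{eqBirkhoff} with constant coefficients. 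By its very definition this recursion forces the first $n-1$ columns of the transformed connection matrix to coincide with those of $\Omega_0+\tau^{-1}A_\infty$, \emph{for any} choice of $a_1,\ldots,a_{n-1}$. The whole problem thus collapses to the single \emph{closing condition} in the last column, namely that $\partial_\tau\omega^{(1)}_n$ must equal $c_0 t\,\omega^{(1)}_1+a_n\tau^{-1}\omega^{(1)}_n$.

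The heart of the matter is to meet this closing condition. Expanding both sides in the basis $\underline{\omega}^{(0)}$ (both are homogeneous of weight $n$, and $\partial_\tau\omega^{(1)}_n\in G$ because $-\partial_\tau$ preserves $G$), the coefficient of $c_0t\,\alpha$ matches automatically, being the companion corner of $\Omega_0$ and independent of the $a_j$, while the $\tau^{-1}\omega^{(0)}_n$ term yields the trace relation $\sum_j a_j=c_1$ (fixing $a_n$), and comparison of the remaining $\tau^{-k}\omega^{(0)}_{n+1-k}$ terms produces $n-1$ scalar equations in $a_1,\ldots,a_{n-1}$. Computing the top-row coefficients along the recursion gives $b^{j+1}_j=-(j-1+a_j)\,b^j_{j-1}$, whence $b^n_{n-1}=(-1)^{n-1}\prod_{l=1}^{n-1}(l-1+a_l)$, and the deepest-pole equation reads $(n-1+a_n)\,b^n_{n-1}=c_n$, i.e. $\prod_{l=1}^{n}(l-1+a_l)=(-1)^{n-1}c_n$. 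I would then argue that this square system is solvable. The key structural input is that $\Omega_0$ is the companion matrix of the cyclic vector $\alpha$ (its single subdiagonal of $1$'s), which ensures that at each pole order the obstruction lies in the one-dimensional cokernel of $\operatorname{ad}(\Omega_0)$ and is absorbed precisely by the corresponding diagonal entry $a_j$. Establishing solvability of this coupled system is the main obstacle, and it is exactly what the grading and the companion form are there to supply.

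Finally, for the ``moreover'': if $D$ is special then $c_n=0$, so the deepest-pole equation becomes $\prod_{l=1}^{n}(l-1+a_l)=0$. It then suffices to select the solution branch with $a_1=0$; since the factor at $l=1$ is $(1-1+a_1)=a_1$, which divides every top-row coefficient through the recursion $b^{j+1}_j=-(j-1+a_j)\,b^j_{j-1}$, this choice forces $b^{i+1}_i=0$ for all $i=1,\ldots,n-1$. Moreover, imposing $a_1=0$ makes the deepest-pole equation hold identically, so it removes one equation and one unknown at once, leaving the remaining $n-2$ closing equations in $a_2,\ldots,a_{n-1}$, solvable by the same argument. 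This produces the desired base change with $b^{i+1}_i=0$.
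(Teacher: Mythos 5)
Your setup is sound and is essentially a re-parametrization of the paper's: your recursion $\omega^{(1)}_{j+1}=\partial_\tau\omega^{(1)}_j-a_j\tau^{-1}\omega^{(1)}_j$ produces exactly the base changes of the form \eqref{eqBirkhoff} with constant coefficients (your $a_j$ corresponds to $b_1^j-b_1^{j+1}$ in the paper's notation, $b_1^j$ being the entries of the block $B_1$), the first $n-1$ columns of the new connection matrix are correct for free, and your trace relation $\sum_j a_j=c_1$ and deepest-pole equation $\prod_{l=1}^{n}(l-1+a_l)=(-1)^{n-1}c_n$ are both right. But there is a genuine gap exactly where you acknowledge it: you never prove solvability of the remaining closing equations in $a_1,\ldots,a_{n-1}$. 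The heuristic you offer --- that at each pole order the obstruction lies in the one-dimensional cokernel of $\operatorname{ad}(N)$ (the nilpotent shift; the corner term $C_0$ of $\Omega_0$ commutes with the superdiagonal blocks, so this is the relevant operator) and is ``absorbed'' by the corresponding diagonal entry --- is only a dimension count. The obstructions depend \emph{polynomially}, not linearly, on the unknowns, and a square polynomial system over $\CC$ need not be consistent (consider $x=0$, $x=1$); matching the number of equations with the number of free parameters proves nothing.

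What closes this gap in the paper is a structural fact your proposal is missing: the obstruction system is \emph{triangular}. Writing $Q^i$ for the scalar obstruction to solving $[B_{i+1},N]=P^i$ (the sum of the entries of $P^i$, see \eqref{Ig:eq:9}), the paper shows by back-substitution of \eqref{Ig:eq:10} into \eqref{Ig:eq:9} that
\begin{displaymath}
Q^{i}=-(b_1^{i+1})^{i+1}+\text{lower order terms in }b_1^{i+1}\text{ with coefficients involving only }b_1^k,\ k>i+1,
\end{displaymath}
so that $Q^{n-1}$ is a nonconstant polynomial in $b_1^n$ alone, and the equations $Q^{n-1}=0$, $Q^{n-2}=0,\ldots$ are solved one variable at a time by the fundamental theorem of algebra. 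An argument of this kind (in your variables: that the closing equation at each pole order is, after substitution, a nonconstant polynomial in a single new unknown $a_j$) is indispensable; without it both your main claim and your ``moreover'' part remain unproven, since the latter again invokes ``the same argument'' for the residual system in $a_2,\ldots,a_{n-1}$. Your observation that in the special case ($c_n=0$) choosing $a_1=0$ kills all $b^{i+1}_i$ via $b^{j+1}_j=-(j-1+a_j)\,b^j_{j-1}$ is correct and mirrors the paper's remark that $\Omega_n=0$ lets one impose $b_i^{i+1}=0$ from the start --- but it too rests on the solvability you have not established.
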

\begin{proof}
  Let us regard $b_i^j$ as unknown constants to be determined and then let
  \begin{displaymath}
    B:=(b^j_{j-k}\tau^{k-j})_{kj}=:\sum_{i=0}^{n-1}B_i\tau^{-i}=\Id+\sum_{i=1}^{n-1} B_i\tau^{-i}.
  \end{displaymath}
  Here $b^i_j=0$ for $j<0$.  Notice that $B_i$ is a matrix whose only non-zero entries are
  in the position $(j,j+i)$ for $j=1,\dots,n-i$. 

  The matrix of the action of $\partial_\tau$ changes according to the formula:
  \begin{equation}
    \label{Ig:eq:5}
    X := B^{-1}\cdot \frac{dB}{d\tau} + B^{-1}\Omega B =:\sum_{i=0}^{n} X_i\tau^{-i}.
  \end{equation}
  Multiplying by $B$ both sides of the above equation we find
  \begin{equation}
    \label{Ig:eq:6}
    BX=\sum_{i=0}^{n} \left(\sum_{j=0}^i B_jX_{i-j} \right)\tau^{-i}=\sum_{i=1}^{n}\left
    (-(i-1)B_{i-1} + \Omega_0 B_i+\Omega_i\right)\tau^{-i} + \Omega_0.
  \end{equation}
  where $B_{-1}:=0$. Let $N=(n_{ij})$ be the matrix with $n_{ij}=1$ if $j=i-1$ or $0$
  otherwise. Hence $\Omega_0=N+C_0$ where $C_0$ is the matrix whose only non-zero entry is
  $c_0t$ in the right top corner. It follows that $X_0=\Omega_0$ and that
  \begin{equation}
    \label{Ig:eq:7}
    X_i= -\sum_{j=1}^{i-1} B_jX_{i-j} - \left[ B_i,N \right] - (i-1)B_{i-1} +  \Omega_{i}.
  \end{equation}
  We are looking for a solution to the system $X_1=A^{(1)}_\infty$, $X_i=0$, $i=2,\dots,n$,
  where $A^{(1)}_\infty$ is diagonal with entries yet to be determined. In view of the above,
  this system is equivalent to:
  \begin{align}
    \label{Ig:eq:8}
    \begin{split}
      X_1&=-[B_1,N]+\Omega_1 = A^{(1)}_\infty,\\
      [B_{i+1},N]&=-B_{i}X_1-iB_i+\Omega_{i+1},~i=1,\dots,n-1.
    \end{split}
  \end{align}
  We are going to show that this system of polynomial equations in the variables $b^j_i$
  can always be reduced to a triangular system in $b^j_1$, so that there exists a
  solution. In particular, this determines the entries of the diagonal matrix
  $-[B_1,N]+\Omega_1$, i.e., the matrix $A^{(1)}_\infty$ we are looking for.

  A direct calculation shows that if we substitute the first equation of \eqref{Ig:eq:8}
  into the right hand side of the second one, we obtain
  $[B_{i+1},N]=B_{i}([B_1,N]-\Omega_1+i\Id)+\Omega_{i+1}=:P^i$, where the only non-zero
  coefficients of the matrix $P^i$ are $P^i_{j,i+j}$, namely:
  \begin{align}
    \label{Ig:eq:9}
    \begin{split}
      P^i_{j,i+j}&=b^{i+j}_i(b_1^{i+j+1}-b_1^{i+j}+i),j=1,\dots,n-i-1,\\
      P^i_{n-i,n}&=b_i^n(-b_1^n-c_1+i)+c_{i+1}.
    \end{split}
  \end{align}
  A matrix $B_{i+1}$ satisfying $[B_{i+1},N]=P^i$ exists if and only if
  $Q^{i}:=\sum_{j=1}^{n-i} P_{j,i+j}^{i}=0$, and if this is case, the solution is given by
  setting:
  \begin{equation}
    \label{Ig:eq:10}
    b_{i+1}^{i+k+1}=-\sum_{j=k+1}^{n-i} P^i_{j,j+i},~k=1,\dots,n-i-1.
  \end{equation}
  For $i=1$ and $j=1,\dots,n-1$, we have from \eqref{Ig:eq:9}
  \begin{displaymath}
    P_{j,j+1}^{1} =
    -(b_1^{j+1})^2+\text{lower degree terms in $b_1^{j+1}$ with coefficients in $b_1^k,k>j+1$}
  \end{displaymath}
  so that substituting \eqref{Ig:eq:10} in \eqref{Ig:eq:9} for $i=2$
  \begin{displaymath}
    P^2_{j,2+j}=-(b_1^{j+2})^3
    + \text{lower degree terms in $b_1^{j+2}$ with coefficients in $b_k^1,~k>j+2$}
  \end{displaymath}
  By induction we see that after substitution we have
  \begin{displaymath}
    P^i_{j,i+j}=-(b_1^{i+j})^{i+1}
    + \text{lower degree terms in $b_1^{i+j}$ with coefficients in $b^k_1,~k>i+j$}
  \end{displaymath}
  from which it follows that
  \begin{displaymath}
    Q^{i}=-(b_1^{i+1})^{i+1}
    + \text{lower degree terms in $b_1^{i+1}$ with coefficients in $b_1^k,~k>i+1$}
  \end{displaymath}
  The system $Q^{i}=0,i=1,\dots,n-1$ is triangular (e.g.,
  $Q^{n-1}\in\CC[b_1^n]$) and thus has a solution.

  In the case where $D$ is special, the vanishing of $\Omega_n$ can be used to
  set  $b_i^{i+1}=0$ from the start. The above proof then works verbatim.
\end{proof}
Notice that we can assume by a change of coordinates on $T$ that the non-zero constant
$c_0$ is actually normalized to $1$. We will make this assumption from now on.

In the next section, we are interested in constructing Frobenius structures associated
to the tame functions $f_{|D_t}$ and to study their limit behaviour when $t$ goes to zero.
For that purpose, it is desirable to complete the relative connection $\nabla$ from above
to an absolute one, which will acquire an additional pole at $t=0$.
Although such a definition exists in general, we will give it in the
reductive case only. The reason for this is that in order to obtain an
explicit expression for this connection, we will need the special form of the relative connection in the basis $\underline{\omega}^{(1)}$ as
well as theorem \ref{nbi}, which is valid in the reductive case only.
It is, however, true that formula \eqref{horizontalconnection} defines an integrable connection on $\mathbf{G}$
in all cases, more precisely, it defines the (partial Fourier-Laplace transformation of the) Gau\ss-Manin connection
for the complete intersection given by the two functions $(f,h)$. We will not discuss this in detail here.

The completion of the relative connection $\nabla$ on $\mathbf{G}$ refered to above is given by the formula
\begin{equation}
  \label{horizontalconnection}
  \nabla_{\partial_t}(\omega)  :=  \frac{1}{n\cdot t} \left(L_E(\omega)- \tau L_E(f) \cdot \omega \right),
\end{equation}
for any $[\omega]\in\Omega^{n-1}(\log h)$ and extending $\tau$-linearly. One checks that
$$
(t \nabla_{\partial_t})\left((\tau^{-1}d- df\wedge)(\Omega^{n-2}(\log\,h)[\tau^{-1}])\right)\subset (\tau^{-1}d- df\wedge)(\Omega^{n-2}(\log\,h)[\tau^{-1}]),
$$
so that we obtain operator
\begin{equation}
\label{eqIntConnection}
  \nabla: G \longrightarrow G \otimes \tau\Omega^1_{\CC\times T}(\log \mathcal{D}),
\end{equation}
where $\mathcal{D}$ is the divisor $(\{0\} \times T) \cup (\CC \times \{0\})
\subset \CC \times T$.

\begin{proposition}\label{propVerticalConnectionReductive}
  Let $D$ be reductive.
Then:
  \begin{enumerate}
  \item The elements of the basis $\underline{\omega}^{(1)}$ constructed above can be
    represented by differential forms $\omega_i^{(1)}=[g_i\alpha]$ with $g_i$ homogeneous
    of degree $i=0,\dots,n-1$, i.e., by elements outside of $\tau^{-1}\Omega^{n-1}(\log
    h)[\tau^{-1}]$.
    \item
    The connection operator defined above is flat outside $\theta=0, t=0$.
    We denote by $\mathbf{G}^\nabla$ the corresponding local system and by $\mathbf{G}^\infty$ its space of
    multivalued flat sections.
	\item Consider the Gau\ss-Manin system, localized at $t=0$, i.e.
    $$
        \mathbf{G}[t^{-1}]:=\mathbf{G}\otimes_{\CC[\tau,\tau^{-1},t]}\CC[\tau,\tau^{-1},t,t^{-1}] \cong \frac{\Omega^{n-1}_{V/T}(*D)[\tau,\tau^{-1}]}{(d-\tau df\wedge)\Omega^{n-2}_{V/T}(*D)[\tau,\tau^{-1}]}.
    $$
    and similarly, the localized Brieskorn lattice
    $$
        G[t^{-1}]:=G\otimes_{\CC[\tau^{-1},t]}\CC[\tau^{-1},t,t^{-1}] \cong \frac{\Omega^{n-1}_{V/T}(*D)[\tau^{-1}]}{(\tau^{-1}d-df\wedge)\Omega^{n-2}_{V/T}(*D)[\tau^{-1}]}
        \subset \mathbf{G}[t^{-1}].
    $$
	Then $\underline{\omega}^{(1)}$ provides a solution to the Birkhoff problem
	for $(G[t^{-1}],\nabla)$ ``in a family'', i.e., an extension to a trivial algebraic bundle $\widehat{G}[t^{-1}]
    \subset \widetilde{i}_*G[t^{-1}]$ (here $\widetilde{i}:\CC\times (T\backslash\{0\}) \hookrightarrow \PP^1\times (T\backslash\{0\})$)
	on $\PP^1\times (T\backslash\{0\})$, on which the connection has a logarithmic pole along $\{\infty\}\times (T \backslash
        \{0\})$ and, as before, a pole of type one along $\{0\}\times
        (T\backslash \{0\})$ (remember that $\{0\}\times T=\{\theta=0\}$).
  \item Let $\gamma$ resp. $\gamma'$ be a small counterclockwise loop around the divisor
    $\{0\}\times T$ resp. $\CC\times\{0\}$ in $\CC\times T$. Let $M$ resp. $M'$ denote the
    mondromy endomorphisms on $\mathbf{G}^\infty$ corresponding to $\gamma$ resp.
    $\gamma'$. Then
    \begin{displaymath}
      M^{-1} = (M')^n.
    \end{displaymath}
  \item
   Let $u:\CC^2\rightarrow\CC\times T$, $(\theta,s) \mapsto (\theta,s^n)$. Consider
   the pullback $u^*(G,\nabla)$ and denote by $(\widetilde{G},\nabla)$ the restriction
   to $\CC\times\CC^*$ of the analytic bundle corresponding to $u^*(G,\nabla)$. Then
   $\widetilde{G}$ underlies a \emph{Sabbah-orbit} of TERP-structures, as defined in
   \cite[definition 4.1]{HS1}.
  \end{enumerate}
\end{proposition}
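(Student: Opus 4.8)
The plan is to recognise the pulled-back family $\widetilde{G}$ as the $\CC^*$-orbit, in the sense of \cite{HS1}, of a single pure polarized TERP structure, namely the one attached to $f_{|D_1}$, and then to combine Sabbah's purity theorem for tame functions with \cite[Definition 4.1]{HS1}.

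First I would exploit the homogeneity of the pair $(f,h)$. Writing $m_\lambda\colon V\to V$ for the scaling $x\mapsto\lambda x$, one has $h\circ m_\lambda=\lambda^n h$ and $f\circ m_\lambda=\lambda f$, so $m_\lambda$ identifies $D_t$ with $D_{\lambda^n t}$ and carries $f_{|D_{\lambda^n t}}$ to $\lambda\,f_{|D_t}$. Since in the Brieskorn lattice the variable $\theta=\tau^{-1}$ is dual to multiplication by $f$ (the relative connection $\nabla_{\partial_\tau}$ being given by $f\cdot$), the rescaling $f\mapsto\lambda f$ corresponds to $\theta\mapsto\lambda\theta$. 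Hence $m_\lambda$ induces an isomorphism between $G_{\lambda^n t}$ and the TERP structure obtained from $G_t$ by the $\CC^*$-action $\theta\mapsto\lambda\theta$ of \cite{HS1}. Setting $t=s^n$ turns the relation $t'=\lambda^n t$ into $s'=\lambda s$, so that on $\CC\times\CC^*$ the parameter $s$ becomes exactly the group parameter of that $\CC^*$-action; this is the reason for passing to the $n$-fold cover $u$, and it is also the infinitesimal origin of the monodromy relation $M^{-1}=(M')^n$ of part (iv). I therefore expect to identify $(\widetilde{G},\nabla)$ with the $\CC^*$-orbit of the TERP structure $G_{s=1}$.

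This identification must be carried out for the full TERP datum, not merely for the bundle with connection. The connection and its poles are already under control: by parts (ii) and (iii) the pole along $\theta=0$ is of type one and the extension to $\theta=\infty$ is logarithmic, while after pulling back by $u$ and restricting to $s\neq0$ the pole of $\nabla$ along $\CC\times\{0\}$ disappears, leaving precisely the meromorphic structure in $\theta$ demanded of a TERP structure of weight $n-1$. It then remains to supply the pairing and the real structure and to verify their $\CC^*$-equivariance. The pairing is Sabbah's non-degenerate higher-residue pairing on the Gau\ss-Manin system (\cite{Sa2}), the $S$ of the $(V^+,S)$-solutions discussed below, which on the topological side restricts to the $\CC[h]$-perfect pairing of the Corollary to Theorem \ref{nbi}; the real structure is the Betti structure carried by the flat sections of the dual local system, as constructed by Sabbah for cohomologically tame functions. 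Because $m_\lambda$ is defined over $\RR$ and is an isometry for the geometric pairing, both data are manifestly compatible with the scaling, so the $\CC^*$-orbit structure extends to the entire TERP datum.

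Finally I would invoke purity. Since $f_{|D_t}$ is cohomologically tame (Proposition \ref{propCohomTame}), Sabbah's theorem in its TERP formulation shows that $G_{s=1}$ is a pure polarized TERP structure of weight $n-1$; by homogeneity of the orbit this then holds at every $s$. By \cite[Definition 4.1]{HS1} the $\CC^*$-orbit of a pure polarized TERP structure is a Sabbah orbit, which is the claim. The step I expect to be the main obstacle is the bookkeeping in the identification of $(\widetilde{G},\nabla)$ with the $\CC^*$-orbit: one must match the conventions for the $\CC^*$-action of \cite{HS1}, track the real structure and the sign of the weight through the rescaling $m_\lambda$, and confirm that the extension and pole orders furnished by parts (ii)--(iii) are literally those in the definition of a variation of TERP structures, and not merely these up to a twist.
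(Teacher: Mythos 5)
Your proposal addresses only part (v) of the proposition, with a passing remark on (iv); parts (i)--(iii) are never proved, and in fact your argument for (v) consumes them as hypotheses (you invoke ``parts (ii) and (iii)'' for the pole of type one along $\theta=0$ and the logarithmic extension, and the integrability asserted in (ii) is what makes the local system $\mathbf{G}^\nabla$, hence any variation of TERP structures, meaningful at all). Those omitted parts are exactly where the paper does the real work and where reductivity enters. Part (i) is proved by combining theorem \ref{nbi} (the Hochschild--Serre argument, which needs $\gg_h$ reductive) --- it gives that $g\alpha$ is exact in $\Omega^\bullet(\log h)$ whenever $g$ is homogeneous of degree strictly between $1$ and $n$, so that in $\mathbf{G}$ one has $\tau^{-1}g\alpha=df\wedge\omega'=g'\alpha$ with $\deg g'=\deg g+1$ --- with the specialness of $D$ (corollary \ref{cor2}), which permits the choice $b_i^{i+1}=0$ in lemma \ref{lemBirkhoff}; together these let one successively erase all negative powers of $\tau$ from $\underline{\omega}^{(1)}$. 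Part (ii) is then an explicit computation: only because the $\omega^{(1)}_i$ are represented by pure homogeneous forms can formula \eqref{horizontalconnection} be evaluated, giving the matrix $(\Omega_0+\tau^{-1}A^{(1)}_\infty)d\tau+(\diag(0,\ldots,n-1)+\tau\Omega_0+A^{(1)}_\infty)\frac{dt}{nt}$, on which flatness is verified directly; and (iii) is read off from this connection form. So as a proof of the stated proposition your text has a genuine gap: the chain (i) $\Rightarrow$ (ii) $\Rightarrow$ (iii) is missing, and nothing in your scaling argument substitutes for it.

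For part (v) itself, your route is genuinely different from the paper's and is viable in outline. The paper verifies condition 2.(a) of \cite[definition 4.1]{HS1} by a calculation with the matrix from (ii): the twisted basis $u^*\underline{\omega}^{(1)}\cdot s^{-\diag(0,\ldots,n-1)}$ is annihilated by $\nabla_{s\partial_s-\tau\partial_\tau}$, and pure polarization is quoted from tameness (\cite{Sa2}, \cite{Sa8}, \cite[theorem 11.1]{HS1}). You instead derive the $\CC^*$-orbit property from the homogeneity $h\circ m_\lambda=\lambda^n h$, $f\circ m_\lambda=\lambda f$; this is precisely the geometric content behind the paper's computation (the curve $\tau^nt=1$ used in the paper's proof of (iv) is an orbit of your scaling action, so your remark on (iv) is consistent, though you never turn it into a proof). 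Your approach buys conceptual transparency, but the bookkeeping you defer is not cosmetic: the identification of $G_{\lambda^n t}$ with the rescaled TERP structure must be checked on the pairing and the real structure, and your justification that $m_\lambda$ respects the real structure ``because $m_\lambda$ is defined over $\RR$'' is false as stated for $\lambda\notin\RR$ (what saves it is functoriality of Betti cohomology, not reality of $m_\lambda$). Likewise the claim that Sabbah's pairing $S$ restricts to the $\CC[h]$-perfect pairing of the corollary to theorem \ref{nbi} is asserted, not proved, and is not needed by the paper. The paper's computational route avoids all of these issues at the cost of the matrix calculus it has already performed in (i)--(ii).
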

\begin{proof}
\begin{enumerate}
\item
It follows from theorem \ref{nbi} that for
  $g\in\CC[V]_i$ with $1< i < n$, the $n-1$-form $g\alpha$ is exact in the
  complex $\Omega^\bullet(\log
  h)$. Therefore in $\mathbf{G}$ we have $\tau^{-1} g\alpha =\tau^{-1} d\omega' = df\wedge
  \omega' =g'\alpha$ for some $\omega'\in\Omega^{n-2}(\log h)$ and $g'\in\CC[V]$. Note
  that necessarily $g'\in\CC[V]_{i+1}$. Moreover, in the above constructed base change
  matrix we had $B_{1l}=\delta_{1l}$ (as $D$ is reductive hence special), which implies that for all $i>0$, $\omega^{(1)}_i$
  is represented by an element in $f\CC[V]\alpha[\tau^{-1}]$, i.e, by a sum of terms of
  the form $\tau^{-k} g\alpha$ with $g\in\CC[V]_{\geq 1}$. This proves that we can
  successively erase all negative powers of $\tau$, i.e., represent all $\omega^{(1)}_i$,
  $i>0$ by pure forms (i.e., without $\tau^{-1}$'s), and $\omega^{(1)}_0 = \omega^{(0)}_0
  = \alpha$ is pure anyhow.

\item
From (i) and the definition of $\nabla_{\partial_t}$ in
  (\ref{horizontalconnection}) we obtain
  \begin{displaymath}
    \nabla(\underline{\omega}^{(1)}) = \underline{\omega}^{(1)}\cdot\left[(\Omega_0 + \tau^{-1}A^{(1)}_\infty)d\tau + (\diag(0,\ldots,n-1)+\tau\Omega_0+A^{(1)}_\infty)\frac{dt}{nt}\right].
  \end{displaymath}
  The flatness conditions of an arbitrary connection of the form
  $$
  \nabla(\underline{\omega}^{(1)}) = \underline{\omega}^{(1)}\cdot\left[(\tau A + B)\frac{d\tau}{\tau} + (\tau A'+B')\frac{dt}{t}\right]
  $$
  with $A, A' \in M(n\times n, \CC[t])$ and $B, B' \in M(n\times n, \CC)$ is given by the following system of equations:
  $$
  [A,A']=0\quad;\quad[B,B']=0\quad;\quad (t\partial_t) A - A' = [A,B']-[A',B]
  $$
  One checks that for $A=\Omega_0$, $A'=\frac1n\Omega_0$, $B=A^{(1)}_\infty$ and $B'=\frac1n(A^{(1)}_\infty+\diag(0,\ldots,n-1))$, these
  equations are satisfied.
  \item
  The extension defined by
  $\underline{\omega}^{(1)}$, i.e., $\widehat{G}[t^{-1}]:=\oplus_{i=1}^n \OO_{\PP^1\times
    T}[t^{-1}] \omega^{(1)}_i$ provides the solution in a family to
  the Birkhoff problem, i.e., we have $\nabla_{X} \widehat{G}[t^{-1}] \subset
  \widehat{G}[t^{-1}]$ for any $X\in\Der(-\log(\{\infty\}\times (T\backslash\{0\})))$.
\item

If we restrict $(\mathbf{G},\nabla)$ to the
  curve $C:=\left\{(\tau,t)\in(\CC^*)^2\,|\,\tau^nt=1\right\}$ we obtain
  \begin{displaymath}
    \nabla_{|C} = -\diag(0,\ldots,n-1) \frac{d\tau}{\tau}.
  \end{displaymath}
  As the diagonal this connection matrix are integers, the monodromy of $(\mathbf{G},\nabla)_{|C}$
  is trivial which implies the result (notice that the composition of $\gamma_1$ and
  $\gamma_2^n$ is homotopic to a loop around the origin in $C$).
   \item

    That the restriction to $\CC\times (T\backslash \{0\})$ of (the analytic bundle corresponding to)
    $G$ underlies a variation of pure polarized TERP-structures is a general fact, due to the
    tameness of the functions $f_{|D_t}$ (see \cite{Sa2} and \cite{Sa8}, \cite[theorem 11.1]{HS1}).
    Using the connection matrix from (ii), it is an easy calculation that $\nabla_{s\partial_s-\tau\partial_\tau}
    (\widetilde{\underline{\omega}}^{(1)})=0$,
    where $\widetilde{\underline{\omega}}^{(1)}:=u^*\underline{\omega}^{(1)} \cdot s^{-\diag(0,\ldots,n-1)}$
    so that $(\widetilde{G},\nabla)$ satisfies condition 2.(a) in \cite[definition 4.1]{HS1}.
\end{enumerate}
\end{proof}
For the purpose of the next section, we need to find a much more special solution to the
Birkhoff problem, which is called $V^+$-solution in \cite{DS}.  It takes into account the
Kashiwara-Malgrange filtration of $\mathbf{G}$ at infinity (i.e., at $\tau=0$). We briefly
recall the notations and explain how to construct the $V^+$-solution starting from our
basis $\underline{\omega}^{(1)}$.

Fix $t\in T$ and consider, as before, the restrictions $\mathbf{G}_t$ (resp. $G_t$)
of the family of Gau\ss-Manin-systems (resp. Brieskorn) lattices $\mathbf{G}$ resp. $G$.
As already pointed out, for $t\neq 0$, these are the Gau\ss-Manin-system resp. the Brieskorn
lattice of the tame of the function $f_{|D_t}$.  The meromorphic bundle $\mathbf{G}_t$ is known to be a holonomic left
$\CC[\tau]\langle\partial_\tau\rangle$-module, with singularities at $\tau=0$ and
$\tau=\infty$ only.  The one at infinity, i.e. $\tau=0$ is regular singular, but not
necessarily the one at zero (at $\tau=\infty$).  Similarly to the notation used above, we
have the local system $\mathbf{G}^\nabla_t$ and its space of multivalued global flat
sections $\mathbf{G}_t^\infty$.  Recall that for any $t\neq 0$, the monodromy of $\mathbf{G}^\nabla_t$ is
quasi-unipotent, so any logarithm of any of its eigenvalues is a rational number. As we will
see in section \ref{sec:examples}, the same is true in all examples for $t=0$, but
this is not proved for the moment. Let $\KK$ be either $\CC$ or $\QQ$, depending on whether $t=0$ or $t\neq 0$.
In the former case, we chose the lexicographic ordering on $\CC$ which extends the usual ordering of $\RR$.
Recall that there is a unique increasing exhaustive filtration $V_\bullet \mathbf{G}_t$
indexed by $\KK$, called the Kashiwara-Malgrange or canonical V-filtration on
$\mathbf{G}_t$ with the properties
\begin{enumerate}
\item It is a good filtration with respect to the $V$-filtration
  $V_\bullet\CC[\tau]\langle\partial_\tau\rangle$ of the Weyl-algebra, i.e. it satisfies
  $V_k \CC[\tau]\langle\partial_\tau\rangle V_l \mathbf{G}_t \subset V_{k+l} \mathbf{G}_t$
  and this is an equality for any $k\leq 0$, $l\leq -l_0$ and $k\geq 0$, $l\geq l_0$ for some
  sufficiently large positive integer $l_0$.
\item For any $\alpha\in\KK$, the operator $\tau\partial_\tau+\alpha$ is nilpotent on the
  quotient $\mathit{gr}^V_\alpha\mathbf{G}_t$
\end{enumerate}
We have an induced V-filtration on the Brieskorn lattice $G_t$, and we denote by
\begin{displaymath}
  \textup{Sp}(G_t,\nabla):=\sum_{\alpha\in\KK}\dim_{\CC}\left(\frac{V_\alpha \cap G_t}{V_{<\alpha} \cap G_t+\tau^{-1}G_t\cap V_\alpha}\right)\alpha\in\ZZ[\KK]
\end{displaymath}
the spectrum of $G_t$ at infinity (for $t\neq 0$ it is also called the spectrum at infinity associated to
$f_{|D_t}$).  We also write it as an ordered tuple of (possibly repeated) numbers
$\alpha_1\leq\ldots\leq\alpha_n$.  We recall the following notions from \cite[appendix
B]{DS}.
\begin{lemmadefinition}
  \label{Vsolution}
  \begin{enumerate}
  \item The following conditions are equivalent.
    \begin{enumerate}
    \item There is a solution to the Birkhoff problem, i.e, a basis $\underline{\omega}$ of $G_t$
      with $\partial_\tau(\underline{\omega})=
      \underline{\omega}(\Omega_0+\tau^{-1}A_\infty)$ (where $A_\infty$ is not
      necessarily semi-simple).
    \item There is a $\CC[\tau]$-lattice $G_t'$ of $\mathbf{G}_t$ which is stable under
      $\tau\partial_\tau$, and such that $G_t=(G_t\cap G'_t)\oplus\tau^{-1}G_t$.
    \item There is an extension to a free $\mathcal{O}_{\PP^1}$-module $\widehat{G}_t
      \subset \widetilde{i}_*G_t$ (where $\widetilde{i}:\CC \hookrightarrow \PP^1$) with the property
      that $(\tau\nabla_\tau)\widehat{G}_t\subset\widehat{G}_t$.
    \end{enumerate}
  \item A solution to the Birkhoff problem $G_t'$ is called a $V$-solution iff
    \begin{displaymath}
      G_t\cap V_\alpha\mathbf{G}_t = (G_t\cap G'_t \cap V_\alpha \mathbf{G}_t) \oplus (\tau^{-1}G_t\cap V_\alpha\mathbf{G}_t).
    \end{displaymath}
  \item It is is called a $V^+$-solution if moreover we have
    \begin{displaymath}
      (\tau\partial_\tau+\alpha)(G_t\cap G'_t \cap V_\alpha \mathbf{G}_t) \subset
      (G_t\cap G'_t \cap V_{<\alpha} \mathbf{G}_t) \oplus \tau(G_t\cap G'_t \cap V_{\alpha+1} \mathbf{G}_t).
    \end{displaymath}
    In this case, a basis as in (i) (a) can be chosen such that the matrix $A_\infty$ is
    diagonal, and the diagonal entries, multiplied by $-1$, are the spectral numbers of $(G_t,\nabla)$ at
    infinity.
  \item Suppose that we are moreover given a non-degenerate flat Hermitian pairing on $\mathbf{G}_t$ which has weight
    $n-1$ on $G_t$, more precisely (see \cite[section 1.f.]{DS} or \cite[section 4]{DS2}) a morphism $S:\mathbf{G}_t\otimes_{\CC[\tau,\tau^{-1}]} \overline{\mathbf{G}}_t \rightarrow
    \CC[\tau,\tau^{-1}]$ (where $\overline{\mathbf{G}}_t$ denotes the module $\mathbf{G}_t$
    on which $\tau$ acts as $-\tau$)
    with the following properties
    \begin{enumerate}
    \item
        $\tau\partial_\tau S(a,\overline{b})=S(\tau\partial_\tau a, \overline{b})+S(a,\tau\partial_\tau\overline{ b})$,
    \item
        $S:V_0\otimes\overline{V}_{<1}\rightarrow \CC[\tau]$,
    \item
        $S(G_t,\overline{G}_t)\subset\tau^{-n+1}\CC[\tau^{-1}]$, and the induced symmetric
        pairing $G_t/\tau^{-1}G_t\otimes G_t/\tau^{-1}G_t \rightarrow \tau^{-n+1}\CC$ is non-degenerate.
    \end{enumerate}
    In particular, the spectral numbers
    then obey the symmetry $\alpha_1+\alpha_{n+1-i}=n-1$.  A $V^+$-solution $G'_t$ is called
    $(V^+,S)$-solution if $S(G_t\cap G'_t, \overline{G}_t\cap \overline{G}'_t)\subset \CC \tau^{-n+1}$.
  \end{enumerate}
\end{lemmadefinition}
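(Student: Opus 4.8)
The plan is to observe first that part (ii), the defining sentence of (iii), and the final sentence of (iv) are definitions and require no argument; the genuine content is the equivalence of (a)--(c) in (i), the diagonalisation claim in (iii), and the spectral symmetry in (iv). All three are contained in \cite[Appendix B]{DS}, and I would obtain them by the lattice-theoretic arguments below, written in the coordinate $\theta=\tau^{-1}$ in which $G_t$ is the $\CC[\theta]$-lattice of $\mathbf{G}_t$ at $\theta=0$ and $\Omega_0$ denotes the endomorphism induced by $-\partial_\tau=\theta^2\partial_\theta$ on $G_t/\theta G_t$.

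For (i), the implication (a)$\Rightarrow$(b): given a basis $\underline{\omega}$ with $\partial_\tau\underline{\omega}=\underline{\omega}(\Omega_0+\tau^{-1}A_\infty)$, set $G'_t:=\bigoplus_i\CC[\tau]\omega_i$. Then $\tau\partial_\tau\underline{\omega}=\underline{\omega}(A_\infty+\tau\Omega_0)$ has coefficient matrix in $M(n\times n,\CC[\tau])$, so $G'_t$ is stable under $\tau\partial_\tau$, and applying $\CC[\tau,\tau^{-1}]=\CC[\tau]\oplus\tau^{-1}\CC[\tau^{-1}]$ entrywise gives $G_t=(G_t\cap G'_t)\oplus\tau^{-1}G_t$ with $G_t\cap G'_t=\bigoplus_i\CC\omega_i$. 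For (b)$\Rightarrow$(a): choose a $\CC$-basis $\underline{\omega}$ of $E:=G_t\cap G'_t$; by the splitting it is at once a $\CC[\tau^{-1}]$-basis of $G_t$ and a $\CC[\tau]$-basis of $G'_t$. Stability of $G'_t$ gives $\tau\partial_\tau\underline{\omega}=\underline{\omega}C(\tau)$ with $C\in M(n\times n,\CC[\tau])$, while the pole-order-two bound on $G_t$ gives $\partial_\tau\underline{\omega}=\underline{\omega}M(\theta)$ with $M\in M(n\times n,\CC[\theta])$; the identity $M(\theta)=\theta\,C(\theta^{-1})$ forces $C(\tau)=A_\infty+\tau\Omega_0$ to be affine in $\tau$, and reduction modulo $\theta$ identifies its linear coefficient with $\Omega_0$, giving the required normal form. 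Finally (b)$\Leftrightarrow$(c) is the Birkhoff--Grothendieck dictionary: gluing $G_t$ at $\theta=0$ to $G'_t$ at $\tau=0$ yields a rank-$n$ bundle $\widehat{G}_t$ on $\PP^1$ whose triviality is exactly the transversality $G_t=(G_t\cap G'_t)\oplus\tau^{-1}G_t$, while $\tau\partial_\tau G'_t\subset G'_t$ together with the pole bound on $G_t$ transcribes into $(\tau\nabla_\tau)\widehat{G}_t\subset\widehat{G}_t$.

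For (iii), write $E=G_t\cap G'_t$ and set $E_\alpha:=(E\cap V_\alpha)/(E\cap V_{<\alpha})$. The $V$-solution property identifies the quotient defining $\textup{Sp}(G_t,\nabla)$ with $E_\alpha$, so that $E=\bigoplus_\alpha E_\alpha$ is a lift of the $V$-flag and $\dim_\CC E_\alpha$ equals the multiplicity of $\alpha$ in the spectrum. Recall from (i) that, in any $\CC$-basis of $E$, the matrix $A_\infty$ is the matrix of the induced action of $\tau\partial_\tau$ on $G'_t/\tau G'_t\cong E$. The extra $V^+$-condition $(\tau\partial_\tau+\alpha)(E\cap V_\alpha)\subset(E\cap V_{<\alpha})\oplus\tau(E\cap V_{\alpha+1})$ shows that on this graded space each $E_\alpha$ is an eigenspace of $A_\infty$ with eigenvalue $-\alpha$; hence in a basis adapted to the $V$-flag, $A_\infty$ is block-triangular with scalar diagonal blocks $-\alpha\cdot\Id_{E_\alpha}$. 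Its off-diagonal entries link distinct eigenvalues and are removed by a constant base change (solving the corresponding Sylvester equation), which preserves $\Omega_0$ and the solution property and yields $A_\infty=\diag(-\alpha_1,\ldots,-\alpha_n)$. I expect this bookkeeping, and checking that the base change respects the $V^+$-structure, to be the only delicate point here.

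For (iv), the flat pairing $S$ together with (iv)(a)--(b) induces on associated graded a nondegenerate pairing between $\mathit{gr}^V_\alpha\mathbf{G}_t$ and the conjugate piece of $\overline{\mathbf{G}}_t$ in degree $-\alpha$, while the weight-$(n-1)$ Brieskorn condition (iv)(c) makes $S$ nondegenerate on $G_t/\tau^{-1}G_t$. Combining these two dualities one gets a perfect pairing between the spectral pieces $E_\alpha$ and $E_{n-1-\alpha}$, so their dimensions agree and the spectrum is symmetric: $\alpha_i+\alpha_{n+1-i}=n-1$. The main obstacle in this last part is matching the two duality normalisations --- the $V$-filtration duality at $\tau=0$ coming from $S\colon V_0\otimes\overline{V}_{<1}\to\CC[\tau]$ and the lattice duality at $\theta=0$ coming from (iv)(c) --- together with the sign convention $\tau\mapsto-\tau$ defining $\overline{\mathbf{G}}_t$, so as to land exactly on the shift $n-1$; this is precisely the computation carried out in \cite[Appendix B]{DS}.
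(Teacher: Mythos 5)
The paper contains no proof of this statement: it is introduced with ``We recall the following notions from \cite[appendix B]{DS}'' and is used afterwards as a black box, so there is no internal argument to compare yours against. Judged on its own, your reconstruction is correct, and it follows the same route as the cited source: you correctly isolate the three claims with actual content (the equivalence in (i), the diagonalisability and identification of the diagonal entries in (iii), the spectral symmetry in (iv)), and your tools --- gluing the $\CC[\tau^{-1}]$-lattice $G_t$ and the $\CC[\tau]$-lattice $G'_t$ into a bundle on $\PP^1$, Birkhoff--Grothendieck, block-triangularisation of $A_\infty$ along the $V$-flag with Sylvester equations for distinct scalar blocks, and the combination of the $V$-filtration duality with the weight-$(n-1)$ duality on $G_t/\tau^{-1}G_t$ --- are exactly those of \cite[appendix B]{DS}.

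One step is asserted too quickly and in the wrong logical order. In (b)$\Rightarrow$(a) you claim that a $\CC$-basis of $E=G_t\cap G'_t$ is ``at once'' a $\CC[\tau^{-1}]$-basis of $G_t$ and a $\CC[\tau]$-basis of $G'_t$ ``by the splitting''. This is not a formal consequence of the module-theoretic splitting alone: iterating $G_t=E\oplus\tau^{-1}G_t$ only shows that $\CC[\tau^{-1}]\otimes_\CC E\to G_t$ is injective with cokernel $Q$ satisfying $Q=\tau^{-1}Q$, i.e.\ a torsion module supported away from $\tau^{-1}=0$, which need not vanish without further input. Killing $Q$ uses that $G'_t$ is a \emph{lattice}, and the clean mechanism is precisely the Birkhoff--Grothendieck dictionary that you invoke only afterwards for (b)$\Leftrightarrow$(c): the glued bundle has $H^0=E$, the splitting says that evaluation of $H^0$ at $\theta=0$ is bijective, this forces all Birkhoff--Grothendieck degrees to be zero, and a basis of $H^0$ then trivialises the bundle, producing the simultaneous bases. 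So the dictionary should be established first and quoted inside (b)$\Rightarrow$(a); with that reordering (and with ``preserves $\Omega_0$'' in your treatment of (iii) read as ``preserves the shape $\Omega_0'+\tau^{-1}A_\infty'$'', since the constant conjugation replaces $\Omega_0$ by $P^{-1}\Omega_0 P$), your proof is complete and consistent with the reference the paper defers to.
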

We will see in the sequel (theorem \ref{theoS-solution}) that under a technical hypothesis
(which is however satisfied in many examples)
we are able to construct directly a $(V^+,S)$-solution. Without this hypothesis,
we can for the moment only construct a $V^+$-solution. In order to obtain Frobenius
structures in all cases, we need the following general result, which we quote from \cite{Sa2} and \cite{DS}.
\begin{theorem}\label{canSol}
  Let $Y$ be a smooth affine complex algebraic variety and
  $f:Y\rightarrow \CC$ be a cohomologically tame function. Then the Gau\ss-Manin system
  of $f$ is equipped a pairing $S$ as above, and
  there is a canonical
  $(V^+,S)$-solution to the Birkhoff problem for the Brieskorn lattice of $f$, defined by
  a (canonical choice of an) opposite filtration to the Hodge filtration of the mixed
  Hodge structure associated to $f$.
\end{theorem}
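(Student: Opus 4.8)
The plan is to recognize that this statement is a restatement of theorems of Sabbah and of Douai--Sabbah, so rather than reprove it from scratch I would assemble the Hodge-theoretic machinery for tame functions developed in \cite{Sa2} and \cite{DS} and quote the decisive input. The first assertion, the existence of the pairing $S$, comes from duality. For a cohomologically tame $f$ the direct image complex is self-dual up to shift (Poincar\'e--Verdier duality on $Y$, equivalently the topological intersection pairing on the smooth fibres $f^{-1}(c)$), and transporting this duality through the de Rham/period isomorphism to the Fourier--Laplace transformed Gau\ss-Manin system produces a nondegenerate flat sesquilinear pairing $S:\mathbf{G}\otimes_{\CC[\tau,\tau^{-1}]}\overline{\mathbf{G}}\to\CC[\tau,\tau^{-1}]$. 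Properties (a) and (b) of Lemma and Definition \ref{Vsolution} are then formal, and the pole-order statement (c) on $G$ is exactly the content of K.~Saito's higher residue pairing in Sabbah's setting.

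The decisive input for the second assertion is that the Brieskorn lattice of a tame function underlies a mixed Hodge structure. Concretely, $(\mathbf{G},G,S)$ is a polarizable mixed TERP-structure in the sense of \cite{HS1}, and its space $\mathbf{G}^\infty$ of multivalued flat sections, decomposed into the generalized eigenspaces of the monodromy, carries a canonical mixed Hodge structure $(F^\bullet,W_\bullet)$ whose Hodge filtration is recovered from $G$ through the canonical $V$-filtration; the jumps of $F^\bullet$ on the graded pieces are precisely the spectral numbers $\alpha_1\le\cdots\le\alpha_n$ introduced above. I would take this as a quoted black box.

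Given the mixed Hodge structure, I would produce the solution by Deligne's canonical splitting. There is a unique bigrading $\mathbf{G}^\infty=\bigoplus_{p,q} I^{p,q}$ compatible with $(F^\bullet,W_\bullet)$, and the filtration $U^\bullet$ opposite to $F^\bullet$ is the one split off by it. Via the Rees-module dictionary, $U^\bullet$ determines a $\CC[\tau]$-lattice $G'_t\subset\mathbf{G}_t$ that is stable under $\tau\partial_\tau$ and opposite to $G_t$; unwinding the definition of an opposite filtration through the $V$-filtration is exactly conditions (ii) and (iii) of \ref{Vsolution}, so $G'_t$ is a $V^+$-solution, and the diagonal entries of $A_\infty$, multiplied by $-1$, are the spectral numbers by construction.

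Finally, to upgrade this to a $(V^+,S)$-solution I would use that the mixed Hodge structure is \emph{polarized} by the graded pieces of $S$: the polarization forces the Deligne bigrading to be $S$-orthogonal in complementary bidegrees, which is precisely the condition $S(G_t\cap G'_t,\overline{G}_t\cap\overline{G}'_t)\subset\CC\,\tau^{-n+1}$. The main obstacle is this entire Hodge-theoretic package, namely constructing the mixed Hodge structure on the Brieskorn lattice of a tame function and proving that its canonical opposite filtration is simultaneously compatible with the $V$-filtration and with the polarization $S$. This is the substance of \cite{Sa2} and \cite{DS}, and my plan is to invoke it rather than to redo it.
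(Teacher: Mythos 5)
Your proposal is correct and takes essentially the same approach as the paper: Theorem \ref{canSol} is not proved in the paper at all, but is explicitly quoted from \cite{Sa2} and \cite{DS}, which is precisely what you propose to do. Your sketch of the internal structure of those references (duality giving $S$, the mixed Hodge structure on $\mathbf{G}^\infty$ with spectral numbers as Hodge-filtration jumps, Deligne splitting producing the canonical opposite filtration, and the polarization forcing $S$-orthogonality) is a faithful summary of the cited proofs, so there is nothing to add.
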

The key tool to compute the spectrum and to obtain such a $V^+$-solution to the Birkhoff
problem is the following result.
\begin{proposition}
  \label{ExchangeTrick}
  Let $t\in T$ be arbitrary, $G_t\subset \mathbf{G}_t$ as before and consider any solution to the Birkhoff problem  for $(G_t,\nabla)$, given by a
	basis $\underline{\omega}$ of $G_t$ such that
  $\partial_\tau(\underline{\omega}) =\underline{\omega}(\Omega_0 +
  \tau^{-1}A_\infty)$ with $\Omega_0$ as above and such that
  $A_\infty=\diag(-\nu_1,\ldots, -\nu_n)$ is diagonal. Suppose moreover that
  $\nu_i-\nu_{i-1}\leq 1$ for all $i\in\{1,\ldots,i\}$ and additionally that
  $\nu_1-\nu_n\leq 1$ if $t\neq 0$.

  Then $\underline{\omega}$ is a $V^+$-solution to the Birkhoff problem and the
  numbers $(\nu_i)_{i=1,\ldots,n}$ give the spectrum $\textup{Sp}(G_t,\nabla)$ of $G_t$
  at infinity.
\end{proposition}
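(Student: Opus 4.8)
The plan is to write down the Kashiwara--Malgrange filtration $V_\bullet\mathbf{G}_t$ explicitly in the given basis $\underline{\omega}$ and then read off both assertions. Recall from \eqref{Ig:eq:1} (with $c_0$ normalised to $1$) that $\Omega_0=N+C_0$ is the companion-type matrix whose only nonzero entries are the subdiagonal ones and a single $t$ in the upper-right corner, so that with $A_\infty=\diag(-\nu_1,\ldots,-\nu_n)$ the operator $\tau\partial_\tau$ acts by
\[
  (\tau\partial_\tau)\omega_j=\tau\omega_{j+1}-\nu_j\omega_j\quad(1\leq j\leq n-1),\qquad
  (\tau\partial_\tau)\omega_n=t\,\tau\omega_1-\nu_n\omega_n ,
\]
equivalently $(\tau\partial_\tau+\nu_j)\omega_j=\tau\omega_{j+1}$ and $(\tau\partial_\tau+\nu_n)\omega_n=t\,\tau\omega_1$. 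First I would introduce the candidate filtration
\[
  U_\alpha\mathbf{G}_t:=\sum_{i=1}^n\tau^{\lceil\nu_i-\alpha\rceil}\,\CC[\tau]\,\omega_i ,
\]
a free $\CC[\tau]$-lattice with $\tau U_\alpha=U_{\alpha-1}$ and $\partial_\tau U_\alpha\subset U_{\alpha+1}$, and prove $U_\bullet=V_\bullet$ by checking the defining axioms and invoking uniqueness of the canonical $V$-filtration.

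The crucial point, and where the hypotheses on the $\nu_i$ enter, is $\tau\partial_\tau$-stability together with nilpotency on the graded pieces. Applying $\tau\partial_\tau$ to a generator $\tau^{\lceil\nu_i-\alpha\rceil}\omega_i$ produces, besides a scalar multiple of itself, the term $\tau^{\lceil\nu_i-\alpha\rceil+1}\omega_{i+1}$; this lies in $U_\alpha$ precisely because $\lceil\nu_{i+1}-\alpha\rceil\leq\lceil\nu_i-\alpha\rceil+1$, which is exactly the inequality $\nu_{i+1}-\nu_i\leq1$. For the wrap-around generator $i=n$ the same computation needs $\nu_1-\nu_n\leq1$, assumed only when $t\neq0$; when $t=0$ the corner entry vanishes and this edge disappears, so the extra hypothesis is not required. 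A short computation of $U_{<\alpha}$ then shows that the associated graded of $U_\bullet$ in degree $\alpha$ has a one-dimensional piece $[\tau^{\nu_i-\alpha}\omega_i]$ for each $i$ with $\nu_i-\alpha\in\ZZ$, and that $\tau\partial_\tau+\alpha$ sends $[\tau^{\nu_i-\alpha}\omega_i]$ to $[\tau^{\nu_{i+1}-\alpha}\omega_{i+1}]$ (resp. $t\,[\tau^{\nu_1-\alpha}\omega_1]$) exactly when $\nu_{i+1}=\nu_i+1$ (resp. $\nu_1=\nu_n+1$), and to $0$ otherwise. Since the differences around the cyclic chain $1\to\cdots\to n\to1$ telescope to $0$, they cannot all equal $1$, so the induced shift is a nilpotent endomorphism; this verifies property (2) of the $V$-filtration, and goodness (property (1)) is immediate since $U_\alpha$ is $\CC[\tau]$-finite with $\tau\partial_\tau U_\alpha\subset U_\alpha$. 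Hence $U_\bullet=V_\bullet$.

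With the $V$-filtration in hand the rest is bookkeeping. Writing $G_t'$ for the $\CC[\tau]$-span of $\underline{\omega}$ and $G_t$ for its $\CC[\tau^{-1}]$-span, one computes directly
\[
  V_\alpha\cap G_t=\bigoplus_i\,\mathrm{span}\{\tau^k\omega_i:\lceil\nu_i-\alpha\rceil\leq k\leq0\},\qquad
  G_t\cap G_t'\cap V_\alpha=\mathrm{span}\{\omega_i:\nu_i\leq\alpha\},
\]
from which the decomposition $V_\alpha\cap G_t=(G_t\cap G_t'\cap V_\alpha)\oplus(\tau^{-1}G_t\cap V_\alpha)$ of Lemma and Definition \ref{Vsolution}(ii) follows by separating the power $k=0$ from the powers $k\leq-1$. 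For the $V^+$-condition \ref{Vsolution}(iii) I would apply $\tau\partial_\tau+\alpha$ to $\omega_i$ with $\nu_i\leq\alpha$: the diagonal term $(\alpha-\nu_i)\omega_i$ lies in $G_t\cap G_t'\cap V_{<\alpha}$ (it vanishes when $\nu_i=\alpha$, and otherwise $\nu_i<\alpha$), while $\tau\omega_{i+1}$ lies in $\tau(G_t\cap G_t'\cap V_{\alpha+1})$ because $\nu_{i+1}\leq\nu_i+1\leq\alpha+1$ --- again exactly the standing hypotheses, including the wrap-around for $i=n$ when $t\neq0$. Finally, in the quotient defining $\textup{Sp}(G_t,\nabla)$ every power $\tau^k\omega_i$ with $k\leq-1$ dies in $\tau^{-1}G_t\cap V_\alpha$, and the class of $\omega_i$ survives iff $\nu_i=\alpha$; hence the graded dimension in degree $\alpha$ equals $\#\{i:\nu_i=\alpha\}$ and $\textup{Sp}(G_t,\nabla)=\sum_i\nu_i$, as claimed.

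I expect the main obstacle to be the structural step of paragraph two: securing $\tau\partial_\tau$-stability and nilpotency of the graded shift simultaneously, which rests entirely on the telescoping argument ruling out a full cycle and on the different behaviour of the $n\to1$ edge according to whether $t=0$ or $t\neq0$. Once $U_\bullet=V_\bullet$ is established, the $V$- and $V^+$-conditions and the spectrum computation are routine.
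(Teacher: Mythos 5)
Your proof is correct and follows essentially the same route as the paper: your lattice $U_\alpha=\sum_i\tau^{\lceil\nu_i-\alpha\rceil}\CC[\tau]\,\omega_i$ is exactly the paper's candidate filtration $\widetilde{V}_\alpha\mathbf{G}_t$ (defined there via the grading $\deg(\tau^k\omega_i)=\nu_i-k$), and both arguments identify it with the canonical $V$-filtration by checking stability and nilpotency of $\tau\partial_\tau+\alpha$ on the graded pieces and then read off the $V^+$-conditions and the spectrum by the same bookkeeping. The only cosmetic difference is that you establish nilpotency by the cyclic telescoping argument (not all differences $\nu_{i+1}-\nu_i$ around the cycle can equal $1$), whereas the paper packages the same observation as an explicit block decomposition of $(1,\ldots,n)$; your version even handles the edge case $\nu_1-\nu_n=1$, $t\neq 0$ slightly more cleanly.
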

\begin{proof}
  The basic idea is similar to \cite{DS2} and \cite{ignacio3}, namely, that the spectrum of
  $A_\infty$ can be used to \emph{define} a filtration which turns out to coincide with
  the $V$-filtration using that the latter is unique with the above properties. More
  precisely, we define a $\KK$-grading on $\mathbf{G}_t$ by $\deg(\tau^k
  \omega_i):=\nu_i-k$ and consider the associated increasing filtration
  $\widetilde{V}_\bullet\mathbf{G}_t$ given by
  \begin{align*}
      \widetilde{V}_\alpha\mathbf{G}_t & :=  \left\{\sum_{i=1}^n c_i \tau^{k_i} \omega_i \in \mathbf{G}_t\,|\,
        \textup{max}_i(\nu_{i}-k_i)\leq \alpha\right\} \\
      \widetilde{V}_{<\alpha}\mathbf{G}_t & := \left\{\sum_{i=1}^n c_i \tau^{k_i} \omega_i \in \mathbf{G}_t\,|\,
        \textup{max}_i(\nu_{i}-k_i) < \alpha\right\}.
  \end{align*}
  By definition $\partial_\tau \widetilde{V}_\bullet \mathbf{G}_t\subset
  \widetilde{V}_{\bullet+1} \mathbf{G}_t$ and $\tau \widetilde{V}_\bullet \mathbf{G}_t
  \subset \widetilde{V}_{\bullet-1} \mathbf{G}_t$ and moreover, $\tau$ is obviously
  bijective on $\mathbf{G}$. Thus to verify that $V_\bullet \mathbf{G}_t =
  \widetilde{V}_\bullet \mathbf{G}_t$, we only have to show that
  $\tau\partial_\tau+\alpha$ is nilpotent on $gr_\alpha^{\widetilde{V}}\mathbf{G}_t$. This
  will prove both statements of the proposition: the conditions in definition
  \ref{Vsolution} for $\underline{\omega}$ to be a $V^+$-solution are trivially
  satisfied if we replace $V$ by $\widetilde{V}$.  The nilpotency of
  $\tau\partial_\tau+\alpha \in \mathit{End}_{\CC}(gr^{\widetilde{V}}_\alpha\mathbf{G}_t)$
  follows from the assumption $\nu_i-\nu_{i-1}\leq 1$:

  First define a block decomposition of the ordered tuple $(1,\ldots,n)$ by putting
  $(1,\ldots,n)=(I_1,\ldots,I_s)$, where $I_r=(i_r,i_r+1,\ldots,i_r+l_r=i_{r+1}-1)$ such
  that $\nu_{i_r+l+1}-\nu_{i_r+l}=1$ for all $l\in\{0,\ldots,l_r-1\}$ and
  $\nu_{i_r}-\nu_{i_r-1}<1$, $\nu_{i_{r+1}}-\nu_{i_r+l_r}<1$.  Then in $\mathbf{G}_t$ we
  have $(\tau\partial_\tau+(\nu_i-k_i))(\tau^{k_i}\omega_i)=\tau^{k_i+1}\omega_{i+1}$ for
  $i\in\{1,\ldots,n-1\}$ and
  $(\tau\partial_\tau+(\nu_n-k_n))(\tau^{k_n}\omega_n)=t\tau^{k_n+1}\omega_1$, so that
  $(\tau\partial_\tau+(\nu_i-k_i))^{i_{r+1}-i}(\tau^{k_i}\omega_i)=0$ in
  $gr^{\widetilde{V}}_{\nu_i-k_i}\mathbf{G}_t$ for all $i\in I_r$ (here we put
  $i_{s+1}:=n+1$, note also that if $t\neq 0$ we suppose that $\nu_1-\nu_n\leq 1$).
\end{proof}
As a by-product, a solution with the above properties also makes it possible to compute
the monodromy of $\mathbf{G}_t$. Consider the local system $\mathbf{G}_t^\nabla$ and the
space $\mathbf{G}_t^\infty$ of its multivalued flat sections. There is a natural
isomorphism $\oplus_{\alpha\in(0,1]} gr^V_\alpha\mathbf{G}_t \stackrel{\psi}{\rightarrow}
\mathbf{G}_t^\infty$.  The monodromy $M\in\mathit{Aut}(\mathbf{G}_t^\infty)$, which
corresponds to a counter-clockwise loop around $\tau=\infty$, decomposes as $M=M_s\cdot
M_u$ into semi-simple and unipotent part, and we write $N:=\log(M_u)$ for the nilpotent
part of $M$. The endomorphism $N$ corresponds under the isomorphism $\psi$, up to a constant factor, to the
operator $\oplus_{\alpha\in(0,1]} (\tau\partial_\tau+\alpha)\in \oplus_{\alpha\in(0,1]}
\mathit{End}_{\CC}(gr^V_\alpha\mathbf{G})$. This gives the following result, notice that
a similar statement and proof are given in \cite[end of section 3]{DS2}.
\begin{corollary}
  \label{monodromy}
  Consider the basis of $\mathbf{G}^\infty_t$ induced from a basis
  $\underline{\omega}$ as above, i.e.,
  \begin{displaymath}
    \mathbf{G}^\infty_t=
    \oplus_{i=1}^n\CC \psi^{-1}([\tau^{l_i}\omega_i]),
  \end{displaymath}
  where $l_i=\lfloor\nu_i\rfloor+1$.  Then $M_s \psi^{-1}[\tau^{l_i} \omega_i] =
  e^{-2\pi i \nu_i} \cdot \psi^{-1}[\tau^{l_i}\omega_i]$ and
  \begin{displaymath}
    N(\psi^{-1}[\tau^{l_i}\omega_i])=
    \begin{cases}
      2\pi i\psi^{-1}[\tau^{l_i}\omega_{i+1}]&\textup{if }\nu_{i+1}-\nu_i=1\\
      0 & \textup{else,}
    \end{cases}
  \end{displaymath}
  where $\omega_{n+1}=\omega_1$ if $t\neq 0$ and $\omega_{n+1}=0$ if $t=0$.  Thus the
  Jordan blocks of $N$ are exactly the blocks appearing above in the decomposition of the
  tuple $(1,\ldots,n)$.
\end{corollary}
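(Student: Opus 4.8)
The plan is to combine the explicit description of the canonical $V$-filtration obtained in Proposition \ref{ExchangeTrick} with the dictionary between the monodromy on $\mathbf{G}_t^\infty$ and the operator $\tau\partial_\tau$ recalled just before the statement. Under the hypotheses of Proposition \ref{ExchangeTrick} one has $V_\bullet\mathbf{G}_t=\widetilde V_\bullet\mathbf{G}_t$, so that $\mathit{gr}^V_\alpha\mathbf{G}_t$ is spanned by the classes of those $\tau^k\omega_i$ with $\nu_i-k=\alpha$; in particular, since $l_i=\fl{\nu_i}+1$, the element $\tau^{l_i}\omega_i$ represents a generator of $\mathit{gr}^V_{\nu_i-l_i}\mathbf{G}_t$, and $\nu_i-l_i\equiv\nu_i\pmod{\ZZ}$. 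All the remaining work is then a direct computation of $\tau\partial_\tau$ on these representatives.

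First I would settle the semisimple part. Since, by the general theory recalled above, $M_s$ acts on $\mathit{gr}^V_\alpha\mathbf{G}_t$ as multiplication by $e^{-2\pi i\alpha}$ and $\psi$ is equivariant, it suffices to put $\alpha=\nu_i-l_i$ and use $e^{-2\pi i(\nu_i-l_i)}=e^{-2\pi i\nu_i}$; this gives the eigenvalue statement at once.

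For the nilpotent part I would start from the matrix of the connection. The special shape of $\Omega_0$ (with $c_0$ normalised to $1$) together with $A_\infty=\diag(-\nu_1,\ldots,-\nu_n)$ yields, exactly as in the proof of Proposition \ref{ExchangeTrick}, the relations
\[
(\tau\partial_\tau+\nu_i)\,\omega_i=\tau\,\omega_{i+1}\quad(i<n),\qquad (\tau\partial_\tau+\nu_n)\,\omega_n=t\,\tau\,\omega_1
\]
in $\mathbf{G}_t$. Writing $\alpha_i=\nu_i-l_i$, this gives $(\tau\partial_\tau+\alpha_i)(\tau^{l_i}\omega_i)=\tau^{l_i+1}\omega_{i+1}$ for $i<n$ and $t\,\tau^{l_n+1}\omega_1$ for $i=n$. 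Because $N$ corresponds under $\psi$ to $2\pi i\bigoplus_\alpha(\tau\partial_\tau+\alpha)$, it remains to read off the class of the right-hand side in $\mathit{gr}^V_{\alpha_i}\mathbf{G}_t$. Now $\tau^{l_i+1}\omega_{i+1}$ has degree $\nu_{i+1}-l_i-1=\alpha_i+(\nu_{i+1}-\nu_i-1)$, which, since $\nu_{i+1}-\nu_i\le 1$, is always $\le\alpha_i$, with equality precisely when $\nu_{i+1}-\nu_i=1$; in that case $l_{i+1}=l_i+1$, so the class is exactly the chosen representative $\psi^{-1}[\tau^{l_{i+1}}\omega_{i+1}]$ of the next basis vector, and otherwise it vanishes in $\mathit{gr}^V_{\alpha_i}$. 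The case $i=n$ is entirely parallel once one sets $\nu_{n+1}:=\nu_1$: for $t=0$ the relation reads $(\tau\partial_\tau+\nu_n)\omega_n=0$, so $N$ kills $\psi^{-1}[\tau^{l_n}\omega_n]$ (the convention $\omega_{n+1}=0$), while for $t\neq 0$ the class of $t\,\tau^{l_n+1}\omega_1$ is nonzero exactly when $\nu_1-\nu_n=1$, in which case $l_1=l_n+1$ and $N$ sends $\psi^{-1}[\tau^{l_n}\omega_n]$ to $2\pi i\,\psi^{-1}[\tau^{l_1}\omega_1]$ (the convention $\omega_{n+1}=\omega_1$).

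Finally, reading off the Jordan structure is immediate: $N$ advances $\psi^{-1}[\tau^{l_i}\omega_i]$ by one step precisely when $\nu_{i+1}-\nu_i=1$, i.e. exactly within each block $I_r$ of the decomposition $(1,\ldots,n)=(I_1,\ldots,I_s)$ of Proposition \ref{ExchangeTrick}, and annihilates it at the end of each block; hence the Jordan blocks of $N$ are precisely the $I_r$. The step I expect to need the most care is the bookkeeping for $\psi$: one must verify that replacing $\tau^{l_i+1}\omega_{i+1}$ by the normalised representative $\tau^{l_{i+1}}\omega_{i+1}$ is compatible with the identification of $\bigoplus_{\alpha\in(0,1]}\mathit{gr}^V_\alpha\mathbf{G}_t$ with $\mathbf{G}_t^\infty$ (matching up the fundamental domain via the canonical $\tau$-action on graded pieces), which is exactly where the gap conditions $\nu_i-\nu_{i-1}\le 1$ (and $\nu_1-\nu_n\le 1$ for $t\neq 0$) enter, guaranteeing that each $N$-image lands in the correct graded piece.
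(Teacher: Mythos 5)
Your proof is correct and follows essentially the same route as the paper: the corollary there is derived from the dictionary $M_s=e^{-2\pi i\alpha}$ on $gr^V_\alpha\mathbf{G}_t$ and $N\leftrightarrow 2\pi i\bigoplus_\alpha(\tau\partial_\tau+\alpha)$, combined with the relations $(\tau\partial_\tau+(\nu_i-k))(\tau^{k}\omega_i)=\tau^{k+1}\omega_{i+1}$ (resp.\ $t\tau^{k+1}\omega_1$ for $i=n$) already established in the proof of Proposition \ref{ExchangeTrick}, which is exactly your argument. Your explicit degree bookkeeping, showing the image survives in $gr^V_{\nu_i-l_i}\mathbf{G}_t$ precisely when $\nu_{i+1}-\nu_i=1$ and then equals the normalised representative $\tau^{l_i+1}\omega_{i+1}=\tau^{l_{i+1}}\omega_{i+1}$, just makes the paper's implicit computation (and its slightly loose notation $\tau^{l_i}\omega_{i+1}$) precise.
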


We can now use proposition \ref{ExchangeTrick} to compute a $V^+$-solution and the
spectrum of $G_t$. We give an explicit algorithm, which we split into two parts for the
sake of clarity. Once again it should be emphasized that the special form of the matrix
$\Omega_0$ is the main ingredient for this algorithm.
\begin{algorithm}
  \label{algorithm1}
  Given $\underline{\omega}^{(1)}$ from lemma \ref{lemBirkhoff}, i.e., $\partial_\tau(\underline{\omega}^{(1)})
  =\underline{\omega}^{(1)}(\Omega_0+\tau^{-1}A^{(1)}_\infty)$ and
  $A^{(1)}_\infty=\diag(-\nu^{(1)}_1,\ldots,-\nu^{(1)}_n)$, whenever there is $i\in\{2,\ldots,n\}$
  with $\nu^{(1)}_i-\nu^{(1)}_{i-1}>1$, put
  \begin{equation}\label{eqAlg1}
  \begin{array}{rcl}
     \widetilde{\omega}^{(1)}_i & :=  & \omega^{(1)}_i+\tau^{-1}(\nu^{(1)}_i-\nu^{(1)}_{i-1}-1)\omega^{(1)}_{i-1}\\
      \widetilde{\omega}^{(1)}_j & :=  & \omega^{(1)}_j\quad\forall j\neq i
  \end{array}
  \end{equation}
  so that $\partial_\tau(\widetilde{\underline{\omega}}^{(1)})
  =\widetilde{\underline{\omega}}^{(1)}(\Omega_0+\tau^{-1}\widetilde{A}^{(1)}_\infty)$ and
  $\widetilde{A}^{(1)}_\infty=\diag(-\widetilde{\nu}^{(1)}_1,\ldots,-\widetilde{\nu}^{(1)}_n)$,
  where $\widetilde{\nu}^{(1)}_i=\nu^{(1)}_{i-1}+1$,
  $\widetilde{\nu}^{(1)}_{i-1}=\nu^{(1)}_i-1$ and $\widetilde{\nu}^{(1)}_j=\nu^{(1)}_j$
  for any $j\notin\{i,i-1\}$. Restart algorithm $1$ with input
  $\widetilde{\underline{\omega}}^{(1)}$.
\end{algorithm}
Now we have
\begin{lemma}\label{Alg1Term}
  Given any basis $\underline{\omega}^{(1)}$ of $G_t$ as above, algorithm 1 terminates.
  Its output $\underline{\omega}^{(2)}$ is a $V^+$-solution for $G_t$ if $t=0$.
\end{lemma}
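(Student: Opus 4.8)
The plan is to prove termination first and then read off the $V^+$-property for $t=0$ directly from Proposition \ref{ExchangeTrick}. Each pass of Algorithm \ref{algorithm1} is an invertible $\CC[\tau^{-1}]$-linear base change (its matrix is $\Id$ plus a single $\tau^{-1}$-multiple of a nilpotent elementary matrix, hence invertible over $\CC[\tau^{-1}]$), so the intermediate tuples remain bases of $G_t$ and, by the transformation formula recorded in the algorithm, keep the connection in the shape $\Omega_0+\tau^{-1}\diag(-\nu_1,\ldots,-\nu_n)$. When the loop stops, no index $i\in\{2,\ldots,n\}$ satisfies $\nu_i-\nu_{i-1}>1$, i.e. the output $\underline{\omega}^{(2)}$ has $\nu^{(2)}_i-\nu^{(2)}_{i-1}\le 1$ for all such $i$. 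For $t=0$ these are exactly the hypotheses of Proposition \ref{ExchangeTrick}, the extra wrap-around condition $\nu_1-\nu_n\le 1$ being required only when $t\neq 0$; that proposition then identifies $\underline{\omega}^{(2)}$ as a $V^+$-solution. So all the content is in termination.

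For termination I would track how one pass acts on the tuple $(\nu_1,\ldots,\nu_n)\in\CC^n$: it replaces the pair $(\nu_{i-1},\nu_i)$, subject to $\nu_i-\nu_{i-1}>1$ in the lexicographic order on $\CC$, by $(\nu_i-1,\nu_{i-1}+1)$ in positions $i-1,i$, fixing the other entries. Three invariants are immediate: the multiset $\{\operatorname{Im}\nu_j\}_j$ of imaginary parts (a pass merely transposes the imaginary parts of positions $i-1$ and $i$); the sum $\sum_j\operatorname{Re}\nu_j$; and the multiset of residues of the $\operatorname{Re}\nu_j$ modulo $\ZZ$. I would then introduce $M_1:=\sum_j(\operatorname{Re}\nu_j)^2$ and $M_2:=\sum_j j\,\operatorname{Im}\nu_j$ and compute their change under one pass,
\begin{displaymath}
  \Delta M_1 = 2-2\operatorname{Re}(\nu_i-\nu_{i-1}),\qquad \Delta M_2=-\bigl(\operatorname{Im}\nu_i-\operatorname{Im}\nu_{i-1}\bigr).
\end{displaymath}
The lexicographic inequality $\nu_i-\nu_{i-1}>1$ splits into two cases: if $\operatorname{Re}(\nu_i-\nu_{i-1})>1$ then $\Delta M_1<0$; if $\operatorname{Re}(\nu_i-\nu_{i-1})=1$ then necessarily $\operatorname{Im}(\nu_i-\nu_{i-1})>0$, so $\Delta M_1=0$ while $\Delta M_2<0$. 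Hence $(M_1,M_2)$ strictly decreases in the lexicographic order at every pass.

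Finally I would argue that $(M_1,M_2)$ ranges over a finite set, ruling out infinite strict descent. As $M_1$ is non-increasing it is bounded by its initial value; together with the invariance of $\sum_j\operatorname{Re}\nu_j$ this bounds every $\operatorname{Re}\nu_j$, and since each real part lies in one of the finitely many fixed residue cosets modulo $\ZZ$, the real-part tuples—and hence the values of $M_1$—form a finite set. By the invariance of the multiset of imaginary parts, $M_2$ takes only the finitely many values $\sum_j j\,\operatorname{Im}\nu_{\sigma(j)}$ over permutations $\sigma$. A strictly lexicographically decreasing sequence in a finite set is finite, so the algorithm terminates, and the $t=0$ conclusion follows via Proposition \ref{ExchangeTrick} as above.

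The main obstacle is precisely the lexicographic bookkeeping forced by $\KK=\CC$ at $t=0$: for a real spectrum the single monotone quantity $\sum_j(\operatorname{Re}\nu_j)^2$ already decreases strictly and settles termination, whereas the possibly genuinely complex spectral numbers require the auxiliary measure $M_2$ to control the borderline case $\operatorname{Re}(\nu_i-\nu_{i-1})=1$.
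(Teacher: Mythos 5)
Your proof is correct, and its termination argument is genuinely different from (and more detailed than) the paper's. The paper disposes of termination in one sentence: it claims, as "a simple analysis", that if the prefix $(\nu^{(1)}_1,\ldots,\nu^{(1)}_k)$ is ordered (consecutive differences $\leq 1$) then after finitely many steps the prefix $(\nu^{(1)}_1,\ldots,\nu^{(1)}_{k+1})$ becomes ordered, and then invokes Proposition \ref{ExchangeTrick} exactly as you do. Note that this prefix induction is not as innocent as the paper suggests: a pass at position $k+1$ replaces $\nu_k$ by $\nu_{k+1}-1$, which may well exceed $\nu_{k-1}+1$, so an ordered prefix can be destroyed and the induction needs a more careful formulation (e.g. a nested descent) to be complete. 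Your global monovariant $(M_1,M_2)=\bigl(\sum_j(\operatorname{Re}\nu_j)^2,\ \sum_j j\operatorname{Im}\nu_j\bigr)$ avoids this entirely: the computations $\Delta M_1=2-2\operatorname{Re}(\nu_i-\nu_{i-1})$ and $\Delta M_2=-\operatorname{Im}(\nu_i-\nu_{i-1})$ are correct, the case split matches the lexicographic meaning of $\nu_i-\nu_{i-1}>1$ over $\KK=\CC$, and the finiteness of the value set follows as you say from the invariance of $\sum_j\operatorname{Re}\nu_j$ modulo the boundedness from $M_1$, the invariance of the multiset of residues of real parts in $\RR/\ZZ$, and the invariance of the multiset of imaginary parts. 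Two virtues of your route: it is insensitive to the nondeterminism in the choice of the index $i$ at each pass, and it treats honestly the possibly non-real spectral numbers at $t=0$ (the reason the paper introduces the lexicographic order in the first place), which the paper's sketch never engages with. What the paper's approach buys, if fleshed out, is slightly more structural information (the algorithm essentially sorts from the left), but as written it is only a sketch; your argument is a complete proof of the same statement.
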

\begin{proof}
  The first statement is a simple analysis on the action of the algorithm on the array
  $(\nu^{(1)}_1,\ldots,\nu^{(1)}_n)$, namely, if $(\nu^{(1)}_1,\ldots,\nu^{(1)}_k)$ is ordered (i.e.,
  $\nu^{(1)}_i-\nu^{(1)}_{i-1}\leq 1$ for all $i\in\{2,\ldots k\}$), then after a finite number of
  steps the array $(\widetilde{\nu}^{(1)}_1,\ldots,\widetilde{\nu}^{(1)}_{k+1})$ will be ordered.
  This shows that the algorithm will eventually terminate. Its output is then a
  $V^+$-solution for $G_t$ if $t=0$ by proposition \ref{ExchangeTrick}.
\end{proof}
If we want to compute the spectrum and a $V^+$-solution of $G_t$ for $t\neq 0$, we also
have to make sure that $\nu_1-\nu_n\leq 1$.  This is done by the following procedure.
\begin{algorithm}
  Run algorithm 1 on the input $\underline{\omega}^{(1)}$ with output
  $\underline{\omega}^{(2)}$ where
  $A^{(2)}_\infty=(-\nu^{(2)}_1,\ldots,-\nu^{(2)}_n)$.  As long as
  $\nu^{(2)}_1-\nu^{(2)}_n>1$, put
  \begin{equation}\label{eqAlg2}
  \begin{array}{rcl}
      \widetilde{\omega}^{(2)}_1 & := & t\omega^{(2)}_1+\tau^{-1}(\nu^{(2)}_1-\nu^{(2)}_n-1)\omega^{(2)}_n \\
      \widetilde{\omega}^{(2)}_i & := & t\omega^{(2)}_i\quad\forall i\neq 1.
  \end{array}
  \end{equation}
  so that $\partial_\tau(\underline{\widetilde{\omega}}^{(2)})
  =\underline{\widetilde{\omega}}^{(2)}(\Omega_0+\tau^{-1}\widetilde{A}^{(2)}_\infty)$ with
  $\widetilde{A}^{(2)}_\infty=\diag(-\widetilde{\nu}^{(2)}_1,\ldots,-\widetilde{\nu}^{(2)}_n)$, where $\widetilde{\nu}^{(2)}_1=\nu^{(2)}_n+1$,
  $\widetilde{\nu}^{(2)}_n=\nu^{(2)}_1-1$ and $\widetilde{\nu}^{(2)}_i=\nu^{(2)}_i$ for any
  $i\notin\{1,n\}$. Run algorithm 2 again on input $\underline{\widetilde{\omega}}^{(2)}$.
\end{algorithm}

\begin{lemma}\label{Alg2Term}
  Let $t\neq 0$, given any solution $\underline{\omega}^{(1)}$ to the Birkhoff problem for
  $G_t$, such that $\partial_\tau(\underline{\omega}^{(1)})
  =\underline{\omega}^{(1)}(\Omega_0+\tau^{-1}A^{(1)}_\infty)$ with $\Omega_0$ as above and $A^{(1)}_\infty$
  diagonal, then algorithm 2 with input $\underline{\omega}^{(1)}$ terminates and yields a
  basis $\underline{\omega}^{(3)}$ with $\partial_\tau(\underline{\omega}^{(3)})
  =\underline{\omega}^{(3)}(\Omega_0+\tau^{-1}A^{(3)}_\infty)$, where
  $A^{(3)}_\infty=(-\nu^{(3)}_1, \ldots, -\nu^{(3)}_n)$ with $\nu^{(3)}_{i+1}-\nu^{(3)}_i\leq 1$ for
  $i\in\{1,\ldots n\}$ (here $\nu^{(3)}_{n+1}:=\nu^{(3)}_1$).
\end{lemma}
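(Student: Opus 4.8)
The plan is to treat the whole of algorithm 2 --- the interleaved runs of algorithm 1 together with the rotation steps \eqref{eqAlg2} --- as a single sequence of \emph{elementary moves} on the diagonal tuple $(\nu_1,\dots,\nu_n)$, and to exhibit a potential that strictly decreases along every move by an amount bounded away from zero. First I would record that every elementary move preserves the Birkhoff normal form: both an algorithm-1 step \eqref{eqAlg1} and a rotation step \eqref{eqAlg2} replace the given basis by a new $\CC[\tau^{-1}]$-basis of $G_t$ for which $\partial_\tau$ again has the shape $\Omega_0+\tau^{-1}A_\infty$ with $A_\infty$ diagonal. The change-of-basis matrix is unipotent for an algorithm-1 step, and equals $t\,\Id$ plus a single off-diagonal $\tau^{-1}$-term for a rotation step; both are invertible over $\CC[\tau^{-1}]$ because $t\neq 0$. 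For the rotation step the resulting connection matrix is the one already asserted in the statement of algorithm 2, and it is checked by a direct computation using the special shape of $\Omega_0$ (the subdiagonal of ones together with the single entry $c_0t$ in the top right corner): in particular the $\tau^{-2}$-terms produced by $\partial_\tau(\tau^{-1}\omega_n^{(2)})$ cancel precisely because $c=\nu_1^{(2)}-\nu_n^{(2)}-1$. Thus each move acts on the diagonal by selecting two entries $a<b$ with $b-a>1$ --- the consecutive pair $(\nu_{i-1},\nu_i)$ for an algorithm-1 step, the cyclic pair $(\nu_n,\nu_1)$ for a rotation --- and replacing them by $(b-1,a+1)$.

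Next I would isolate the two invariants of this combinatorial dynamics. Since each move shifts the two affected entries by $\pm 1$, the multiset $R:=\{\nu_i^{(1)}\bmod\ZZ\}$ of residues is preserved throughout, so every $\nu_i$ occurring at any stage lies in the fixed discrete set $\Lambda:=\bigcup_{r\in R}(r+\ZZ)\subset\RR$. Now take as potential
\begin{displaymath}
  \Phi:=\sum_{i=1}^n \nu_i^2 .
\end{displaymath}
A move $(a,b)\mapsto(b-1,a+1)$ changes $\Phi$ by $(b-1)^2+(a+1)^2-a^2-b^2=-2(b-a-1)$, which is strictly negative because $b-a>1$. Moreover $b-a-1$ again lies in the set $\Lambda-\Lambda=\bigcup_{r,r'\in R}(r-r'+\ZZ)$, a finite union of cosets of $\ZZ$, hence discrete; therefore the strictly positive reals of this form admit a strictly positive infimum $\epsilon_0>0$, and each elementary move decreases $\Phi$ by at least $2\epsilon_0$.

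Since $\Phi\geq 0$ is bounded below and drops by at least the fixed positive amount $2\epsilon_0$ at every algorithm-1 step and at every rotation, only finitely many moves of either kind can occur; in particular only finitely many rotation steps are performed, so algorithm 2 terminates. At the moment it halts, the last call to algorithm 1 has just finished, so by Lemma \ref{Alg1Term} the output tuple satisfies $\nu_{i}^{(3)}-\nu_{i-1}^{(3)}\leq 1$ for $i=2,\dots,n$; and the loop was exited only because the halting test gave $\nu_1^{(3)}-\nu_n^{(3)}\leq 1$, which is precisely the remaining cyclic inequality $\nu_{n+1}^{(3)}-\nu_n^{(3)}\leq1$ for $\nu_{n+1}^{(3)}:=\nu_1^{(3)}$. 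Together with the preserved normal form this gives the basis $\underline{\omega}^{(3)}$ with connection matrix $\Omega_0+\tau^{-1}A_\infty^{(3)}$ and cyclically ordered diagonal, as claimed (so that, by Proposition \ref{ExchangeTrick}, $\underline{\omega}^{(3)}$ is in fact a $V^+$-solution). The only real obstacle is termination: the inductive argument of Lemma \ref{Alg1Term} for the linear condition does not survive the cyclic wrap-around of the rotation step, and it is exactly the combination of the strictly decreasing quadratic potential $\Phi$ with the discreteness of $\Lambda$ that supplies the uniform per-step decrease needed to force finiteness.
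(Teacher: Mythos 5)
Your proof is correct, and it takes a genuinely different route from the paper's. The paper proves termination by tracking a single quantity, the cyclic gap $\nu^{(2)}_1-\nu^{(2)}_n$, across passes of the outer loop: it asserts that this gap never increases, strictly decreases after finitely many steps, and takes its values in the discrete set $\{\nu^{(1)}_a-\nu^{(1)}_b\,|\,a,b\}+\ZZ$, so that it must eventually drop to $\leq 1$; termination of the embedded algorithm-1 runs is quoted from Lemma \ref{Alg1Term}. Note that the paper's monotonicity claim is not a step-by-step statement --- an algorithm-1 move at $i=2$ raises $\nu_1$, and one at $i=n$ lowers $\nu_n$, so the gap can grow during an algorithm-1 run --- and the paper leaves its verification (for the net effect of a full rotation-plus-reordering pass) to the reader. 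Your argument replaces it by a monovariant that works uniformly for every elementary move of either kind: each move has the shape $\{a,b\}\mapsto\{a+1,b-1\}$ with $b-a>1$, the potential $\Phi=\sum_i\nu_i^2$ drops by exactly $2(b-a-1)>0$, and the preservation of the residues modulo $\ZZ$ (the same discreteness observation the paper invokes) bounds each drop below by a fixed $2\epsilon_0>0$; hence the total number of moves of both kinds is finite in one stroke. This buys you two things: the delicate monotonicity assertion disappears entirely, and termination of algorithm 1 is re-proved as a by-product, the only content of Lemma \ref{Alg1Term} still used being the immediate fact that algorithm 1 halts exactly when the consecutive inequalities $\nu_i-\nu_{i-1}\leq 1$ all hold. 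One point you should make explicit: ordering the $\nu_i$ and forming $\sum_i\nu_i^2$ presupposes that they are real. This is automatic for $t\neq 0$: the $-\nu_i$ are residue eigenvalues of a lattice in $\mathbf{G}_t$ logarithmic at $\tau=0$, so the $e^{-2\pi i\nu_i}$ are eigenvalues of the quasi-unipotent monodromy and the $\nu_i$ are rational, as the paper records before Lemma and Definition \ref{Vsolution}; the paper's own proof relies on the same fact implicitly, so this is a presentational caveat rather than a gap.
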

\begin{proof}
  We only have to prove that algorithm 2 terminates. This is easily be done by showing
  that in each step, the number $\widetilde{\nu}^{(2)}_1-\widetilde{\nu}^{(2)}_n$ does not
  increase, that it strictly decreases after a finite number of steps, and that the
  possible values for this number are contained in the set
  $\{a-b\,|\,a,b\in\{-\nu^{(1)}_1,\ldots,-\nu^{(1)}_n\}\}+\ZZ$ (which has no accumulation points), so
  that after a finite number of steps we necessarily have
  $\widetilde{\nu}^{(2)}_1-\widetilde{\nu}^{(2)}_n\leq 1$.
\end{proof}
Note that for any fixed $t\neq 0$, algorithm 2 produces a base change of $G_t$, but this
does not lift to a base change of $G$ itself, i.e., $G^{(3)}:=\oplus_{i=1}^n\CC[\tau^{-1},t]
\omega^{(3)}_i$ is a proper free submodule of $G$ which coincides with $G$ only after
localization off $t=0$. In other words, it is a $\CC[t]$-lattice of $G[t^{-1}]$ which is
in general different from $G$.

Summarizing the above calculations, we have shown the following.
\begin{corollary}
  \label{summary}
  \begin{enumerate}
  \item
  Let $D\subset V$ be a linear free divisor with defining equation $h\in\CC[V]_n$, seen as
  a morphism $h:V\rightarrow T$. Let $f\in \CC[V]_1$ be linear and $\mathcal{R}_h$-finite.
  Then for any $t\in T$, there is a $V^+$-solution of the Birkhoff problem for
  $(G_t,\nabla)$, defined by bases $\underline{\omega}^{(2)}$ if $t = 0$
  resp. $\underline{\omega}^{(3)}$ if $t\neq 0$
  as constructed
  above.  If $\nu^{(2)}_1-\nu^{(2)}_n\leq 1$ then $\underline{\omega}^{(3)}=\underline{\omega}^{(2)}$. Moreover, we have that $\underline{\omega}^{(2)}_i-(-f)^{i-1}\alpha$
  and $\omega^{(3)}_i-(-f)^{i-1}\alpha$ lie in $\tau^{-1}G_t$ for all $i\in\{1,\ldots,n\}$.
  \item
  Let $D$ be reductive. Then the integrable connection $\nabla$ on $\mathbf{G}[t^{-1}]$ defined by formula \eqref{horizontalconnection} takes the following form in the basis $\underline{\omega}^{(3)}$:
  $$
  \nabla(\underline{\omega}^{(3)}) = \underline{\omega}^{(3)} \cdot
  \left[
    (\Omega_0 + \tau^{-1}A^{(3)}_\infty)d\tau + (\widetilde{D}+\tau\Omega_0+A^{(3)}_\infty)\frac{dt}{nt}
  \right]
  $$
  where $\widetilde{D}:=\diag(0,\ldots,n-1)+k\cdot n \cdot\mathit{Id}$, here $k$ is the
  number of times the (meromorphic) base change \eqref{eqAlg2} in algorithm 2 is performed.

   Hence, in the reductive case, $\underline{\omega}^{(3)}$ gives a $V^+$-solution $\widehat{G}[t^{-1}]$ to the Birkhoff problem for $(G[t^{-1}],\nabla)$.
  \end{enumerate}
\end{corollary}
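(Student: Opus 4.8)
The plan is to treat this corollary as an assembly of the machinery already built, supplying only two genuinely new items: the value of $\widetilde{D}$ and the passage from a fibrewise to a family statement. For part (i), I would argue by cases on $t$. If $t=0$, Lemma \ref{Alg1Term} already asserts that the output $\underline{\omega}^{(2)}$ of Algorithm~1 is a $V^+$-solution, so nothing further is needed. If $t\neq 0$, I feed $\underline{\omega}^{(1)}$ (from Lemma \ref{lemBirkhoff}) into Algorithm~2; Lemma \ref{Alg2Term} guarantees termination and produces $\underline{\omega}^{(3)}$ whose spectral numbers satisfy $\nu^{(3)}_{i+1}-\nu^{(3)}_i\leq 1$ cyclically, which is exactly the hypothesis of Proposition \ref{ExchangeTrick}; that proposition then certifies $\underline{\omega}^{(3)}$ as a $V^+$-solution and identifies the $\nu^{(3)}_i$ with the spectrum. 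The equality $\underline{\omega}^{(3)}=\underline{\omega}^{(2)}$ under $\nu^{(2)}_1-\nu^{(2)}_n\leq 1$ is immediate: this is precisely the stopping condition of Algorithm~2, so $k=0$ and no base change \eqref{eqAlg2} is performed.

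For the ``moreover'' in part (i), I would track the $\tau^0$-component through every base change. The initial basis has $\omega^{(0)}_i=(-f)^{i-1}\alpha$, and the base changes \eqref{eqBirkhoff} and \eqref{eqAlg1} only add $\tau^{-1}\cdot(\textup{earlier basis vectors})$, hence preserve the $\tau^0$-part, giving $\omega^{(2)}_i-(-f)^{i-1}\alpha\in\tau^{-1}G_t$. For $\underline{\omega}^{(3)}$ the base change \eqref{eqAlg2} additionally scales by $t$; at a fixed $t\neq 0$ this is multiplication by the nonzero constant $t^k$, which I would divide out (harmless for the $V^+$-property) so as to normalise the $\tau^0$-part back to $(-f)^{i-1}\alpha$.

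Part (ii) carries the real content, and here the clean tool is the homogeneity operator $\mathcal{D}:=nt\nabla_{\partial_t}-\tau\nabla_{\partial_\tau}$. A direct computation from \eqref{horizontalconnection} and the definition of $\nabla_{\partial_\tau}$, using $L_E(g\alpha)=\deg(g)\,g\alpha$ (as $\alpha$ has weight zero) and $L_E(f)=f$, shows $\mathcal{D}(\tau^k g\alpha)=(\deg g - k)\tau^k g\alpha$ for homogeneous $g$; that is, $\mathcal{D}$ records the total weight, the two first-order terms $-\tau^{k+1}fg\alpha$ cancelling. I would then observe that every basis vector in the construction is a $\mathcal{D}$-eigenvector: $\omega^{(0)}_i$ has weight $i-1$, the $\tau^{-1}$-corrections in \eqref{eqBirkhoff}, \eqref{eqAlg1}, \eqref{eqAlg2} are weight-neutral, and multiplication by $t$ raises the weight by $n$ (since $\mathcal{D}(t\omega)=(\mathcal{D}+n)(t\omega)$). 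Hence $\omega^{(3)}_i$ has $\mathcal{D}$-weight $(i-1)+kn$, with $k$ the number of applications of \eqref{eqAlg2}. Rewriting $nt\nabla_{\partial_t}=\mathcal{D}+\tau\nabla_{\partial_\tau}$ and inserting $\nabla_{\partial_\tau}\underline{\omega}^{(3)}=\underline{\omega}^{(3)}(\Omega_0+\tau^{-1}A^{(3)}_\infty)$ yields
\[
nt\nabla_{\partial_t}\underline{\omega}^{(3)}=\underline{\omega}^{(3)}\bigl(\widetilde{D}+\tau\Omega_0+A^{(3)}_\infty\bigr),\qquad \widetilde{D}=\diag(0,\ldots,n-1)+kn\,\Id,
\]
which is the asserted $dt$-matrix. (One could instead verify the three integrability relations of Proposition \ref{propVerticalConnectionReductive}(ii) with $A=\Omega_0$, $A'=\tfrac1n\Omega_0$, $B=A^{(3)}_\infty$, $B'=\tfrac1n(A^{(3)}_\infty+\widetilde{D})$, but the weight computation pins down $\widetilde{D}$ without guesswork.)

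Finally, that $\underline{\omega}^{(3)}$ furnishes a $V^+$-solution $\widehat{G}[t^{-1}]$ for the family $(G[t^{-1}],\nabla)$ I would deduce by combining the fibrewise $V^+$-property above (valid for every $t\neq 0$) with the explicit global connection matrix just obtained: the $\tau$-part $\Omega_0+\tau^{-1}A^{(3)}_\infty$ has a logarithmic pole along $\{\infty\}\times(T\backslash\{0\})$ and a type-one pole along $\{0\}\times(T\backslash\{0\})$, so $\widehat{G}[t^{-1}]=\bigoplus_i\OO_{\PP^1\times(T\backslash\{0\})}\,\omega^{(3)}_i$ is the required extension, exactly as in Proposition \ref{propVerticalConnectionReductive}(iii). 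The main obstacle I anticipate is bookkeeping the two competing normalisations of $\underline{\omega}^{(3)}$: in part (i) I want the leading $\tau^0$-term normalised to $(-f)^{i-1}\alpha$, whereas in part (ii) the $t^k$-scalings from \eqref{eqAlg2} must be retained, since it is exactly these that generate the shift $kn\,\Id$ in $\widetilde{D}$. Keeping these two roles consistent, and checking that \eqref{eqAlg2} is genuinely $\mathcal{D}$-homogeneous (which rests on the identity $w_1-w_n=1-n$ for the running weights $w_i$, valid at every stage since the $kn$ cancels), is the one point requiring care; the remainder is routine assembly of the preceding lemmas.
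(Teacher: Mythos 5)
Your proposal is correct. Part (i) is assembled exactly as in the paper: Lemma \ref{Alg1Term} for $t=0$, Lemma \ref{Alg2Term} together with Proposition \ref{ExchangeTrick} for $t\neq 0$, and the observation that $\nu^{(2)}_1-\nu^{(2)}_n\leq 1$ is precisely the stopping condition of algorithm 2. For part (ii), however, you take a genuinely different route. The paper propagates the $dt$-matrix of Proposition \ref{propVerticalConnectiveReductive} through the algorithms step by step --- more precisely, it checks that each base change \eqref{eqAlg1} preserves the form $nt\partial_t(\underline{\omega})=\underline{\omega}\,(\tau\Omega_0+\diag(0,\ldots,n-1)+A_\infty)$, while each step \eqref{eqAlg2} adds $n\cdot\Id$ --- and this rests on Proposition \ref{propVerticalConnectionReductive} (i) (purity of representatives), hence on Theorem \ref{nbi} and reductivity. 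You instead introduce the grading operator $\mathcal{D}=nt\nabla_{\partial_t}-\tau\nabla_{\partial_\tau}$; your computation $\mathcal{D}(\tau^j g\alpha)=(\deg g-j)\,\tau^j g\alpha$ is correct (it uses $L_E\alpha=0$, $L_E f=f$, and the cancellation of the two $\tau^{j+1}fg\alpha$ terms), and the base changes \eqref{eqBirkhoff}, \eqref{eqAlg1}, \eqref{eqAlg2} are indeed $\mathcal{D}$-homogeneous, so $\omega^{(3)}_i$ has weight $(i-1)+kn$ and the $dt$-matrix falls out in one line. This buys two things: the value of $\widetilde{D}$ is forced rather than verified a posteriori, and reductivity is never used --- only the well-definedness of \eqref{horizontalconnection} on $\mathbf{G}$, which holds for any linear free divisor --- so your argument would in fact establish (ii) beyond the reductive case, something the paper only speculates about in section \ref{sec:examples}. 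Finally, your treatment of the ``moreover'' in (i) is more careful than the paper's own proof: the paper claims the cumulative base change matrix $P$ satisfies $P-\Id\in\tau^{-1}\Gl(n,\CC[\tau^{-1}])$, which is literally false for $\underline{\omega}^{(3)}$ once \eqref{eqAlg2} has acted (there $P=t^k\Id+O(\tau^{-1})$); your renormalisation by $t^{-k}$ --- harmless fibrewise, and consistent with the sections $t^{-k}\omega^{(3)}_i$ the paper itself uses later in Lemmas \ref{lemApplicationHM} and \ref{lemNablaTFlat} --- is the right fix, and you correctly insist that the unnormalised basis be retained in (ii), since the $t^k$-scaling is exactly what produces the shift $kn\,\Id$ in $\widetilde{D}$.

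(One typographical note: the reference in the second sentence above should of course read Proposition \ref{propVerticalConnectionReductive}; the point stands as written.)
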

\begin{proof}
  Starting with the basis $\omega^{(0)}_i=(-f)^{i-1}\alpha$ of $G_t$, we construct
  $\underline{\omega}^{(2)}$ resp. $\underline{\omega}^{(3)}$ using lemma \ref{lemBirkhoff},
  proposition \ref{ExchangeTrick} and lemma \ref{Alg1Term} resp. lemma \ref{Alg2Term}. In both cases,
  the base change matrix $P\in \Gl(n, \CC[\tau^{-1}])$
  defined by $\underline{\omega}^{(2)} = \underline{\omega}^{(0)} \cdot P$
  resp. $\underline{\omega}^{(3)} = \underline{\omega}^{(0)} \cdot P$ has the
  property that $P-\Id\in \tau^{-1}\Gl(n, \CC[\tau^{-1}])$ which shows the second
  statement of the first part. As to the second part, one checks that
  the base change steps \eqref{eqAlg1} performed in algorithm 1 have the effect that
  $n\cdot t\partial_t(\widetilde{\underline{\omega}}^{(1)})
  =\widetilde{\underline{\omega}}^{(1)}(\tau\Omega_0 + \diag(0,\ldots,n-1) + \widetilde{A}^{(1)}_\infty)$,
  whereas step \eqref{eqAlg2} in algorithm 2 gives
  $n\cdot t\partial_t(\underline{\widetilde{\omega}}^{(2)})
  =\underline{\widetilde{\omega}}^{(2)}(\tau\Omega_0+\diag(0,\ldots,n-1)+n\cdot\mathit{Id}+\widetilde{A}^{(2)}_\infty)$.
\end{proof}

As already indicated above, we can show that the solution obtained behaves well
with respect to the pairing $S$, provided that a technical hypothesis holds true. More precisely, we have the
following statement.
\begin{theorem}
\label{theoS-solution}
Let $t\neq 0$. Suppose that the minimal spectral number of the tame function $f_{|D_t}$
is of multiplicity one, i.e., there is a unique $i\in\{1,\ldots,n\}$ such that
$\nu^{(3)}_i=\min_{j\in\{1,\ldots,n\}}(\nu^{(3)}_j)$. Then $\underline{\omega}^{(3)}$ is a
$(V^+,S)$-solution of the Birkhoff problem for $(G_t,\nabla)$, i.e.,
$S(G_t\cap G'_t, \overline{G}_t\cap \overline{G}'_t)\subset \CC \tau^{-n+1}$, where $G'_t:=\oplus_{i=1}^n\OO_{\PP^1\backslash\{0\}
\times\{t\}}\omega_i^{(3)}$.
\end{theorem}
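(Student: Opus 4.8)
The plan is to reduce the statement to a computation with the pairing matrix and then to isolate where the multiplicity-one hypothesis is used. Write $\omega_i:=\omega_i^{(3)}$ and $\nu_i:=\nu_i^{(3)}$. Recall from Lemma and Definition \ref{Vsolution}(i) (in the form used in Corollary \ref{summary}) that $G'_t=\bigoplus_{i=1}^n\CC[\tau]\,\omega_i$ while $G_t=\bigoplus_{i=1}^n\CC[\tau^{-1}]\,\omega_i$, so that $G_t\cap G'_t=\bigoplus_{i=1}^n\CC\,\omega_i$. Hence the assertion $S(G_t\cap G'_t,\overline{G}_t\cap\overline{G}'_t)\subset\CC\tau^{-n+1}$ is equivalent to showing, for the matrix $P_{ij}(\tau):=S(\omega_i,\overline{\omega_j})$, that $P_{ij}\in\CC\,\tau^{-n+1}$ for all $i,j$. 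Property (c) of $S$ already gives $P_{ij}\in\tau^{-n+1}\CC[\tau^{-1}]$, so only the \emph{absence of strictly lower-order terms} has to be proved.

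First I would convert the flatness of $S$ into a differential equation for $P$. As $\underline{\omega}^{(3)}$ solves the Birkhoff problem, $\tau\partial_\tau\omega_j=\sum_i(\tau\Omega_0+A_\infty)_{ij}\omega_i$ with $A_\infty=\diag(-\nu_1,\ldots,-\nu_n)$ and $\Omega_0$ the companion-type matrix carrying $1$'s on the subdiagonal and $t$ in the upper right corner. Feeding this into property (a) of $S$ and using that $A_\infty$ is real diagonal while $\tau$ acts as $-\tau$ on $\overline{\mathbf G}_t$, one gets the linear system
\[
\tau\partial_\tau P=(\tau\Omega_0+A_\infty)^{\top}\,P+P\,(A_\infty-\tau\Omega_0).
\]
Substituting $P=\tau^{-n+1}Q$ with $Q=\sum_{k\ge 0}Q_k\tau^{-k}$ a matrix over $\CC[\tau^{-1}]$ (legitimate by property (c), with $Q_0$ non-degenerate by the last part of (c)) and comparing coefficients of $\tau^{-n+1-k}$, the diagonal part of $A_\infty$ produces the recursion
\[
\bigl(n-1+k-\nu_i-\nu_j\bigr)(Q_k)_{ij}=(Q_{k+1})_{i+1,j}-(Q_{k+1})_{i,j-1}+(\text{cyclic }t\text{-terms}),
\]
in which $\Omega_0$ only shifts indices.

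Next I would read off the leading term and then attack the tail. At $k=0$ the left-hand side is the residue pairing on $G_t/\tau^{-1}G_t$; compatibility of $S$ with the $V$-filtration (property (b)), together with the fact that $\tau\partial_\tau$ is semisimple on $\CC[\tau,\tau^{-1}]$, forces $(Q_0)_{ij}\neq 0$ only when $\nu_i+\nu_j=n-1$. Combined with the non-degeneracy in (c) this both reproves the spectral symmetry $\nu_i+\nu_{n+1-i}=n-1$ and shows that $Q_0$ is block-antidiagonal; by the multiplicity-one hypothesis the extreme blocks (at $\nu_{\min}$ and, via the symmetry, at $\nu_{\max}=n-1-\nu_{\min}$) are $1\times 1$. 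To finish, let $K$ be the largest index with $Q_K\neq 0$ and suppose $K\ge 1$. The recursion at level $K$ has vanishing right-hand side, so $(Q_K)_{ij}\neq 0$ forces $\nu_i+\nu_j=n-1+K>n-1$; thus the support of $Q_K$ is concentrated near the extreme-weight corner of the spectrum. Here the plan is to play the recursion at level $K-1$ against the $\pm$-symmetry of $S$ (which on coefficients reads $Q_k=(-1)^kQ_k^{\top}$) and the pole estimate extracted from property (b), using that multiplicity-one makes the top-weight pair unique, to conclude $Q_K=0$ and hence $K=0$.

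The hard part will be exactly this last step, the vanishing of the tail $Q_k$ for $k\ge 1$. Properties (a)--(c) alone cannot suffice, since a general $V^+$-solution need \emph{not} be an $S$-solution; the assumption that the minimal (hence, by symmetry, the maximal) spectral number has multiplicity one is what rigidifies the extreme graded pieces, removing the ambiguity in the underlying opposite filtration. I expect the cleanest completion to combine the recursion above with the symmetry of $S$ and the $V$-filtration pole bound at the isolated extreme corner; alternatively one may compare $\underline{\omega}^{(3)}$ with the canonical $(V^+,S)$-solution of Theorem \ref{canSol}, the multiplicity-one hypothesis guaranteeing that the two agree in the decisive graded piece and so that the $S$-isotropy is inherited.
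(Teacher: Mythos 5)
Your reduction and framework are sound and in fact coincide with the paper's ingredients: $G_t\cap G'_t=\bigoplus_i\CC\,\omega^{(3)}_i$, so the claim is equivalent to $S(\omega^{(3)}_i,\overline{\omega}^{(3)}_j)\in\CC\tau^{-n+1}$ for all $i,j$; your matrix ODE is the flatness identity (property (iv)(a) of \ref{Vsolution}) written globally; and your $k=0$ analysis is what (iv)(b) and (iv)(c) give. But the proof stops exactly at the decisive point: the vanishing of the tail $Q_k$, $k\geq 1$, is never established --- you say yourself this is ``the hard part'' and offer only a plan. And the plan, as stated, does not close. The top-level constraint that $\supp Q_K$ lies on pairs with $\nu^{(3)}_i+\nu^{(3)}_j=n-1+K$ is not self-contradictory (such pairs exist whenever the spectrum is non-constant); the recursion at level $K-1$ mostly \emph{determines} $Q_{K-1}$ from $Q_K$ and constrains $Q_K$ only at the resonant pairs $\nu^{(3)}_i+\nu^{(3)}_j=n-2+K$, and no argument is given that these sparse relations together with $Q_k=(-1)^kQ_k^{\top}$ force $Q_K=0$. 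A downward induction on pole order simply has no anchor. (A smaller overclaim: non-degeneracy of $Q_0$ gives the multiset symmetry $\alpha_k+\alpha_{n+1-k}=n-1$, not the index-wise symmetry $\nu^{(3)}_i+\nu^{(3)}_{n+1-i}=n-1$, which in the paper is only known under conjecture \ref{conjSpectrum}.)

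The paper's proof supplies exactly the missing anchor, and it is a \emph{row}, not a pole order. Let $i$ index the minimal spectral number and $j$ the maximal one (unique by multiplicity one together with $\alpha_k+\alpha_{n+1-k}=n-1$). For the row $i$ no recursion is needed: if $l\neq j$ then $\nu^{(3)}_i+\nu^{(3)}_l<n-1$, so (iv)(b) (in the flat form $S:V_\alpha\otimes\overline{V}_{<1-\alpha+m}\to\tau^{-m}\CC[\tau]$) puts $S(\omega^{(3)}_i,\overline{\omega}^{(3)}_l)$ in $\tau^{-n+2}\CC[\tau]$, while (iv)(c) puts it in $\tau^{-n+1}\CC[\tau^{-1}]$, and these meet only in $0$; for $l=j$ the same argument gives $\CC\tau^{-n+1}$. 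This is the only place the multiplicity-one hypothesis is used. The paper then propagates this row by row (lemma \ref{lemSpectrum}(i)): once the entire row $k$ lies in $\CC\tau^{-n+1}$ it is annihilated by $\tau\partial_\tau+(n-1)$, and the flatness identity
\begin{displaymath}
0=(n-1-\nu^{(3)}_k-\nu^{(3)}_l)\,S(\omega^{(3)}_k,\overline{\omega}^{(3)}_l)
+\tau\Bigl(S(\omega^{(3)}_{k+1},\overline{\omega}^{(3)}_l)-S(\omega^{(3)}_k,\overline{\omega}^{(3)}_{l+1})\Bigr)
\end{displaymath}
then determines row $k+1$; a graded-duality argument (via \cite[remark 3.6]{Sa2} and the non-degeneracy in (iv)(c)) yields $\nu^{(3)}_{k+1}+\nu^{(3)}_{i+j-k-1}=n-1$ so the induction can continue, and a cyclic, $t$-twisted re-indexing (lemma \ref{lemSpectrum}(ii)) covers the rows outside $\{i,\ldots,j\}$. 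Your alternative suggestion --- comparison with the canonical $(V^+,S)$-solution of theorem \ref{canSol} --- also does not work as stated: only the section attached to the minimal spectral number is known to be common to both solutions (this is what the proof of theorem \ref{theoFrobeniusSemiSimple}(ii) uses), which is far from transferring $S$-isotropy of the whole basis.
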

\begin{proof}
The proof is essentially a refined version of the proof of the similar
statement \cite[lemma 4.1]{DS2}. Denote by $\alpha_1,\ldots,\alpha_n$
a non-decreasing sequence of rational numbers such that
we have
an equality of sets $\{\nu^{(3)}_1,\ldots,\nu^{(3)}_n\}=\{\alpha_1,\ldots,\alpha_n\}$.
Then, as was stated in lemma \ref{Vsolution} (iv), we have
$\alpha_i+\alpha_{n+1-i}=n-1$ for all $k\in\{1,\ldots,n\}$.

Let $i$ be the index of the smallest spectral number $\nu^{(3)}_i$. The
symmetry $\alpha_k+\alpha_{n+1-k}=n-1$ implies that there is a unique $j\in\{1,\ldots,n\}$ such that
$\nu^{(3)}_i+\nu^{(3)}_j=n-1$, or, equivalently, that $\nu^{(3)}_j=\max_{l\in\{1,\ldots,n\}}(\nu^{(3)}_l)$.
Then, as in the proof of loc.cit., we have that for all $l\in\{1,\ldots,n\}$
$$
S(\omega^{(3)}_i,\overline{\omega}^{(3)}_l) = \left\{
\begin{array}{lcl}
0 & \textup{if} & l \neq j\\
c \cdot\tau^{-n+1}, c \in \CC & \textup{if} & l=j
\end{array}
\right.
$$
This follows from the compatibility of $S$ with the $V$-filtration and the pole order property
of $S$ on the Brieskorn lattice $G_t$ (i.e., properties (iv) (b) and (c) in definition
\ref{Vsolution}).
Suppose without loss of generality that $i < j$, if $i=j$, i.e., if there is only
one spectral number, then the result is clear. Now the proof of the theorem follows from the next lemma.
\end{proof}
\begin{lemma}\label{lemSpectrum}
Let $i$ and $j$ as above. Then
\begin{enumerate}
\item
For any
$k\in\{i,\ldots,j\}$, we have
$$
S(\omega^{(3)}_k,\overline{\omega}^{(3)}_l) = \left\{
\begin{array}{lcl}
0 & \textup{for all} & l \neq i+j-k\\
S(\omega^{(3)}_i, \overline{\omega}^{(3)}_j) \mbox{\textbf{ and }}\nu^{(3)}_k+\nu^{(3)}_l=n-1& \textup{for} & l = i+j-k
\end{array}
\right.
$$
\item
For any  $k\in\{1,\ldots,n\}\backslash\{i,\ldots,j\}$, we have that
$$
S(\omega^{(3)}_k,\overline{\omega}^{(3)}_l) = \left\{
\begin{array}{lcl}
0 & \textup{for all} & l \neq i+j-k\\
c_{kl}\cdot S(\omega^{(3)}_i, \overline{\omega}^{(3)}_j) \mbox{\textbf{ and }}\nu^{(3)}_k+\nu^{(3)}_l=n-1& \textup{for} & l = i+j-k
\end{array}
\right.
$$
where $c_{kl}\in\CC$.
\end{enumerate}
\end{lemma}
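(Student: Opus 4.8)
The plan is to read off a recursion for the matrix $S_{kl}:=S(\omega^{(3)}_k,\overline{\omega}^{(3)}_l)$ from the three defining properties of $S$ in \ref{Vsolution}(iv) together with the explicit connection of \ref{ExchangeTrick}, and then to propagate the base case already established in the proof of Theorem \ref{theoS-solution} (namely $S_{il}=0$ for $l\neq j$ and $S_{ij}=c\,\tau^{-n+1}$ with $c\neq 0$) by induction on $k$. Since $\Omega_0=N+C_0$ with $N$ the shift matrix and $C_0$ carrying the single normalised entry $t$ in position $(1,n)$, the connection gives $\tau\partial_\tau\omega^{(3)}_k=\tau\,\omega^{(3)}_{k+1}-\nu^{(3)}_k\omega^{(3)}_k$ for $k<n$ and $\tau\partial_\tau\omega^{(3)}_n=t\,\tau\,\omega^{(3)}_1-\nu^{(3)}_n\omega^{(3)}_n$. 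Substituting this into the Leibniz property (a), using that the spectral numbers are real and that $\tau$ acts as $-\tau$ on $\overline{\mathbf{G}}_t$, yields for $k,l<n$
$$\tau\partial_\tau S_{kl}=\tau\bigl(S_{k+1,l}-S_{k,l+1}\bigr)-(\nu^{(3)}_k+\nu^{(3)}_l)S_{kl},$$
with analogous identities carrying a factor $t$ (resp.\ $\overline{t}$) whenever $k=n$ (resp.\ $l=n$). Solving for $S_{k+1,l}$ expresses the $(k{+}1)$-st row of $(S_{kl})$ through the $k$-th row, so the base row $k=i$ determines everything.

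Next I would use the pole-order property (c), $S_{kl}\in\tau^{-n+1}\CC[\tau^{-1}]$, to isolate the top coefficient $s_{kl}$ (the coefficient of $\tau^{-n+1}$), which is the value of the induced non-degenerate symmetric pairing on $G_t/\tau^{-1}G_t$. Combining the $V$-compatibility (b) with the spectral symmetry $\alpha_k+\alpha_{n+1-k}=n-1$ from \ref{Vsolution}(iv) gives the grading principle that $s_{kl}\neq 0$ forces $\nu^{(3)}_k+\nu^{(3)}_l=n-1$; non-degeneracy then provides, for each $k$, a partner $l$ on this antidiagonal. The crucial computation is that on such a pair the degree term matches the action of $\tau\partial_\tau$ exactly, namely $(\tau\partial_\tau+(n-1))(c\,\tau^{-n+1})=0$.

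I would then run the induction. Assume row $k$ is supported on the single antidiagonal entry $l=i+j-k$, with $S_{k,i+j-k}=c\,\tau^{-n+1}$. The recursion $S_{k+1,l}=\tau^{-1}(\tau\partial_\tau+\nu^{(3)}_k+\nu^{(3)}_l)S_{kl}+S_{k,l+1}$ shows $S_{k+1,l}=0$ except possibly at $l=i+j-k$ and $l=i+j-k-1$; at $l=i+j-k$ the entry vanishes by the cancellation just noted (here $\nu^{(3)}_k+\nu^{(3)}_{i+j-k}=n-1$ by the grading principle), while at $l=i+j-(k+1)$ one gets $S_{k+1,i+j-(k+1)}=S_{k,i+j-k}=c\,\tau^{-n+1}$. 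Thus within $\{i,\dots,j\}$, where the targets $i+j-k$ stay in range and no boundary term is met, the constant is propagated unchanged, which is exactly assertion (i) (the relation $\nu^{(3)}_k+\nu^{(3)}_l=n-1$ coming along for free from the grading principle). For $k\notin\{i,\dots,j\}$ one reaches the index either through the cyclic step at $k=n$, which introduces factors of $t,\overline{t}$, or by invoking the Hermitian symmetry $s_{kl}=\overline{s_{lk}}$ to descend below $i$; in both cases the same argument applies but the propagated constant is only pinned down up to a scalar $c_{kl}\in\CC$, giving assertion (ii).

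The main obstacle I anticipate is the bookkeeping around the ends of the index range: establishing antidiagonal support uniformly, and keeping track of how the cyclic boundary relation at $k=n$ (with its factors of $t$) and the Jordan-block structure of $N$ interact. Here the multiplicity-one hypothesis on the minimal spectral number is essential, since it forces the block containing $i$ to be matched cleanly with the block containing $j$ under the reflection $k\mapsto i+j-k$, so that the clean propagation of the single constant $c$ is confined to $\{i,\dots,j\}$ and the scalars $c_{kl}$ genuinely only arise outside this range.
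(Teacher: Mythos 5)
Your proof skeleton is the paper's: the same Leibniz-rule recursion $S_{k+1,l}=\tau^{-1}\bigl(\tau\partial_\tau+\nu^{(3)}_k+\nu^{(3)}_l\bigr)S_{kl}+S_{k,l+1}$, the same base case taken from the proof of Theorem \ref{theoS-solution}, the same three-way case analysis on $l$, and the same propagation of the constant $c\tau^{-n+1}$ along the antidiagonal. For part (ii) your idea of continuing through the cyclic boundary at $k=n$ is also what the paper does, though it formalizes this with the constant base change $\omega'^{(3)}_{k+1}:=\omega^{(3)}_{j+k}$, $\omega'^{(3)}_{k+1+n-j}:=t\,\omega^{(3)}_k$ and reruns part (i) verbatim; this is why the constants $c_{kl}$ come out as $t^{-1},1,t$. (A small caveat: the bar here is the involution $\tau\mapsto-\tau$ on $\mathbf{G}_t$, not complex conjugation, so no $\overline{t}$ ever appears and ``Hermitian symmetry'' is the wrong language.)

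The genuine gap is your ``grading principle''. You claim that V-compatibility (\ref{Vsolution} (iv)(b)) together with the spectral symmetry $\alpha_k+\alpha_{n+1-k}=n-1$ implies that $s_{kl}\neq 0$ forces $\nu^{(3)}_k+\nu^{(3)}_l=n-1$. This is false as stated: flatness plus the pole-order property only give the one-sided vanishing $\nu^{(3)}_k+\nu^{(3)}_l<n-1\Rightarrow S_{kl}=0$, and nothing you have cited excludes non-zero entries \emph{above} the antidiagonal. For instance, with $n-1=1$ and spectrum $\{0,1\}$, the pairing matrix $\tau^{-n+1}\left(\begin{smallmatrix}0&1\\1&1\end{smallmatrix}\right)$ satisfies below-antidiagonal vanishing, spectral symmetry and non-degeneracy, yet has $s_{22}\neq 0$ with $\nu_2+\nu_2=2>n-1$. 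The correct statement is weaker and needs the inductive single-column support: once row $k+1$ is known to be supported in the single column $l=i+j-(k+1)$ with non-zero entry, one needs \emph{non-degeneracy of the pairing induced on the $V$-graded pieces} $gr^V_\alpha(G_t/\tau^{-1}G_t)\otimes gr^V_{n-1-\alpha}(G_t/\tau^{-1}G_t)$ to force that column onto the antidiagonal. Establishing this graded non-degeneracy is exactly what the second half of the paper's proof of (i) does, via the duality isomorphism $\overline{\tau^{n-1}G}_t\cong G_t^*$ and Sabbah's identification $gr^{V^*}_\alpha(G^*_t/\tau^{-1}G^*_t)\cong gr^V_{-\alpha}(G_t/\tau^{-1}G_t)$ (\cite[remark 3.6]{Sa2}), together with the fact that $\underline{\omega}^{(3)}$ induces a basis of $gr^V_\bullet(G_t/\tau^{-1}G_t)$ compatible with the grading. (One can alternatively deduce graded non-degeneracy elementarily from property (iv)(c), the below-antidiagonal vanishing and the spectral symmetry by a dimension count showing $F_{\leq\alpha}^{\perp}=F_{<n-1-\alpha}$ in $G_t/\tau^{-1}G_t$; but some such argument must be supplied --- it does not follow from (b) and the symmetry of the spectrum alone.)
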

\begin{proof}
\begin{enumerate}
\item
We will prove the statement by induction over $k$. It is obviously true for $k=i$ by the hypothesis above.
Hence we suppose that there is $r\in\{i,\ldots,j\}$ such that statement in (i) is true for
all $k$ with $i\leq k < r \leq j$. The following identity is a direct consequence of property (iv) (a) in definition
\eqref{Vsolution}.
$$
\begin{array}{c}
(\tau\partial_\tau + (n-1)) S(\omega^{(3)}_k,\overline{\omega}^{(3)}_l) = S(\tau\partial_\tau \omega^{(3)}_k, \overline{\omega}^{(3)}_l)+
S(\omega^{(3)}_k, \tau\partial_\tau \overline{\omega}^{(3)}_l)+(n-1)S(\omega^{(3)}_k,\overline{\omega}^{(3)}_l)\\ \\
=S(\tau\partial_\tau \omega^{(3)}_k, \overline{\omega}^{(3)}_l)+
S(\omega^{(3)}_k, \overline{\tau\partial_\tau \omega}^{(3)}_l)+(n-1)S(\omega^{(3)}_k,\overline{\omega}^{(3)}_l) = \\ \\
S(\tau\omega^{(3)}_{k+1}-\nu^{(3)}_k\omega^{(3)}_k,\overline{\omega}^{(3)}_l)+S(\omega^{(3)}_k,\overline{\tau\omega^{(3)}_{l+1}-\nu^{(3)}_l\omega^{(3)}_k})+(n-1)S(\omega^{(3)}_k,\overline{\omega}^{(3)}_l)
=\\ \\
(n-1-\nu^{(3)}_k-\nu^{(3)}_l)S(\omega^{(3)}_k,\overline{\omega}^{(3)}_l)+\tau\left(S(\omega^{(3)}_{k+1},\overline{\omega}^{(3)}_l)-S(\omega^{(3)}_k,\overline{\omega}^{(3)}_{l+1})\right).
\end{array}
$$
By induction hypothesis, we have that $(\tau\partial_\tau+(n-1))S(\omega^{(3)}_k,\overline{\omega}^{(3)}_l)=0$ for all $l\in\{1,\ldots,n\}$.

Now we distinguish several cases depending on the value of $l$: If $l\notin\{i+j-k,i+j-k-1\}$, then by the induction
hypothesis, both $S(\omega^{(3)}_k,\overline{\omega}^{(3)}_l)$ and $S(\omega^{(3)}_k,\overline{\omega}^{(3)}_{l+1})$ are zero. Hence it follows that
$S(\omega^{(3)}_{k+1},\overline{\omega}^{(3)}_l)=0$ in this case.

If $l=i+j-k$ then again by the induction hypothesis we know that $(n-1)-\nu^{(3)}_k-\nu^{(3)}_l=0$ and that moreover
$S(\omega^{(3)}_k,\overline{\omega}^{(3)}_{l+1})=0$. Thus we have $S(\omega^{(3)}_{k+1},\overline{\omega}^{(3)}_{i+j-k})=0$.

Finally, if $l=i+j-k-1$, then $S(\omega^{(3)}_k,\overline{\omega}^{(3)}_l)=0$, and so
$S(\omega^{(3)}_{k+1}, \overline{\omega}^{(3)}_l)=S(\omega^{(3)}_k, \overline{\omega}^{(3)}_{l+1})$
in other words: $S(\omega^{(3)}_{k+1}, \overline{\omega}^{(3)}_{i+j-(k+1)})=S(\omega^{(3)}_k,\overline{\omega}^{(3)}_{i+j-k})$.
In conclusion, we obtain that
$$
S(\omega^{(3)}_{k+1},\overline{\omega}^{(3)}_l) = \left\{
\begin{array}{lcl}
0 & \textup{if} & l \neq i+j-(k+1)\\
S(\omega^{(3)}_k, \overline{\omega}^{(3)}_{i+j-k}) & \textup{if} & l = i+j-(k+1).
\end{array}
\right.
$$
In order to make the induction work, it remains to show that $\nu^{(3)}_{k+1}+\nu^{(3)}_{i+j-(k+1)}=n-1$. It is obvious
that $\nu^{(3)}_{k+1}+\nu^{(3)}_{i+j-(k+1)} \geq n-1$ for otherwise we would have $S(\omega^{(3)}_{k+1},\overline{\omega}^{(3)}_{i+j-(k+1)})=0$.
(Remember that it follows from the flatness of $S$, i.e. from condition (iv) (a) in \ref{Vsolution}, that
$S:V_\alpha \otimes \overline{V}_{<1-\alpha+m}\rightarrow \tau^{-m}\CC[\tau]$ for any $\alpha\in\QQ, m\in\ZZ$, so that
$S(\omega^{(3)}_{k+1},\overline{\omega}^{(3)}_{i+j-(k+1)})\in\tau^{-n+2}\CC[\tau]$ if $\nu^{(3)}_{k+1}+\nu^{(3)}_{i+j-(k+1)} < n-1$, which is impossible since $S:G_t\otimes_{\CC[\tau^{-1}]}\overline{G}_t\rightarrow\tau^{-n+1}\CC[\tau^{-1}]$). Thus the only case to exclude is
$\nu^{(3)}_{k+1}+\nu^{(3)}_{i+j-(k+1)} > n-1$.

First notice that it follows from property (iv) (c) of definition \ref{Vsolution} that $S$ induces an isomorphism
$$
\overline{\tau^{n-1} G}_t \cong G_t^*:=\mathit{Hom}_{\CC[\tau^{-1}]}(G_t,\CC[\tau^{-1}]).
$$
On the other hand, we deduce from \cite[remark 3.6]{Sa2} that for any $\alpha\in\{\nu^{(3)}_1,\ldots,\nu^{(3)}_n\}$,
$$
gr^{V^*}_\alpha(G_t^*/\tau^{-1}G_t^*) \cong gr^{V}_{-\alpha}(G_t/\tau^{-1}G_t),
$$
where $V^*$ denotes the canonical V-filtration on the dual module $(G_t,\nabla)^*$.
In conclusion, $S$ induces a non-degenerate pairing
$$
S:gr^V_\alpha(G_t/\tau^{-1}G_t)\otimes gr^V_{n-1-\alpha}(G_t/\tau^{-1}G_t)\rightarrow \tau^{-n+1}\CC
$$
which yields a non-degenerate pairing on the sum $gr^V_\bullet(G_t/\tau^{-1}G_t):=\oplus_{\alpha\in \QQ}
gr^V_\alpha (G_t/\tau^{-1}G_t)$. However, we know that $\underline{\omega}^{(3)}$ induces a basis
of $gr^V_\bullet(G_t/\tau^{-1}G_t)$, compatible with the above decomposition. This, together with the fact
that $S(\omega^{(3)}_{k+1}, \overline{\omega}^{(3)}_l) \in \tau^{-n+1}\CC\delta_{i+j,k+1+l}$, yields that
$\nu^{(3)}_{k+1}+\nu^{(3)}_{i+j-(k+1)}=n-1$, as required.

\item
For this second statement, we consider the constant (in $\tau^{-1}$) base change given by
$\omega'^{(3)}_{k+1}:=\omega^{(3)}_{j+k}$ for all $k\in \{0,\ldots,n-j\}$ and
$\omega'^{(3)}_{k+1+n-j}:=t\omega^{(3)}_k$ for all $k\in\{1,\ldots,j-1\}$.
Then we have
$$
\partial_\tau(\underline{\omega}'^{(3)}) = \underline{\omega}'^{(3)} \cdot
    (\Omega_0 + \tau^{-1}(A^{(3)}_\infty)'),
$$
where $(A^{(3)}_\infty)'=\diag(-\nu^{(3)}_j,-\nu^{(3)}_{j+1},\ldots,-\nu^{(3)}_n,-\nu^{(3)}_1,\ldots,-\nu^{(3)}_{j-1})$.
Now the proof of (i) works verbatim for the basis $\underline{\omega}'^{(3)}$, with the index $i$ from above
replaced by $1$ and the index $j$ from above replaced by $n-j+i+2$. Notice that then the spectral number
corresponding to $1$ is the biggest one and the one corresponding to $n-j+i+2$ is the smallest one, but this does
not affect the proof. Depending on the value of the indices $k$ and $l$, we have that $c_{kl}(t)$ is either
$t^{-1}$, $1$ or $t$.

\end{enumerate}
\end{proof}

\section{Frobenius structures}
\label{sec:Frobenius}

\subsection{Frobenius structures for linear functions on Milnor fibres}
\label{subsec:Frobenius-tneq0}

In this subsection, we derive one of the main results of this paper: the existence of a
Frobenius structure on the unfolding space of the function $f_{|D_t}, t\neq 0$.
Depending on whether we restrict to the class of examples satisfying the hypotheses of theorem \ref{theoS-solution},
the Frobenius structure can be derived directly from the $(V^+,S)$-solution $\underline{\omega}^{(3)}$ of the
Birkhoff problem constructed in the last section, or otherwise is obtained by appealing
to theorem \ref{canSol}.

We refer to \cite{He3} or \cite{Sa4} for the definition of a Frobenius manifold. It is
well known that a Frobenius structure on a complex manifold $M$ is equivalent to the
following set of data (sometimes called first structure connection).
\begin{enumerate}
\item a holomorphic vector bundle $E$ on $\PP^1\times M$ such that
  $\textup{rank}(E)=\dim(M)$, which is fibrewise trivial,
  i.e. $\mathcal{E}=p^*p_*\mathcal{E}$, (where $p:\PP^1\times M\rightarrow M$ is the
  projection) equipped with an integrable connection with a logarithmic pole along
  $\{\infty\}\times M$ and a pole of type one along $\{0\}\times M$,
\item an integer $w$,
\item A non-degenerate, $(-1)^w$-symmetric pairing $S:\mathcal{E}\otimes j^*
  \mathcal{E}\rightarrow \mathcal{O}_{\PP^1\times M}(-w,w)$ (here $j(\tau,u)=(-\tau,u)$,
  with, as before, $\tau$ a coordinate on $\PP^1$ centered at infinity and $u$ a
  coordinate on $M$ and we write $\mathcal{O}_{\PP^1\times M}(a,b)$ for the sheaf of
  meromorphic functions on $\PP^1\times M$ with a pole of order $a$ along $\{0\}\times M$
  and order $b$ along $\{\infty\}\times M$) the restriction of which to $\CC^*\times M$ is
  flat,
\item A global section $\xi\in H^0(\PP^1\times M, \mathcal{E})$, whose restriction to $\{\infty\}\times M$ is flat with
  respect to the residue connection $\nabla^{res}:\mathcal{E}/\tau\mathcal{E}\rightarrow
  \mathcal{E}/\tau\mathcal{E}\otimes\Omega^1_M$ with the following two properties
  \begin{enumerate}
  \item The morphism
    \begin{align*}
        \Phi_\xi:\mathcal{T}_M & \longrightarrow  \mathcal{E}/\tau^{-1}\mathcal{E}\cong p_*\mathcal{E} \\
        X & \longmapsto  -[\tau^{-1}\nabla_X](\xi)
    \end{align*}
    is an isomorphism of vector bundles (a section $\xi$ with this property is called
    primitive),
  \item $\xi$ is an eigenvector of the residue endomorphism $[\tau\nabla_\tau]\in
    \mathcal{E}\!nd_{\mathcal{O}_M}(p_*\mathcal{E})\cong
    \mathcal{E}\!nd_{\mathcal{O}_M}(\mathcal{E}/\tau \mathcal{E})$ (a section with this
    property is called homogeneous).
  \end{enumerate}
\end{enumerate}
In many application one is only interested in constructing a Frobenius structure on a germ
at a given point, in that case $M$ is a sufficiently small representative of such a germ.

We now come back to our situation of a $\mathcal{R}_h$-finite linear section $f$ on the
Milnor fibration $h:V\rightarrow T$. In this subsection, we are interested to construct Frobenius structures on the
(germ of a) semi-universal unfolding of the function $f_{|D_t}$, $t\neq 0$.
It is well known that in contrast to the local case, such an unfolding does
not have obvious universality properties.
One defines, according to \cite[2.a.]{DS},
a deformation
\begin{displaymath}
  F=f+\sum_{i=1}^n u_i g_i : B_t \times M \rightarrow D
\end{displaymath}
of the restriction $f_{|B_t}$ to some
intersection $D_t\cap B_\epsilon$ such that the critical locus
$C$ of $F$ is finite over $M$ via the projection
$q:B_t\times M\twoheadrightarrow M$
to be a semi-universal unfolding if the
Kodaira-Spencer map $\mathcal{T}_M \rightarrow q_*\mathcal{O}_C$,
$X \mapsto [X(F)]$ is an isomorphism.


From proposition \ref{numcons} we know that any basis $g_1,\ldots,g_n$ of
$T^1_{\s R_h} f$ gives a representative
\begin{displaymath}
  F=f+\sum_{i=1}^n u_i g_i : B_t \times M \rightarrow D
\end{displaymath}
of this unfolding, where $M$ is a sufficiently small neighborhood of the origin in
$\CC^n$, with coordinates $u_1,\ldots,u_n$.


In order to exhibit Frobenius structures via the approach sketched in the beginning of
this section, one has to find a $(V^+,S)$-solution to the Birkhoff problem for
$G_t$. If the minimal spectral number of $(G_t,\nabla)$ has multiplicity
one, then, according to corollary \ref{summary} and theorem \ref{theoS-solution},
the basis $\underline{\omega}^{(3)}$ yields such a solution, which we denote by
$\widehat{G}_t$ (which is, if $D$ is reductive, the restriction of $\widehat{G}[t^{-1}]$ from
corollary \ref{summary} (ii) to $\PP^1\times\{t\}$). Otherwise, we consider the
canonical solution from theorem \ref{canSol}, which is denoted by
$\widehat{G}^{can}_t$. The bundle called $\mathcal{E}$ in the beginning of this subsection
is then obtained by \emph{unfolding} the solution $\widehat{G}_t$ resp.
$\widehat{G}^{can}_t$. We will not describe $\mathcal{E}$ explicitly, but use
a standard result due to Dubrovin which gives directly the corresponding Frobenius structure
provided that one can construct a homogenous and primitive form for $\widehat{G}_t$ resp.
$\widehat{G}^{can}_t$, i.e., a section called $\xi$ above at the point $t$.


We can now state and prove the main result of this section.
\begin{theorem}\label{theoFrobeniusSemiSimple}
  Let $f\in\CC[V]_1$ be an $\mathcal{R}_h$-finite linear function. Write
  $M_t$ for the parameter space of a semi-universal unfolding
  $F:B_t\times M_t \rightarrow D$ of $f_{|B_t}$, $t\neq 0$ as described above.
  Let $\alpha_{\min}=\alpha_1$ be the minimal spectral number of $(G_t,\nabla)$.
  \begin{enumerate}
    \item
    Suppose that $\alpha_{\min}$ has multiplicity one, i.e., $\alpha_2>\alpha_1$. Then any of the sections $\omega^{(3)}_i\in H^0(\PP^1, \widehat{G}_t)$
    is primitive and homogeneous.  Any choice of such a section yields a Frobenius structure
    $(M_t,\circ,g,e,E)$ which we denote by $M_t^{(i)}$.
    \item
    Let $i\in\{1,\ldots,n\}$ such that $\nu^{(3)}_i=\alpha_{min}$. Then $\omega^{(3)}_i \in H^0(\PP^1,\widehat{G}^{can}_t)$
    (remember that $\widehat{G}^{can}_t$ is the canonical $(V^+,S)$-solution to the Birkhoff problem for $G_t$ described in
    theorem \ref{canSol}), and this
    section is primitive and homogeneous (with respect to $\widehat{G}^{can}_t$) and hence yields a Frobenius structure
    $(M_t,\circ,g,e,E)$, denoted by $M_t^{(i),can}$.
  \end{enumerate}
\end{theorem}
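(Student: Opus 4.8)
The plan is to reduce everything to the standard result (due to Dubrovin, used in the precise form recorded in \cite{Sa4}; see also \cite{He3}) that a $(V^+,S)$-solution of the Birkhoff problem, together with a homogeneous and primitive section, produces a Frobenius structure on the unfolding. Granting this, the only thing to verify in each case is that the proposed section $\omega^{(3)}_i$ is homogeneous and primitive (and, in (ii), that it really is a global section of the canonical solution). Throughout I will use Corollary \ref{summary}, which gives $\partial_\tau(\underline{\omega}^{(3)}) = \underline{\omega}^{(3)}(\Omega_0+\tau^{-1}A^{(3)}_\infty)$ with $A^{(3)}_\infty=\diag(-\nu^{(3)}_1,\ldots,-\nu^{(3)}_n)$ diagonal, and $\omega^{(3)}_i-(-f)^{i-1}\alpha\in\tau^{-1}G_t$ for all $i$.

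Homogeneity will be immediate: the residue endomorphism of $\tau\nabla_\tau$ along the logarithmic pole is represented in the basis $\underline{\omega}^{(3)}$ by $A^{(3)}_\infty$, so, this matrix being diagonal, each $\omega^{(3)}_i$ is an eigenvector with eigenvalue $-\nu^{(3)}_i$. For primitivity I would compute $\Phi_{\omega^{(3)}_i}(\partial_{u_j})=-[\tau^{-1}\nabla_{\partial_{u_j}}\omega^{(3)}_i]$ in $G_t/\tau^{-1}G_t$. In the unfolding $F=f+\sum_j u_jg_j$ the operator $\tau^{-1}\nabla_{\partial_{u_j}}$ acts, modulo $\tau^{-1}$, as multiplication by $g_j=\partial_{u_j}F$ on $G_t/\tau^{-1}G_t\cong(\CC[D_t]/J_f)\,\alpha$ (the identification of Lemma \ref{BrieskornFree}), so it suffices to see that the classes $[g_j\,\omega^{(3)}_i]$ form a $\CC$-basis of $G_t/\tau^{-1}G_t$. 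Since $\omega^{(3)}_i\equiv(-f)^{i-1}\alpha$, these equal $(-f)^{i-1}[g_j\alpha]$; now $g_1,\ldots,g_n$ restrict to a basis of $\CC[D_t]/J_f$ by the freeness in Proposition \ref{numcons}, and for $t\neq0$ this algebra is the semisimple algebra supported on the $n$ critical points of $f_{|D_t}$, on each of which $f$ is non-zero (the critical points form a $\GG_n$-orbit with $f$ non-vanishing, by Proposition \ref{fincon}(iii)). Hence $(-f)^{i-1}$ is a unit, multiplication by it preserves bases, so $\Phi_{\omega^{(3)}_i}$ is an isomorphism at $u=0$ and, by openness of the isomorphism locus, on a neighbourhood $M_t$. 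Note that this argument depends only on the class of $\omega^{(3)}_i$ modulo $\tau^{-1}G_t$, and so is insensitive to which extension to $\PP^1$ we use.

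It remains to supply the $(V^+,S)$-solution. For (i), the hypothesis $\alpha_2>\alpha_1$ says the minimal spectral number has multiplicity one, so Theorem \ref{theoS-solution} applies and $\underline{\omega}^{(3)}$ is itself a $(V^+,S)$-solution, with associated trivial bundle $\widehat{G}_t$ and metric furnished by $S$; by the previous paragraph every $\omega^{(3)}_i\in H^0(\PP^1,\widehat{G}_t)$ is then homogeneous and primitive, giving the structures $M_t^{(i)}$. For (ii) we have lost control of the $S$-compatibility of $\underline{\omega}^{(3)}$, so I would instead take the canonical $(V^+,S)$-solution $\widehat{G}^{can}_t$ of Theorem \ref{canSol}. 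The key point, which I expect to be the main obstacle, is to show that for $i$ with $\nu^{(3)}_i=\alpha_{\min}$ the section $\omega^{(3)}_i$ is a global section of $\widehat{G}^{can}_t$: because $V_{<\alpha_{\min}}\cap G_t\subseteq\tau^{-1}G_t$ there is no lower $V$-step into which a generator of the minimal-degree graded piece could be moved, so the component of any $V^+$-solution in the smallest $V$-degree is canonically determined, hence common to $\widehat{G}_t$ and $\widehat{G}^{can}_t$. Granting this, $\omega^{(3)}_i$ is a homogeneous (eigenvalue $-\alpha_{\min}$) and primitive section of $\widehat{G}^{can}_t$ by the computations above, and Dubrovin's theorem yields $M_t^{(i),can}$.

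The delicate issues are precisely those of (ii): making rigorous the claim that the extremal (minimal spectral number) vector is shared by all $V^+$-solutions, and hence lies in the canonical one, together with the verification that $\omega^{(3)}_i$ restricts to a $\nabla^{res}$-flat section at the logarithmic pole, so that it is an admissible primitive form for $\widehat{G}^{can}_t$. By contrast the homogeneity and primitivity computations are formal once the identification $\omega^{(3)}_i\equiv(-f)^{i-1}\alpha \bmod \tau^{-1}G_t$ of Corollary \ref{summary} and the semisimplicity of $\CC[D_t]/J_f$ for $t\neq0$ are in hand.
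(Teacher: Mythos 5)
Your proposal is correct and follows essentially the same path as the paper: homogeneity comes from the diagonality of $A^{(3)}_\infty$, primitivity from $\omega^{(3)}_i\equiv(-f)^{i-1}\alpha \bmod \tau^{-1}G_t$ together with $(-f)^{i-1}$ being a unit in the semisimple Jacobian algebra (the paper phrases this as cyclic generation of $W\cong G_t/\tau^{-1}G_t$ under $B_0=[\nabla_\tau]$, which is equivalent to your Kodaira--Spencer computation), and a Dubrovin-type universality theorem to induce the structure on $M_t$. The point you flag as the main obstacle in (ii) --- that $H^0(\PP^1,\widehat{G}_t)\cap V_{\alpha_{\min}}$ is the same for every $V^+$-solution and hence lies in $\widehat{G}^{can}_t$ --- is settled in the paper exactly as you anticipate, by citing \cite[appendix B.b]{DS}, while your remaining worry about $\nabla^{res}$-flatness is unnecessary for this theorem, since Dubrovin's construction is applied pointwise (that condition only enters the later family statements, lemma \ref{lemApplicationHM} and proposition \ref{propAnalytContFrob}).
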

\textbf{Remark:} It is obvious that under the hypotheses of (i),
any non-zero constant multiple of the sections $\omega^{(3)}_i$ is also primitive and
homogeneous. In particular, this is true for the sections $t^{-k}\omega^{(3)}_i$. We will later
need to work with these rescaled sections, rather than with $\omega^{(3)}_i$ (see proposition \ref{propAnalytContFrob}
and theorem \ref{theoLogFrob}).
\begin{proof}
  In both cases, we use the universal
  semi-simple Frobenius structure defined by a finite set of given initial data as
  constructed by Dubrovin (\cite{Du1}, see also \cite[th\'eor\`eme VII.4.2]{Sa4}). The initial
  set of data we need to construct is
  \begin{enumerate}
  \item
    an $n$-dimensional complex vector space $W$,
  \item
    a symmetric, bilinear, non-degenerate pairing $g:W\otimes_\CC W \rightarrow \CC$,
  \item
    two endomorphisms $B_0,B_\infty\in\End_\CC(W)$ such that $B_0$ is semi-simple with distinct eigenvalues
    and $g$-selfadjoint
    and such that $B_\infty+B^*_\infty=(n-1)\Id$, where
    $B^*_\infty$ is the $g$-adjoint of $B_\infty$.
  \item
    an eigenvector $\xi\in W$ for $B_\infty$, which is a cyclic generator of $W$ with respect to $B_0$.
  \end{enumerate}
  In both cases of the theorem, the vector space $W$ will be identified with $G_t/\tau^{-1}G_t$.
  Dubrovin's theorem yields a germ of a universal Frobenius structure on a certain $n$-dimensional manifold such that its
  first structure connection restricts to the data $(W,B_0,B_\infty, g,\xi)$ over the origin.
  The universality property then induces a Frobenius structure on the germ $(M_t,0)$, as the tangent space of the latter at the origin
  is canonically identified with $T^1_{\s R_h/\CC} f/\mathfrak{m}_t\cdot T^1_{\s R_h/\CC} f\cong
  (T^1_{\s R_h/\CC} f/\mathfrak{m}_t\cdot T^1_{\s R_h/\CC} f)\cdot\alpha \cong G_t/\tau^{-1}G_t$.

  Let us show how to construct the initial data needed in case (i) and (ii) of the theorem:
  \begin{enumerate}
  \item
    We put $W:=H^0(\PP^1,\widehat{G}_t)$, $g:=\tau^{n-1} S$ (notice that this is possible due to
    theorem \ref{theoS-solution}), $B_0:=[\nabla_\tau]\in\End_{\CC}(G_t/\tau^{-1} G_t)\cong \End_{\CC}(W)$ and
    $B_\infty:=[\tau\nabla_\tau]\in \End_{\CC}(\widehat{G}_t/\tau \widehat{G}_t)\cong \End_{\CC}(W)$. In order to verify
    the conditions from above on these initial data, consider the basis $\underline{\omega}^{(3)}$ of $W$. Then
    $B_0$ is given by the matrix $\Omega_0$, which is obviously semi-simple with distinct eigenvalues (these are the critical values of $f_{|D_t}$).
    It is self-adjoint due to the flatness of $S$. The endomorphism $B_\infty$ corresponds to the
    matrix $A^{(3)}_\infty$, so that the symmetry of the spectrum as well as the proof of lemma
    \ref{lemSpectrum} show that $B_\infty+B^*_\infty=(n-1)\Id$.
    Finally, it follows from corollary \ref{summary} that for all $i\in\{1,\ldots,n\}$, the class of $\omega^{(3)}_i$ in 					 $G_t/\tau^{-1} G_t$ is equal to the class of $(-f)^{i-1}\alpha$. By definition, $B_0=[\nabla_\tau]$ is the multiplication by $-f$
    on $W \cong G_t/\tau^{-1}G_t$, hence, any of the classes of the sections $\omega^{(3)}_i$ is a cyclic
    generator of $W$ with respect to $[\nabla_\tau]$. It is homogenous, i.e., an eigenvector of $B_\infty$ by construction. This proves
    the theorem in case (i).
  \item
    First notice that it follows from \cite[appendix B.b.]{DS} that the space $H^0(\PP^1,\widehat{G}_t)\cap V_{\alpha_{min}}$ is independent
    of the choice of the $V^+$-solution $\widehat{G}_t$ of the Birkhoff problem for $(G_t,\nabla)$. In particular, we
    have $\omega^{(3)}_i\in H^0(\PP^1,\widehat{G}^{can}_t)$ if $\nu^{(3)}_i=\alpha_{min}$. Now put
    $W:=H^0(\PP^1,\widehat{G}^{can}_t)$ and again $g:=\tau^{n-1} S$,  $B_0:=[\nabla_\tau]\in\End_{\CC}(G_t/\tau^{-1} G_t)\cong \End_{\CC}(W)$ and
    $B_\infty:=[\tau\nabla_\tau]\in \End_{\CC}(\widehat{G}^{can}_t/\tau \widehat{G}^{can}_t)\cong \End_{\CC}(W)$.
    The eigenvalues of the endomorphism $B_0$ are always the critical values of $f_{|D_t}$ so as in (i) it follows that
    $B_0$ is semi-simple with distinct eigenvalues. It is $g$-self-adjoint by the same argument as in (i).
    The endomorphism $B_\infty$ is also semi-simple, as $\widehat{G}^{can}_t$ is a $V^+$-solution. The section $\omega^{(3)}_i$ is an eigenvalue
    of $B_\infty$, i.e. homogeneous. The property $B_\infty+B^*_\infty=(n-1)\Id$ follows as in (i)
    by the fact that $\widehat{G}_t$ is also a $(V^+,S)$-solution (more precisely, by choosing a basis $\underline{w}$ of $W$ such that
    $B_\infty$ is again given by the matrix $A^{(3)}_\infty$ and such that $g(w_i,w_j)= 1$ if $\nu^{(3)}_i+\nu^{(3)}_j = n-1$ and $g(w_i,w_j)=0$ otherwise).
    Finally, the fact that $\omega^{(3)}_i$ is primitive also follows by the argument given in (i).
  \end{enumerate}
\end{proof}
The previous theorem yields for fixed $i$ Frobenius structures $M_t^{(i)}$
for any $t\neq 0$. One might ask whether they are related
in some way. It turns out that for a specific choice of the index $i$ they are (at least in the reductive case),
namely, one of them can be seen as
analytic continuation of the other. The proof relies on the fact
that it is possible to construct a Frobenius structure from the bundle
$G$ simultaneously for all values of $t$ at least on a small disc
outside of $t=0$.
This is done using a generalization of Dubrovins
theorem, due to Hertling and Manin \cite[theorem 4.5]{HM}. In
loc.cit., Frobenius manifolds are constructed from so-called
``trTLEP-structures''. The following result shows how they arise
in our situation.
\begin{lemma}\label{lemApplicationHM}
Suppose that $D$ is reductive. Fix $t\in T\backslash\{0\}$ and suppose that the minimal spectral number
$\alpha_{min}$ of $(G_t,\nabla)$ has multiplicity one, so that
theorem \ref{theoS-solution} applies. Let $\Delta_t$ be a sufficiently small disc centered at
$t$. Denote by $\widehat{H}^{(t)}$ the restriction to $\Delta_t$ of the analytic bundle corresponding to
$\widehat{G}[t^{-1}]$. Then $\widehat{H}^{(t)}$ underlies a trTLEP-structure
on $\Delta_t$, and any of the sections $t^{-k}\omega^{(3)}_i$ satisfy the
conditions (IC), (GC) and (EC) of \cite[theorem 4.5]{HM}. Hence, the
construction in loc.cit. yields a universal Frobenius structure on
a germ $(\widetilde{M}^{(i)},t):=(\Delta_t\times\CC^{n-1},(t,0))$.
\end{lemma}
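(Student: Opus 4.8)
The plan is to exhibit $\widehat{H}^{(t)}$ as a trTLEP-structure in the sense of \cite{HM} and then to verify that the rescaled sections $\zeta:=t^{-k}\omega^{(3)}_i$ satisfy the three hypotheses (IC), (GC) and (EC) of \cite[theorem 4.5]{HM}; the conclusion is then immediate from that theorem. All the analytic content (freeness, pole orders, flatness, the existence of the pairing) has already been established in corollary \ref{summary}, proposition \ref{propVerticalConnectionReductive} and theorem \ref{theoS-solution}, so the argument is largely one of assembling these facts and reading off an explicit connection matrix.

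First I would check that $\widehat{H}^{(t)}$ is a trTLEP-structure of weight $n-1$ on $\Delta_t$. Fibrewise triviality is built into the definition of $\widehat{G}[t^{-1}]=\bigoplus_{i=1}^n\OO_{\PP^1\times(T\ssm\{0\})}[t^{-1}]\,\omega^{(3)}_i$ of corollary \ref{summary} (ii), which restricts on $\Delta_t$ to a bundle trivial along each $\PP^1\times\{t'\}$. The pole structure and the flatness of $\nabla$ are read off the matrix of corollary \ref{summary} (ii): the $d\tau$-part $\Omega_0+\tau^{-1}A^{(3)}_\infty$ gives a logarithmic pole along $\{\infty\}\times\Delta_t$, and since $G$ is $-\nabla_{\p_\tau}$-stable one has a pole of type one along $\{0\}\times\Delta_t$. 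For the pairing I would invoke proposition \ref{propVerticalConnectionReductive} (v): the restriction of $G$ to $\CC\times(T\ssm\{0\})$ carries the flat pairing coming from its variation of polarized TERP-structures; theorem \ref{theoS-solution} together with lemma \ref{lemSpectrum} then shows that in the basis $\underline{\omega}^{(3)}$ this pairing has the shape $\tau^{-n+1}$ times a matrix whose entries are the factors $c_{kl}\in\{t^{-1},1,t\}$. On the small disc $\Delta_t$, with $t\neq 0$, these factors are holomorphic and nowhere vanishing, so the pairing stays non-degenerate, $(-1)^{n-1}$-symmetric and of weight $n-1$.

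Next I would verify (EC), (IC) and (GC) for $\zeta=t^{-k}\omega^{(3)}_i$, once more from corollary \ref{summary} (ii). For (EC), the residue endomorphism at $\tau=0$ is $A^{(3)}_\infty=\diag(-\nu^{(3)}_1,\ld,-\nu^{(3)}_n)$, so $\omega^{(3)}_i$, and hence $\zeta$, is an eigenvector; the factor $t^{-k}$ is precisely what cancels the spurious summand $k\,n\,\Id$ of $\widetilde{D}$, turning $\zeta$ into an eigenvector of the residue connection $\nabla^{res}_{\p_t}$ of eigenvalue $((i-1)-\nu^{(3)}_i)/n$, i.e. into a homogeneous section of the family. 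For (IC), the infinitesimal period map sends $\p_t$ to $-[\tau^{-1}\nabla_{\p_t}\zeta]$, which by the $\tau\Omega_0$-term of the $dt$-part equals, up to the non-zero scalar $1/(nt)$, multiplication of $[\zeta]$ by $f$ on $G_t/\tau^{-1}G_t$; this is injective because $\Omega_0$ has the critical values of $f_{|D_t}$ as eigenvalues, and these are non-zero: by proposition \ref{fincon} (iii) the critical points form a single $\GG_n$-orbit, so the critical values are $\{\varepsilon f(p_0):\varepsilon\in\GG_n\}$ with $f(p_0)\neq 0$. For (GC), the same orbit description shows these values are pairwise distinct, so $[\zeta]=t^{-k}[(-f)^{i-1}\alpha]$ is a cyclic vector for multiplication by $f$; hence the iterates $\Omega_0^{\,j}[\zeta]$ span $G_t/\tau^{-1}G_t$ and the generation condition holds.

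The one genuinely delicate point will be the pairing \emph{in the family}: theorem \ref{theoS-solution} is stated for a single fixed $t\neq 0$, so I must argue that the $(V^+,S)$-compatibility persists holomorphically along $\Delta_t$ and that the base-dependent factors $c_{kl}(t)$ do not spoil non-degeneracy — this is exactly where the hypotheses $t\neq 0$ and $\Delta_t$ small are used, and where the weight-$(n-1)$ normalization must be matched with the conventions of \cite{HM}. Once (IC), (GC) and (EC) are in place, \cite[theorem 4.5]{HM} produces a universal unfolding of $\widehat{H}^{(t)}$ over the germ $(\Delta_t\times\CC^{n-1},(t,0))$ together with the asserted Frobenius structure, which is the claim.
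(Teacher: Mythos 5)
Your proposal is correct and follows essentially the same route as the paper: the trTLEP-structure is read off from corollary \ref{summary} and theorem \ref{theoS-solution}, conditions (EC) and (GC) amount to the homogeneity and primitivity of the sections $t^{-k}\omega^{(3)}_i$ already established in theorem \ref{theoFrobeniusSemiSimple}, condition (IC) follows from the $\tau\Omega_0$-term of the connection form in corollary \ref{summary} (ii), and then \cite[theorem 4.5]{HM} is applied. Your extra care about the pairing varying over $\Delta_t$ (via proposition \ref{propVerticalConnectionReductive} (v) and the explicit $c_{kl}(t)$ factors from lemma \ref{lemSpectrum}) only makes explicit what the paper leaves implicit in its citation of theorem \ref{theoS-solution}.
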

\begin{proof}
That $\widehat{H}^{(t)}$ underlies a trTLEP-structure is a consequence
of corollary \ref{summary} (i) and theorem \ref{theoS-solution}. We have
already seen that the sections $t^{-k}\omega^{(3)}_i$ are homogenous and
primitive, i.e., satisfy conditions (EC) and (GC) of loc.cit. It follows from the connection form
computed in corollary \ref{summary} (ii), that they also satisfy
condition (IC). Thus the theorem of Hertling and Manin gives a universal Frobenius structure on
$\widetilde{M}^{(i)}$ such that its first structure connection restricts
to $\widehat{H}^{(t)}$ on $\Delta_t$.
\end{proof}
In order to apply this lemma we need to find
a homogenous and primitive section of $\widehat{H}^{(t)}$ which
is also $\nabla^{res}_t$-flat. This is done in the following lemma.
\begin{lemma}\label{lemNablaTFlat}
Let $D$ be reductive. Consider the $V^+$-solution to the Birkhoff-problem for $(G_0,\nabla)$ resp.
$(G_t,\nabla)$ given by $\underline{\omega}^{(2)}$ resp. $\underline{\omega}^{(3)}$. Then
there is an index $j\in\{1,\ldots,n\}$ such that $\deg(\omega^{(2)}_j)=\nu^{(2)}_i$ and an index
$i\in\{1,\ldots,n\}$ such that $\deg(\omega^{(3)}_i)=\nu^{(3)}_i+k\cdot n$. In particular,
$\nu^{(2)}_j,\nu^{(3)}_i\in\NN$.
Moreover, $\nabla^{res}_t(t^{-k}\omega^{(3)}_i)=0$,
where $\nabla^{res}_t:\widehat{G}/\tau\widehat{G} \rightarrow \widehat{G}/\tau\widehat{G}$ is the residue connection.
\end{lemma}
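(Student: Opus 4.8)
The plan is to recognise the single object that controls both the geometric degree and the residue: the Euler (weight) operator
$$
\mathcal{W} \;:=\; nt\,\nabla_{\partial_t}-\tau\,\nabla_{\partial_\tau}.
$$
Using the definition \eqref{horizontalconnection} together with $L_E(f)=f$, one has $nt\nabla_{\partial_t}=L_E-\tau f\cdot$ on forms, while $\nabla_{\partial_\tau}=-f\cdot$ on pure forms; hence on a monomial $g\,\tau^l t^m\alpha$ the operator $\mathcal{W}$ acts by the scalar $\deg(g)-l+nm$. First I would check that $\mathcal{W}$ is well defined on $\mathbf{G}$: a defining relation $\tau^l d\beta-\tau^{l+1}df\wedge\beta$ with $\beta$ homogeneous of degree $m$ has both summands of $\mathcal{W}$-weight $m-l$, so the relations are $\mathcal{W}$-homogeneous. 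Reading off the connection matrices from Corollary \ref{summary}(ii) (resp. Proposition \ref{propVerticalConnectionReductive}(ii) for the basis $\underline{\omega}^{(1)}$, which survives Algorithm~1 since the steps \eqref{eqAlg1} preserve $\mathcal{W}$-weight), I compute
$$
\mathcal{W}(\underline{\omega}^{(3)})=nt\nabla_{\partial_t}(\underline{\omega}^{(3)})-\tau\nabla_{\partial_\tau}(\underline{\omega}^{(3)})=\underline{\omega}^{(3)}\bigl[(\widetilde{D}+\tau\Omega_0+A^{(3)}_\infty)-(\tau\Omega_0+A^{(3)}_\infty)\bigr]=\underline{\omega}^{(3)}\,\widetilde{D},
$$
so that $\deg(\omega^{(3)}_i)=\widetilde{D}_{ii}=(i-1)+kn$ and likewise $\deg(\omega^{(2)}_j)=j-1$. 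Thus the asserted identities $\deg(\omega^{(2)}_j)=\nu^{(2)}_j$ and $\deg(\omega^{(3)}_i)=\nu^{(3)}_i+kn$ are \emph{equivalent} to the existence of an index with $\nu^{(2)}_j=j-1$, resp. $\nu^{(3)}_i=i-1$.

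The existence of such an index is the heart of the matter. For $t=0$ it is clean: $(G_0,\nabla)$ is regular singular at $\tau=\infty$ with $(\Omega_0)_{|t=0}$ a single nilpotent Jordan block, so the monodromy at $\tau=0$ is unipotent with one Jordan block; by Corollary \ref{monodromy} the spectral numbers then form a single string of consecutive integers, and the symmetry $\sum_j\nu^{(2)}_j=\tfrac{n(n-1)}{2}$ (Lemma \ref{Vsolution}(iv)) forces $\nu^{(2)}_j=j-1$ for \emph{every} $j$, in particular $\nu^{(2)}_j\in\NN$. For $t\neq0$ the spectrum need not be integral, and here lies the main obstacle. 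I would take $i$ to be the index of the minimal spectral number and argue that $t^{-k}\omega^{(3)}_i$ underlies an honest $\nabla^{res}_t$-\emph{flat geometric section}: the corrections added in \eqref{eqAlg1}, \eqref{eqAlg2} only attach $\tau^{-1}$-multiples of strictly lower basis vectors, so the bottom class (the image of $\alpha$, which scales homogeneously under $x\mapsto\lambda x$) never acquires a genuine $\tau^{-1}$ term and stays pure up to the factor $t^{k}$. Single-valuedness of this geometric section over the punctured $t$-line then pins its $\nabla^{res}$-exponent to an integer, yielding $\nu^{(3)}_i=i-1\in\NN$. As a consistency check one notes $\delta_i:=(i-1)-\nu^{(3)}_i$ is non-decreasing (Lemma \ref{Alg2Term} gives $\nu^{(3)}_{i+1}-\nu^{(3)}_i\leq1$) with $\sum_i\delta_i=0$, which is exactly compatible with a single integer zero occurring at the minimal index.

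The flatness is then the routine payoff. Writing classes modulo $\tau$ and using $nt\nabla_{\partial_t}=\mathcal{W}+\tau\nabla_{\partial_\tau}$ together with $\tau\nabla_{\partial_\tau}(\omega^{(3)}_i)\equiv-\nu^{(3)}_i\,\omega^{(3)}_i\pmod{\tau}$, I obtain
$$
nt\,\nabla^{res}_{\partial_t}\bigl(\overline{\omega^{(3)}_i}\bigr)=\bigl((i-1)+kn-\nu^{(3)}_i\bigr)\overline{\omega^{(3)}_i}.
$$
For the index with $\nu^{(3)}_i=i-1$ this gives $\nabla^{res}_t(\overline{\omega^{(3)}_i})=\tfrac{k}{t}\,\overline{\omega^{(3)}_i}$, whence by the Leibniz rule
$$
\nabla^{res}_t\bigl(t^{-k}\omega^{(3)}_i\bigr)=\Bigl(-k\,t^{-k-1}+t^{-k}\cdot\tfrac{k}{t}\Bigr)\overline{\omega^{(3)}_i}=0,
$$
as claimed. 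I expect steps one and three to be entirely mechanical once $\mathcal{W}$ is in place; the genuine difficulty is justifying the integrality of the minimal spectral number for $t\neq0$, i.e. that the rescaled bottom section is flat and single-valued, for which the homogeneity of $\alpha$ and the block structure of the Algorithm~1--2 base changes are the essential inputs.
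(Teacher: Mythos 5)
Your reduction and your final step are fine: the weight-operator computation (equivalently, Corollary \ref{summary}(ii)) correctly gives $\deg(\omega^{(2)}_j)=j-1$ and $\deg(\omega^{(3)}_i)=kn+i-1$, so the lemma amounts to producing \emph{one} index with $\nu^{(2)}_j=j-1$, resp.\ $\nu^{(3)}_i=i-1$; and once such an index exists, your residue computation $\nabla^{res}_t(t^{-k}\omega^{(3)}_i)=0$ is exactly the paper's. But both of your existence arguments are wrong, and the paper's own examples refute them. For $t=0$: nilpotency of $(\Omega_0)_{|t=0}$ does \emph{not} make the monodromy unipotent — for a regular singular pole of order two the monodromy eigenvalues are $e^{-2\pi i\nu^{(2)}_j}$, and the $\nu^{(2)}_j$ need not be integers. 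Concretely, for $\star_3$ the spectrum of $(G_0,\nabla)$ is $(2,1,2,3,4,3)$, not $(0,1,\ldots,5)$, so ``$\nu^{(2)}_j=j-1$ for every $j$'' fails; for $E_6$ the spectrum at $t=0$ contains $\tfrac{44}{5},\tfrac{25}{3},\ldots$, so the monodromy is not even unipotent. For $t\neq 0$: choosing $i$ to be the index of the minimal spectral number, and claiming that number is an integer, is refuted by the bracelet (a reductive example, last remark of Section \ref{sec:examples}): its spectrum is $(\tfrac23,1,2,\tfrac73)$, the minimal spectral number $\tfrac23$ is not an integer, and the $\nabla^{res}$-flat section is $t^{-1}\omega^{(3)}_2$ (where $\nu^{(3)}_2=1=2-1$), not $\omega^{(3)}_1$. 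Moreover the structural claim feeding your argument — that the ``bottom class'' never acquires a genuine $\tau^{-1}$-term — breaks down exactly when Algorithm 2 runs, since step \eqref{eqAlg2} replaces $\omega_1$ by $t\omega_1+\tau^{-1}(\nu_1-\nu_n-1)\omega_n$.

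What actually proves existence, and what the paper does, is a bookkeeping induction along the algorithms, which your proposal never carries out. One starts from $\omega^{(1)}_1=\alpha$, for which $\deg(\omega^{(1)}_1)=0=\nu^{(1)}_1$, and checks that each base change step preserves the property ``there is an index $l$ with $\deg(\omega_l)$ equal to $\nu_l$ plus the accumulated shift'': in a step \eqref{eqAlg1} at index $i$, the property stays at $l$ if $i\notin\{l,l+1\}$, moves to $l-1$ if $i=l$ (since $\widetilde{\nu}_{l-1}=\nu_l-1=\deg(\omega_{l-1})$), and moves to $l+1$ if $i=l+1$ (since $\widetilde{\nu}_{l+1}=\nu_l+1=\deg(\widetilde{\omega}_{l+1})$); a similar (slightly more delicate) analysis handles step \eqref{eqAlg2}, each pass of which adds $n$ to all degrees and accounts for the shift $k\cdot n$. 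The distinguished index is thus \emph{not} canonically the minimal-spectrum one — it wanders during the algorithms — and no monodromy, integrality, or single-valuedness input is needed. You should replace your two existence arguments by this tracking argument; the rest of your write-up can stand.
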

\begin{proof}
By construction we have $\omega^{(1)}_1=\omega^{(0)}_1=\alpha$, so in particular $\deg(\omega^{(1)}_1)=0$. We also have
$\nu_1^{(1)}=0$. Now it suffices to remark that in algorithm 1 (formula \eqref{eqAlg1}), whenever we have an index
$l\in\{1,\ldots,n\}$ with $\deg(\omega^{(1)}_l)=\nu^{(1)}_l$, then either $\deg(\widetilde{\omega}^{(1)}_l)=\widetilde{\nu}^{(1)}_l$
(this happens if the index $i$ in formula \eqref{eqAlg1} is different
from $l$ and $l+1$)
or $\deg(\widetilde{\omega}^{(1)}_{l-1})=\widetilde{\nu}^{(1)}_{l-1}$ (if $i=l$)
or $\deg(\widetilde{\omega}^{(1)}_{l+1})=\widetilde{\nu}^{(1)}_{l+1}$ (if $i=l+1$). It follows that
we always conserve some index $j$ with $\deg(\widetilde{\omega}^{(1)}_j)=\widetilde{\nu}^{(1)}_j$.
A similar argument works for algorithm 2,
which gives the second statement of the first part.
The residue connection is given by the matrix $\frac{1}{nt}\left(\widetilde{D} + A^{(3)}_\infty\right)$ in the basis
$\underline{\omega}^{(3)}$ of $\widehat{G}/\tau\widehat{G}$ (see corollary \ref{summary} (ii)).
This yields the $\nabla^{res}$-flatness of $t^{-k}\omega^{(3)}_i$.
\end{proof}
Finally, the comparison result can be stated as follows.
\begin{proposition}\label{propAnalytContFrob}
Let $i$ be the index from the previous lemma such that
$\nabla^{res}(t^{-k}\omega^{(3)}_i)=0$. Then for any $t'\in \Delta_t$, the germs of Frobenius structures
$(\widetilde{M}^{(i)},t')$ (from lemma \ref{lemApplicationHM}) and
$(M^{(i)},t')$ (from theorem \ref{theoFrobeniusSemiSimple}) are isomorphic.
\end{proposition}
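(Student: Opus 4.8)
The plan is to reduce the comparison to a statement about the two \emph{universal} constructions involved, by checking that the first structure connections of both Frobenius structures restrict to the same initial data at the base point, and then invoking the uniqueness built into Dubrovin's and Hertling--Manin's theorems. The point is that a germ of a (semisimple) Frobenius manifold is determined up to isomorphism by the restriction of its first structure connection to the base point together with the choice of primitive homogeneous section; so everything comes down to matching these restricted data at each $(t',0)$.

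First I would fix $t'\in\Delta_t$ and re-base both germs at $(t',0)$. For the Frobenius structure $(M^{(i)},t')$ coming from theorem \ref{theoFrobeniusSemiSimple}(i), the first structure connection restricts over the origin of the unfolding to the Dubrovin initial data $(W,B_0,B_\infty,g,\xi)$ with $W=G_{t'}/\tau^{-1}G_{t'}$, $g=\tau^{n-1}S$, $B_0=[\nabla_\tau]$ (given by $\Omega_0$), $B_\infty=[\tau\nabla_\tau]$ (given by $A^{(3)}_\infty$) and $\xi=[\omega^{(3)}_i]$. For the Hertling--Manin Frobenius structure $(\widetilde{M}^{(i)},t')$ from lemma \ref{lemApplicationHM}, I would use that the trTLEP-structure $\widehat{H}^{(t)}$ restricts at $t'$ to the $(V^+,S)$-solution $\widehat{G}_{t'}$ of corollary \ref{summary}, so that its first structure connection restricts over $(t',0)$ to the very same $(W,B_0,B_\infty,g)$, while the primitive homogeneous section $t^{-k}\omega^{(3)}_i$ restricts to $t'^{-k}\xi$.

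Then I would invoke universality. For $t'\neq 0$ the endomorphism $B_0=\Omega_0$ is semisimple with pairwise distinct eigenvalues (the critical values of $f_{|D_{t'}}$, as established in the proof of theorem \ref{theoFrobeniusSemiSimple} using proposition \ref{fincon}), so Dubrovin's construction applies to the data at $(t',0)$. By the Remark following theorem \ref{theoFrobeniusSemiSimple}, the nonzero rescaling $\xi\mapsto t'^{-k}\xi$ does not change the isomorphism class of the resulting Frobenius structure. Hence both $(M^{(i)},t')$ and $(\widetilde{M}^{(i)},t')$ are $n$-dimensional germs of Frobenius manifolds whose first structure connections restrict at the base point to one and the same initial tuple (up to the harmless scalar), and the uniqueness part of the universal construction yields a germ isomorphism between them. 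It is precisely here that lemma \ref{lemNablaTFlat} enters: the $\nabla^{res}_t$-flatness of $t^{-k}\omega^{(3)}_i$ guarantees that this single section is the primitive homogeneous flat section for the whole family $\widehat{H}^{(t)}$ over $\Delta_t$, so that its restriction to each fibre $t'$ is exactly the primitive vector $t'^{-k}\xi$ feeding Dubrovin's construction.

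The main obstacle will be the careful bookkeeping of the two universal properties. One must verify that the germ of the Hertling--Manin structure at a general point $(t',0)$ --- a priori characterized only by the family trTLEP-structure over the whole disc $\Delta_t$ --- is also characterized, at that single point, by the pointwise semisimple Dubrovin data, so that the two uniqueness statements can be matched. Concretely this amounts to checking that the restriction of the Hertling--Manin first structure connection and of its primitive section at $(t',0)$ really coincides with the Dubrovin initial data (including the flat primitive section), and that semisimplicity of $B_0$ for every $t'\neq 0$ lets one apply the uniqueness at every point of $\Delta_t$ rather than only at the central point $t$.
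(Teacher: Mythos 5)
Your overall strategy is the same as the paper's: both proofs reduce to (a) the observation that the first structure connection of $\widetilde{M}^{(i)}$ restricts over $(t',0)$ to the fibre $\widehat{G}_{t'}$ of corollary \ref{summary}, (b) the $\nabla^{res}$-flatness of $t^{-k}\omega^{(3)}_i$ from lemma \ref{lemNablaTFlat}, which guarantees that this one section restricts at every $t'$ to the primitive form feeding the pointwise construction, and (c) a universality statement. The only difference in route is where (c) is invoked: the paper, following \cite[proposition 5.5.2]{Dou3}, stays inside the Hertling--Manin framework (the family $\widehat{H}^{(t)}$ is a deformation of its fibre, hence contained in the fibre's universal deformation, so the two universal deformations have isomorphic germs at $t'$), whereas you invoke Dubrovin's pointwise uniqueness for semisimple initial data; since every $t'\in\Delta_t$ is nonzero and $\Omega_0$ then has pairwise distinct eigenvalues, the two formulations are interchangeable here.

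There is, however, one genuine error in your argument: the claim that ``the nonzero rescaling $\xi\mapsto (t')^{-k}\xi$ does not change the isomorphism class of the resulting Frobenius structure,'' which you attribute to the Remark after theorem \ref{theoFrobeniusSemiSimple}. That Remark only asserts that the rescaled sections are again primitive and homogeneous; it does not assert rescaling-invariance, and the assertion is in fact false whenever $(t')^{-2k}\neq 1$ (so for generic $t'$ as soon as $k\geq 1$). Rescaling the primitive section by a constant $c$ leaves $\circ$, $e$ and $E$ unchanged but multiplies the flat metric by $c^2$, and the resulting germs are not isomorphic: an isomorphism would induce an automorphism of the initial data $(W,g,B_0,B_\infty)$ carrying $\xi$ to $c\xi$; commuting with the semisimple $B_0$ with distinct eigenvalues forces it to be diagonal in the eigenbasis of $B_0$, and cyclicity of $\xi$ (all its components are nonzero) then forces it to be $c\cdot\mathrm{Id}$, which scales $g$ by $c^2$ and hence is not an isometry unless $c^2=1$. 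The correct resolution --- and the actual purpose of that Remark (``We will later need to work with these rescaled sections, rather than with $\omega^{(3)}_i$'') --- is that in proposition \ref{propAnalytContFrob} the germ $(M^{(i)},t')$ is by convention the Frobenius structure built from the \emph{rescaled} primitive form $(t')^{-k}\omega^{(3)}_i$. With that reading the flat section of the family restricts at $t'$ to exactly the primitive form of the pointwise structure, no scalar discrepancy ever arises, and the rest of your argument goes through unchanged.
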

\begin{proof}
We argue as in \cite[proposition
5.5.2]{Dou3}: The trTLEP-structure $\widehat{H}^{(t)}$ is a
deformation (in the sense of \cite[definition 2.3]{HM}) of
the fibre $\widehat{G}/t'\widehat{G}$, hence contained in the
universal deformation of the latter. Thus the (germs at $t'$ of the)
universal deformations of $\widehat{H}^{(t)}$ and
$\widehat{G}/t'\widehat{G}$
are isomorphic. This gives the result
as the homogenous and primitive section $t^{-k}\omega^{(3)}_i$ of
$\widehat{H}^{(t)}$ that we choose in order to apply lemma \ref{lemApplicationHM}
is $\nabla^{res}$-flat.
\end{proof}

\subsection{Frobenius structures at $t=0$}
\label{subsec:Frobenius-teq0}

In the last subsection, we constructed Frobenius structures on the unfolding spaces $M_t$
for any $t\neq 0$.
It is a natural question to know whether there is a way to attach a Frobenius structure to the
restriction of $f$ on $D$. In order to carry this out, one is faced with the difficulty that
the pairing $S$ from theorem \ref{canSol} is not, a priori, defined on $\mathbf{G}_0$. Hence
a more precise control over
this pairing on $\mathbf{G}[t^{-1}]$ is needed in order to make a statement
at $t=0$. The following conjecture provides exactly this additional information.
\begin{conjecture}\label{conjSpectrum}
The
pairing $S$ from theorem \ref{canSol} is defined on $\mathbf{G}[t^{-1}]$ and
meromorphic at $t=0$, i.e., induces a pairing $S:\mathbf{G}[t^{-1}]\otimes\overline{\mathbf{G}}[t^{-1}]\rightarrow \CC[\tau,\tau^{-1},t,t^{-1}]$.
Moreover, consider the natural grading of $\mathbf{G}$ resp. on $\mathbf{G}[t^{-1}]$ induced from the grading
of $\Omega^{n-1}(\log\,h)$ by putting $\deg(\tau)=-1$ and $\deg(t)=n$.
Then
\begin{enumerate}
\item
$S$ is homogenous, i.e., it sends
$(\mathbf{G}[t^{-1}])_k\otimes (\overline{\mathbf{G}[t^{-1}]})_l$ into
$\CC[\tau,\tau^{-1},t,t^{-1}]_{k+l}$.
\item
$S$ sends $G\otimes\overline{G}$ into $\tau^{-n+1}\CC[\tau^{-1},t]$.
\end{enumerate}
\end{conjecture}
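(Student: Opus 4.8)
The plan is to deduce the conjecture from an \emph{algebraic} model of the pairing: I would construct a family pairing on $\mathbf{G}[t^{-1}]$ out of a relative Grothendieck residue, check the two homogeneity/pole statements for this model directly, and then identify the model with Sabbah's $S$ fibrewise. Recall from Lemma \ref{BrieskornFree} that $G/\tau^{-1}G\cong h_*\mathcal{T}^1_{\s R_h/\CC}f=\CC[V]/(\xi_1(f),\ldots,\xi_{n-1}(f))$ is free of rank $n$ over $\CC[t]=\CC[h]$, with basis the classes of $f^i\alpha$, and that $\xi_1(f),\ldots,\xi_{n-1}(f)$ form a relative regular sequence over $\CC[t]$ (Proposition \ref{numcons}). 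This exhibits the quotient as a relatively Gorenstein finite flat $\CC[t]$-algebra, so there is a $\CC[t]$-bilinear, symmetric, perfect relative residue pairing
\[
\kappa:(G/\tau^{-1}G)\otimes_{\CC[t]}(G/\tau^{-1}G)\longrightarrow \CC[t].
\]
First I would verify that $\kappa$ is homogeneous for the grading $\deg(t)=n$, $\deg(\alpha)=0$, $\deg(f)=1$; since $\alpha$ has weight zero this forces $\kappa(f^i\alpha,f^j\alpha)\in\CC\,\delta_{i+j,\,n-1}$, matching the anti-diagonal shape already visible in Lemma \ref{lemSpectrum}.

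The next step is to propagate $\kappa$ to all of $G$. I would define $\widetilde S$ on $G\otimes\overline G$ by declaring its $\tau^{-n+1}$-coefficient to be $\kappa$ and imposing the flatness identity $\tau\partial_\tau\widetilde S(a,\overline b)=\widetilde S(\tau\partial_\tau a,\overline b)+\widetilde S(a,\tau\partial_\tau\overline b)$ of \ref{Vsolution}(iv)(a). Using the connection matrix $\Omega_0+\tau^{-1}A^{(1)}_\infty$ of Lemma \ref{lemBirkhoff}, this becomes an order-by-order recursion in $\tau^{-1}$ that expresses every lower coefficient in terms of $\kappa$; the one thing to check is that it is solvable \emph{without creating denominators in $t$}, so that all coefficients remain in $\CC[t]$. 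Extending $\tau$-sesquilinearly yields $\widetilde S:\mathbf{G}[t^{-1}]\otimes\overline{\mathbf{G}}[t^{-1}]\to\CC[\tau,\tau^{-1},t,t^{-1}]$. Granting the solvability, properties (i) and (ii) of the conjecture hold for $\widetilde S$ \emph{by construction}: the recursion is compatible with the grading, so $\widetilde S$ is homogeneous of degree zero as in (i), and $\widetilde S(G,\overline G)\subset\tau^{-n+1}\CC[\tau^{-1},t]$ is precisely the pole bound (ii).

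It then remains to identify $\widetilde S$ with $S$. A key observation, which I would isolate first, is that the recursion of the previous paragraph shows that \emph{any} pairing satisfying \ref{Vsolution}(iv)(a) together with the pole bound \ref{Vsolution}(iv)(c) is determined on $G_t$ by the symmetric pairing it induces on $G_t/\tau^{-1}G_t$. Hence, for each $t\neq 0$, it suffices to show that Sabbah's $S$ induces, up to a constant, the residue pairing $\kappa_{|t}$ on $G_t/\tau^{-1}G_t$. This is the analogue, in the present logarithmic/relative situation of $f_{|D_t}$ on the Milnor fibre of a linear free divisor, of the classical statement that the leading term of the higher-residue pairing of a tame function is the Grothendieck residue on its Jacobian ring.

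The hard part will be exactly this last comparison, and making it \emph{uniform in $t$}. Sabbah's $S$ is defined transcendentally, via Poincar\'e--Verdier duality between the moderate- and rapid-decay cohomologies of $f_{|D_t}$, whereas $\kappa$ is algebraic; matching them produces a priori only a nonzero comparison factor $c(t)$ holomorphic on $T\setminus\{0\}$. I would need to prove that $c(t)$ is a constant: any genuine $t$-dependence would be incompatible with the homogeneity of $\widetilde S$ established above and would obstruct the meromorphic extension across $t=0$. Once $c$ is shown to be a nonzero scalar, the algebraicity of $\widetilde S$ immediately gives that $S=c^{-1}\widetilde S$ extends meromorphically over $t=0$, which is the first assertion of the conjecture, and properties (i) and (ii) transfer from $\widetilde S$ to $S$. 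As a consistency check before attempting the uniformity argument in general, I would compute the constants $c_{ij}$ appearing in $\kappa(f^i\alpha,f^j\alpha)$ for the examples of Section \ref{sec:examples} and compare them with the anti-diagonal values forced by Lemma \ref{lemSpectrum}.
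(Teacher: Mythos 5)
You should first be aware that the statement you are proving is Conjecture \ref{conjSpectrum}: the paper itself offers \emph{no proof} of it. The authors support it only by the numerical evidence of section \ref{sec:examples} (the extra symmetry $\nu^{(3)}_i+\nu^{(3)}_{n+1-i}=n-1$ and the symmetry of the residue eigenvalues of $t\partial_t$) and by the remark that it holds for the normal crossing divisor, where $S$ is known explicitly from \cite{DS2} and \cite{Dou3}. So there is no paper argument to compare yours against, and a complete proof would be a new result going beyond the paper. Your proposal, as you yourself flag at several points, is a program rather than a proof, and its two pivotal steps are open.

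The first genuine gap is your ``key observation'' that flatness (property (iv)(a) of \ref{Vsolution}) together with the pole bound (iv)(c) determines a pairing on $G_t$ from the symmetric pairing it induces on $G_t/\tau^{-1}G_t$. This does not follow from the order-by-order recursion you invoke. Writing $S(\omega_i,\overline{\omega}_j)=\sum_{m\geq n-1}S^{(m)}_{ij}\tau^{-m}$ and using $\partial_\tau(\underline{\omega})=\underline{\omega}(\Omega_0+\tau^{-1}A_\infty)$, flatness gives at each order an equation of the form
\begin{displaymath}
\Omega_0^{T}S^{(m+1)}-S^{(m+1)}\Omega_0 \;=\; \bigl((\nu_i+\nu_j-m)S^{(m)}_{ij}\bigr)_{ij},
\end{displaymath}
so the recursion determines $S^{(m+1)}$ only up to the kernel of $X\mapsto \Omega_0^{T}X-X\Omega_0$, which is $n$-dimensional since $\Omega_0$ is a regular matrix; moreover each step imposes a solvability condition on the right-hand side. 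Eliminating this ambiguity is exactly where compatibility with the $V$-filtration and the homogeneity enter in the paper's Lemma \ref{lemSpectrum} and Theorem \ref{theoS-solution} --- i.e., the structure you would need is part of what is being conjectured, not an input you may assume. The second gap is the identification of Sabbah's transcendentally defined pairing (Theorem \ref{canSol}, via \cite{Sa2}) with your algebraic residue pairing $\kappa$, \emph{uniformly in $t$}. Fibre by fibre this is a nontrivial comparison theorem; and even granting it, the comparison factor $c(t)$ is a priori only an invertible holomorphic function on $T\setminus\{0\}$, and its meromorphy (indeed constancy) at $t=0$ is essentially equivalent to the first assertion of the conjecture. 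Your argument that $t$-dependence of $c$ ``would be incompatible with the homogeneity of $\widetilde{S}$'' is circular: homogeneity is a property of your algebraic model $\widetilde{S}$ by construction, whereas what must be proved is that $S$ itself --- defined independently on each fibre --- is homogeneous and meromorphic. Until these two points are settled, the proposal establishes nothing about $S$; what it does provide is a reasonable candidate model $\kappa$, consistent with the anti-diagonal shape in Lemma \ref{lemSpectrum} and with the known normal crossing case.
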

Some evidence supporting the first part of this conjecture comes from the computation of the examples in section \ref{sec:examples}.
Namely, it appears that in all cases, there is an extra symmetry satisfied by the spectral numbers, i.e., we
have $\nu^{(3)}_k+\nu^{(3)}_{n+1-k}=n-1$, and not only $\alpha_k+\alpha_{n+1-k}=n-1$ for all $k\in\{1,\ldots,n\}$
(Remember that $\alpha_1,\ldots,\alpha_n$ was the ordered sequence of spectral numbers).
Moreover, the eigenvalues of the residue of $t\partial_t$ on $(G/tG)_{|\tau\neq 0}$ are constant in $\tau$
and symmetric around zero, which indicates that $S$ extends without poles and as a non-degenerate pairing to $G$.
In particular, one obtains a pairing on $G_0$, which would explain the symmetry
$\nu^{(2)}_k+\nu^{(2)}_{n+1-k}=n-1$ observed in the examples
(notice that even the symmetry of the spectral numbers at $t=0$, written as an
ordered sequence, is not
a priori clear).
Notice also that in the case where $D$ is a normal crossing divisor (i.e., the first example
studied in section \ref{sec:examples}), the conjecture is true. This follows from the explicit form
of the pairing $S$ in this case, which can be found in \cite{Dou3}, based on \cite{DS2}.

The following corollary draws some consequences of the above conjecture.
\begin{corollary}\label{corConjSpectrum}
Suppose that conjecture \ref{conjSpectrum} holds true and that the minimal spectral
number $\alpha_{min}$ of $(G_t,\nabla)$, $t\neq 0$ has multiplicity one so that
theorem \ref{theoS-solution} holds. Then
\begin{enumerate}
\item
The pairing $S$ is expressed in the basis $\underline{\omega}^{(3)}$ as
$$
S(\omega^{(3)}_i,\overline{\omega}^{(3)}_j) = \left\{
\begin{array}{rcl}
c \cdot t^{2k} \cdot \tau^{-n+1}& \textup{if} & i+j=n+1 \\
0 & \textup{else}
\end{array}\right.
$$
for some constant $c\in \CC$, where, as before, $k\in\NN$ counts the
number of meromorphic base changes in algorithm 2.
Moreover, we have $\nu^{(3)}_i+\nu^{(3)}_{n+1-i}=n-1$ for
all $i\in\{1,\ldots,n\}$.
\item
The pairing $S$ is expressed in the basis $\underline{\omega}^{(2)}$ as
$$
S(\omega^{(2)}_i,\overline{\omega}^{(2)}_j) = \left\{
\begin{array}{rcl}
c \cdot \tau^{-n+1}& \textup{if} & i+j=n+1 \\
0 & \textup{else}
\end{array}\right.
$$
for the same constant $c\in \CC$ as in (i).
\item
$S$ extends to a non-degenerate paring on $G$, i.e., it induces a
pairing
$S:G_0\otimes_{\CC[\tau^{-1}]}\overline{G}_0\rightarrow\tau^{-n+1}\CC[\tau^{-1}]$
with all the properties of definition \ref{Vsolution} (iv). Moreover,
$\underline{\omega}^{(2)}$ defines a $(V^+,S)$-solution for the
Birkhoff problem for $(G_0,\nabla)$ with respect to $S$.
\end{enumerate}
\end{corollary}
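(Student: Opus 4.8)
The plan is to bootstrap from what is already established for generic $t\neq 0$ and then degenerate to $t=0$, using Conjecture \ref{conjSpectrum} precisely to control the behaviour across $t=0$. For part (i), I would start from Lemma \ref{lemSpectrum}, which (granting the multiplicity-one hypothesis and hence Theorem \ref{theoS-solution}) already shows that in the basis $\underline{\omega}^{(3)}$ the matrix of $S$ is supported on a single anti-diagonal $i+j=i_0+j_0$ with values in $\CC\,\tau^{-n+1}$, where $i_0,j_0$ are the positions of the smallest and largest spectral numbers. The only genuinely new content is that this anti-diagonal is the correct one, $i_0+j_0=n+1$, together with the full unsorted symmetry $\nu^{(3)}_i+\nu^{(3)}_{n+1-i}=n-1$. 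To obtain it I would feed in the homogeneity of $S$ from Conjecture \ref{conjSpectrum}(i): by Corollary \ref{summary} the class of $\omega^{(3)}_l$ in $G_t/\tau^{-1}G_t$ equals that of $(-f)^{l-1}\alpha$, homogeneous of degree $l-1$, so the induced non-degenerate pairing $G_t/\tau^{-1}G_t\otimes G_t/\tau^{-1}G_t\to\tau^{-n+1}\CC$ can be nonzero only on classes whose degrees add up to $n-1$, forcing $(i-1)+(j-1)=n-1$, i.e. $i+j=n+1$. Comparison with Lemma \ref{lemSpectrum} then gives $i_0+j_0=n+1$, and the relation $\nu^{(3)}_k+\nu^{(3)}_{i_0+j_0-k}=n-1$ from that lemma becomes the asserted symmetry. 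The exponent of $t$ is pinned down by a second homogeneity count: using $\deg(\omega^{(3)}_l)=\nu^{(3)}_l+kn$ from Lemma \ref{lemNablaTFlat} (which I would first note extends to every index, since all $\omega^{(3)}_l$ are homogeneous in the grading of Conjecture \ref{conjSpectrum}), the total degree of $S(\omega^{(3)}_i,\overline{\omega}^{(3)}_{n+1-i})$ is $(n-1)+2kn$, matching $\deg(c\,t^{2k}\tau^{-n+1})$ and excluding any other monomial.

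For part (ii), I would relate $\underline{\omega}^{(2)}$ to $\underline{\omega}^{(3)}$ through the $k$ iterations of algorithm 2, whose base change \eqref{eqAlg2} multiplies the whole basis by $t$ up to a $\tau^{-1}$-correction that preserves the anti-diagonal. Inverting this, the leading ($\tau^{-n+1}$) anti-diagonal coefficients of $S$ in $\underline{\omega}^{(2)}$ are obtained from those in $\underline{\omega}^{(3)}$ by multiplication with $t^{-2k}$, which by part (i) yields
\[
S(\omega^{(2)}_i,\overline{\omega}^{(2)}_{n+1-i})=t^{-2k}\,S(\omega^{(3)}_i,\overline{\omega}^{(3)}_{n+1-i})=c\,\tau^{-n+1},
\]
independent of $t$, with the same constant $c$. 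That no lower pole-order terms survive, so that the matrix is genuinely $c\,\tau^{-n+1}$ on the anti-diagonal and zero off it, I would establish by rerunning the flatness recursion $(\tau\partial_\tau+(n-1))S(\cdot,\cdot)$ from the proof of Lemma \ref{lemSpectrum} directly in the basis $\underline{\omega}^{(2)}$, now exploiting that $\underline{\omega}^{(2)}$ is a $\CC[\tau^{-1},t]$-basis of $G$ (being the output of algorithm 1, whose moves \eqref{eqAlg1} have constant coefficients and involve no $t^{-1}$) together with the pole bound of Conjecture \ref{conjSpectrum}(ii). The essential point is that the $t^{2k}$ produced in (i) is exactly cancelled.

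For part (iii) I would exploit that $\underline{\omega}^{(2)}$ is a basis of $G$ specialising to $G_0$ at $t=0$: by (ii) the matrix of $S$ in this basis is constant in $t$, hence regular and non-degenerate at $t=0$ (non-degeneracy since $c\neq0$, by Definition \ref{Vsolution}(iv)(c) at $t\neq0$), and, combined with Conjecture \ref{conjSpectrum}(ii) excluding negative powers of $t$ on $G$, this shows that $S$ extends to $S:G_0\otimes_{\CC[\tau^{-1}]}\overline{G}_0\to\tau^{-n+1}\CC[\tau^{-1}]$. Flatness (iv)(a) and the filtration compatibility (iv)(b) pass to the limit from $t\neq0$, while the explicit anti-diagonal form gives the pole property (iv)(c); since $\underline{\omega}^{(2)}$ is already a $V^+$-solution for $(G_0,\nabla)$ by Lemma \ref{Alg1Term} and now satisfies $S(G_0\cap G'_0,\overline{G}_0\cap\overline{G}'_0)\subset\CC\,\tau^{-n+1}$, it is a $(V^+,S)$-solution. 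The main obstacle I anticipate is the precise degree bookkeeping behind (i) and (ii): Conjecture \ref{conjSpectrum} supplies only homogeneity and a pole bound, so converting these into the exact monomial $c\,t^{2k}\tau^{-n+1}$ requires tracking the gradings of the $\omega^{(3)}_l$ through both algorithms and verifying that the base-change corrections never spill off the anti-diagonal, which is where the bilinear-algebra care must be concentrated.
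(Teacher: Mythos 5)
Your overall strategy---degree bookkeeping via the conjectured homogeneity of $S$, combined with Lemma \ref{lemSpectrum} and an analysis of the base changes relating $\underline{\omega}^{(2)}$ and $\underline{\omega}^{(3)}$---is the same as the paper's, but the execution of part (i) has a genuine gap, located exactly where you yourself say "the bilinear-algebra care must be concentrated." Your argument forcing $i_0+j_0=n+1$ is run on the fibre: you declare the classes of $(-f)^{l-1}\alpha$ in $G_t/\tau^{-1}G_t$ to be "homogeneous of degree $l-1$" and conclude that the induced pairing into $\tau^{-n+1}\CC$ can only couple classes whose degrees sum to $n-1$. But for fixed $t\neq 0$ the grading of Conjecture \ref{conjSpectrum} does not descend to $G_t$ or $G_t/\tau^{-1}G_t$: that grading lives on $\mathbf{G}[t^{-1}]$ with $\deg(t)=n$, and passing to the fibre imposes the inhomogeneous relation $h=t$, identifying degree-$n$ elements with constants, so "degree of a class" is meaningless there. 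The paper's proof stays in the graded family: it establishes $\deg(\omega^{(2)}_i)=i-1$ and $\deg(\omega^{(3)}_i)=kn+i-1$ by tracking Lemma \ref{lemBirkhoff} and both algorithms, deduces $\deg S(\omega^{(3)}_i,\overline{\omega}^{(3)}_j)=2kn+i+j-2$ from homogeneity, and combines this with $S(\omega^{(3)}_i,\overline{\omega}^{(3)}_j)=c(t)\,\tau^{-n+1}$, $c(t)\in\CC[t]$ (from the proof of Lemma \ref{lemSpectrum} plus Conjecture \ref{conjSpectrum}(ii)); since the monomials of $\tau^{-n+1}\CC[t]$ have degrees $n-1+mn$ and $i+j-2\leq 2n-2<n-1+2n\cdot 1$, the only possibility is $i+j=n+1$ and $c(t)=c\,t^{2k}$. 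Note the mod-$n$ ambiguity that must be excluded in this last step; your fibrewise shortcut hides it rather than resolves it.

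A second concrete error: your degree formula $\deg(\omega^{(3)}_l)=\nu^{(3)}_l+kn$ "for every index" is false, and the proposed justification (that Lemma \ref{lemNablaTFlat} extends to all $l$ because the $\omega^{(3)}_l$ are homogeneous) is a non sequitur---homogeneity says nothing about the value of the degree. Lemma \ref{lemNablaTFlat} asserts the existence of a single such index; for $\star_3$ one has $\deg(\omega^{(3)}_1)=0$ while $\nu^{(3)}_1=2$ and $k=0$. The correct formula is $\deg(\omega^{(3)}_l)=kn+l-1$. Your final count comes out right only by coincidence: once the symmetry $\nu^{(3)}_i+\nu^{(3)}_{n+1-i}=n-1$ is in hand, both formulas give the same sum $2kn+n-1$, but the intermediate claim is wrong. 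Your parts (ii) and (iii) (inverting the algorithm-2 steps, noting the algorithm-1 moves are $t$-free, then specializing the now constant-in-$t$ matrix of $S$ at $t=0$ against Lemma \ref{Alg1Term}) do follow the paper's route and would go through once (i) is repaired, though the paper carries out the preservation of the anti-diagonal under the inverse base changes \eqref{eqAlg1} and \eqref{eqAlg2} by direct computation, including the delicate case $j=i\pm 1$ where one needs the $(-1)^{n-1}$-symmetry of $S$ together with its homogeneity in $\tau$.
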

\begin{proof}
\begin{enumerate}
\item
Following the construction of the bases $\underline{\omega}^{(1)}$, $\underline{\omega}^{(2)}$ and
$\underline{\omega}^{(3)}$, starting from
the basis $\underline{\omega}^{(0)}$ (via lemma \ref{lemBirkhoff} and algorithms 1 and 2),
it is easily seen that $\deg(\omega^{(1)}_i)=\deg(\omega^{(2)}_i)=i-1$ and that
$\deg(\omega^{(3)}_i)=k\cdot n+i-1$.
The (conjectured) homogeneity of $S$ yields that
$\deg(S(\omega^{(2)}_i,\overline{\omega}^{(2)}_j))=i+j-2$ and
$\deg(S(\omega^{(3)}_i,\overline{\omega}^{(3)}_j))=2kn+i+j-2$.

The proof of lemma \ref{lemSpectrum} shows that
$\tau^{n-1}S(\omega^{(3)}_i,\overline{\omega}^{(3)}_j)$ is either zero or constant in
$\tau$, hence, by part (ii) of conjecture \ref{conjSpectrum}, $S(\omega^{(3)}_i,\overline{\omega}^{(3)}_j)=c(t)\cdot\tau^{-n+1}$, with $c(t)\in\CC[t]$, which is actually homogenous by part (i) of conjecture \ref{conjSpectrum}. Now since $i+j-2<2(n-1)$, $\deg(c(t)\cdot\tau^{-n+1})=2kn+(i+j-2)$ is only
possible if $i+j=n+1$, and then $c(t)=c\cdot t^{2k}$, in
particular, the numbers $c_{kl}$ in lemma \ref{lemSpectrum} (ii) are
always equal to one, and we have $\nu^{(3)}_i+\nu^{(3)}_j=n-1$.


\item
Using (i), one has to analyse the behaviour of $S$ under the base
changes inverse to
\ref{eqAlg1} (algorithm 1)
and \ref{eqAlg2} (algorithm 2). Suppose that $\underline{\omega}$ is
a basis of $\mathbf{G}[t^{-1}]$ with $\deg(\omega_i)=l\cdot
n+i-1$, $l\in\{0,\ldots,k\}$ and such that
$S(\omega_i,\overline{\omega}_j)=c \cdot t^{2l} \cdot
\tau^{-n+1} \cdot \delta_{i+j,n+1}$,
then if we define for any $i\in\{1,\ldots,n\}$ a new basis
$\underline{\omega}'$ by
\begin{equation}
\begin{array}{rcl}
  \omega'_i & :=  & \omega_i-\tau^{-1}\cdot\nu\cdot\omega_{i-1},\\
  \omega'_j & :=  & \omega_j\quad\forall j\neq i,
\end{array}
\end{equation}
where $\nu\in\CC$ is any constant, we see that we still have
$S(\omega'_i,\overline{\omega}'_j)=c \cdot t^{2l} \cdot \tau^{-n+1} \cdot
\delta_{i+j,n+1}$.
Notice that if $j=i+1$ and $i+j=n+1$, then in order to show $S(\omega'_i,\overline{\omega}'_i)=0$, one uses that if $i+(i-1)=n+1$, then
$S(\omega_i,\overline{\omega}_{i-1})=(-1)^{n-1}\overline{S(\omega_{i-1},\overline{\omega}_i)}
=S(\omega_{i-1},\overline{\omega}_i)$ since
$S(\omega_{i-1},\overline{\omega}_i)$ is
homogenous in $\tau^{-1}$ of degree $-n+1$.

Similarly, if we put, for any constant $\nu\in\CC$,
\begin{equation}
\begin{array}{rcl}
  \omega''_1 & := & t^{-2}\omega_1-t^{-1}\tau^{-1}\cdot\nu\cdot\omega_n, \\
  \omega''_i & := & t^{-1}\omega_i\quad\forall i\neq 1,
\end{array}
\end{equation}
then we have $S(\omega''_i,\overline{\omega}''_j)=c \cdot t^{2(l-1)} \cdot \tau^{-n+1} \cdot
\delta_{i+j,n+1}$.
\item
This follows from (ii) and the fact that $\underline{\omega}^{(2)}$ is a
$V^+$-solution for $(G_0,\nabla)$.
\end{enumerate}
\end{proof}

As a consequence, we show that under the hypothesis of conjecture \ref{conjSpectrum}, we obtain indeed
a Frobenius structure at $t=0$.
\begin{theorem}\label{theoLimitFrob}
Suppose that conjecture \ref{conjSpectrum} holds true and that the minimal spectral number $\alpha_{min}$
of $(G_t,\nabla)$ for $t\neq 0$ has multiplicity one, so that theorem \ref{theoS-solution} applies.
Then the (germ at the origin of the) $\mathcal{R}_h$-deformation space of $f$, which we call
$M_0$, carries a Frobenius structure, which is constant, i.e., given by a potential of degree at most three
(or, expressed otherwise, such that the structure constants $c^k_{ij}$ defined by $\partial_{t_i} \circ \partial_{t_j}
=\sum_k c^k_{ij} \partial_{t_k}$ are constant in the flat coordinates $t_1,\ldots,t_n$).
\end{theorem}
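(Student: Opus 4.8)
The plan is to build the Frobenius structure on $M_0$ by the same mechanism as in the family case (lemma \ref{lemApplicationHM} and proposition \ref{propAnalytContFrob}), now specialised to $t=0$, and then to read off constancy from the fact that the $\s R_h$-deformations of $f$ are spanned by powers of $f$. First I would assemble the input data at $t=0$: assuming conjecture \ref{conjSpectrum}, corollary \ref{corConjSpectrum}(iii) provides a non-degenerate extension of $S$ to $G_0$ together with the statement that $\underline{\omega}^{(2)}$ is a $(V^+,S)$-solution of the Birkhoff problem for $(G_0,\nabla)$; this is exactly a trTLEP-structure over the point $\{t=0\}$. The essential difference with subsection \ref{subsec:Frobenius-tneq0} is that $(\Omega_0)_{|t=0}$ is a single nilpotent Jordan block (see the discussion after \eqref{Ig:eq:1}), so $B_0=[\nabla_\tau]$ is nilpotent and Dubrovin's semisimple construction is unavailable. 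Instead I would invoke the theorem of Hertling and Manin \cite[theorem 4.5]{HM}, which produces a universal Frobenius structure out of a trTLEP-structure equipped with a primitive and homogeneous section.

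Concretely, I would take $W:=G_0/\tau^{-1}G_0$, the metric $g:=\tau^{n-1}S$, $B_0:=[\nabla_\tau]$ and $B_\infty:=[\tau\nabla_\tau]$, and choose as primitive section $\xi:=\omega^{(2)}_{j_0}$, where $j_0$ indexes a minimal spectral number $\nu^{(2)}_{j_0}$ at $t=0$. Since $\omega^{(2)}_i$ reduces to $(-f)^{i-1}\alpha$ modulo $\tau^{-1}G_0$ by corollary \ref{summary}(i), the endomorphism $B_0$ is multiplication by $-f$, so $W\cong (T^1_{\s R_h}f)\cdot\alpha$ is cyclic over $B_0$ with cyclic vector $\xi$; this gives the generation and eigenvalue conditions, while the injectivity condition follows from the explicit connection form of corollary \ref{summary}(ii) at $t=0$. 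Hertling--Manin then yields a Frobenius structure on a germ whose tangent space at the base point is canonically $W\cong T^1_{\s R_h}f$, and the universality transports it to the germ $(M_0,0)$, exactly as in the proof of theorem \ref{theoFrobeniusSemiSimple}.

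It then remains to prove constancy of the structure constants. Here I would use that, by proposition \ref{fincon}(iii)(b), the $\s R_h$-deformation directions of $f$ are spanned by $1,f,\dots,f^{n-1}$, so that every unfolding has the form $F_u=\phi_u(f)$ for a polynomial $\phi_u$ with $\phi_u'(0)\neq0$ for $u$ near $0$. Consequently $dF_u=\phi_u'(f)\,df$, and since $\phi_u'(f)$ is a unit in the Artinian local ring $T^1_{\s R_h}F_u$, the deformed Jacobian algebra is canonically isomorphic to the fixed algebra $T^1_{\s R_h}f\cong\CC[f]/(f^n)$. The Kodaira--Spencer multiplication $\partial_{u_i}\circ\partial_{u_j}=\partial_{u_{i+j-1}}$ (vanishing when $i+j-1>n$) is therefore independent of $u$; equivalently, by corollary \ref{summary}(ii) at $t=0$ the Higgs field of the unfolded first structure connection is given in the basis $\underline{\omega}^{(2)}$ by the constant matrix $(\Omega_0)_{|t=0}=N$ and its powers, all the matrices $N$, $\diag(0,\dots,n-1)$ and $A^{(2)}_\infty$ being constant.

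The step I expect to be the main obstacle is the transfer of this constancy to the flat coordinates and the conclusion that the potential is exactly cubic: a naive quasi-homogeneity count is insufficient, since the degrees $1-q_a$ of the flat coordinates can be negative and would a priori permit higher-order terms, so one genuinely needs the rigidity coming from $(\Omega_0)_{|t=0}$ being a single nilpotent block. The cleanest way to close this gap is to observe that the initial data $(W,g,B_0,B_\infty,\xi)$, together with the associated trTLEP-structure, coincide with those of the constant Frobenius manifold attached to the graded Frobenius algebra $\CC[f]/(f^n)$ with its residue pairing, and then to invoke the uniqueness part of \cite[theorem 4.5]{HM}: two universal Frobenius structures built from the same trTLEP-structure and the same primitive homogeneous section are isomorphic, and the model one is manifestly given by a cubic potential. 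I would also verify that the primitivity/multiplicity-one needed at $t=0$ is inherited from the hypothesis at $t\neq0$ through the analytic continuation of proposition \ref{propAnalytContFrob}.
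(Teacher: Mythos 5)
Your construction step follows the paper's own route: a Frobenius type structure on a point, with $W\cong G_0/\tau^{-1}G_0$, $g=\tau^{n-1}S$ (using conjecture \ref{conjSpectrum} through corollary \ref{corConjSpectrum} to have $S$ on $G_0$ at all), $B_0=[\nabla_\tau]$, $B_\infty=[\tau\nabla_\tau]$, fed into the Hertling--Manin theorem (the paper uses it in the form of \cite[remark 4.6]{HM}, i.e.\ Malgrange's case of a \emph{regular}, non-semisimple $B_0$). However, your choice of primitive section is a genuine error, not a matter of convention. At $t=0$ the matrix of $B_0$ is $(\Omega_0)_{|t=0}=N$, a single nilpotent Jordan block (the corner entry $c_0t$ vanishes); equivalently, $B_0$ is multiplication by $-f$ on $W\cong\bigl(\CC[f]/(f^n)\bigr)\cdot\alpha$. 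The iterates $B_0^k\,\omega^{(2)}_{j_0}=[(-f)^{j_0-1+k}\alpha]$ therefore span only the ideal $(f^{j_0-1})$, of dimension $n-j_0+1$, so your assertion that $W$ is cyclic with cyclic vector $\xi=\omega^{(2)}_{j_0}$ is false, and (GC) fails, whenever $j_0>1$. Your selection rule (index of a minimal spectral number) is the correct one for the canonical solution at $t\neq 0$ (theorem \ref{theoFrobeniusSemiSimple}(ii)), but at $t=0$ it picks the wrong vector in most of the paper's examples: for $\star_3$ the spectrum at $t=0$ is $(2,1,2,3,4,3)$, so you would take $\xi=\omega^{(2)}_2$, which is not primitive. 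The paper takes $\xi=\omega^{(2)}_1=[\alpha]$ and remarks that it is, up to a scalar, the \emph{only} primitive and homogeneous section at $t=0$ --- precisely in contrast with the case $t\neq 0$, from which you imported your criterion.

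Your constancy argument also does not close the gap you yourself flag. The observation that the Kodaira--Spencer algebra is literally constant in the unfolding coordinates $u$ is correct but, as you note, does not transfer to flat coordinates. The proposed fix --- identifying the initial data with those of ``the constant Frobenius manifold attached to the graded algebra $\CC[f]/(f^n)$'' and invoking uniqueness in \cite[theorem 4.5]{HM} --- presupposes that such a constant model with the \emph{given} residue endomorphism $B_\infty=A^{(2)}_\infty$ exists. But a constant Frobenius structure whose multiplication is $\CC[f]/(f^n)$ forces the grading (the eigenvalues of $\nabla E$, equivalently of $B_\infty$ up to an affine shift) to be additive along the multiplication, i.e.\ an arithmetic progression in the basis $1,f,\ldots,f^{n-1}$; the actual eigenvalues of $A^{(2)}_\infty$ are the spectral numbers at $t=0$, which are not an arithmetic progression in general (again $\star_3$, or the $E_6$ example). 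So the ``model'' whose cubic potential you want to quote is not available from the data, the claimed coincidence of initial data is unjustified, and the uniqueness argument cannot start. The paper does not argue this way: after verifying (EC), (GC) and the regularity of $B_0$, it obtains the degree-three statement by the separate computation of \cite[lemma 6.4.1 and corollary 6.4.2]{Dou3}, which your sketch does not reproduce.
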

\begin{proof}
Remember that $(M_0,0)$ is a smooth germ of dimension $n$,
with tangent space given by $T^1_{\mathcal{R}_h}f \cong
G_0/\tau^{-1}G_0$ (notice that the deformation functor in question
is evidently unobstructed). As usual, a $\mathcal{R}_h$-semi-universal
unfolding of $f$ is given as
$$
F=f+\sum_{i=1}^n u_i g_i :V \times M_0 \longrightarrow \CC,
$$
where $u_1,\ldots,u_n$ are coordinates on $M_0$ and $g_1,\ldots,g_n$ is a basis of $T^1_{\mathcal{R}_h}f$.

In order to endow $M_0$ with a Frobenius structure, we will use a similar strategy as in subsection
\ref{subsec:Frobenius-tneq0}, namely, we construct
 a germ of an $n$-dimensional Frobenius manifold
which induces a Frobenius structure on $M_0$ by a universality
property. The case we need here has been treated by Malgrange
(see \cite[(4.1)]{Mal4}). The theorem of Hertling and Manin (\cite[theorem 4.5]{HM}) can be considered as a common
generalisation of Malgrange's result and of the
constructing of Duborovin used lemma \ref{lemApplicationHM}.
We use the result in the form that can be found in \cite[remark 4.6]{HM}.
Thus we have to construct a \emph{Frobenius type structure} on a point, and a section satisfying the
conditions called (GC) and (EC) in loc.cit. This is nothing but a
tuple $(W,g,B_0,B_\infty,\xi)$ as in the proof of theorem
\ref{theoFrobeniusSemiSimple}, except that we do not require the
endomorphism $B_0$ to be semi-simple, but to be \emph{regular}, i.e.,
its characteristic and minimal polynomial must coincide. Consider the $(V^+,S)$-solution
defined by $\widehat{G}_0:= \oplus_{i=1}^n \OO_{\PP^1\times\{0\}}
\omega^{(2)}_i$, and put, as before, $W:=H^0(\PP^1,\widehat{G}_0)$,
$g:=\tau^{-n+1} S$, $B_0:=[\nabla_\tau]$ and $B_\infty:=[\tau\nabla_\tau]$.
Considering the matrices $(\Omega_0)_{|t=0}$ resp. $A^{(2)}_\infty$ of $B_0$ resp. $B_\infty$
with respect to the basis $\underline{\omega}^{(2)}$ of $W$, we see immediately
that $g(B_0-,-)=g(-,B_0-)$, $g(B_\infty-,-)=g(-,(n-1)\Id-B_\infty-)$ and that $B_0$ is
regular since $(\Omega_0)_{|t=0}$ is nilpotent with a single Jordan block. Notice that the
assumption that conjecture \ref{conjSpectrum} holds is used through corollary
\ref{corConjSpectrum} (ii), (iii). The section
$\xi:=\omega^{(2)}_1$ is obviously homogenous and primitive, i.e., satisfies (EC) and (GC).
Notice that it is, up to constant multiplication, the only primitive and homogenous section, contrary to the case
$t\neq 0$, where we could chose any of the sections $\omega^{(3)}_i$, $i\in\{1,\ldots,n\}$.
We have thus verified all conditions of the theorem of Hertling and Manin, and obtain, as indicated
above, a Frobenius structure on $M_0$.

It remains to show that it is given by potential of degree at most three. The argument is exactly the same as
in \cite[lemma 6.4.1 and corollary 6.4.2.]{Dou3} so that we omit the details here.
\end{proof}

%
%

\subsection{Logarithmic Frobenius structures}

\label{subsec:LogFrobenius}

The pole order property of the connection $\nabla$ on $G$ (see formula \ref{eqIntConnection}) suggests
that the family of germs of Frobenius manifolds $M_t$ studied above can be put together in a single Frobenius
manifold with a \emph{logarithmic} degeneration behaviour at the divisor $t=0$. We show that this is actually the
case for the normal crossing divisor;
the same result has been obtained from a slightly different viewpoint in
\cite{Dou3}. In the general case, we observe a phenomenon which also occurs in loc.cit.: one obtains a
Frobenius manifold where the multiplication is defined on the logarithmic tangent bundle, but the metric
might be degenerate on it (see loc.cit., section 7.1.).

We recall the following definition from \cite{Reich1}, which we extend to the more general situation studied here.
\begin{definition}
\begin{enumerate}
\item
Let $M$ be a complex manifold and $\Sigma\subset M$ be a normal crossing divisor. Suppose that
$(M\backslash\Sigma,\circ,g,E,e)$ is a Frobenius manifold. One says that it has a logarithmic
pole along $\Sigma$ if $\circ\in\Omega^1(\log\,\Sigma)^{\otimes 2}\otimes\Der(-\log\,\Sigma)$,
$g\in\Omega^1(\log\,\Sigma)^{\otimes 2}$ and $g$ is non-degenerate as a pairing on $\Der(-\log\,\Sigma)$.
\item
If, in the previous definition, we relax the condition of $g$ being non-degenerate on $\Der(-\log\,\Sigma)$,
then we say that $(M,\Sigma)$ is a weak logarithmic Frobenius manifold.
\end{enumerate}
\end{definition}
In \cite{Reich1}, logarithmic Frobenius manifolds are constructed
using a generalisation of the main theorem of \cite{HM}. More
precisely, universal deformations of so-called
``log$\Sigma$-trTLEP-structures'' (see \cite[definition
1.8.]{Reich1}) are constructed. In our situation, the base of such an object is the
space $T$, and the divisor $\Sigma:=\{0\}\subset T$. In order to adapt
the construction to the more general situation that we discuss here,
we define a weak log$\Sigma$-trTLEP-structures to be such a vector
bundle on $\PP^1\times T$ with connection and pairing, where the latter is
supposed to be non-degenerate only on $\PP^1\times (T\backslash\Sigma)$.
The result can then be stated as follows.
\begin{theorem}\label{theoLogFrob}
Let $D$ be reductive, $i\in\{1,\ldots,n\}$ be the index from
lemma \ref{lemNablaTFlat} such that
$\deg(t^{-k}\omega_i)=-\nu^{(3)}_i$
and suppose that the minimal spectral number $\alpha_{min}$ of $(G_t,\nabla)$ has
multiplicity one (so that theorem \ref{canSol} applies). Then
the (analytic bundle corresponding to the) module
$$
\begin{array}{rcl}
\widehat{G}' & := & \bigoplus_{j=1}^n\OO_{\PP^1\times T}
\omega^{(4)}_j \quad\textup{where} \\ \\
\omega^{(4)}_j & := & t^{-k}\omega^{(3)}_j \quad\forall j\in\{i,\ldots,n\}\\ \\
\omega^{(4)}_j & := & t^{-k+1}\omega^{(3)}_j \quad\forall j\in\{1,\ldots,i-1\}
\end{array}
$$
underlies a weak $\log\Sigma$-trTLEP-structure, and a
$\log\Sigma$-trTLEP-structure if conjecture \ref{conjSpectrum} holds true and if $i=1$. The form
$t^{-k}\omega^{(3)}$ is homogenous and primitive and yields a weak
logarithmic Frobenius manifold. It yields a logarithmic Frobenius manifold
if conjecture \ref{conjSpectrum} holds true and if $i=1$, e.g., in the case of a linear section $f$ of the normal crossing divisor.
\end{theorem}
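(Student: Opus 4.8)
The plan is to check directly that the rescaled lattice $\widehat{G}'$ fulfils the axioms of a (weak) $\log\Sigma$-trTLEP-structure with $\Sigma=\{0\}\subset T$, and then to invoke the universal deformation result of \cite{Reich1} (the logarithmic counterpart of \cite[theorem 4.5]{HM}, in its weak form when the metric degenerates) to produce the Frobenius manifold. First I would compute the connection matrix in the basis $\underline{\omega}^{(4)}$. Since the base change $\underline{\omega}^{(4)}=\underline{\omega}^{(3)}\cdot P$ is diagonal with $P=\diag(p_1,\ldots,p_n)$ and $p_j\in\{t^{-k},t^{-k+1}\}$, the new connection form is $P^{-1}\Omega P+P^{-1}dP$, where $\Omega$ is the matrix of corollary \ref{summary} (ii). Conjugating $\Omega_0=N+C_0$ by $P$ only rescales its off-diagonal entries by integer powers of $t$: a direct check shows that the single subdiagonal entry at the index $i$ acquires a factor $t$ while the upper-right entry loses its factor $t$, so every entry stays holomorphic, and $P^{-1}dP$ merely adds a constant diagonal matrix in the $dt/t$ direction. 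I would conclude that $\nabla$ has a logarithmic pole along $\{\infty\}\times T$, a pole of type one along $\{0\}\times T$, and, crucially, a logarithmic pole along $\PP^1\times\Sigma$, which is exactly the pole structure demanded of a $\log\Sigma$-trTLEP-structure.

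Next I would treat the pairing. For $t\neq 0$ the pairing $S$ of \cite{Sa2} is flat and non-degenerate on $G_t$, so it already yields a non-degenerate pairing on $\PP^1\times(T\setminus\Sigma)$, giving the weak structure unconditionally. To decide non-degeneracy at $t=0$ I would use corollary \ref{corConjSpectrum} (i) (valid under conjecture \ref{conjSpectrum}), which gives $S(\omega^{(3)}_a,\overline{\omega}^{(3)}_b)=c\,t^{2k}\tau^{-n+1}\delta_{a+b,n+1}$. Substituting the rescaling, the nonzero entries $S(\omega^{(4)}_a,\overline{\omega}^{(4)}_{n+1-a})$ pick up the factors $p_a p_{n+1-a}$, which equal $t^{-2k}$ precisely when both $a$ and $n+1-a$ lie in $\{i,\ldots,n\}$ and carry extra positive powers of $t$ otherwise. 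Thus $S$ extends across $t=0$ with values in $\CC\,\tau^{-n+1}$, and is non-degenerate there exactly when no index falls in $\{1,\ldots,i-1\}$, i.e. when $i=1$; for $i>1$ the metric degenerates on the logarithmic tangent bundle, leaving only a weak structure. This is the formal origin of the dichotomy in the statement.

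It remains to supply a primitive, homogeneous, $\nabla^{res}_t$-flat section and to verify the hypotheses of the logarithmic theorem. The section $t^{-k}\omega^{(3)}_i$ (for the index $i$ of lemma \ref{lemNablaTFlat}) is an eigenvector of $[\tau\nabla_\tau]$, hence homogeneous, and is primitive by the same cyclic-generator argument as in theorem \ref{theoFrobeniusSemiSimple}: its class in $G_t/\tau^{-1}G_t$ is that of $(-f)^{i-1}\alpha$, and $[\nabla_\tau]$ acts as multiplication by $-f$, which is cyclic. Its flatness for the residue connection is precisely lemma \ref{lemNablaTFlat}, so conditions (IC), (GC), (EC) of \cite{Reich1} hold. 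Feeding these data into the construction of \cite{Reich1} produces the desired weak logarithmic Frobenius manifold, the genuine logarithmic case occurring when conjecture \ref{conjSpectrum} holds and $i=1$; for the normal crossing divisor one has $i=1$ and the conjecture holds by the computation in section \ref{sec:examples}, so the full logarithmic structure is obtained.

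I expect the main obstacle to be the bookkeeping of powers of $t$ under the conjugation by $P$: one must verify simultaneously that no negative power of $t$ survives in $P^{-1}\Omega P$ (so that the pole along $\Sigma$ is genuinely logarithmic and no worse) and that the $t$-valuations of the pairing are distributed exactly as above, since it is this distribution---controlled entirely by whether the spectral index $i$ equals $1$---that separates the weak from the genuine logarithmic structure.
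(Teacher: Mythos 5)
Your proposal is correct and follows essentially the same route as the paper: compute the connection matrix in the rescaled basis $\underline{\omega}^{(4)}$ to check the logarithmic pole structure, verify conditions (EC), (GC), (IC) of \cite[theorem 1.12]{Reich1} for the section $t^{-k}\omega^{(3)}_i$ using lemma \ref{lemNablaTFlat}, control the pairing via corollary \ref{corConjSpectrum} under conjecture \ref{conjSpectrum}, and invoke Reichelt's theorem (adapted to the weak case when the metric degenerates at $t=0$). Your explicit $t$-power bookkeeping showing that the anti-diagonal entries of $S$ stay units at $t=0$ precisely when $i=1$ is a useful elaboration of a point the paper leaves largely implicit, but it is the same argument, not a different one.
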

\begin{proof}
It is clear by definition that $(\widehat{G}',\nabla, S)$ is a
weak $\log\Sigma$-trTLEP-structure (of weight $n-1$).
It is easy to see that the connection takes the form
$$
\nabla(\underline{\omega}^{(4)}) =
\underline{\omega}^{(4)} \cdot
\left[
(\Omega_0\tau+A^{(4)}_\infty)\frac{d\tau}{\tau}
+
(\Omega_0\tau+\widetilde{A}^{(4)}_\infty)\frac{dt}{nt}
\right],
$$
where
$$
\begin{array}{rcl}
A^{(4)}_\infty & = &
\mbox{diag}(-\nu^{(3)}_i,\ldots,-\nu^{(3)}_n,-\nu^{(3)}_1,\ldots,-\nu^{(3)}_{i-1})
\\ \\
\widetilde{A}^{(4)}_\infty & = & A^{(4)}_\infty+
\mbox{diag}\left(\deg(\omega^{(4)}_i),\ldots,\deg(\omega^{(4)}_n),\deg(\omega^{(4)}_1),\ldots,\deg(\omega^{(4)}_{i-1})\right).
\end{array}
$$
In particular, $\omega^{(4)}_1$ is $\nabla^{res}$-flat,
$[\nabla_\tau]$-homogenous and a cyclic generator of $H^0(\PP^1\times\{0\},\widehat{G}'/t
\widehat{G}')$ with respect to $[\nabla_\tau]$ and
$[\tau^{-1}\nabla_{t\partial_t}]$ (even with respect to $[\nabla_\tau]$ alone).
Moreover, $[\tau^{-1}\nabla_{t\partial_t}(\omega^{(4)}_1)]$ is
non-zero in $H^0(\PP^1\times\{0\},\widehat{G}'/t
\widehat{G}')$, so that $\omega^{(4)}_1$ satisfies the conditions
(EC), (GC) and (IC) of \cite[theorem 1.12]{Reich1}, except that the
form $S$ might be degenerate on $\widehat{G}'_{|t=0}$ (correspondingly,
the metric $g$ on $K:=\widehat{G}'/\tau\widehat{G}'$ from
loc.cit. might be degenerate on $K_{|t=0}$). One checks
that the proof of theorem 1.12 of
loc.cit can be adapted to the more general situation and
yields a weak logarithmic Frobenius structure.

Now assume conjecture \ref{conjSpectrum} and suppose that $i=1$. Then
$\underline{\omega}^{(4)}=t^{-k}\underline{\omega}^{(3)}$,
and we get that $S$ is non-degenerate on $\widehat{G}'$ by corollary
\ref{corConjSpectrum}. In particular, $(\widehat{G}', \nabla,S)$
underlies a $\log\Sigma$-trTLEP-structure in this case. This yields
a logarithmic Frobenius structure by applying \cite[theorem
1.12]{Reich1}. That the pairing is non-degenerate and that $i=1$ holds
for the normal crossing divisor case follows, e.g., from the
computations in \cite{DS2} (which, as already pointed out above, have
been taken up in \cite{Dou3} to give the same result as here).
\end{proof}
Let us remark that one might consider the result for the normal crossing divisor as being ``well-known'' by the mirror principle:
as already stated in the introduction, the Frobenius structure for fixed $t$ is known to be isomorphic
to the quantum cohomology ring of the ordinary projective space. But in fact we have more: the parameter $t$
corresponds exactly to the parameter in the small quantum cohomology ring
(note that the convention for the name of the coordinate on the parameter
space differs from the usual one in quantum cohomology, our $t$ is usually called $q$ and
defined as $q=e^t$, where this $t$ corresponds to a basis vector in the second cohomology
of the underlying variety, e.g., $\PP^{n-1}$). Using this interpretation, the logarithmic
structure as defined above is the same as the one obtained in \cite[subsection 2.1.2.]{Reich1}.
In particular, it is easily seen that the deformation algebra
$T^1_{\mathcal{R}_h/\CC} f=\CC[V]/df(\Der(-\log\,h))
=\CC[x_1,\ldots,x_n]/(x_1-x_2,\ldots,x_1-x_n)\cong\CC[x_1]$ specializes to $H^*(\PP^{n-1},\CC)=\CC[x_1]/(x_1^n)$ over
$t=0$ (and more generally to $\CC[x_1]/(x_1^{n-1}-t)$ at $t\in T$, i.e., to the small quantum product
of $\PP^{n-1}$ at the point $t\in H^2(\PP^{n-1},\CC)/H^2(\PP^{n-1},\ZZ)$).


\section{Examples}
\label{sec:examples}

We have computed the spectrum and monodromy for some of the discriminants in quiver
representation spaces described in \cite{bm}. In some cases, we have implemented the
methods explained in the previous sections in {\sc Singular} (\cite{Greuel:2005aa}). For
the infinite families given in table \ref{tab:1} below, we have solved the Birkhoff
problem by essentially building the seminvariants $h_i$, where $h=h_1\cdots h_k$ is the
equation of $D$, by successive multiplication by $(-f)$.

We will present two types of examples. On the one hand, we will explain
in detail some specific ones, namely, the normal crossing divisor,
the star quiver with three exterior vertices (denoted by $\star_3$ in
example \ref{D_4/E_6} (i)), and the non-reductive example discussed
after definition \ref{mond:specdef} for $k=2$.
We also give the spectral numbers for the linear free divisor associated to the $E_6$
quiver (see example \ref{D_4/E_6} (ii)), but we do not write down the corresponding
good basis, which is quite complicated (remember that already the
equation of this divisor \eqref{eqE6} was not completely given).

On the other hand, we are able to determine the spectrum for
$(G_t,\nabla)$ ($t\neq 0$) and $(G_0,\nabla)$ for the whole $D_n$- and
$\star_n$-series by a combinatorical procedure. The details are rather
involved; therefore we present the results, but refer to the
forthcoming paper \cite{CompQuiver} for full details and
proofs. It should be noticed that except in the case of the normal
crossing divisor and in very small dimensions for other examples,
it is hard to write down explicitly
elements for the good bases $\underline{\omega}^{(2)}$ and
$\underline{\omega}^{(3)}$  as already the equation for the divisor
becomes quickly quite involved.

Let us start with the three explicit examples mentioned above.

\textbf{The case of the normal crossing divisor:} As noticed in the first section,
this is the discriminant in the representation space $\Rep(Q,\11)$ of any quiver with
a tree as underlying (oriented) graph. In particular, it is the
discriminant of the Dynkin $A_{n+1}$-quiver. Chosing coordinates
$x_1,\ldots,x_n$ on $V$, we have $h=x_1\cdot\ldots\cdot x_n$. The linear
function $f=x_1+\ldots+x_n$ is $\mathcal{R}_h$-finite, and a direct calculation
(i.e., without using lemma \ref{eqBirkhoff} and algorithm 1) shows that
$\underline{\omega}^{(1)}=\underline{\omega}^{(2)}=\underline{\omega}^{(3)}
=\left((-n)^{i-1}\prod_{j=1}^{i-1} x_j \cdot \alpha\right)_{i=1,\ldots,n}$.
This is consistent with the basis found in \cite[proposition 3.2]{DS2}.
In particular, we have $A^{(2)}=A^{(3)}=-\diag(0,\ldots,n-1)$, so the spectral numbers
of $(G_t,\nabla)$ for $t\neq 0$ and $(G_0,\nabla)$ are $(0,\ldots,n-1)$.
We also see that
$(nt\partial_t)\underline{\omega}^{(2)}=\underline{\omega}^{(2)}\cdot\tau\Omega_0$,
which is a well known result from the calculation of the quantum cohomology of
$\PP^{n-1}$ (see the last remark in subsection \ref{subsec:LogFrobenius}).

\vspace*{0,2cm}

\textbf{The case $\star_3$ (see example \ref{D_4/E_6} (i))}:
Remember that we had chosen coordinates $a_{11},\ldots,a_{23}$ on the
space $V=M(2\times 3,\CC)$ and that
$h=(a_{11}a_{22}-a_{12}a_{21})
(a_{11}a_{23}-a_{13}a_{21})
(a_{12}a_{23}-a_{22}a_{13}).
$
Defined as a discriminant in a quiver representation space,
this linear free divisor is reductive, and it follows from proposition
\ref{propUnitaryCoord} that the dual divisor has the same equation in dual
coordinates. Then the linear form
$f=a_{11}+a_{21}+a_{22}+a_{23}$ is $\mathcal{R}_h$-finite, as it
does not lie in the dual divisor.

In the next step, we will actually not make use of the basis $\underline{\omega}^{(0)}=((-f)^i\cdot\alpha)_{i=0,\ldots,n-1}$,
but instead compute a basis $\underline{\omega}^{(1)}$ which gives
a solution to the Birkhoff problem directly. Namely, we
write
$$
\begin{array}{rcl}
\Delta_1 & := & a_{13} a_{22} - a_{12} a_{23} \\
\Delta_1 & := & a_{11} a_{23} - a_{21} a_{13} \\
\Delta_1 & := & a_{21} a_{12} - a_{11} a_{22}
\end{array}
$$
for the equations of the components of $D$, and define
linear forms
$$
\begin{array}{rcl}
l_1 & := & \frac{1}{2}a_{13}  \\
l_2 & := & \frac{1}{2}(a_{23}-a_{13})  \\
l_3 & := & \frac{1}{2}a_{22}
\end{array}
$$
Using these notations, we have that $\omega^{(1)}$ is given as follows.
\begin{equation}\label{eqBases1Star3}
\begin{array}{rclcrclcrcl}
\omega^{(1)}_1 & = & \alpha &;& \omega^{(1)}_2 & = & -12\cdot l_1 \cdot \alpha &;& \omega^{(1)}_3 & = & -12\cdot \Delta_1 \cdot \alpha \\ \\
\omega^{(1)}_4 & = & -12^2 \cdot \Delta_\cdot l_2 \cdot \alpha &;& \omega^{(1)}_5 & = & -12^2 \cdot \Delta_1 \cdot \Delta_2 \cdot \alpha &;& \omega^{(1)}_6 & = & -12^3 \cdot \Delta_1 \cdot \Delta_2 \cdot l_3 \cdot \alpha.
\end{array}
\end{equation}
and one calculates that $A^{(1)}_\infty = \diag(-0,-3,-2,-3,-4,-3)$. Algorithm 1 yields
$\omega^{(2)}_2=\omega^{(1)}_2+2\tau^{-1}\omega^{(1)}_1$ and $\omega^{(2)}_i=\omega^{(1)}_i$ for all $i\neq 2$,
and we obtain $A^{(2)}_\infty = \diag(-2,-1,-2,-3,-4,-3)$. As $\nu^{(2)}_1-\nu^{(2)}_6=-1 \leq 1$, we have
$\underline{\omega}^{(2)}=\underline{\omega}^{(3)}$, hence $G^{(3)}=G$ and $(2,1,2,3,4,3)$ is the spectrum
for $(G_t,\nabla)$, $t\neq 0$ as well as for $(G_0,\nabla)$. We see that the minimal spectral number is unique,
therefore, $\underline{\omega}^{(2)}$ yields a $(V^+,S)$-solution for any $t$. Moreover, we have
$(nt\partial_t)\underline{\omega}^{(2)}=\underline{\omega}^{(2)}\cdot\left[\tau\Omega_0+\diag(-2,0,0,0,0,2)\right]$,
so that in this case the $\nabla^{res}$-flat section $\omega^{(3)}_i$ from lemma
\ref{lemNablaTFlat} is $\omega^{(2)}_2$, which is an eigenvector of $A^{(2)}_\infty$ with respect to the minimal spectral number.

\vspace*{0,5cm}

\textbf{The case $E_6$ (see example \ref{D_4/E_6} (ii))}: In the given coordinates $a,b,\ldots,v$ of $V$,
we chose the linear form $f=(a,b,\ldots,v)\cdot{^t}(1,2,0,1,3,0,1,3,2,1,0,2,1,3,0,1,3,0,2,1,3,2)$,
which lies in the complement of the dual divisor (again, by reductivity, we have $h^\vee=h^*$). Then
the spectrum of both $(G_t,\nabla)$, $t\neq 0$ and $(G_0,\nabla)$ is
$$
\left(\frac{44}{5},\frac{25}{3},\frac{28}{3},\frac{31}{3},\frac{34}{3},\frac{47}{5}, \underbrace{6,\ldots,15}_{10 \textup{ elements}},\frac{58}{5},\frac{29}{3},\frac{32}{3},\frac{35}{3},\frac{38}{3},\frac{61}{5}\right)
$$
Again we have a unique minimal spectral number, hence, theorem \ref{theoS-solution} applies. The symmetry
$\nu^{(3)}_i+\nu^{(3)}_{n+1-i}=21=n-1$ holds. Moreover, we obtain the following eigenvalues
for the residue of $t\partial_t$ on $G_{|\CC^*\times T}$ at $t=0$:
$$
\left(-\frac{2}{5},\left(-\frac{1}{3}\right)^4,-\frac{1}{5}, 0^{10},\frac{1}{5},\left(\frac{1}{3}\right)^4,\frac{2}{5}\right)
$$
which are (again) symmetric around zero (hence supporting conjecture \ref{conjSpectrum} (ii)).

\vspace*{0,2cm}

\textbf{A non-reductive example in dimension 3 (see \eqref{eqNonReductiveDim3})}: The linear free divisor
in $\CC^3$ with equation $h=x(xz-y^2)$ is not special and therefore not reductive. The dual divisor is given,
in dual coordinates $X,Y,Z$ by $h^\vee=Z(XZ-Y^2)\neq h^*(X,Y,Z)$. As an $\mathcal{R}_h$-finite linear form, we choose $f=x+z\in V^\vee\backslash D^\vee$.
The basis $\underline{\omega}^{(1)}$ is given as
\begin{equation}
\omega^{(1)}_1 = \alpha \quad ; \quad \omega^{(1)}_2 = (-f) \cdot \alpha \quad ; \quad \omega^{(1)}_3 = \frac92f^2 \cdot \alpha
\end{equation}
and we have $A^{(1)}_\infty = \diag(0,-\frac74,-\frac54)$. Algorithm 1 yields
\begin{equation}
\omega^{(2)}_1 = \omega^{(1)}_1 \quad ; \quad \omega^{(2)}_2 = \omega^{(1)}_2 +\frac34\tau^{-1} \omega^{(1)}_1 \quad ; \quad \omega^{(2)}_3 = \omega^{(1)}_3
\end{equation}
and $A^{(1)}_\infty = \diag(-\frac34,-1,-\frac54)$. Again, as $\nu^{2}_1-\nu^{2}_3=-\frac12\leq 1$, we obtain
$\underline{\omega}^{(2)}=\underline{\omega}^{(3)}$, $G=G^{(3)}$, and $(\frac34,1,\frac54)$ is the spectrum
of both $(G_t,\nabla)$, $t\neq 0$ and $(G_0,\nabla)$.
We can also compute the spectral numbers for the case $k=3,4$ and $5$ (these are again the same
for $(G_t,\nabla)$, $t\neq 0$ and $(G_0,\nabla)$), namely:
\small\renewcommand{\baselinestretch}{1.4}\normalsize
$$
\begin{array}{c|c|c}
\textbf{size of matrices} & \dim(V) & \textbf{Spectrum of } (G_t,\nabla) \\ \hline
k=3 & n=6  &  (2,\frac52,2,3,\frac52,3) \\ \hline
k=4 & n=10 &  (\frac{15}4,\frac{13}3,\frac92,\frac{17}4,4,5,\frac{19}4,\frac92,\frac{14}3,\frac{21}4) \\ \hline
k=5 & n=15 & (6,\frac{53}{8},7,\frac{27}{4},7,\frac{55}{8},6,7,8,\frac{57}{8},7,\frac{29}{4},7,\frac{59}{8},8)
\end{array}
$$
\small\renewcommand{\baselinestretch}{1}\normalsize
The case $k=5$ (and also $k=3$) is an example where the minimal spectral number is not unique, hence, theorem
\ref{theoS-solution} does not apply. According to theorem \ref{theoFrobeniusSemiSimple} (ii), we can
take $\omega^{(3)}_1, \omega^{(3)}_2, \omega^{(3)}_4, \omega^{(3)}_6$ and $\omega^{(3)}_7$ as primitive and homogenous
sections for $\widehat{G}_t^{can}$.
However, we observe that the ``extra symmetry''
$\nu^{(3)}_i+\nu^{(3)}_{n+1-i}=n-1$ from corollary \ref{corConjSpectrum} still holds,
which supports conjecture \ref{conjSpectrum}. One might speculate that
although the eigenspace of the smallest spectral number is two-dimensional (generated
by $\omega^{(3)}_1$ and $\omega^{(3)}_7$), we still have $\tau^{n-1}S(\omega^{(3)}_1,\omega^{(3)}_j)\in \CC \delta_{j,15}$
(resp. $\tau^{n-1}S(\omega^{(3)}_7,\omega^{(3)}_j)\in \CC \delta_{j,9}$) which would
imply that the conclusions of theorem \ref{theoS-solution} still hold, in particular,
that also for $k=5$ the above basis elements define a $(V^+,S)$-solution
and hence are all primitive and homogenous for it.
Notice also that if one formally calculates
$\frac1n(\deg(\omega^{(3)}_i)-\nu^{(3)}_i)_{i=1,\ldots,n}$ for the
above non-reductive examples, then the resulting numbers still have
the property of being symmetric around zero. This seem to indicate
that the conclusions of proposition \ref{propVerticalConnectionReductive} also
hold in the non-reductive case, although we cannot apply
theorem \ref{nbi} in this situation.


\vspace*{0.5cm}

Now we turn to the series $D_m$ resp. $\star_m$.
The results are given in table \ref{tab:1} below.
We write $(p_1,q_1),\ldots,(p_k,q_k)$ to indicate that the output of algorithm 1 resp. algorithm 2
is a basis $\underline{\omega}^{(2)}$ resp. $\underline{\omega}^{(3)}$ which decomposes into $k$ blocks as
in the proof of proposition \ref{ExchangeTrick}, where in each block $(p_i,q_i)$ the
eigenvalues of the residue endomorphism $\tau\partial_\tau$ along $\tau=0$ are
$-p_i,-p_i-1,\ldots,-p_i-q_i+1$. In particular, this gives the monodromy of $(G_t,\nabla)$
according to corollary \ref{monodromy}. We write moreover the
eigenvalues of the residue endomorphism of $t\partial_t$ on
$(G_0/tG_0)_{|\tau\neq\infty}$ as a tuple with multiplicities like
$[r_1]^{l_1},\ldots,[r_k]^{l_k}$ .
We observe that in all cases the symmetries
$\nu^{(2)}_i+\nu^{(2)}_{n+1-i}=n-1$ and
$\nu^{(3)}_i+\nu^{(3)}_{n+1-i}=n-1$ hold, and that the residue
eigenvalues of $t\partial_t$
on $(G_0/tG_0)_{|\tau\neq\infty}$ are symmetric around zero.

\small\renewcommand{\baselinestretch}{1.4}\normalsize
\begin{table}[h]
\begin{center}
$$
\begin{array}{c|c|c|c}
 & D_m & \star_{m:=2k+1} & \star_{m:=2k} \\ \hline
\scriptstyle \dim(D)=n-1 & \scriptstyle 4m-11 & \multicolumn{2}{c}{\scriptstyle m^2-m-1}\\ \hline
&\scriptscriptstyle\left(\frac{4m-10}{3},m-3\right),&
\multicolumn{2}{c}{\scriptscriptstyle
\left((m-1-l)(m-2)+l(l-1)/2,l+1\right)_{\scriptscriptstyle l=0,\ldots,m-3},}\\
\scriptstyle Sp(G_0,\nabla)&\scriptscriptstyle (m-3,2m-4),& \multicolumn{2}{c}{\scriptscriptstyle((m-1)(m-2)/2,2(m-1)),}\\
&\scriptscriptstyle (\frac{5m-11}{3},m-3) &
\multicolumn{2}{c}{\scriptscriptstyle
\left(\frac12(m-l-1)(m+l),(m-l-2)\right)_{\scriptscriptstyle l=0,\ldots,m-3}}\\\hline
&&&\scriptscriptstyle  (mk-k,k-1), \\
\scriptstyle Sp(G_t,\nabla)& \scriptscriptstyle\left(\frac{4m-10}{3},m-3\right), &
\scriptscriptstyle \overbrace{\scriptscriptstyle (2k^2,m-2),(2k^2-1,m-2),\ldots,\left(2k^2-k+1,m-2\right)}^k,
& \scriptscriptstyle  \overbrace{\scriptscriptstyle  (mk-m,m-2),(mk-m-1,m-2),\dots,\left(mk-3k+2,m-2\right)}^{k-1},\\
	\scriptstyle t\neq 0&\scriptscriptstyle (m-3,2m-4),&\scriptscriptstyle \left(2k^2-k,2m-2\right),&\scriptscriptstyle  \left(2k^2-3k+1,2m-2\right),\\
&\scriptscriptstyle \left(\frac{5m-11}{3},m-3\right) & \scriptscriptstyle \overbrace{\scriptscriptstyle (2k^2+k,m-2),(2k^2+k-1,m-2),\dots,\left(2k^2+1,m-2\right)}^k &
\scriptscriptstyle \overbrace{\scriptscriptstyle (mk-k,m-2),(mk-k-1,m-2),\dots,\left(mk-m+2,m-2\right)}^{k-1},\\
&&&\scriptscriptstyle (mk-m+1,k-1)\\\hline
\scriptstyle \textup{Res}[t\partial_t] &
\scriptscriptstyle
\left[-\frac13\right]^{m-3}
, &
\multicolumn{2}{c}{\scriptscriptstyle
  \left(\frac{1}{m^2-m}[l-(m-1-l)(m-2)]^{l+1}\right)_{\scriptscriptstyle l=0,\ldots,m-3},}\\
\scriptstyle \textup{on } &\scriptscriptstyle 0^{2m-4},&
\multicolumn{2}{c}{\scriptscriptstyle 0^{2(m-1)},}\\
\scriptstyle (G_0/tG_0)_{|\tau\neq\infty}&\scriptscriptstyle \left[\frac13\right]^{m-3}
&\multicolumn{2}{c}{\scriptscriptstyle
  \left(\frac{1}{m^2-m}[(m-1)(l+1)]^{m-l-2}\right)_{\scriptscriptstyle l=0,\ldots,m-3}}\\
\end{array}
$$
\caption{Spectra of $f$ on the Milnor resp. zero fibre of the
  fibrations for $D_m$ and $\star_m$-series.\strut}
\label{tab:1}
\end{center}
\end{table}
\small\renewcommand{\baselinestretch}{1}\normalsize

\begin{remark}
  \begin{enumerate}
  \item
    We see that the jumping phenomenon (i.e., the fact that the spectrum
    of $(G_t,\nabla)$, $t\neq 0$ and $(G_0,\nabla)$ are different) occurs in our examples only
    for the star quiver for $m\geq 5$. However, there are probably many more examples
    where this happens, if the divisor $D$ has sufficiently high degree.
  \item Each Dynkin diagram supports many different quivers, distinguished by their edge
    orientations. Nevertheless, each of these quivers has the same set of roots.  For
    quivers of type $A_n$ and $D_n$, the discriminants in the corresponding representation
    spaces are also the same, up to isomorphism. However, for the quivers of type $E_6$,
    there are three non-isomorphic linear free divisors associated to the highest root
    (the dimension vector shown). Their generic hyperplane sections all have the same
    spectrum and monodromy.
  \item For the case of the star quiver with $n=2k$, the last and first blocks actually
    form a single block. We have split them into two to respect the order given by the
    weight of the corresponding elements in the Gau{\ss}-Manin system.
  \item In all the reductive examples presented above, the $\nabla^{res}$-flat basis element $t^{-k}\omega_i^{(3)}$
    from lemma \ref{lemNablaTFlat} was an eigenvector of $A^{(3)}_\infty$ for the smallest spectral number.
    An example where the latter does not hold is provided by the \emph{bracelet}, the discriminant in the space of
    binary cubics (the last example in 4.4 of \cite{gmns}). The spectrum of the generic
    hyperplane section is $(\frac{2}{3}, 1, 2, \frac{7}{3})$,
    and hence the minimal spectral number is not an integer. It is however unique, so that
    theorem \ref{theoS-solution} applies. On the other hand, we have a
    $\nabla^{res}$-flat section, namely $t^{-1}\omega^{(3)}_2$, but which does not coincide with
    the section corresponding to the smallest spectral number (i.e., the section $\omega^{(3)}_1$).
  \end{enumerate}
\end{remark}

Let us finish the paper by a few remarks on open questions and problems related to the results
obtained.

In \cite{DS2}, where similar questions for certain Laurent polynomials are studied, it is shown
that the $(V^+,S)$-solution constructed coincides in fact with the canonical solution as described
in theorem \ref{canSol} (see proposition 5.2 of loc.cit.). A natural question is to ask whether the same holds true
in our situation.

A second problem is to understand the degeneration
behaviour of the various Frobenius structures $M_t$ as discussed in theorem
\ref{theoLogFrob}, in particular in those cases
where we only have a weak logarithmic Frobenius manifold (i.e., all examples except the normal crossing case).
As already pointed out, a rather similar phenomenon occurs in \cite{Dou3}.

The constancy of the Frobenius structure at $t=0$ from theorem \ref{theoLimitFrob} is easy to understand
in the case of the normal crossing divisor: It corresponds to the
semi-classical limit in the quantum cohomology of $\PP^{n-1}$, which
is the Frobenius algebra given by the usual cup product and the Poincare
duality on $H^*(\PP^{n-1},\CC)$. One might speculate that for other
linear free divisors, the fact that the Frobenius structure at $t=0$ is constant
is related to the left-right stability of $f_{|D}$.

Another very interesting point is the relation of the Frobenius structures
constructed to the so-called $tt^*$-geometry (also known as variation of TERP-
resp. integrable twistor structures, see, e.g., \cite{He4}). We know from proposition \ref{propVerticalConnectionReductive}
(v) that the families studied here are examples of \emph{Sabbah orbits}. The degeneration
behaviour of such variations of integrable twistor structures has been studied
in \cite{HS1} using methods from \cite{Mo2}. However, the extensions over the boundary point $0\in T$ used in loc.cit.
are in general different from the lattices $G$ resp. $G^{(3)}$ considered here, as
the eigenvalues of the residue $[t\partial_t]$ computed above does not always
lie in a half-open interval of length one (i.e., $G_{|\CC^*\times T}$ is not
always a Deligne extension of $G_{|\CC^*\times (T\backslash\{0\})}$). One might want
to better understand what kind of information is exactly contained in the extension $G$.
Again, a similar problem is studied to some extend for Laurent polynomials in \cite{Dou3}.


Finally, as we already remarked, the connection $\partial_\tau$ is regular singular at
$\tau=\infty$ on $\mathbf{G}_0$ but irregular for $t\neq 0$. Irregular connections are
characterized by a subtle set of topological data, the so-called Stokes matrices. It might
be interesting to calculate these matrices for the examples we studied, extending the
calculations done in \cite{Guzz} for the normal crossing case.

\bibliographystyle{amsalpha}
\providecommand{\bysame}{\leavevmode\hbox to3em{\hrulefill}\thinspace}
\providecommand{\MR}{\relax\ifhmode\unskip\space\fi MR }
\providecommand{\MRhref}[2]{%
  \href{http://www.ams.org/mathscinet-getitem?mr=#1}{#2}
}
\providecommand{\href}[2]{#2}

\bigskip
\noindent{\sc Ignacio de Gregorio}\quad{\tt degregorio@gmail.com}\\
\noindent{\it Mathematics Institute, University of Warwick, Coventry CV4 7AL, England.}

\bigskip
\noindent{\sc David Mond}\quad{\tt D.M.Q.Mond@warwick.ac.uk}\\
\noindent{\it Mathematics Institute, University of Warwick, Coventry CV4 7AL, England.}

\bigskip
\noindent{\sc Christian Sevenheck}\quad{\tt Christian.Sevenheck@uni-mannheim.de}\\
\noindent{\it Lehrstuhl VI f\"ur Mathematik, Universit\"at Mannheim, A6 5, 68131 Mannheim, Germany.}

\end{document}